\documentclass[12pt]{article}

\usepackage[utf8]{inputenc}
\usepackage[english]{babel}
\usepackage{multirow}
\usepackage{hyperref}

\hypersetup{
    bookmarks=true,         %
    unicode=false,          %
    pdftoolbar=true,        %
    pdfmenubar=true,        %
    pdffitwindow=false,     %
    pdfstartview={FitH},    %
    pdftitle={My title},    %
    pdfauthor={Author},     %
    pdfsubject={Subject},   %
    pdfcreator={Creator},   %
    pdfproducer={Producer}, %
    pdfkeywords={keyword1, key2, key3}, %
    pdfnewwindow=true,      %
    hidelinks=true,
}

\usepackage{graphicx}
\usepackage{tikz}
\usepackage{tikz-cd}

\usetikzlibrary{
  positioning,
  decorations.markings,
  arrows.meta,
  calc,
  angles,
  quotes
}

\definecolor{aqua}{rgb}{0.0, 1.0, 1.0}
\usepackage[background = aqua, arrow = aqua, text = black]{callouts}

\usepackage{amsfonts}
\usepackage{amsmath}
\usepackage{amssymb}
\usepackage{amsthm}
\usepackage{mathtools}

\usepackage{verbatim}

\usepackage{thmtools, thm-restate}

\theoremstyle{plain}
  \newtheorem{thm}{Theorem}[section]
  \newtheorem{lem}[thm]{Lemma}
  
  \newtheorem{cor}[thm]{Corollary}
  
  \newtheorem{conj}[thm]{Conjecture}

\theoremstyle{definition}
  \newtheorem{dfn}[thm]{Definition}

\theoremstyle{remark}
  \newtheorem{rem}[thm]{Remark}

\newcommand{\angb}[1]{\langle #1 \rangle}
\DeclareMathOperator{\Ima}{Im}
\DeclareMathOperator{\Int}{\text{Int}}
\DeclareMathOperator{\Sing}{\text{Sing}}

\title{$\mathbb{Z}_p$-folded Surfaces and the $\mathbb{Z}_p$-Thurston norm}
\author{William Hobkirk\\[1ex]}
\date{}

\begin{document}

\maketitle

\begin{abstract}
    $\mathbb{Z}_p$-folded surfaces are a class of oriented folded surfaces with a fundamental $H_2(\mathbb{Z}/p\mathbb{Z})$ class. In this paper we study the $\mathbb{Z}_p$-folded surfaces that embed in $3$-manifolds. This perspective naturally leads to the definition of the $\mathbb{Z}_p$-Thurston norm which generalises the $\mathbb{Z}_2$-Thurston norm in~\cite{jaco2009z2thurstonnormcomplexity3manifolds} and adapts the definition of $\theta$ in~\cite{Turaevfnhom3mfld2007}. We define \textit{canonical $\mathbb{Z}_p$-folded surfaces} which connect the theory of $\mathbb{Z}_p$-folded surfaces to the study of one vertex $3$-manifold triangulations and we relate the Euler characteristic of canonical $\mathbb{Z}_p$-folded surfaces to the $\mathbb{Z}_p$-Thurston norm. We generalise a classical result of Bredon and Wood in~\cite{bredonwood1969} by proving that the maximum Euler characteristic of a non-separating non-empty $\mathbb{Z}_3$-folded surface embedded in $L(3n,1)$ is $2-n$ for $n\in \mathbb{Z}^{>0}$, determining the $\mathbb{Z}_3$-Thurston norm in these cases. We conclude by demonstrating the motivating application of this theory to the study of generalised triangulations by proving that any generalised triangulation of the lens space $L(3n,1)$ must have at least $n$ tetrahedra. This final result is a preview of joint work with Spreer in which we determine the exact complexity of $L(3n,1)$ using more sophisticated techniques.
\end{abstract}

\paragraph{MSC 2020 classification:} Primary 57K31; Secondary 57K30, 57Q15.

\paragraph{Keywords:} triangulations of $3$-manifolds, triangulation complexity, folded surfaces, lens spaces, layered triangulations, Thurston norm, $\mathbb{Z}_2$-Thurston norm.

\tableofcontents

\section{Introduction}

Generally speaking, the contribution of this research is the application of similar techniques to study $3$-manifolds with non-trivial $H_2(\mathbb{Z}_p)$. To this end we develop a theory of $\mathbb{Z}_p$-folded surfaces and the $\mathbb{Z}_p$-Thurston norm that generalises the $\mathbb{Z}_2$-Thurston norm. We apply this theory to study $\mathbb{Z}_3$-folded surfaces that embed in the family of lens spaces $L(3n,1)$ for $n\in \mathbb{Z}^{>0}$ and prove Theorem~\ref{thm:l3n1norm} which states that the maximum Euler characteristic of a $\mathbb{Z}_3$-folded surface embedded in $L(3n,1)$ which does not separate the ambient space is $2-n$. This result, coupled with the construction of canonical $\mathbb{Z}_p$-folded surfaces yields Corollary~\ref{cor:naivecomplexitybound} which states that any triangulation of $L(3n,1)$ must have at least $n$ tetrahedra.

This research is inspired by the following four papers which all define invariants of 3-manifolds using embedded topological spaces which represent 2-homology classes:

\begin{itemize}
    \item Embedded oriented surfaces as representatives of $H_2(M;\mathbb{Z})$ classes and the Thurston norm $||\cdot||:H_2(M;\mathbb{Z})\rightarrow \mathbb{Z}^{\geq 0}$ defined in~\cite{Thurston1986ANF}.
    \item Unoriented surfaces as representatives of $H_2(M;\mathbb{Z}_2)$ classes and the $\mathbb{Z}_2$-Thurston norm $||\cdot||_2:H_2(M;\mathbb{Z}_2)\rightarrow \mathbb{Z}^{\geq 0}$ defined in~\cite{jaco2009z2thurstonnormcomplexity3manifolds}.
    \item Connected non-orientable surfaces as representatives of the unique non-trivial class of $H_2(M;\mathbb{Z}_2)$ where $M$ is a lens space, studied in~\cite{bredonwood1969} using combinatorial methods.
    \item Simple oriented folded surfaces as representatives of $H_2(M;\mathbb{Q}/\mathbb{Z})$ classes and Turaev's $\theta:H_2(M;\mathbb{Q}/\mathbb{Z})\rightarrow \mathbb{R}^{\geq 0}$ as defined in~\cite{Turaevfnhom3mfld2007}.
\end{itemize}

The functions $||\cdot||, ||\cdot||_2$ and $\theta$ are recalled in Sections~\ref{sec:Zthurstonnorm}-\ref{sec:theta} respectively.

In Section~\ref{sec:pfoldedsurfaces} we introduce $\mathbb{Z}_p$-folded surfaces. These $\mathbb{Z}_p$-folded surfaces are a refinement of the oriented folded surfaces described in~\cite{Turaevfnhom3mfld2007}. We describe their anatomy and in Section~\ref{sec:classification} we give a classification of $\mathbb{Z}_p$-folded surfaces and determine the $\mathbb{Z}_p$-folded surfaces that embed in orientable $3$-manifolds. In Section~\ref{sec:realisation} we define the fundamental class and the sense in which $\mathbb{Z}_p$-folded surfaces embedded in $3$-manifolds represent $H_2(M;\mathbb{Z}_p)$ classes. We also prove that any $H_2(\mathbb{Z}_p)$ class of an orientable $3$-manifold can be represented by such an embedded $\mathbb{Z}_p$-folded surface.

In Section~\ref{sec:zpthurstonnorm} we define a new invariant of $3$-manifolds which we call the $\mathbb{Z}_p$-Thurston norm $||\cdot||_{\mathbb{Z}_p}:H_2(M;\mathbb{Z}_p)\rightarrow \mathbb{Z}^{\geq 0}$. We determine the relationship between the $\mathbb{Z}_p$-Thurston norm and the $\mathbb{Z}$-Thurston norm and prove that the $\mathbb{Z}_p$-Thurston norm generalises the $\mathbb{Z}_2$-Thurston norm. In Section~\ref{sec:weightedfoldedsurfs} we generalise the weighted surfaces defined in~\cite[Chapter~IV.3.3]{Turaev2002torsions} to define \textit{weighted $\mathbb{Z}_p$-folded surfaces}. We further refine this object to define \textit{reduced $\mathbb{Z}_p$-folded surfaces} in Section~\ref{sec:reducedpfolded}. These reduced $\mathbb{Z}_p$-folded surfaces allow us to define another function on homology $\chi_p:H_2(M;\mathbb{Z}_p)\rightarrow \mathbb{Z}$ which is defined as the maximum Euler characteristic of a reduced $\mathbb{Z}_p$-folded surface. When $p\in \{2,3\}$ a reduced $\mathbb{Z}_p$-folded surface can be taken simply as a $\mathbb{Z}_p$-folded surface that does not separate the ambient space. We connect this back to the $\mathbb{Z}_p$-Thurston norm by proving that $||x||_p = \max(0, -\chi_p(x))$.

In Section~\ref{sec:normalpfoldedsurfacesconstruction} we give a construction of a $\mathbb{Z}_p$-folded surface embedded in any closed $3$-manifold that depends only on a one vertex triangulation and a dual $H^1(\mathbb{Z}_p)$ class as input. In Section~\ref{sec:normalisreduced} we show that the Euler characteristic of these \textit{canonical $\mathbb{Z}_p$-folded surface} is bounded from above by $\chi_p$, providing the desired connection between the $\mathbb{Z}_p$-Thurston norm and triangulation complexity. 

In Section~\ref{sec:3foldsurgeries} we restrict our focus to $\mathbb{Z}_3$-folded surfaces, defining analogues of compression surgeries and incompressibility in this context. In Section~\ref{sec:reducedsurgeries} we define reduced versions of these surgeries that preserve the crucial property of not separating the ambient manifold. 

In Sections~\ref{sec:decompositions} and~\ref{sec:mocktransversality} we give some necessary preliminaries and in Section~\ref{sec:meridionalintpats} we introduce \textit{meridional intersection patterns} which allow us to reduce the proof of Theorem~\ref{thm:l3n1norm} to a more combinatorial problem about graph embeddings in the disk. In Sections~\ref{sec:meridionalintpats} and~\ref{sec:snakesec} we simplify these meridional intersection patterns with a combination of combinatorial and topological arguments that rely heavily on the surgeries developed in Section~\ref{sec:3foldedsurfs}. Section~\ref{sec:recursionrelation} finishes the proof of Theorem~\ref{thm:l3n1norm}.

Section~\ref{sec:naivecomplexity} gives the promised Corollary~\ref{cor:naivecomplexitybound} which states that any generalised triangulation of the lens space $L(3n,1)$ for $n\in \mathbb{Z}^{>0}$ must have at least $n$ tetrahedra.

While the applications we have so far all use $\mathbb{Z}_3$-folded surfaces, we expect that the theory of $\mathbb{Z}_p$-folded surfaces can be used to determine similar results for the lens spaces $L(pn,1)$ in future research, motivating the level of generality in Sections~\ref{sec:pfoldedsurfsoverarch}-~\ref{sec:canonicalpfoldedsurfaces}.

\subsection{AI declaration}

Our use of AI when conducting this research was minimal. Most of it was completed in 2024 and 2025 prior to the public release of the more useful large language models. Consequently, all lemmas and theorems in this manuscript were proved without the use of AI and it was not used for the drafting of this paper. In the editing stage AI use was limited to a single proofread for typos the week prior to submission. 

\subsection{Acknowledgements}

I am very grateful for the endless guidance and encouragement of my supervisor Jonathan Spreer. The motivating idea of this project to generalise the techniques from \cite{Jaco_2009} using the folded surfaces described in \cite{Turaevfnhom3mfld2007} stemmed from discussions he had with Rubinstein and Tillmann. I want to thank Daniele Celoria and Finn Thompson for helpful discussions about this research during several trips to The University of Queensland in 2025. Special thanks to Nelson Odins-Jones and Max Mikkelsen for their constant support and for many interesting conversations about folded surfaces over the years.

\section{Background}\label{sec:background}

We recall definitions of the $\mathbb{Z}$-Thurston norm, $\mathbb{Z}_2$-Thurston norm, Turaev's $\theta$ function and generalised triangulations for the reader's benefit. We also introduce some notation for the maps in standard homology sequences that will be used throughout. Note that we will use the notation $\mathbb{Z}_p := \mathbb{Z}/p\mathbb{Z}$ for $p\in \mathbb{Z}^{>0}$. 

\subsection{The $\mathbb{Z}$-Thurston Norm}\label{sec:Zthurstonnorm}

Let $(\Sigma,\partial \Sigma)$ be a compact oriented surface properly embedded in a compact orientable $3$-manifold $(M,\partial M)$ (\textit{properly} means that $\partial M \cap \Sigma = \partial \Sigma$). Denote the image of the fundamental class of $\Sigma$ under the induced homomorphism $H_2(\Sigma,\partial \Sigma;\mathbb{Z})\rightarrow H_2(M,\partial M;\mathbb{Z})$ by $[\Sigma]$. We say that $\Sigma$ \textit{represents} the class $[\Sigma]$ and every element of $H_2(M,\partial M;\mathbb{Z})$ can be represented in this way if $M$.

The function $\chi_-$ is defined for a connected properly embedded surface $\Sigma$ as $\chi_-(\Sigma) := \max(0,-\chi(\Sigma))$. This function is extended to a function on the class of properly embedded surfaces with multiple components by summing over connected components such that $\chi_-(\Sigma) := \sum_{\Sigma_i\in \pi_0(\Sigma)} \chi_-(\Sigma_i)$. The Thurston norm on $x \in H_2(M,\partial M;\mathbb{Z})$ is defined by allowing $\Sigma$ to run over all such properly embedded oriented surfaces representing $x$.

\[||x||_{\mathbb{Z}} := \min_{[\Sigma]=x} \chi_-(\Sigma)\]

See~\cite{Thurston1986ANF} for more details on the Thurston norm.

\subsection{The $\mathbb{Z}_2$-Thurston Norm}\label{sec:Z2thurstonnorm}

Again, let $(\Sigma, \partial \Sigma)$ be a compact (unoriented) surface properly embedded in a compact $3$-manifold $(M,\partial M)$. Every manifold has a unique $\mathbb{Z}_2$-orientation and we can use this to define a fundamental class of $\Sigma$ in $H_2(\Sigma,\partial \Sigma;\mathbb{Z}_2)$. Let $[\Sigma]$ be the image of this fundamental class under the induced homomorphism $H_2(\Sigma,\partial \Sigma;\mathbb{Z}_2)\rightarrow H_2(M,\partial M;\mathbb{Z}_2)$. We say that $\Sigma$ represents the class $[\Sigma]$ and every element of $H_2(M,\partial M;\mathbb{Z}_2)$ can be represented in this way. We define $\chi_-$ exactly as in Section~\ref{sec:Zthurstonnorm} by allowing components of $\Sigma$ to be non-orientable. Definition~\ref{dfn:z2thurstonnorm} recalls the definition of the $\mathbb{Z}_2$-Thurston norm.

\begin{dfn}\label{dfn:z2thurstonnorm}
    Let $M$ be a compact $3$-manifold. The $\mathbb{Z}_2$-Thurston norm is a function $||\cdot||_{\mathbb{Z}_2}:H_2(M;\mathbb{Z}_2)\rightarrow \mathbb{Z}^{\geq 0}$ defined by the following expression for each $x\in H_2(M;\mathbb{Z}_2)$ where $\Sigma$ runs over all unoriented surfaces properly embedded in $M$ that represent $x$.

    \[||x||_{\mathbb{Z}_2} := \min_{[\Sigma]=x}\chi_-(\Sigma)\]
\end{dfn}

See~\cite{jaco2009z2thurstonnormcomplexity3manifolds} for more details on the $\mathbb{Z}_2$-Thurston norm.

\subsection{Turaev's $\theta$ function}\label{sec:theta}

To describe $\theta$ we need to first define \textit{folded surfaces}. Definition~\ref{dfn:foldedsurface} restricts to the definition given in~\cite{Turaevfnhom3mfld2007} when $k\neq 2$. The reason for stating it in this slightly different way is to allow for a sensible definition of a $\mathbb{Z}_2$-folded surface in Section~\ref{sec:pfoldedsurfaces} as a non-orientable surface with some extra structure.

\begin{dfn}\label{dfn:foldedsurface}
    A \textit{folded surface} is a compact 2-dimensional polyhedron and sub-polyhedron pair $(X,\Sing(X))$ with the following properties:

    \begin{itemize}
        \item \textit{The singular locus}, $\Sing(X)$ is a compact $1$-manifold. 
        \item \textit{The interior}, $\Int(X) = X\setminus \Sing(X)$ is a $2$-manifold.
        \item Every point $p$ in $\Sing(X)$ has a neighbourhood in $X$ homeomorphic to $\mathbb{R}\times K_{k,1}$ where $p = (0,v)$, $K_{k,1}$ is the cone on $k>0$ univalent vertices, with a single $k$-valent cone vertex $v$. This is homeomorphic to $k$ upper half planes identified along a common $x$-axis on which $p$ lies. We refer to components of $\Sing(X)$ as \textit{singular arcs}.
    \end{itemize}
    An \textit{oriented} folded surface is a folded surface $X$ with an orientation on $\Int(X)$.

\end{dfn}

Let $X$ be a oriented folded surface embedded in an oriented closed $3$-manifold. The orientation of $\Int(X)$ determines a fundamental class which we denote $[\Int(X)]\in H_2(X,\Sing(X);\mathbb{Z})$. If the image of this class under the connecting homomorphism $d:H_2(X,\Sing(X);\mathbb{Z})\rightarrow H_1(\Sing(X);\mathbb{Z})$ is non-zero and $\Sing(X) \cong S^1$ we call $X$ a \textit{simple} folded surface. Since $H_1(\Sing(X);\mathbb{Z}) =\mathbb{Z}$ there is a unique choice of generator $[\Sing(X)]\in H_1(\Sing(X);\mathbb{Z})$ and positive integer $i_X$ such that $d([\Int(X)]) = i_X[\Sing(X)]$. Now consider the class $i_X^{-1}[\Int(X)]\in H_2(X,\Sing(X);\mathbb{Q}/\mathbb{Z})$. 

A dimension argument implies that $H_2(\Sing(X);\mathbb{Q}/\mathbb{Z}) = 0$ and so the long exact sequence of the pair $X,\Sing(X)$ defines an injection 
\[H_2(X;\mathbb{Q}/\mathbb{Z}) \rightarrow H_2(X,\Sing(X);\mathbb{Q}/\mathbb{Z}).\] 
This map sends $i_X^{-1}[\Int(X)] \mapsto [\Sing(X)]$ which is trivial with $\mathbb{Q}/\mathbb{Z}$ coefficients. Hence we can lift $i_X^{-1}[\Int(X)]$ to a unique class of $H_2(X;\mathbb{Q}/\mathbb{Z})$ which can be thought of as the $\mathbb{Q}/\mathbb{Z}$ \textit{fundamental class} of $X$. We denote the image of this fundamental class in $H_2(M;\mathbb{Q}/\mathbb{Z})$ under the homomorphism induced by the embedding by $[X]$ and say that $X$ represents $[X]$. Turaev proves that every element of $H_2(M;\mathbb{Q}/\mathbb{Z})$ can be represented in this way. A folded surface $X$ can be described as a quotient of the compact $2$-manifold $X_{cut}$ obtained by cutting a simplicial complex representing $X$ along the subcomplex $\Sing(X)$. The quotient map $q:X_{cut}\rightarrow X$ restricts to a homeomorphism $X_{cut}\setminus \partial X_{cut} \xrightarrow{\sim} \Int(X)$ and a surjection $\partial{X_{cut}} \twoheadrightarrow \Sing(X)$. Since $X_{cut}$ is a compact 2-manifold with boundary we can use the definition of $\chi_-$ from Section~\ref{sec:Zthurstonnorm} to define $\chi_-(X) := \chi_-(X_{cut})$ for a general folded surface $X$. The value of $\theta$ on $x\in H_2(M;\mathbb{Q}/\mathbb{Z})$ is defined by allowing $X$ to run over all simple folded surfaces embedded in $M$ representing $x$:

\[\theta(x):= \inf_{[X]=x} i_X^{-1} \chi_-(X)\]

See~\cite{Turaevfnhom3mfld2007} for more details on $\theta$ and folded surfaces.

\subsection{Triangulations}\label{sec:triangulations}

The motivation for studying $\mathbb{Z}_p$-folded surfaces is their potential for proving results on the complexity of orientable $3$-manifolds as defined below. We will be using the definition of a triangulation given in~\cite{jaco20020efficienttriangulations3manifolds} which we highly recommend the unfamiliar reader to refer to for some of the terminology used in Section~\ref{sec:canonicalpfoldedsurfaces}. For convenience we paraphrase it here. Let $\mathbf{\Delta} = \{\Delta_1,\ldots,\Delta_n\}$ be a pairwise disjoint union of (linear) tetrahedra and $\Phi$ be a family of affine orientation reversing isomorphisms between pairs of triangles of the tetrahedra of $\mathbf{\Delta}$. Note that in $\Phi$ there may be pairings of triangles that are faces of the same tetrahedron. A \textit{triangulation} $\mathcal{T}$ is the data $(\mathbf{\Delta},\Phi)$. We denote by $|\mathcal{T}|$ or $\mathbf{\Delta}/\Phi$ the topological space given by the quotient of the disjoint union of the $\Delta_i$ by the relation $\phi(x)\sim x$ for all $\phi:\sigma \rightarrow \sigma' \in \Phi$ and $x\in \sigma$ with the identification topology. We say that $\mathcal{T}$ is a triangulation of an oriented $3$-manifold $M$ if $|\mathcal{T}| \simeq M$. The complexity of $M$ denoted $c(M)$ is the minimal number of tetrahedra in any such triangulation of $M$. This number is known to agree with the notion of complexity given by Matveev in~\cite{Matveev1990complexity} for closed irreducible $3$-manifolds except when $M$ is $S^3$, $\mathbb{R}\mathrm{P}^3$ or $L(3,1)$.

\subsection{Sequences of homology}\label{sec:sequences}

We will make extensive use of two long exact sequences of homology. For a pair $B\subset A$ we will denote the maps in the \textit{long exact sequence of the pair $(A,B)$} as in Equation~\eqref{eq:longpair} where $G$ is $\mathbb{Z}$ or $\mathbb{Z}_p$ in our case.

\begin{equation}\label{eq:longpair}
    \resizebox{0.9\linewidth}{!}{$\displaystyle
        \cdots
        \rightarrow H_i(B;G)
        \xrightarrow{i_{(A,B)}} H_i(A;G)
        \xrightarrow{j_{(A,B)}} H_i(A,B;G)
        \xrightarrow{d_{(A,B)}} H_{i-1}(B;G)
        \rightarrow \cdots
    $}
\end{equation}

The other sequence we will use is the \textit{coefficient long exact sequence} induced by the following short exact sequence:

\[ 0 \rightarrow \mathbb{Z}\xrightarrow{\times p} \mathbb{Z}\xrightarrow{\pi} \mathbb{Z}/p\mathbb{Z} \rightarrow 0.\]

We will denote the maps of this long exact sequence defined by the snake lemma as in Equation~\eqref{eq:longcoeff}.

\begin{equation}\label{eq:longcoeff}
    \cdots\rightarrow H_i(A;\mathbb{Z})\xrightarrow{i_{\mathbb{Z}_p}} H_i(A;\mathbb{Z})\xrightarrow{j_{\mathbb{Z}_p}} H_i(A;\mathbb{Z}_p)\xrightarrow{d_{\mathbb{Z}_p}} H_{i-1}(A;\mathbb{Z})\rightarrow \cdots
\end{equation}

These two long exact sequences are natural with respect to one another.

\section{$\mathbb{Z}_p$-folded Surfaces}\label{sec:pfoldedsurfsoverarch}

We start by describing a subclass of oriented folded surfaces called \textit{$\mathbb{Z}_p$-folded surfaces} as our set of representatives of $H_2(\mathbb{Z}_p)$ classes. We will refine this definition to describe \textit{weighted}, \textit{non-separating} and \textit{reduced} $\mathbb{Z}_p$-folded surfaces. The last of these is closely related to the \textit{normal} $\mathbb{Z}_p$-folded surfaces described in Section~\ref{sec:normalpfoldedsurfacesconstruction}.

\subsection{Definition}\label{sec:pfoldedsurfaces}

\begin{dfn}\label{dfn:pfoldedsurface}
    A \textit{$\mathbb{Z}_p$-folded surface} is an oriented folded surface from Definition~\ref{dfn:foldedsurface} with two additional properties:
    \begin{itemize}
        \item Every point of $\Sing(X)$ has a neighbourhood in $X$ homeomorphic to $\mathbb{R}\times K_{p,1}$. 
        \item The orientation of $X_{cut}$ induces an orientation of $\partial{X_{cut}}$ on the boundary and let $[\partial{X_{cut}}]\in H_1(\partial{X_{cut}};\mathbb{Z})$ be the corresponding fundamental class. The quotient map $q:X_{cut}\rightarrow X$ restricts to a map $\partial{X_{cut}}\rightarrow \Sing(X)$ inducing a homomorphism $H_1(\partial{X_{cut}};\mathbb{Z})\rightarrow H_1(\Sing(X);\mathbb{Z})$. There exists a unique orientation of $\Sing(X)$ such that if $[\Sing(X)]\in H_1(\Sing(X);\mathbb{Z})$ is the corresponding fundamental class then the induced homomorphism sends $[\partial{X_{cut}}] \mapsto p\cdot [\Sing(X)]$. In other words the $p$ sheets incident to each arc of the singular locus all induce a consistent orientation along their identified boundaries. 
    \end{itemize}

\end{dfn}

Note that unlike the simple folded surfaces of~\cite{Turaevfnhom3mfld2007}, the singular locus of a $\mathbb{Z}_p$-folded surface may not be connected. When $p>2$ these $\mathbb{Z}_p$-folded surfaces are a subclass of the \textit{quasi-simple} folded surfaces described in Section 7.1 of~\cite{Turaevfnhom3mfld2007} with $i_X =p$. In general, quasi-simple folded surfaces are related to $\mathbb{Z}_p$-folded surfaces by a surgery in a neighbourhood of $\Sing(X)$. An oriented closed surface $X$ is trivially a $\mathbb{Z}_p$-folded surface for any $p$ with $\Sing(X) = \emptyset$. When $p=1$, a \textit{trivially} folded surface $X$ embedded in a $3$-manifold is a Seifert surface of the oriented link $\Sing(X)$. A closed unoriented surface can be given the structure of a $\mathbb{Z}_2$-folded surface $X$ by making a choice of a closed $1$-manifold $\Sing(X) \subset X$ such that $X\setminus \Sing(X)$ is orientable (and making some choice of orientation). This procedure is described in more detail in the proof of Lemma~\ref{lem:nonorientable}.

\begin{rem}
    A neighbourhood of a singular arc of a $\mathbb{Z}_4$-folded surface appears in a 1965 woodcut print by M.C. Escher titled \textit{Knots}. 
\end{rem}

In this paper we focus on $\mathbb{Z}_p$-folded surfaces where $p$ is prime because this simplifies things, but this restriction is not necessary in general. Definition~\ref{dfn:realisation} and Lemma~\ref{lem:existence} do not require $p$ to be prime and hence the $\mathbb{Z}_p$-Thurston norm is defined for all $p\in \mathbb{Z}^{>0}$. However, Lemma~\ref{lem:components} relies on the primeness of $p$. The definition of a reduced $\mathbb{Z}_p$-folded surface in Section~\ref{sec:reducedpfolded} and the construction of canonical $\mathbb{Z}_p$-folded surface in Section~\ref{sec:canonicalpfoldedsurfaces} both require $p$ to be odd. For this reason we assume that $p$ is a prime greater than $2$ unless otherwise stated from Section~\ref{sec:zpthurstonnorm} onwards.

\subsection{Regular neighbourhoods of singular arcs}\label{sec:regnbhds}

Let $X$ be an oriented folded surface and $\Sing(X)$ be oriented as a $1$-manifold. Since $X$ is oriented then $\partial X_{cut}$ is oriented as described in Definition~\ref{dfn:pfoldedsurface}. Since $\Sing(X)$ is also oriented then for each boundary component $B\in \pi_0(\partial{X_{cut}})$ the restriction $q_B:B\rightarrow q(B)$ of $q: X_{cut}\rightarrow X$ is a covering map between oriented circles and so it has a degree $\deg(q_B)$. Lemma~\ref{lem:alternatedefinition} gives an alternative definition for a $\mathbb{Z}_p$-folded surface.

\begin{lem}\label{lem:alternatedefinition}
    An oriented folded surface $X$ is a $\mathbb{Z}_p$-folded surface if and only if there exists an orientation of $\Sing(X)$ such that:
    \begin{enumerate}
        \item $\sum_{B \in \pi_0(q^{-1}(K))} \deg(q_B) = p$ for all $K\in \pi_0(\Sing(X))$. \label{altdef:sumdegp}
        \item $\deg(q_B)>0$ for all $B\in \pi_0(\partial X_{cut})$. \label{altdef:degpositive}
    \end{enumerate} 
\end{lem}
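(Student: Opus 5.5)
The plan is to translate the homological condition of Definition~\ref{dfn:pfoldedsurface} into a statement about degrees, one component of $\Sing(X)$ at a time, and then read off the local $K_{p,1}$ condition from the degree sum. First I would decompose everything along $\pi_0(\Sing(X))$. Since $\Sing(X)$ is a compact $1$-manifold without boundary (the neighbourhood model $\mathbb{R}\times K_{k,1}$ forbids endpoints), it is a disjoint union of circles $K$, and
\[H_1(\Sing(X);\mathbb{Z}) = \bigoplus_{K} \mathbb{Z}[K].\]
Similarly $\partial X_{cut}$ is a disjoint union of circles $B$, with $[\partial X_{cut}] = \sum_B [B]$, where each $[B]$ is oriented as the boundary of $X_{cut}$. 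Each $B$ maps into exactly one $K=q(B)$, and with $K$ oriented, $q_*[B] = \deg(q_B)[K]$. Hence
\[q_*[\partial X_{cut}] = \sum_K \Big(\sum_{B\in\pi_0(q^{-1}(K))}\deg(q_B)\Big)[K].\]

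Next I would relate $\sum_B \deg(q_B)$ over the preimage of $K$ to the local valence. The degree is $\pm1$ times the number of preimages of a generic point of $K$ in $B$. The total number of preimages of a point of $K$ in $\partial X_{cut}$ equals the number of sheets $k$ in its local model $\mathbb{R}\times K_{k,1}$. So
\[\sum_B |\deg(q_B)| = k_K,\]
where $k_K$ is the valence along $K$, which is constant on $K$ by connectedness.

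For $(\Leftarrow)$, assume an orientation of $\Sing(X)$ satisfying conditions~\ref{altdef:sumdegp} and~\ref{altdef:degpositive}. Positivity gives $\sum_B|\deg(q_B)| = \sum_B \deg(q_B) = p$, so every point of $K$ has exactly $p$ sheets, i.e.\ a $K_{p,1}$ neighbourhood. The displayed formula then gives $q_*[\partial X_{cut}] = p[\Sing(X)]$. The uniqueness clause of the definition is automatic: reversing the orientation of any component negates its coefficient, and $p\neq -p$ since $p>0$.

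For $(\Rightarrow)$, assume $X$ is a $\mathbb{Z}_p$-folded surface with the given orientation of $\Sing(X)$. The formula gives $\sum_B \deg(q_B) = p$ for every $K$, which is condition~\ref{altdef:sumdegp}. The local condition gives $\sum_B|\deg(q_B)| = p$. Together these force every $\deg(q_B)\geq 0$, and nonzero degrees are automatic for a covering of circles, so condition~\ref{altdef:degpositive} holds.

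The main obstacle is justifying that each $q_B$ really is a finite covering map of circles, so that $\deg(q_B)=\pm(\text{number of sheets})$. I would argue this from the local model. Near a point of $K$, $X_{cut}$ is obtained by cutting $\mathbb{R}\times K_{k,1}$ along $\mathbb{R}\times\{v\}$, which gives $k$ disjoint half-planes. Each half-plane's boundary maps homeomorphically onto the $\mathbb{R}$ factor, so $q|_{\partial X_{cut}}$ is a local homeomorphism. It is also proper between compact spaces, hence a covering map. One should also note that the boundary orientation on each $B$ is locally constant, so a sheet's contribution to the degree is a consistent sign.
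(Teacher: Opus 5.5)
Your proposal is correct and follows essentially the same route as the paper: you decompose $q_*[\partial X_{cut}]$ componentwise as $\sum_K\bigl(\sum_B \deg(q_B)\bigr)[K]$, identify the number of sheets along $K$ with $\sum_B|\deg(q_B)|$, and compare coefficients. You also spell out details the paper leaves as ``easy to verify'': why each $q_B$ is a covering map, the uniqueness clause of the definition, and why $\sum_B\deg(q_B)=\sum_B|\deg(q_B)|=p$ forces every $\deg(q_B)$ to be positive.
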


\begin{proof}
    Fixing any orientation of $\Sing(X)$, the number of sheets meeting along a singular arc $K\in \pi_0(\Sing(X))$ of an oriented folded surface $X$ is $\sum_{B \in \pi_0(q^{-1}(K))} |\deg(q_B)|$ (equal to the number of sheets of the $S^1$ covering $q^{-1}[K]\xrightarrow{q} K$). Let $[B]\in H_1(\partial{X_{cut}})$ be the fundamental class of the boundary component $B$ (with induced orientation) and $[K]$ be the fundamental class of the component $K$ in $H_1(\Sing(X);\mathbb{Z})$. The induced homomorphism $H_1(\partial{X_{cut}};\mathbb{Z})\rightarrow H_1(\Sing(X);\mathbb{Z})$ sends $[B] \mapsto \deg(q_B)[K]$. Note that clearly $[\partial{X_{cut}}] = \sum_{B \in \pi_0(\partial{X_{cut}})}[B]$ and $[\Sing(X)] = \sum_{K\in \pi_0(\Sing(X))} [K]$. Also, $\partial{X_{cut}}$ is a disjoint union of the sets $q^{-1}(K)$ for all $K\in \pi_0(\Sing(X))$. Together this implies that the induced homomorphism sends $[\partial{X_{cut}}] \mapsto \sum_{K\in \pi_0(\Sing(X))}\left[\sum_{B\in \pi_0(q^{-1}(K))} \deg(q_B)\right][K]$. It is now easy to verify from these facts that conditions~\eqref{altdef:degpositive} and~\eqref{altdef:sumdegp} of Lemma~\ref{lem:alternatedefinition} are equivalent to the two properties in Definition~\ref{dfn:pfoldedsurface}.
\end{proof}

\begin{lem}\label{lem:regularneighbourhoods}
    If $X$ is a $\mathbb{Z}_p$-folded surface then for all $K\in \pi_0(\Sing(X))$ a regular neighbourhood of $K$ in $X$ is homeomorphic to $[0,1]\times K_{p,1} / \sim$ where $(1,x) \sim (0,f(x))$ where $f\in \text{Aut}(K_{p,1})= \text{Sym}(p)$. The homeomorphism type is classified by the cycle type of $f$ and equivalently the multiset $\{\deg(q_B)\}_{B\in \pi_0(q^{-1}(K))}$.
\end{lem}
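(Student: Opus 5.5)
The plan is to build the regular neighbourhood from the local model and then read off its invariants from the monodromy. Fix a component $K\in\pi_0(\Sing(X))$, which is a circle since $\Sing(X)$ is a compact $1$-manifold without boundary in the relevant sense. By Definition~\ref{dfn:pfoldedsurface}, every point of $K$ has a neighbourhood homeomorphic to $\mathbb{R}\times K_{p,1}$.

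\textbf{Step 1: the mapping torus description.} Choose a point $x_0\in K$ and cut $K$ at $x_0$ to get an arc $[0,1]$. Cover the arc by finitely many such chart neighbourhoods. On overlaps, the transition maps preserve the singular line and permute the $p$ half-planes. Proceeding inductively along the arc, I will glue the charts so that a regular neighbourhood of the cut arc is $[0,1]\times K_{p,1}$, with the sheets labelled consistently along the arc. This is the step that needs care: the transition homeomorphisms must be straightened to product form. Since each half-plane is a collar of its edge, I expect a standard collaring or uniqueness-of-regular-neighbourhoods argument in the PL category to suffice.

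Regluing the ends at $x_0$ identifies $\{1\}\times K_{p,1}$ with $\{0\}\times K_{p,1}$ by some homeomorphism of the cone fixing the cone point. Up to isotopy, such a homeomorphism is determined by how it permutes the $p$ edges. This gives $f\in\mathrm{Aut}(K_{p,1})=\mathrm{Sym}(p)$, and the regular neighbourhood is the mapping torus $[0,1]\times K_{p,1}/\sim$. Isotopic gluing maps give homeomorphic mapping tori, so only the permutation matters.

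\textbf{Step 2: classification by cycle type.} Changing the basepoint or relabelling the sheets replaces $f$ by a conjugate $gfg^{-1}$. Conjugate permutations give homeomorphic mapping tori via the map $(t,x)\mapsto(t,g(x))$. Conversely, the cycle type is recovered from the mapping torus as a topological invariant. Removing $K$ leaves the sheets, and each $f$-orbit of edges gives one annular sheet component. The boundary circle of that component, which lies in $q^{-1}(K)$, wraps around $K$ exactly as many times as the length of the cycle. So homeomorphic neighbourhoods (by homeomorphisms carrying $K$ to $K$) have the same cycle type. Here I use that a regular neighbourhood is unique up to homeomorphism and that $K$ is intrinsically the singular set.

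\textbf{Step 3: matching cycle lengths to degrees.} The boundary components $B\in\pi_0(q^{-1}(K))$ correspond to the cycles of $f$, and the covering $q_B$ has $|\deg(q_B)|$ equal to the cycle length. By Lemma~\ref{lem:alternatedefinition}, with the distinguished orientation of $\Sing(X)$, every $\deg(q_B)$ is positive. Hence the multiset $\{\deg(q_B)\}$ equals the multiset of cycle lengths, which is exactly the cycle type. This also confirms that the degrees sum to $p$.

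I expect the main obstacle to be the rigorous straightening in Step 1. The remaining steps are bookkeeping once the mapping torus model is in place.
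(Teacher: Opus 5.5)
Your proposal is correct and follows essentially the same route as the paper: realise the regular neighbourhood of $K$ as the mapping torus of some $f\in\mathrm{Sym}(p)$, classify by conjugacy class (cycle type), and identify the cycle lengths with the winding numbers of the pieces obtained by cutting along $K$, hence with the degrees $\deg(q_B)$; your use of Lemma~\ref{lem:alternatedefinition} to make the degrees positive is a small extra point of care over the paper. The only difference is your Step~1: instead of straightening chart transitions, the paper builds the bundle from a collar of $q^{-1}(K)$ in $X_{cut}$ and the fact that each $q_B$ is a covering of the circle. In effect the neighbourhood is the mapping cylinder of the $p$-sheeted covering $q^{-1}(K)\to K$, with monodromy $f$, so the obstacle you flag does not arise.
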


\begin{proof}
    A regular neighbourhood $N(K)$ of $K\in \Sing(X)$ must be homeomorphic to a PL fibre bundle of the graph $K_{p,1}$ over $S^1$. Such a fibre bundle can be assembled using the fact that each boundary component $B$ of $X_{cut}$ collars in $X_{cut}$ and that if $q_{X}(B) = K$ then the restriction of $q_{X}$ to $B$ is a covering map of the circle. Thus, there exists a homeomorphism $f\in \text{Homeo}(K_{p,1})$ with isotopy class $[f] \in \text{Aut}(K_{p,1}) = \pi_0(\text{Homeo}(K_{p,1}))= \text{Sym}(p)$ such that $N(K) \simeq [0,1]\times K_{p,1} / \sim$ where $(1,x) \sim (0,f(x))$. The isomorphism class of this fibre bundle over $S^1$ is determined by the conjugacy class of $[f]\in \pi_0(\text{Homeo}(K_{p,1}))$ which is equivalent to its cycle type as an element of $\text{Sym}(p)$. The identification with the symmetric group is given by labelling the $p$ valence $1$ vertices $\{v_1,\ldots v_p\}$ which are permuted by the graph isomorphism by $v_i\mapsto v_{f(i)}$. We define the cycle type to be the partition of $p$ given by writing an element of $\text{Sym}(p)$ as a product of cycles featuring each of $\{1,\ldots, p\}$ exactly once and taking the multiset of cycle lengths. We will now show that this multiset is independent of the choice of fibre bundle and so determines the homeomorphism type. Let $N_{cut}(K)$ be the disjoint union of annuli given by cutting $N(K)$ along $K$. For each $A \in \pi_0(N_{cut}(K))$ the map $A \rightarrow N(K)$ induces the homomorphism $H_1(A;\mathbb{Z}) \rightarrow H_1(N(K);\mathbb{Z})$ given by $a\mapsto k_A[K]$ for some choice of generating class $\angb{a} = H_1(A;\mathbb{Z})$ and a positive integer $k_A$. The multiset $\{k_A\}_{A\in \pi_0(N\setminus K)}$ is equivalent to the cycle type of $f$ since $k_A$ is determined by the size of $\{v_1,\ldots v_p\} \cap \text{im}(A)$. $N_{cut}(K)$ is homeomorphic to a collar neighbourhood of $q^{-1}(K) \subset \partial{X_{cut}}$ and the multiset $\{k_A\}_{A\in \pi_0(N\setminus K)}$ can also be computed from the degrees of the maps $B \xrightarrow{q} q(B)$ for each $B$ in the preimage of $K$. This gives the desired classification of the homeomorphism type of $N(K)$ by the multiset $\{\deg(q_B)\}_{B\in \pi_0(q^{-1}(K))}$.

\end{proof}

\subsection{Classification using decorated graphs}\label{sec:classification}

We will now give a classification of connected $\mathbb{Z}_p$-folded surfaces up to homeomorphisms. Note that homeomorphisms of $\mathbb{Z}_p$-folded surfaces are homeomorphisms of the pair $X,\Sing(X)$. When $p\neq 2$, points on the singular locus have different local neighbourhoods to points in $\Int(X)$ and so any homeomorphism of the underlying space of $X$ will restrict to a homeomorphism of $\Sing(X)$ and this qualification can be dropped. Homeomorphisms of connected $\mathbb{Z}_p$-folded surfaces are either orientation preserving or orientation reversing on all of $\Int(X)$ and $\Sing(X)$. This depends on Definition~\ref{dfn:pfoldgraph} which describes a type of decorated graph which we call a \textit{$p$-fold graph}.

\begin{dfn}\label{dfn:pfoldgraph}
    A \textit{$p$-fold graph} $G$ is a finite bipartite multi-graph 
    \[ G = (I,S,E,\phi,\psi,d,g)\]
    with vertex sets $I,S$, edges $E$ and edge endpoints given by the functions $\phi:E\rightarrow I$ and $\psi:E\rightarrow S$ together with edge labels $d:E\rightarrow \mathbb{Z}^{>0}$ and vertex labels $g:I\rightarrow \mathbb{Z}^{\geq 0}$ such that $\sum_{e\in \psi^{-1}(s)} d(e) =p$. An isomorphism of $p$-fold graphs is an isomorphism of bipartite multi-graphs that preserves labels and partitions.
\end{dfn}

Now we associate a $p$-fold graph $G(X)$ to any $\mathbb{Z}_p$-folded surface $X$ in the following way:

\begin{dfn}\label{def:assocgraph}
    Let $G(X) := (I,S,E,\phi,\psi,d,g)$ be the $p$-fold graph where:
    \begin{itemize}
        \item $I := \pi_0(X_{cut})$
        \item $S := \pi_0(\Sing(X))$
        \item $E = \pi_0(\partial X_{cut})$, 
        \item For every boundary component $B$ of $X_{cut}$ define the following:
        \begin{itemize}
            \item $\phi(B)$ is the unique component of $X_{cut}$ which cobounds $B$
            \item $\psi(B) := q_B(B) \in \pi_0(\Sing(X))$
            \item $d(B):= \deg(q:B\rightarrow q(B))$
        \end{itemize}
        \item For every component $Y$ of $X_{cut}$ let $g(Y)$ be the (orientable) genus of $Y$.
    \end{itemize}
\end{dfn}

\begin{lem}
    Let $p>0$. The homeomorphism class of any $\mathbb{Z}_p$-folded surface $(X,\Sing(X))$ is determined by the isomorphism class of $G(X)$.
\end{lem}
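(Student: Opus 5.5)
The plan is to reconstruct $X$ from $G(X)$ up to homeomorphism of pairs, by building a model surface $X_G$ directly from an abstract $p$-fold graph $G$. Then, given two $\mathbb{Z}_p$-folded surfaces $X,X'$ with an isomorphism $G(X)\cong G(X')$, we produce a homeomorphism $X\to X'$ piece by piece.

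For each $Y\in I$, let $\Sigma_Y$ be the compact oriented surface of genus $g(Y)$ with $|\phi^{-1}(Y)|$ boundary circles, labelled by the edges $e\in\phi^{-1}(Y)$. For each $s\in S$, let $C_s$ be an oriented circle. Glue each boundary circle $B_e$ to $C_{\psi(e)}$ by the standard orientation-preserving degree-$d(e)$ covering $z\mapsto z^{d(e)}$, where $B_e$ carries the boundary orientation. The quotient $X_G$ is then a folded surface: along $C_s$ exactly $\sum_{e\in\psi^{-1}(s)}d(e)=p$ sheets meet, and all degrees are positive. By Lemma~\ref{lem:alternatedefinition} it is a $\mathbb{Z}_p$-folded surface.

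The main step is to show that any $\mathbb{Z}_p$-folded surface $X$ is homeomorphic to $X_{G(X)}$. By the classification of compact orientable surfaces, each component $Y$ of $X_{cut}$ is determined up to orientation-preserving homeomorphism by its genus $g(Y)$ and its number of boundary components. Moreover, any prescribed orientation-preserving homeomorphisms on the boundary circles extend over $Y$; this is the key fact, coming from the fact that the mapping class group of a surface surjects onto products of boundary rotations, or more elementarily from the fact that homeomorphisms of $\partial Y$ extend after an isotopy in a collar. So we proceed in two stages.
\begin{enumerate}
\item For each singular circle $K$, choose an orientation-preserving homeomorphism $h_K:K\to C_s$, using the orientation of $\Sing(X)$ from Definition~\ref{dfn:pfoldedsurface}.
\item For each boundary component $B$ over $K$, the composite $h_K\circ q_B$ is an orientation-preserving covering of degree $d(B)$. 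Any two degree-$d$ orientation-preserving coverings $S^1\to S^1$ differ by a homeomorphism of the source, namely a lift, so choose $\tilde h_B:B\to B_e$ with $(z\mapsto z^{d})\circ\tilde h_B=h_K\circ q_B$. This lift is orientation preserving.
\end{enumerate}
Extending the $\tilde h_B$ over each component of $X_{cut}$ to an orientation-preserving homeomorphism onto $\Sigma_Y$ gives a homeomorphism $X_{cut}\to (X_G)_{cut}$. By construction it commutes with the quotient maps, so it descends to a homeomorphism of pairs $(X,\Sing(X))\to(X_G,\Sing(X_G))$ that preserves orientations.

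Applying this to $X$ and $X'$ through the graph isomorphism, which relabels the pieces of $X_G$ consistently with $d$, $g$, $\phi$ and $\psi$, yields $X\cong X_{G(X)}\cong X_{G(X')}\cong X'$.

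The expected obstacle is the extension step. It needs the boundary homeomorphisms $\tilde h_B$ to extend over $Y$ simultaneously, which in turn requires them to be orientation compatible with the orientation of $Y$. Equivalently, we need $\deg(q_B)>0$ for every $B$, so that the boundary orientation is respected. This is exactly condition~\eqref{altdef:degpositive} of Lemma~\ref{lem:alternatedefinition}, and it is where the $\mathbb{Z}_p$ consistency condition in Definition~\ref{dfn:pfoldedsurface} gets used. The compatibility of the gluing with the quotient, i.e. that the glued local models near $K$ match, is also guaranteed by Lemma~\ref{lem:regularneighbourhoods}.
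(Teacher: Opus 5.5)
Your proposal is correct and is essentially the paper's argument: fix orientation-preserving homeomorphisms of the singular circles, and lift them to the boundary circles of $X_{cut}$ by covering theory. This is where $\deg(q_B)>0$ is used, to make the lifts orientation preserving. Then extend over each component of $X_{cut}$ using the classification of compact oriented surfaces with boundary (the paper cites Youngs for this step), and descend through the quotient maps. The only difference is organisational: you compare each surface with a standard model $X_G$ built from the graph, which incidentally shows every $p$-fold graph is realised, whereas the paper builds the homeomorphism between two surfaces with isomorphic graphs directly and also records the easy converse that $G(X)$ is a homeomorphism invariant.
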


\begin{proof}

    The associated graph described in Definition~\ref{def:assocgraph} depends only on the topology of a $\mathbb{Z}_p$-folded surface and is independent of the choice of orientation. Hence homeomorphic $\mathbb{Z}_p$-folded surfaces have isomorphic $p$-fold graphs.

    To prove the converse, suppose that we have two $\mathbb{Z}_p$-folded surfaces $X, \Sing(X)$ and $Y,\Sing(Y)$ and an isomorphism $f:G(X) \rightarrow G(Y)$. Let $G(X) := (I,S,E,\phi,\psi,d,g)$ and $G(Y) := (I',S',E',\phi',\psi',d',g')$. Fix an orientation preserving homeomorphism $h:\Sing(X)\rightarrow \Sing(Y)$ such that for all $K\in \pi_0(\Sing(X))$ we have $f(K) = \text{im}(h|_K)$. Now let $q_X:X_{cut}\rightarrow X$ and $q_Y:Y_{cut}\rightarrow Y$ be the two projection maps described in Section~\ref{sec:regnbhds}. Since $h$ is orientation preserving and $f$ is a $p$-fold graph isomorphism we have that $\deg(q_X|_{B}) = \deg(q_Y|_{f(B)})$ and $\text{im}(q_Y|_{f(B)}) = \text{im}(h\circ q_X|_{B})$ so we can use the lifting criterion for each component to construct a homeomorphism $\tilde{h}:\partial X_{cut}\rightarrow \partial{Y_{cut}}$ which commutes with the projection maps for all $B\in \pi_0(\partial{X_{cut}})$:

\begin{center}
    \begin{tikzcd}
        B \ar[r, "\tilde{h}"] \ar[d, "q_{B}"] 
        & 
        f(B) \ar[d, "q_{f(B)}"] 
        \\
        \psi(B) \ar[r, "h"] 
        & 
        \psi'(f(B)) = f(\psi(B))
    \end{tikzcd}
\end{center}

    Note that the definition of a $p$-fold graph isomorphism implies that $\psi'\circ f= f\circ \psi $, $\phi'\circ f  = f\circ \phi$, $g' \circ f = g$ and $d' \circ f = d$. The second fact tells us that if we take $X_i\in \pi_0(X_{cut})$ and $Y_i = f(X_i) \in \pi_0(Y_{cut})$ then $\tilde{h}$ restricts to a homeomorphism $\tilde{h}:\partial{X_i}\rightarrow \partial{Y_i}$ preserving the induced orientation on the boundary. The third fact tells us that the genus of $X_i$ is the same as the genus of $Y_i$. It is well known (for example see Theorem 1 of~\cite{youngs:1948}) that $\tilde{h}$ can be extended to a homeomorphism from $X_i\rightarrow Y_i$ and hence a homeomorphism $\tilde{h}:X_{cut}\rightarrow Y_{cut}$. By construction this descends to a homeomorphism $h:X\rightarrow Y$ which extends the original $h:\Sing(X)\rightarrow \Sing(Y)$ over the interior.
    
\end{proof}

Figure~\ref{fig:pcrosscaps} shows a gluing diagram and also the associated graph for what is arguably the simplest non-trivial example of a $\mathbb{Z}_p$-folded surface which we will refer to as the \textit{$\mathbb{Z}_p$-crosscap} which generalises the usual crosscap $\mathbb{R}\mathrm{P}^2$ when $p=2$. Figure~\ref{fig:example3fold} shows another example of a $\mathbb{Z}_3$-folded surface and its associated graph.

\begin{figure}
    \centering
    \includegraphics[width=0.4\textwidth]{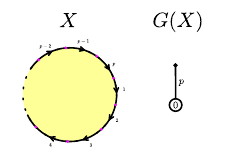}
    \caption{The $\mathbb{Z}_p$-crosscap and its associated $p$-fold graph}\label{fig:pcrosscaps}
\end{figure}

\begin{figure}
    \centering
    \includegraphics[width=0.4\textwidth]{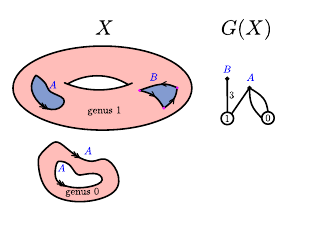}
    \caption{An example $\mathbb{Z}_3$-folded surface with two interior components and two singular arcs labelled $A$ and $B$. $A$ is \textit{straight} and $B$ is \textit{twisted}.}\label{fig:example3fold}
\end{figure}

$\mathbb{Z}_p$-folded surfaces that can be embedded in $3$-manifolds will be of particular interest in this research. Lemma~\ref{lem:classificationin3mflds} characterises these \textit{embeddable} $\mathbb{Z}_p$-folded surfaces. Throughout the remainder of this paper all spaces and embeddings will be \textit{piecewise-linear} unless otherwise stated.

\begin{lem}\label{lem:classificationin3mflds}
    Let $X$ be a $\mathbb{Z}_p$-folded surface such that $p$ is prime and let $G(X) = (I,S,E,\phi,\psi,d,g)$ be its associated graph. $X$ can be embedded in an orientable $3$-manifold if and only if $d(B) \in \{1,p\}\ \forall\ B\in E$.
\end{lem}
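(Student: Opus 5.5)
The argument is local: an embedding of $X$ in an orientable $3$-manifold exists if and only if every regular neighbourhood $N(K)$, $K\in\pi_0(\Sing(X))$, embeds in an orientable $3$-manifold. Indeed $\Int(X)$ is oriented, hence two-sided, and a regular neighbourhood of $X$ can be assembled from thickenings of the $N(K)$ and of the components of $X_{cut}$. By Lemma~\ref{lem:regularneighbourhoods}, $N(K)\cong [0,1]\times K_{p,1}/\sim$ with monodromy $f\in\text{Sym}(p)$, and the cycle type of $f$ is the multiset $\{d(B)\}_{B\in\psi^{-1}(K)}$. Since $p$ is prime, $d(B)\in\{1,p\}$ for all $B\in\psi^{-1}(K)$ means exactly that $f$ is either the identity (all $d(B)=1$, $p$ edges) or a $p$-cycle (a single edge with $d(B)=p$). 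So the statement reduces to showing that $N(K)$ embeds in an orientable $3$-manifold if and only if $f$ is conjugate to a power of the cyclic rotation $\rho$ of $K_{p,1}$.

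\textbf{Necessity.} Suppose $N(K)$ embeds in an orientable $M$. A regular neighbourhood of $K$ in $M$ is a solid torus $V=D^2\times S^1$. Each fibre $\{t\}\times K_{p,1}$ embeds in a meridian disk with the $p$ prongs ending on $\partial D^2$, so the boundary circle inherits a cyclic order on the $p$ prongs. Going once around $K$, the monodromy acts on the meridian disk by a homeomorphism, which must preserve orientation because $V$ is orientable. Hence $f$ preserves this cyclic order, so $f\in\angb{\rho}\cong\mathbb{Z}_p$.

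Here the orientation conventions of Definition~\ref{dfn:pfoldedsurface} have to be checked. The sheets induce consistent orientations on $K$, but this does not constrain $f$ further, so the content is only that $f$ is a rotation. Primality then gives that every nontrivial element of $\angb{\rho}$ is a $p$-cycle, so the cycle type is $\{1,\ldots,1\}$ or $\{p\}$, i.e.\ $d(B)\in\{1,p\}$. Without primality, rotations could have cycle type $\{k,\ldots,k\}$ with $k$ a proper divisor of $p$, which is why the hypothesis is needed.

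\textbf{Sufficiency.} I would construct the embedding explicitly. For each $K$ choose the rotation $\rho^m$ with the right cycle type ($m=0$, or $m=1$ when $d=p$) and embed $N(K)$ in $D^2\times S^1$ as the mapping torus of the standard star in $D^2$ under rotation by $2\pi m/p$. Next I would check that the local orientation data are compatible, i.e.\ that orientations of adjacent sheets agree across $K$ as in Definition~\ref{dfn:pfoldedsurface}. Since rotations of $D^2$ preserve the orientation of $D^2\times S^1$, each annular sheet in $N(K)$ can be given a coherent transverse direction. Finally, take the compact orientable $3$-manifold obtained by thickening $X_{cut}$ to $X_{cut}\times[-1,1]$ and gluing these pieces to the solid tori along the annuli $\partial X_{cut}\times[-1,1]$. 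The gluing is orientation-compatible because $X_{cut}$ is oriented and the orientations on the boundary annuli match the rotation model.

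\textbf{Main obstacle.} The step I expect to be hardest is making necessity rigorous in the PL category: the regular neighbourhood of $K$ in $M$ must be a solid torus meeting $N(K)$ in a way that puts the fibre stars inside meridian disks. I would handle this by taking a small regular neighbourhood of $K$ in $M$ transverse to the fibration of $N(K)$ and using the fact that the link of a point of $K$ in $M$ is a $2$-sphere containing the link of that point in $X$, which is $p$ points joined by two cone arcs. The planar cyclic order on the prongs then comes from the circle where this link sphere meets the link of $K$ itself, and its preservation follows from orientability.
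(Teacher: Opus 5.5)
Your proposal is correct and follows the paper's proof essentially step for step. For necessity, the disk-bundle structure of a regular neighbourhood of each singular arc in the orientable $M$ forces an orientation-preserving monodromy that preserves the cyclic order of the $p$ prongs, so it is a rotation, and primality makes it either trivial (straight) or a single $p$-cycle (twisted). For sufficiency, each $N(K)$ is embedded as the mapping torus of the star under the identity or under rotation by $2\pi/p$ inside a solid torus, and these solid tori are glued along annuli to the thickening $[-1,1]\times\Sigma(X)$ of the rest of the surface.
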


\begin{proof}

    First, let $X$ be a $\mathbb{Z}_p$-folded surface embedded in a $3$-manifold $M$ and let $S$ be any singular arc of $X$. We will prove that for every component $B$ of $q^{-1}(S)$ the degree of $B\rightarrow S$ is $1$ or $p$. 

    Let $N(S)$ be a regular neighbourhood of $S$ in $M$ such that $N(S)\cap X$ is a regular neighbourhood of $S$ in $X$. By gluing together the product structure $(K_{p,1},\mathbb{D}^2)\times \mathbb{R}$ of neighbourhoods of points on $S$ we can assemble a fibre bundle $\mathbb{D}^2 \rightarrow N(S) \rightarrow S^1$ with sub-bundle $K_{p,1}\rightarrow N(S)\cap X \rightarrow S^1$. The restriction of the embedding $N(S)\cap X \rightarrow N(S)$ on fibres is the embedding $K_{p,1}\subset \mathbb{D}^2$ in Figure~\ref{fig:subbundle}.
    
    The embedding $N(S)\cap X \hookrightarrow N(S)$ is therefore homeomorphic to an embedding of mapping tori $K_{p,1}\times[0,1]/\sim \hookrightarrow \mathbb{D}^2 \times [0,1]/\sim$ where $(1,x) \sim (0,f(x))$ and $f$ is a homeomorphism of $\mathbb{D}^2$ which restricts to a homeomorphism of $K_{p,1}$. The orientability of $M$ forces $f$ to be orientation preserving. The embedding $K_{p,1}\hookrightarrow \mathbb{D}^2$ forces $f|_{K_{p,1}}$ to preserve the cyclic order of the vertices of $K_{p,1}\cap \partial{\mathbb{D}^2}$. The graph isomorphism class $[f|_{K_{p,1}}]\in \text{Sym}(p)$ must therefore be a cyclic permutation of order dividing $p$. If $[f|_{K_{p,1}}]=id$ then $\forall B \in \pi_0(q^{-1}(S))$ the map $q_B:B \rightarrow S$ has degree $1$, and we will call $S$ \textit{straight} if this is the case. If $[f|_{K_{p,1}}]\neq id$ then its order is $p$ and so $\pi_0(q^{-1}(S)) = \{B\}$, $q_B:B \rightarrow S$ has degree $p$ and we call $S$ \textit{twisted}.

    Now suppose $X$ is such a $\mathbb{Z}_p$-folded surface with the property that $d(e) \in \{1,p\}\ \forall\ e\in E$. Let $S\in \pi_0(\Sing(X))$ and $N(S)$ be a regular neighbourhood of $S$ in $X$. The fact that $\sum_{e\in \psi^{-1}(S)} d(e) =p$ implies that either $|\psi^{-1}(S)| = p$ and $d(B)= 1\ \forall\ B\in \psi^{-1}(S)$ ($S$ is \textit{straight}) or $\psi^{-1}(S) = \{B\}$ and $d(B)=p$ ($S$ is \textit{twisted}). Consider the embedding $K_{p,1} \simeq \{te^{2i\pi k/p}, k \in \{0,1,\ldots,p-1\}, t \in [0,1]\}$ in the unit disk $\mathbb{D}^2$. Lemma~\ref{lem:regularneighbourhoods} implies that $N(S)$ is homeomorphic to $[0,1]\times K_{p,1}/\sim$ where $(1,x) \sim (0,f(x))$ and $f:K_{p,1}\rightarrow K_{p,1}$ is the restriction of $id\in \text{Aut}(\mathbb{C})$ if $S$ is straight or $z \mapsto ze^{2i\pi/p}$ if $S$ is twisted. In both cases we can use the embedding of $K_{p,1}\hookrightarrow \mathbb{D}^2$ to construct an embedding of $N(S)\hookrightarrow T(S)$ where $T(S)$ is the solid torus $[0,1]\times \mathbb{D}^2/\sim$ where $(1,x) \sim (0,f(x))$. Now repeat this for all $S\in \pi_0(\Sing(X))$ and let $N(X) := \bigsqcup_{S\in \pi_0(\Sing(X))}N(S)$ and $T(X):= \bigsqcup_{S\in \pi_0(\Sing(X))}T(S)$. Let $\Sigma(X)$ be the compact orientable $2$-manifold which is the closure in $X$ of $X \setminus \bigsqcup_{S\in \pi_0(\Sing(X))} N(S)$. $\partial{\Sigma(X)}$ is a $1$-manifold identified in $X$ with $\partial{N(X)} = \partial{T(X)} \cap N(X)$. Thicken $[-1,1]\times \Sigma(X)$ such that $\Sigma(X) = \{0\} \times \Sigma(X)$. We then extend the homeomorphism identifying $\{0\} \times \partial \Sigma(X)$ with $N(X) \cap \partial T(X)$ to a homeomorphism identifying $[-1,1]\times \partial \Sigma(X)$ with the disjoint union of annuli that form a regular neighbourhood of $N(X) \cap \partial T(X)$ in the disjoint union of tori $\partial T(X)$. The result is an embedding of $X$ in the compact orientable $3$-manifold with boundary $T(X)\sqcup [-1,1]\times \Sigma(X)/\sim$ with image $N(X)\sqcup \Sigma(X)/\sim$, where $\sim$ denotes the identification described above.  

\end{proof}

\begin{figure}
    \centering
    \includegraphics[width=0.6\textwidth]{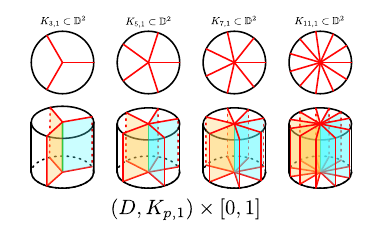}
    \caption{The embedding of $K_{p,1}\hookrightarrow \mathbb{D}^2$ that is the fibrewise restriction of the embedding $N(S)\cap X \subset N(S)$ respecting the fibre bundle structures $\mathbb{D}^2 \rightarrow N(S)\rightarrow S^1$ and $K_{p,1}\rightarrow N(S)\cap X \rightarrow S^1$.}\label{fig:subbundle}
\end{figure}

Section~\ref{sec:realisation} defines the class in $H_2(M;\mathbb{Z}_p)$ represented by a $\mathbb{Z}_p$-folded surface embedded in an orientable 3-manifold $M$ (Definition~\ref{dfn:realisation}) and proves the existence of such representatives for all $x \in H_2(M;\mathbb{Z}_p)$ (Lemma~\ref{lem:existence}).

\subsection{Representing $H_2(\mathbb{Z}_p)$ classes as $\mathbb{Z}_p$-folded surfaces}\label{sec:realisation}

Let $X$ be a $\mathbb{Z}_p$-folded surface. The map $q:(X_{cut},\partial X_{cut})\rightarrow (X,\text{Sing}(X))$ induces a map $q_*$ on homology. Since $X_{cut}$ is a $2$-manifold with boundary $\partial X_{cut}$ that has an orientation given by lifting the orientation of $\Int(X)$, it has a fundamental class $[X_{cut}]\in H_2(X_{cut},\partial X_{cut};\mathbb{Z})$. Similarly since $\Sing(X)$ is an oriented $1$-manifold it too has a fundamental class $[Sing(X)]$. Let $d_{(X,\Sing(X))}$ be the connecting homomorphism of the long exact sequence of relative homology. It follows from the naturality of this sequence that $d_{(X,\Sing(X))}(q_*([X_{cut}])) = q_*(d_{(X_{cut},\partial X_{cut})}([X_{cut}])) = q_*([\partial X_{cut}])$. Lemma~\ref{lem:alternatedefinition} implies that $q_*([\partial X_{cut}]) = p[\Sing(X)]$. It follows that if we change coefficients via the standard quotient $j_{\mathbb{Z}_p}:\mathbb{Z}\rightarrow \mathbb{Z}_p$ then $j_{\mathbb{Z}_p}\circ q_*([\partial X_{cut}]) = 0$. Hence it follows from the naturality of the change of coefficient long exact sequence and the relative homology long exact sequence that $j_{\mathbb{Z}_p} \circ q_*([X_{cut}])$ is in the kernel of $d_{(X,\Sing X)}:H_2(X,\Sing X ;\mathbb{Z}_p)\rightarrow H_1(\Sing X;\mathbb{Z}_p)$ and so must be in the image of $H_2(X;\mathbb{Z}_p)\rightarrow H_2(X,\Sing(X);\mathbb{Z}_p)$. A dimension argument implies that $H_2(\Sing(X);\mathbb{Z}_p)=0$ and so this map is injective. Let $[X]$ be the unique element of $H_2(X;\mathbb{Z}_p)$ such that $[X]\mapsto j_{\mathbb{Z}_p}\circ q_*([X_{cut}])$. We define $[X]$ to be the \textit{fundamental class} of $X$. If $X$ is embedded in $M$ then we may abuse notation by using $[X]$ to denote the image of the fundamental class of $X$ under the map on homology $H_2(X;\mathbb{Z}_p)\rightarrow H_2(M;\mathbb{Z}_p)$ induced by the inclusion $X\hookrightarrow M$. Similarly we may use $[\Sing(X)]$ to denote the image of the fundamental class of $\Sing(X)$ under the map $H_1(X;\mathbb{Z})\rightarrow H_1(M;\mathbb{Z})$ induced by the restriction of the embedding map $X\hookrightarrow M$.

\begin{dfn}\label{dfn:realisation}
    A $\mathbb{Z}_p$-folded surface $X$ embedded in $M$ \textit{represents} $x\in H_2(M;\mathbb{Z}_p)$ if $[X] = x$.
\end{dfn}

The coefficient short exact sequence $\mathbb{Z}\xrightarrow{i} \mathbb{Z} \xrightarrow{j} \mathbb{Z}_p$ where $i(x) = px$ and $j$ is the standard quotient $\mathbb{Z}\rightarrow \mathbb{Z}/p\mathbb{Z}$ induces \textit{the coefficient long exact sequence} by the snake lemma. We will in general use $d_{\mathbb{Z}_p}:H_i(\underline{};\mathbb{Z}_p)\rightarrow H_{i-1}(\underline{};\mathbb{Z})$ to denote the connecting homomorphism of this sequence. 

If $X$ is a $\mathbb{Z}_p$-folded surface representing a class $x\in H_2(M;\mathbb{Z}_p)$ then it follows from Definition~\ref{dfn:pfoldedsurface} that $d_{\mathbb{Z}_p}(x) = [\Sing(X)]$. We will now prove the following statement by constructing a $\mathbb{Z}_p$-folded surface representing an arbitrary class $x$.

\begin{lem}[Existence]
    Let $M$ be a compact orientable $3$-manifold. Every homology class $x\in H_2(M;\mathbb{Z}_p)$ can be represented by a $\mathbb{Z}_p$-folded surface.
    \label{lem:existence}
\end{lem}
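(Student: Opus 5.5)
The plan is to use duality to turn $x$ into a map to a circle with $\mathbb{Z}_p$ monodromy, and take the preimage of a fixed $\mathbb{Z}_p$-folded surface in the target. For compact orientable $M$, $H_2(M;\mathbb{Z}_p)\cong H^1(M,\partial M;\mathbb{Z}_p)$ by Poincar\'e--Lefschetz duality. I will work with the closed case first and treat the boundary case by the same argument relative to $\partial M$. A class $\alpha\in H^1(M;\mathbb{Z}_p)$ is the same as a homomorphism $\pi_1(M)\to\mathbb{Z}_p$. I will realise it by a map $f:M\to Y_p$, where $Y_p$ is a model space whose "dual hypersurface" is a $\mathbb{Z}_p$-folded object. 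A convenient choice is the $2$-skeleton of a $K(\mathbb{Z}_p,1)$, namely the pseudo-projective plane $P_p=S^1\cup_p D^2$.

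Rather than pulling back a codimension-one object from $P_p$, I will build the surface directly by a transversality construction in a triangulation. Fix a triangulation $\mathcal{T}$ of $M$ and a simplicial $1$-cocycle $c$ with values in $\{0,1,\ldots,p-1\}$ representing $\alpha$, so that the values on the three edges of each triangle sum to $0 \bmod p$. In each tetrahedron, place normal-disc-like pieces: on an edge $e$ with value $c(e)$, place $c(e)$ oriented points, all oriented from the tail to the head of $e$. In each triangle, the signed counts on the three edges cancel mod $p$. The boundary points can therefore be joined by oriented arcs in the triangle, except that a multiple of $p$ of them remain unmatched. These are joined to a single interior point, where $p$ arcs meet coherently, giving a local $K_{p,1}$ picture.

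Inside each tetrahedron, the resulting graph on the boundary sphere consists of oriented arcs together with $p$-valent coherent vertices on the faces. I then cone or span it by oriented sheets. The points coming from faces trace out the singular arcs, which run through the tetrahedron as arcs joining the $p$-valent face vertices. Along each such arc, $p$ sheets meet with consistent induced orientation. The cleaner way to organise this is to note that the dual object is the preimage, under the simplicial map $M\to B\mathbb{Z}_p$ determined by $c$, of a standard $\mathbb{Z}_p$-folded surface. For instance, take the lens-space model $L$ of dimension $3$ with $H^1=\mathbb{Z}_p$, which contains the standard embedded $\mathbb{Z}_p$-crosscap of Figure~\ref{fig:pcrosscaps} representing the generator. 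Map $f:M\to L(p,1)$ inducing $\alpha$ on $\pi_1$; this is possible since the $3$-skeleton obstruction lies in $H^{3}(M;\pi_2)=0$ and the $4$-skeleton is irrelevant in dimension $3$. Then make $f$ transverse to the crosscap $C$ and to its singular circle, and set $X=f^{-1}(C)$.

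Next I check that $X$ is a $\mathbb{Z}_p$-folded surface. Transversality to the stratified set $C$ gives local models: away from $f^{-1}(\Sing C)$ the preimage is an oriented surface, co-oriented by pullback. Near $f^{-1}(\Sing C)$ it is $\mathbb{R}\times K_{p,1}$, with the sheets inducing consistent orientations because they do so in $C$. Thus Definition~\ref{dfn:pfoldedsurface} holds via Lemma~\ref{lem:alternatedefinition}. The fundamental class is natural under $f$, so $[X]$ is dual to $f^*$ of the class dual to $[C]$, which is $\alpha$. Hence $[X]=x$. The boundary case is handled relative to $\partial M$, making $f$ transverse on $\partial M$ as well.

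The main obstacle is making "transverse to a stratified folded surface" rigorous in the PL category, including the claim that preimages have exactly the $K_{p,1}\times\mathbb{R}$ local model with no additional singularities. I would first try to avoid general stratified transversality. The idea is to make $f$ simplicial with respect to triangulations in which $C$ is a normal-type subcomplex, so that $X$ is assembled tetrahedron by tetrahedron from preimages of the standard local pieces. In that setting the local model can be checked by hand.
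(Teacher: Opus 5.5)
Your route is genuinely different from the paper's, and it works in outline.

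\textbf{Your approach versus the paper's.} You realise the Poincar\'e dual class by a map $f\colon M\to L(p,1)$. This is legitimate because $L(p,1)$ is the $3$-skeleton of the standard $K(\mathbb{Z}_p,1)$, or simply because $\pi_2(L(p,1))=0$. You then pull back the standard $\mathbb{Z}_p$-crosscap $C$, which is the $2$-skeleton, so the folded structure comes essentially for free. A component of $f^{-1}(\Sing C)$ mapping to $\Sing C$ with degree $m$ inherits the $m$-th power of the crosscap's $p$-cycle as monodromy; for prime $p$ it is therefore straight or twisted, as in Lemma~\ref{lem:classificationin3mflds}. The pulled-back co-orientations give coherent orientations along singular arcs.

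The paper instead works through the Bockstein. It chooses any oriented link $L$ with $[L]=d_{\mathbb{Z}_p}(x)$. It builds an integral class in $H_2(M,L;\mathbb{Z})$ with boundary $p[L]$ whose mod $p$ reduction is the image of $x$, and represents it by an oriented surface in the link exterior. Disk and annulus surgeries make the trace on each boundary torus a coherently oriented torus link, and the paper then cones onto the core using the model of Figure~\ref{fig:modelsingularnbhd}.

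Your argument is more conceptual (one universal example, pulled back) and is close in spirit to the canonical surfaces of Section~\ref{sec:canonicalpfoldedsurfaces}. The paper's argument buys control: $\Sing(X)$ can be any link representing $d_{\mathbb{Z}_p}(x)$ and the interior any integral surface in the exterior. That is exactly what Lemma~\ref{lem:thurstonnorm} needs to compare $\|\cdot\|_{\mathbb{Z}_p}$ with the integral Thurston norm. It also avoids stratified transversality, relying only on classical representability of integral classes.

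\textbf{Points to fix.}

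\emph{Transversality.} Your proposed workaround does not work literally. If $C$ is a subcomplex and $f$ is simplicial, then $f^{-1}(C)$ is a subcomplex of $M$ and contains every tetrahedron collapsed onto a triangle of $C$. Instead, put $C$ in normal position: its sheets meet each simplex in normal pieces, and $\Sing C$ misses the $1$-skeleton and crosses each triangle it meets at one interior point. Subdivide $M$ so that $f$ is simplicial, keeping the target triangulation fixed. Then check the three ways a tetrahedron of $M$ can map: isomorphically, onto a triangle, or onto an edge or vertex. Each case gives normal pieces, and these glue across faces and edges of $M$ to the local models $\mathbb{R}^2$ and $\mathbb{R}\times K_{p,1}$.

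\emph{The identity $[X]=x$.} This is not ``naturality of the fundamental class'', since $f_*[X]$ lives in $H_2(L(p,1);\mathbb{Z}_p)$. What you need is two facts. First, the dual of $[X]$ evaluates on a loop $\gamma$ transverse to $\Int(X)$ as the signed count of $\gamma\cap X$ mod $p$. Second, $\gamma\cdot X=f(\gamma)\cdot C$.

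\emph{The boundary case.} When $\partial M\neq\emptyset$, you need $f$ to send $\partial M$ to a point off $C$, i.e.\ a map $M/\partial M\to K(\mathbb{Z}_p,1)$ realising the class in $H^1(M,\partial M;\mathbb{Z}_p)$. Then $X$ is closed. Making $f$ transverse along $\partial M$ would instead produce a folded surface with boundary, representing a relative class.

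\emph{The preliminary direct construction.} The version with cocycle values in $\{0,\dots,p-1\}$ is wrong as stated. For $p=3$, a triangle whose three edges carry $2$ coherently has signed sum $2p$, so a single interior vertex cannot absorb the unmatched points. Tetrahedra may then contain several singular vertices to pair, which your sketch does not handle. The paper's canonical construction avoids this with the symmetric lift $-p/2<\tilde\varphi<p/2$ for odd $p$. Since you switch to the pullback, this is only a side remark.
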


\begin{proof}

    The main idea for this proof comes from Lemma 2.1 of~\cite{Turaevfnhom3mfld2007}, which we supplement with a surgery on a regular neighbourhood of $\Sing(X)$ required to obtain a $\mathbb{Z}_p$-folded surface rather than a general simple folded surface. We also relax part of Turaev's construction by allowing $\Sing(X)$ to be any oriented link representing $d_{\mathbb{Z}_p}(x)$ rather than requiring a knot. This change is not strictly necessary for existence but it allows Lemma~\ref{lem:thurstonnorm} in Section~\ref{sec:zpthurstonnorm} to follow relatively easily. Existence can also be shown constructively using a technique analogous to normal surface theory which we develop in Section~\ref{sec:canonicalpfoldedsurfaces}.
    
    We start by choosing an oriented link $L$ embedded via $l:L\rightarrow M$ such that $i_{(M,L)}([L]) = d_{\mathbb{Z}_p}(x)$. Note that the coefficient long exact sequence and the long exact sequence of the pair are both natural. For the reader's convenience we assemble the part of the large commutative diagram that they form together which is relevant to the proof in Equation~\eqref{cd:bigcd}.

\begin{equation}\label{cd:bigcd}
    \resizebox{0.88\linewidth}{!}{%
        $\begin{tikzcd}[
            cramped,
            column sep=small,
            row sep=normal,
            ampersand replacement=\&
        ]
            H_2(L;\mathbb{Z})
                \ar[d,"i_{\mathbb{Z}_p}"]
                \ar[r,"d_{(M,L)}"]
            \&
            H_2(M;\mathbb{Z})
                \ar[d,"i_{\mathbb{Z}_p}"]
                \ar[r,"j_{(M,L)}"]
            \&
            H_2(M,L;\mathbb{Z})
                \ar[d,"i_{\mathbb{Z}_p}"]
                \ar[r,"d_{(M,L)}"]
            \&
            H_1(L;\mathbb{Z})
                \ar[d,"i_{\mathbb{Z}_p}"]
                \ar[r,"i_{(M,L)}"]
            \&
            H_1(M;\mathbb{Z})
                \ar[d,"i_{\mathbb{Z}_p}"]
            \\
            H_2(L;\mathbb{Z})
                \ar[d,"j_{\mathbb{Z}_p}"]
                \ar[r,"d_{(M,L)}"]
            \&
            H_2(M;\mathbb{Z})
                \ar[d,"j_{\mathbb{Z}_p}"]
                \ar[r,"j_{(M,L)}"]
            \&
            H_2(M,L;\mathbb{Z})
                \ar[d,"j_{\mathbb{Z}_p}"]
                \ar[r,"d_{(M,L)}"]
            \&
            H_1(L;\mathbb{Z})
                \ar[d,"j_{\mathbb{Z}_p}"]
                \ar[r,"i_{(M,L)}"]
            \&
            H_1(M;\mathbb{Z})
                \ar[d,"j_{\mathbb{Z}_p}"]
            \\
            H_2(L;\mathbb{Z}_p)
                \ar[r,"d_{(M,L)}"]
            \&
            H_2(M;\mathbb{Z}_p)
                \ar[r,"j_{(M,L)}"]
                \ar[uurrr,
                    rounded corners=4pt,
                    "d_{\mathbb{Z}_p}"{pos=0.75},
                    to path={
                        (\tikztostart.south)
                        -- ++(0,-8mm)
                        -| ([xshift=10mm]\tikztotarget.east)
                        \tikztonodes
                        -- (\tikztotarget.east)
                    }
                ]
            \&
            H_2(M,L;\mathbb{Z}_p)
                \ar[r,"d_{(M,L)}"]
            \&
            H_1(L;\mathbb{Z}_p)
                \ar[r,"i_{(M,L)}"]
            \&
            H_1(M;\mathbb{Z}_p)
        \end{tikzcd}$%
    }
\end{equation}

    The exactness of the coefficient long exact sequence implies that $0 = i_{\mathbb{Z}_p}\circ d_{\mathbb{Z}_p}(x) = i_{\mathbb{Z}_p} \circ i_{(M,L)}([L])$.
    The coefficient long exact sequence and the relative homology long exact sequence are natural and so $i_{(M,L)} \circ i_{\mathbb{Z}_p}([L]) = 0$. The exactness of the long exact sequence of the pair implies that there exists a class $x' \in H_2(M,L;\mathbb{Z})$ such that $d_{(M,L)}(x') = i_{\mathbb{Z}_p}([L]) = p\cdot [L]$. A dimension argument implies that $H_2(L;\mathbb{Z}_p)=0$ so it follows that $j_{(M,L)}:H_2(M;\mathbb{Z}_p)\rightarrow H_2(M,L;\mathbb{Z}_p)$ is injective. The naturality of the two sequences implies that $d_{(M,L)}(j_{\mathbb{Z}_p}(x')) = j_{\mathbb{Z}_p}(d_{(M,L)}(x')) = j_{\mathbb{Z}_p}\circ i_{\mathbb{Z}_p}([L]) = 0$ and so there is a unique lift $x'' := j_{(M,L)}^{-1}(j_{\mathbb{Z}_p}(x')) \in H_2(M;\mathbb{Z}_p)$ of $j_{\mathbb{Z}_p}(x')$. 
    
    It follows from a chase over the commutative diagram~\eqref{cd:bigcd} or alternatively from the definition of the connecting homomorphism that $d_{\mathbb{Z}_p}(x'') = d_{\mathbb{Z}_p}(x) = [L]$. Therefore there must exist some $y\in H_2(M;\mathbb{Z})$ such that $x = j_{\mathbb{Z}_p}(y) + x''$. Let $x''' := j_{(M,L)}(y) + x' \in H_2(M,L;\mathbb{Z})$ such that $j^{-1}_{(M,L)}(j_{\mathbb{Z}_p}(x''')) = x$. 

    Take an open regular neighbourhood $N$ of $L$. By excision and deformation we can identify $H_2(M,L;\mathbb{Z})= H_2(M,\overline{N};\mathbb{Z}) = H_2(M\setminus N,\partial(N);\mathbb{Z})$. It is well known that every integral 2-homology class of a compact oriented 3-manifold such as $M \setminus N$ is represented by a properly embedded oriented surface. Let $S$ be such a surface where $[S] = x''' \in H_2(M\setminus N(L),\partial(N(L));\mathbb{Z})$. Consider the maps on homology induced by the embedding $f:(M\setminus N,\partial(N)) \hookrightarrow (M,\overline{N})$ and the naturality of the long exact sequence of relative homology.

    \begin{center}
        \begin{tikzcd}
        H_2(M\setminus N,\partial(N);\mathbb{Z}) \ar[r,"d"] \ar[d,"f_*"] 
        & H_1(\partial(N);\mathbb{Z}) \ar[d,"f_*"] 
        \\
        H_2(M,\overline{N};\mathbb{Z}) \ar[r,"d"]
        & H_1(\overline{N};\mathbb{Z})
        \end{tikzcd}
    \end{center}

    $\partial(N)$ is a disjoint union of tori and we can choose generators $\lambda_k$ and $\mu_k$ represented by simple closed curves on $\partial(N_k)$ for each component $N_k$ of $N$. Let $L_k$ be the component of $L$ that is a core circle of $N_k$. Let $\mu_k, \lambda_k$ be generators of $H_1(\partial(N_k);\mathbb{Z})$ such that $\mu_k$ generates $\ker(f_*)\cap H_1(\partial(N_k);\mathbb{Z})$ and $f_*(\lambda_k) = [L_k]$. The above commutative square can be used to show that $\partial(S)\cap \partial(N_k)$ is a union of oriented simple closed curves with homology class $p_k\lambda_k + q_k\mu_k \in H_1(\partial(N_k);\mathbb{Z})$. Null-homologous curves on the torus bound a disk and so can be eliminated by pasting a disk parallel to $\partial N_k$. After all null-homologous curves are removed $\partial(S)\cap \partial(N_k)$ is a disjoint union of simple closed curves of equal slope, a consequence of the fact that the intersection form is trivial on primitive non-trivial $s_1,s_2 \in H_1(\mathbb{T}^2;\mathbb{Z})$ classes only if $s_1 = \pm s_2$. If the curves are not oriented consistently then there must exist an annulus embedded in $\partial(N_k)$ cobounding two curves of opposite orientation. We can again perform a local surgery to $S$ by gluing an annulus with the required orientation in the interior of $M\setminus N$ parallel to the annulus in the boundary to remove these two components of $\partial(S)$ without changing the homology class represented by $S$. Once these surgeries are complete it follows that $\partial S \cap \partial N_k$ is a $(p_k,q_k)$-torus link where every component represents the same homology class. This torus link is unique up to an isotopy of $\partial N_k$ which the PL isotopy extension theorem allows us to extend to an isotopy of $N_k$. We can therefore obtain a $\mathbb{Z}_p$-folded surface $X$ by setting $X\cap M\setminus N := S$ and $X\cap N_k \subset N_k$ to be a PL-approximation of the embedding where $N_k = \mathbb{C} \times [0,1]/\sim$ where $(z,0)\sim (z,1)$ and $X\cap N_k = \bigcup_{j=0}^{p} \{x e^{(j+q_{k}t)/2ip_{k}\pi}\ |\ x \in \mathbb{R}, t\in [0,1]\}/\sim$. This model neighbourhood of a singular arc is illustrated in Figure~\ref{fig:modelsingularnbhd}. 

    $x'''$ is the image of the fundamental class of $(X_{cut},\partial X_{cut})$ in $(M,\Sing(X))$. By definition the fundamental class of $X$ is the unique lift of $j_{(M,L)}^{-1}(j_{\mathbb{Z}_p}([X_{cut}]))$. It follows that the image of this fundamental class in $M$ is $j_{(M,L)}^{-1}(j_{\mathbb{Z}_p}(x''')) = x$ completing the proof.

\end{proof}

\begin{figure}[htbp]
    \centering
    \setlength{\tabcolsep}{2pt}
    \renewcommand{\arraystretch}{0.8}

    \begin{tabular}{c@{}|@{}c@{}|@{}c@{}|@{}c@{}|@{}c@{}|@{}c@{}|@{}c@{}|@{}c}
        $(3,-2)$ & $(3,0)$ & $(3,1)$ & $(3,7)$ &
        $(5,-2)$ & $(5,0)$ & $(5,1)$ & $(5,7)$ \\ \hline
        \includegraphics[width=0.12\textwidth]{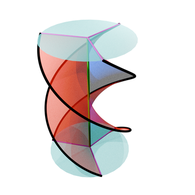} &
        \includegraphics[width=0.12\textwidth]{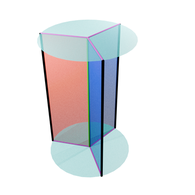} &
        \includegraphics[width=0.12\textwidth]{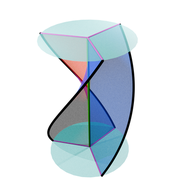} &
        \includegraphics[width=0.12\textwidth]{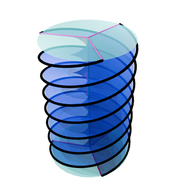} &
        \includegraphics[width=0.12\textwidth]{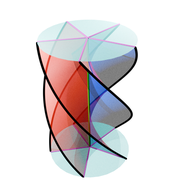} &
        \includegraphics[width=0.12\textwidth]{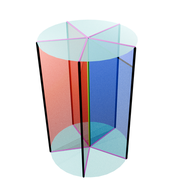} &
        \includegraphics[width=0.12\textwidth]{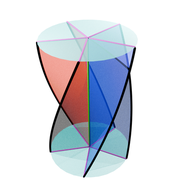} &
        \includegraphics[width=0.12\textwidth]{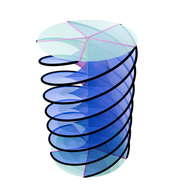} \\

        \includegraphics[width=0.12\textwidth]{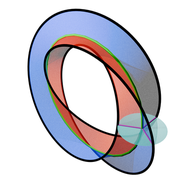} &
        \includegraphics[width=0.12\textwidth]{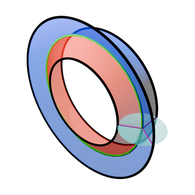} &
        \includegraphics[width=0.12\textwidth]{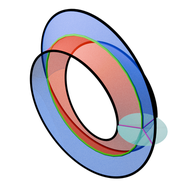} &
        \includegraphics[width=0.12\textwidth]{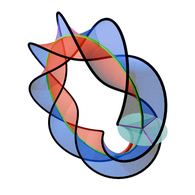} &
        \includegraphics[width=0.12\textwidth]{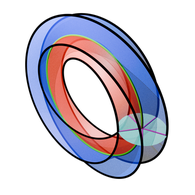} &
        \includegraphics[width=0.12\textwidth]{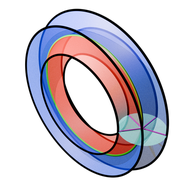} &
        \includegraphics[width=0.12\textwidth]{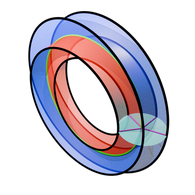} &
        \includegraphics[width=0.12\textwidth]{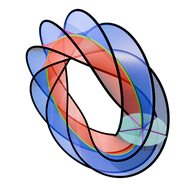} \\

        \includegraphics[width=0.12\textwidth]{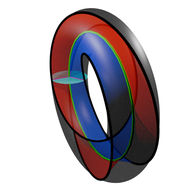} &
        \includegraphics[width=0.12\textwidth]{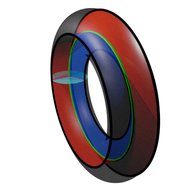} &
        \includegraphics[width=0.12\textwidth]{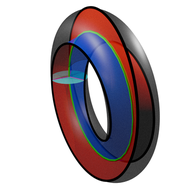} &
        \includegraphics[width=0.12\textwidth]{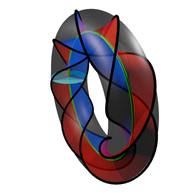} &
        \includegraphics[width=0.12\textwidth]{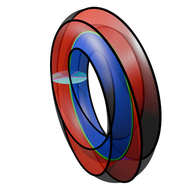} &
        \includegraphics[width=0.12\textwidth]{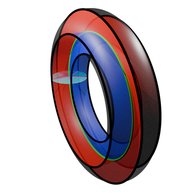} &
        \includegraphics[width=0.12\textwidth]{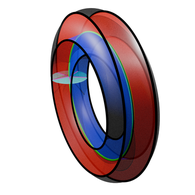} &
        \includegraphics[width=0.12\textwidth]{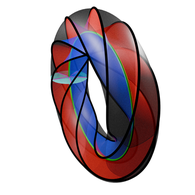}
    \end{tabular}

    \caption{Solid torus neighbourhoods of singular arcs in $\mathbb{Z}_p$-folded surfaces that intersect the boundary torus in a $(p,q)$-torus link (in black). Examples where $p \in \{3,5\}$ and $q \in \{-2,0,1,7\}$. The first row shows the lift of the embedding $X\cap N_k\subset N_k$ to the solid cylinder $[0,1]\times \mathbb{C}$. The second row shows the embedding $X\cap N_k\hookrightarrow N_k$ where the solid torus $N_k$ is embedded in $\mathbb{R}^3$ as a regular neighbourhood of the unknot singular arc in green. $\partial N_k$ is shaded in the third row.}
    \label{fig:modelsingularnbhd}
\end{figure}

\section{The $\mathbb{Z}_p$-Thurston norm}\label{sec:zpthurstonnorm}

We are inspired by the $\mathbb{Z}$-Thurston norm, $\mathbb{Z}_2$-Thurston norm and Turaev's $\theta$ to define the \textit{$\mathbb{Z}_p$-Thurston norm} in Definition~\ref{dfn:zpthurstonnorm}. Here, $X$ runs over all $\mathbb{Z}_p$-folded surfaces embedded in $M$ realising $x\in H_2(M;\mathbb{Z}_p)$ and $\chi_-$ is defined for folded surfaces as in Section~\ref{sec:theta}.

\begin{dfn}\label{dfn:zpthurstonnorm}
    Let $M$ be a closed $3$-manifold. For each $p\in \mathbb{Z}^{>0}$ the $\mathbb{Z}_p$-Thurston norm is a function $||\cdot||_{\mathbb{Z}_p}:H_2(M;\mathbb{Z}_p)\rightarrow \mathbb{Z}^{\geq 0}$ defined for each class $x \in H_2(M;\mathbb{Z}_p)$ by the following expression, allowing $X$ to run over all $\mathbb{Z}_p$-folded surfaces embedded in $M$ that realise $x$:
    
    \[||x||_{\mathbb{Z}_p} := \min_{[X]=x}(\chi_-(X))\]
\end{dfn}

This definition is sufficient for closed 3-manifolds, but there is a natural extension to a function on compact oriented $3$-manifolds with boundary by instead letting $X$ be a \textit{$\mathbb{Z}_p$-folded surface with boundary properly embedded in $M$} - see Definition~\ref{dfn:pfoldedwbdry}. Lemma~\ref{lem:nonorientable} shows that $||\cdot||_{\mathbb{Z}_p}$ generalises the $\mathbb{Z}_2$-Thurston norm of~\cite{jaco2009z2thurstonnormcomplexity3manifolds}.

\begin{lem}\label{lem:nonorientable}
    Definition~\ref{dfn:z2thurstonnorm} and Definition~\ref{dfn:zpthurstonnorm} are equivalent when $p=2$ and $M$ is a closed $3$-manifold.
\end{lem}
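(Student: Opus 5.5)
We will prove the lemma by establishing a correspondence, for $p=2$, between embedded $\mathbb{Z}_2$-folded surfaces and embedded unoriented closed surfaces. The correspondence will preserve both $\chi_-$ and the represented class in $H_2(M;\mathbb{Z}_2)$. The two minima in Definition~\ref{dfn:z2thurstonnorm} and Definition~\ref{dfn:zpthurstonnorm} then agree.

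\emph{From folded surfaces to surfaces.} Let $X$ be a $\mathbb{Z}_2$-folded surface. Every point of $\Sing(X)$ has a neighbourhood $\mathbb{R}\times K_{2,1}\cong\mathbb{R}^2$, so the underlying space of $X$ is a closed surface $F$ (possibly non-orientable). It contains $\Sing(X)$ as a closed $1$-submanifold, and $F\setminus\Sing(X)$ is oriented. We must check two things.
\begin{enumerate}
\item The $\mathbb{Z}_2$ fundamental class of $X$ built in Section~\ref{sec:realisation} equals the $\mathbb{Z}_2$ fundamental class of $F$. Both map to $j_{\mathbb{Z}_2}\circ q_*([X_{cut}])$ in $H_2(F,\Sing(X);\mathbb{Z}_2)$: reducing mod $2$ forgets orientation, and the mod-$2$ class of $F$ restricts to the mod-$2$ class of $F_{cut}=X_{cut}$. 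The map $H_2(F;\mathbb{Z}_2)\to H_2(F,\Sing(X);\mathbb{Z}_2)$ is injective, as noted in Section~\ref{sec:realisation}, so the two classes coincide.
\item $\chi_-(X)=\chi_-(F)$. Here there is a subtlety. $\chi_-(X)$ is defined as $\chi_-(X_{cut})$, summed over components of $X_{cut}$, whereas $\chi_-(F)$ is summed over components of $F$. Cutting along circles preserves Euler characteristic, so $\chi(X_{cut})=\chi(F)$. However, a component of $F$ may split into several pieces of $X_{cut}$, some of them disks or annuli, and $\chi_-$ is not additive across such a split. We therefore only obtain an inequality, and we handle it as described below.
\end{enumerate}

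\emph{From surfaces to folded surfaces.} Given an embedded unoriented closed surface $F$, we construct a $\mathbb{Z}_2$-folded structure as in the remark after Definition~\ref{dfn:pfoldedsurface}. On each component of $F$, choose a closed $1$-manifold $C$ dual to the first Stiefel–Whitney class $w_1$ (empty if the component is orientable) such that $F\setminus C$ is orientable. Orient the complement so that the two sheets along each circle of $C$ induce consistent orientations. This is possible because crossing $C$ reverses the local orientation; it is exactly condition~\eqref{altdef:degpositive}--\eqref{altdef:sumdegp} of Lemma~\ref{lem:alternatedefinition} with $p=2$. Embeddability is automatic. Note that Lemma~\ref{lem:classificationin3mflds} allows degrees $1$ and $2$, with degree $2$ corresponding to a one-sided curve.

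The key point is to choose $C$ minimally so that cutting does not create extra pieces. Let $N_g$ denote the connected sum of $g$ copies of $\mathbb{R}\mathrm{P}^2$.
\begin{enumerate}
\item If $g$ is odd, take $C$ to be a single one-sided curve. Then $F\setminus C$ is connected and orientable, and $\chi$ is unchanged.
\item If $g$ is even, take $C$ to be a two-sided curve with non-orientable-twisting monodromy, or two one-sided curves, whose complement is connected.
\end{enumerate}
With this choice each component of $F$ yields a single component of $X_{cut}$, so $\chi_-(X)=\chi_-(F)$. It follows that $\|x\|_{\mathbb{Z}_2}^{\text{fold}}\le\|x\|_{\mathbb{Z}_2}$.

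\emph{The reverse inequality.} This is the main obstacle. We need $\chi_-(X_{cut})\ge\chi_-(F)$ for an arbitrary singular locus, componentwise on $F$. Fix a component $F_0$ of $F$ and let $Y_1,\dots,Y_m$ be the components of $X_{cut}$ lying over it, so that $\sum_i\chi(Y_i)=\chi(F_0)$.
\begin{enumerate}
\item Since $\chi_-(Y_i)\ge-\chi(Y_i)$ for every $i$, we get $\sum_i\chi_-(Y_i)\ge-\chi(F_0)$. This settles the case $\chi(F_0)\le 0$.
\item If $\chi(F_0)>0$, then $\chi_-(F_0)=0$, and the inequality is trivial.
\end{enumerate}
Summing over the components of $F$ gives the reverse inequality, and hence equality of the two norms.

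The remaining care points are as follows. We must confirm that the fundamental-class comparison in the first direction is made precise via the injectivity noted in Section~\ref{sec:realisation}. We must also confirm that the minimal choice of $C$ in the second direction exists in each parity case; this is the step we would verify most carefully, in particular the even-genus case.
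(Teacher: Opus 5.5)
Your proof is correct. It differs from the paper's mainly in the inequality $\|x\|_{\mathbb{Z}_2}\le\|x\|'_{\mathbb{Z}_2}$.

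\textbf{The inequality $\|x\|_{\mathbb{Z}_2}\le\|x\|'_{\mathbb{Z}_2}$.} The paper starts from a minimising $\mathbb{Z}_2$-folded surface and first performs surgery. It deletes from $\Sing(X)$ every two-sided singular circle that bounds a disk, reversing that disk's orientation. After this, the only pieces of $X_{cut}$ with positive Euler characteristic are spheres, or disks closing up to $\mathbb{R}\mathrm{P}^2$ components. Only then does the paper show that $\chi_-$ of the underlying surface equals $\chi_-(X_{cut})$.

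You instead combine $\chi(X_{cut})=\chi(F)$ with the subadditivity $\max(0,a)+\max(0,b)\ge\max(0,a+b)$. This gives $\chi_-(F)\le\chi_-(X)$ for every $\mathbb{Z}_2$-folded surface directly, with no surgery and no minimality assumption. The paper's route yields the stronger equality for the simplified surface, but the lemma does not need it.

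\textbf{The inequality $\|x\|'_{\mathbb{Z}_2}\le\|x\|_{\mathbb{Z}_2}$.} Both arguments choose a singular locus that keeps each component connected after cutting. The paper cuts inductively along cores of M\"obius bands, so all singular circles are one-sided and no classification theorem is needed. You use the classification of non-orientable surfaces to pick one or two curves.

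The even case you flagged is not a problem. Two one-sided curves leave a connected orientable surface with two boundary circles. Even a single two-sided curve with connected orientable complement automatically satisfies the consistency condition of Lemma~\ref{lem:alternatedefinition}: otherwise the orientation of the complement would extend across the curve and $F$ would be orientable.

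\textbf{Fundamental classes.} Your explicit identification of the two $\mathbb{Z}_2$ fundamental classes, via injectivity of $H_2(F;\mathbb{Z}_2)\to H_2(F,\Sing(X);\mathbb{Z}_2)$, fills in a point the paper only asserts.
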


\begin{proof}
    We will use $||\cdot||_{\mathbb{Z}_2}$ to refer to the function in Definition~\ref{dfn:z2thurstonnorm} and $||\cdot||_{\mathbb{Z}_2}'$ to refer to the function in Definition~\ref{dfn:zpthurstonnorm}
    
    Suppose $\Sigma$ is an unoriented surface realising $x$ such that 
    \[||x||_{\mathbb{Z}_2} = \sum_{\Sigma_0\in \pi_0(\Sigma)} \max(0, -\chi(\Sigma_0)).\] 
    First we choose a system of disjoint closed curves suggestively named $\Sing(\Sigma)$, embedded in $\Sigma$ such that $\Sigma \setminus \Sing(\Sigma)$ is an orientable $2$-manifold with the same number of connected components as $\Sigma$. This can be achieved by the following induction. We start with $\Sing(\Sigma) = \emptyset$. At each step if $\Sigma \setminus \Sing(\Sigma)$ is non-orientable, it is possible to find a simple closed curve $C$ whose neighbourhood in $\Sigma \setminus \Sing(\Sigma)$ is a Mobius band. Being the core circle of a Mobius band, $\Sigma \setminus \Sing(\Sigma) \setminus C$ must have the same number of components as $\Sigma \setminus \Sing(\Sigma)$. Hence we add $C$ to $\Sing(\Sigma)$ and repeat. Since $2 |\pi_0(\Sigma)| - |\Sing(\Sigma)|$ gives an upper bound on $\chi(\Sigma)$ it follows that this induction must terminate with $\Sing(\Sigma)$ having the desired properties. Now choose some arbitrary orientation of $\Sigma \setminus \Sing(\Sigma)$ and view it as a $\mathbb{Z}_2$-folded surface. The components of $\Sigma \setminus \Sing(\Sigma)$ respectively have the same Euler characteristic as the components of $\Sigma$ that contain them. Hence the following is true:
    
    \[||x||_{\mathbb{Z}_2} = \sum_{\Sigma_0\in \pi_0(\Sigma)} \max(0, -\chi(\Sigma_0)) = \sum_{\Sigma_0' \in \pi_0(\Sigma \setminus \Sing(\Sigma))} \max(0 , -\chi(\Sigma_0')) \geq ||x||_{\mathbb{Z}_2}'.\]

    Since the class $[\Sigma] \in H_2(M;\mathbb{Z}_2)$ does not depend on whether $\Sigma$ is viewed as a $\mathbb{Z}_2$-folded surface or an unoriented surface it follows that $||x||_{\mathbb{Z}_2} \geq ||x||_{\mathbb{Z}_2}'$. 
    
    For the other inequality, let $X$ be a $\mathbb{Z}_2$-folded surface embedded in a closed orientable 3-manifold such that $\chi_-(X) = ||x||_{\mathbb{Z}_2}$. If any component $S \in \Sing(X)$ has a neighbourhood in $X$ homeomorphic to an annulus and bounds a disk, then we can reverse the orientation of the disk and remove $S$ from $\Sing(X)$ to obtain a new $\mathbb{Z}_2$-folded surface $X'$ such that $[X'] = [X]$ and $\chi_-(X') \leq \chi_-(X)$. Reassigning $X := X'$ and repeating this procedure we may assume that $X$ is a $\mathbb{Z}_2$-folded surface in which the closure of any disk component of $\Int(X)$ is homeomorphic to $\mathbb{R}\mathrm{P}^2$ and we still have $\chi_-(X) = ||x||_{\mathbb{Z}_2}$. The only other components of $\Int(X)$ with positive Euler characteristic are $2$-spheres. We can view $X$ as an unoriented surface simply by forgetting about $\Sing(X)$. Let $X_0$ be a connected component of the unoriented surface $X$. It follows that:

    \[\max(0,-\chi(X_0)) = \sum_{X_0' \in \pi_0(X_0 \setminus \Sing(X_0))} \max(0,-\chi(X_0')).\]

    Globally, this implies the other inequality $||x||_{\mathbb{Z}_2} \leq ||x||_{\mathbb{Z}_2}'$, completing the proof.

\end{proof}

The function $||\cdot||_{\mathbb{Z}_p}$ is formally very similar to the Thurston norm on $H_2(M,\partial(M);\mathbb{Z})$ defined in Section~\ref{sec:Zthurstonnorm}. We will now make the relationship between these functions explicit in Lemma~\ref{lem:thurstonnorm}.

Let $x \in H_2(M;\mathbb{Z}_p)$ and take any oriented link $L$ such that $[L] = d_{\mathbb{Z}_p}(x) \in H_1(M;\mathbb{Z})$. Now let $N(L)$ denote an open regular neighbourhood of $L$ such that $M\setminus N(L)$ is a compact orientable $3$-manifold. Let $e:H_2(M,L;\mathbb{Z}) \rightarrow H_2(M \setminus N(L),\partial(N(L));\mathbb{Z})$ be the excision isomorphism. We define the subset $F(x,L) \subset H_2(M,L;\mathbb{Z})$ in Equation~\ref{eq:defofF}.

\begin{equation}\label{eq:defofF}
    F(x,L) := \{y\in H_2(M,L;\mathbb{Z}) : j_{\mathbb{Z}_p}(y) = j_{(M,L)}(x), d_{(M,L)}(y) = p[L]\}
\end{equation}

The proof of Lemma~\ref{lem:existence} shows that the set $F(x,L)$ is nonempty for all $x,L$ as above. When $H_2(M;\mathbb{Z})=0$, $F(x,L)$ contains a single class (i.e. denoted $x'$ in Lemma~\ref{lem:existence}). 

\begin{lem}\label{lem:thurstonnorm}
    For all $x\in H_2(M;\mathbb{Z}_p)$ where $M$ is a $3$-manifold Equation~\eqref{eq:zpthurstonzthurston} holds, where $L$ denotes an oriented link embedded in $M$ and $F$ is defined in Equation~\ref{eq:defofF}.
    
    \begin{equation}\label{eq:zpthurstonzthurston}
        ||x||_{\mathbb{Z}_p} = \min_{[L]=d_{\mathbb{Z}_p}(x)} \left(\min_{y\in F(x,L)} ||e(y)||_{\mathbb{Z}} \right)
    \end{equation}
    
\end{lem}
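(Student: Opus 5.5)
We prove the two inequalities separately. The basic tool is the correspondence between a $\mathbb{Z}_p$-folded surface $X$ embedded in $M$ and the properly embedded oriented surface $S := X \cap (M\setminus N(\Sing(X)))$. Here $N(\Sing(X))$ is a small open regular neighbourhood of the singular locus, chosen so that $X\cap \overline{N}$ is a regular neighbourhood of $\Sing(X)$ in $X$. By Lemma~\ref{lem:regularneighbourhoods} and Lemma~\ref{lem:classificationin3mflds}, this regular neighbourhood is a union of mapping tori of $K_{p,1}$. Removing the open neighbourhood therefore amounts to removing an open collar of $\partial X_{cut}$ from $X_{cut}$, so $S\cong X_{cut}$ and $\chi_-(S)=\chi_-(X)$.

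For the inequality $\geq$, take a $\mathbb{Z}_p$-folded surface $X$ realising $x$ with $\chi_-(X)=||x||_{\mathbb{Z}_p}$, and set $L:=\Sing(X)$ with the orientation given by Definition~\ref{dfn:pfoldedsurface}. The remark after Definition~\ref{dfn:realisation} gives $[L]=d_{\mathbb{Z}_p}(x)$, so $L$ is admissible. Let $y:=q_*([X_{cut}])\in H_2(M,L;\mathbb{Z})$, pushed forward to $M$. Naturality of the pair sequence together with Lemma~\ref{lem:alternatedefinition} gives $d_{(M,L)}(y)=q_*([\partial X_{cut}])=p[L]$. The definition of the fundamental class gives $j_{\mathbb{Z}_p}(y)=j_{(M,L)}([X])=j_{(M,L)}(x)$, after pushing the relation in $X$ forward along the inclusion $(X,\Sing X)\to (M,L)$. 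Hence $y\in F(x,L)$. Under excision $e(y)$ is represented by $S$, because the fundamental class of $X_{cut}$ restricts to that of $S$ and the collar contributes nothing relatively. Therefore $||e(y)||_{\mathbb{Z}}\leq \chi_-(S)=||x||_{\mathbb{Z}_p}$.

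For the inequality $\leq$, fix $L$ with $[L]=d_{\mathbb{Z}_p}(x)$, a class $y\in F(x,L)$, and a norm-minimising properly embedded oriented surface $S$ with $[S]=e(y)$. We then repeat the surgery and capping-off stage of the proof of Lemma~\ref{lem:existence}. The key point to verify is that these surgeries do not increase $\chi_-$ and do not change the homology class:
\begin{itemize}
\item capping an inessential boundary curve with a disk raises $\chi$ and can only merge components;
\item attaching an annulus to join two oppositely oriented parallel curves leaves $\chi$ unchanged, but it can merge components or turn a disk into an annulus.
\end{itemize}
The second item needs care: $\chi_-$ can only decrease when components merge, since $\chi_-$ is subadditive in the appropriate direction. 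Components with $\chi>0$ contribute $0$ and can be discarded or kept as needed. After these surgeries $\partial S\cap\partial N_k$ is a coherent $(p_k,q_k)$ torus link. Since $d_{(M,L)}(y)=p[L]$, we have $p_k=p$ for every $k$. Filling $N_k$ with the model neighbourhood from Figure~\ref{fig:modelsingularnbhd} produces a $\mathbb{Z}_p$-folded surface $X$ with $X_{cut}\cong S$. The final paragraph of the proof of Lemma~\ref{lem:existence}, using injectivity of $j_{(M,L)}$ on $H_2(\,\cdot\,;\mathbb{Z}_p)$, gives $[X]=x$ because $j_{\mathbb{Z}_p}(y)=j_{(M,L)}(x)$. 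Therefore $||x||_{\mathbb{Z}_p}\leq\chi_-(X)=\chi_-(S)=||e(y)||_{\mathbb{Z}}$, and minimising over $L$ and $y$ finishes the proof.

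I expect the main obstacle to be the bookkeeping in the $\leq$ direction. One issue is confirming that the annulus and disk surgeries never increase $\chi_-$; sphere and disk components can appear or disappear, but they contribute nothing. The other is identifying precisely the excision image of the folded surface's relative class with the class of $S$, keeping the orientation conventions on $L$ consistent on both sides.
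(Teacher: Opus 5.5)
Your proposal is correct and follows the paper's proof essentially step for step: for $\geq$ you make the same choice $L=\Sing(X)$ and $y=q_*([X_{cut}])$ and the same two naturality checks, and for $\leq$ you use the same disk and annulus surgeries from Lemma~\ref{lem:existence}. The only real difference is that you get non-increase of $\chi_-$ under annulus gluing directly from $\max(0,-(a+b))\leq\max(0,-a)+\max(0,-b)$, which is slightly cleaner than the paper's case analysis invoking minimality of $S$; make sure positive-$\chi$ components are kept rather than discarded, since discarding them could change the class $e(y)$.
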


\begin{proof}
    Let $X$ be a $\mathbb{Z}_p$-folded surface embedded in $M$ via a map $f:X\hookrightarrow M$. To avoid confusion in this proof we avoid all abuses of notation by using $[X]$ and to denote the fundamental class of $X$ and $f_*([X])$ to denote the class in $H_2(M;\mathbb{Z}_p)$ it represents. Suppose $f_*([X]) = x$ and $\chi_{-}(X) = ||x||_{\mathbb{Z}_p}$. 
    
    First let $L := f(\Sing X)$. Equation~\eqref{eq:pSingXfromdef} shows that $[L] = d_{\mathbb{Z}_p}(x)$ using the naturality of the coefficient long exact sequence.

    \begin{equation}
        \label{eq:pSingXfromdef}
        \begin{aligned}
            d_{\mathbb{Z}_p}(x) &= d_{\mathbb{Z}_p} \circ f_*([X]) \\
                &= f_* \circ d_{\mathbb{Z}_p}([X]) \\
                &= f_*([\Sing X]) \\
                &= [f(\Sing X)]\\
                &= [L]
        \end{aligned}
    \end{equation}

    Now let $y' := f_*\circ q_*([X_{cut}]) \in H_2(M,L;\mathbb{Z})$. $e(y')$ is represented by the properly embedded oriented $2$-manifold $\Sigma := f\circ q(X_{cut}) \setminus N(L)$. $\Sigma$ is homeomorphic to $X_{cut}$ and $\chi_-(X_{cut}) = \chi_-(X)$ by definition. Hence $||e(y')|| \leq \chi_-(\Sigma) = \chi_-(X) = ||x||_{\mathbb{Z}_p}$. 
    
    We now claim that $y'\in F(x,L)$. It follows from the definition in Equation~\eqref{eq:defofF} that it is sufficient to show that $y'$ satisfies Claims~\eqref{cl:claim1} and~\eqref{cl:claim2}.

    \begin{equation}\label{cl:claim1}
        j_{\mathbb{Z}_p}(y') = j_{(M,L)}(x)
    \end{equation}

    \begin{equation}\label{cl:claim2}
        d_{(M,L)}(y') = p[L]
    \end{equation}

    First note that $j_{\mathbb{Z}_p}(y') = j_{\mathbb{Z}_p}\circ f_*\circ q_*([X_{cut}]) = f_* \circ j_{\mathbb{Z}_p} \circ q_*([X_{cut}])$ by naturality of the coefficient long exact sequence. It follows from the definition of $[X]$ in Section~\ref{sec:realisation} that $j_{(X,\Sing X)}([X]) = j_{\mathbb{Z}_p} \circ q_*([X_{cut}])$. Hence we obtain $j_{\mathbb{Z}_p}(y') = f_*\circ j_{(X,\Sing X)}([X])$. It follows from the naturality of the long exact sequences of the pairs under the map $(X,\Sing X)\xrightarrow{f} (M,L)$ implies that $f_*\circ j_{(X,\Sing X)} = j_{(M,L)} \circ f_*$ and so we have $j_{\mathbb{Z}_p}(y') =j_{(M,L)} \circ f_*([X]) = j_{(M,L)}(x)$. This proves Claim~\eqref{cl:claim1}.

    The same relation $f_*\circ j_{(X,\Sing X)} = j_{(M,L)} \circ f_*$ also implies that $d_{(M,L)}(y') = d_{(M,L)} \circ f_*\circ q_*([X_{cut}]) = f_* \circ d_{(X,\Sing X)} \circ q_*([X_{cut}])$. The naturality of the long exact sequences of the pairs under the map $(X_{cut},\partial X_{cut})\xrightarrow{q} (X,\Sing X)$ implies that $d_{(X,\Sing X)} \circ q_* = q_* \circ d_{(X_{cut},\partial X_{cut})}$. Hence we have $d_{(M,L)}(y') = f_* \circ q_* \circ d_{(X_{cut},\partial X_{cut})}([X_{cut}])$. $[X_{cut}]$ is the fundamental class of a compact orientable $2$-manifold and so if $\partial X_{cut}$ is endowed with the induced orientation it follows that $d_{(X_{cut},\partial X_{cut})}([X_{cut}]) = [\partial X_{cut}]$. Moreover, it follows from Definition~\ref{dfn:pfoldedsurface} that $q_*([\partial X_{cut}]) = p[\Sing X]$ and so $d_{(M,L)}(y') = f_*(p[\Sing X]) = p[f(\Sing X)] = p[L]$. This proves Claim~\eqref{cl:claim2}.

    Since we have shown that $[L] = d_{\mathbb{Z}_p}(x)$, $y'\in F(x,L)$ and $||x||_{\mathbb{Z}_p} \geq ||e(y')||$. Combining these yields Equation~\eqref{eq:combiningfacts}.
    
    \begin{equation}\label{eq:combiningfacts}
        ||x||_{\mathbb{Z}_p} \geq ||e(y')|| \geq \min_{[L]=d_{\mathbb{Z}_p}(x)} \left(\min_{y\in F(x,L)} ||e(y)||_{\mathbb{Z}} \right)
    \end{equation}

    We will now prove the opposite inequality. Let $L$ be an oriented link and $y\in F(x,L)$ such that $||y||$ minimises the right hand side of the statement in Lemma~\ref{lem:thurstonnorm}. Let $S$ be a properly embedded oriented surface in $M \setminus N(L)$ such that $[S] = y$ and $\chi_-(S) = ||y||$. The two surgeries described in the proof of Lemma~\ref{lem:existence} required to make $S$ homeomorphic to the interior of a $\mathbb{Z}_p$-folded surface can be described simply as gluing disks and annuli to the boundary components of $S$ in an orientation preserving way. If we cap off a boundary component of a connected component $S_i$ of $S$ with a disk to form a new component $S_i'$ of $S'$ then $\chi(S') = \chi(S)+1$ with all other components the same so $\chi_{-}(S')\leq \chi_{-}(S)$ and by the minimality of $\chi_{-}(S)$ we have $\chi_{-}(S') = \chi_{-}(S)$. If we glue an annulus to two boundary components of the same component $S_i$ of $S$ then $\chi(S_i') = \chi(S_i)$ and so $\chi_{-}(S) = \chi_{-}(S')$. Finally if we glue an annulus joining boundary components of distinct components $S_i$ and $S_j$ of $S$ to form $S_{ij}'$ and $S'$ then $\chi(S_{ij}') = \chi(S_i)+\chi(S_j)$. If $\chi(S_i) < 0 < \chi(S_j)$ or vice-versa then $\chi_{-}(S')< \chi_{-}(S)$ which contradicts the minimality of $\chi_{-}(S)$, so this cannot happen. The remaining cases are that both components are disks, both have negative Euler characteristic or one of them is an annulus. In all three cases it follows that $\chi_{-}(S_{ij}') = \chi_{-}(S_i)+ \chi_{-}(S_j)$ and so $\chi_{-}(S') = \chi_{-}(S)$. Combining this observation with the rest of the proof of Lemma~\ref{lem:existence} shows that we can construct a $\mathbb{Z}_p$-folded surface $X$ realising $x$ such that $||x||_{\mathbb{Z}_p} \leq \chi_{-}(X) = \chi_{-}(S) = ||y||$. This yields Equation~\eqref{eq:combiningfacts2}.

    \begin{equation}\label{eq:combiningfacts2}
        ||x||_{\mathbb{Z}_p} \leq \min_{[L]=d_{\mathbb{Z}_p}(x)} \left(\min_{y\in F(x,L)} ||e(y)||_{\mathbb{Z}} \right)
    \end{equation}

    Combining Equations~\eqref{eq:combiningfacts} and~\eqref{eq:combiningfacts} completes the proof.

\end{proof}

\subsection{Weighted folded surfaces}\label{sec:weightedfoldedsurfs}

In~\cite[Chapter~IV.3.3]{Turaev2002torsions} Turaev used the notion of a \textit{weighted surface} as part of the development of a lower bound for the Thurston norm. We adapt this to give  characterisations of a \textit{weighted oriented folded surface} and a \textit{weighted $\mathbb{Z}_p$-folded surface} in Definition~\ref{dfn:weightedfolded}. 

\begin{dfn}[Weighted Folded Surface]\label{dfn:weightedfolded}
    A \textit{weighted oriented folded surface} $X$ is an oriented folded surface $\underline{X}$ together with a positive integer $w_I$ associated to each component $I$ of $\Int(\underline{X})$. If $X$ is embedded in an orientable $3$-manifold then there exists a related unweighted oriented folded surface $X^u \subset M$ which can be constructed by the following procedure for each $I\in \pi_0(\Int(\underline{X}))$. Since both $M$ and $I$ are oriented, the 1-dimensional normal bundle of $I$ in $M$ is trivial. This means that $I$ can be pushed in a normal direction, fixing the complement of $I$ in $X$, to obtain $w_I$ parallel copies with the same singular boundary $\partial(I)$. These are completely disjoint if $\partial(I) = \emptyset$. For each $I\in \pi_0(\Int(\underline{X}))$ replace $I$ with these parallel copies to obtain $X^u$. If $X^u$ is a $\mathbb{Z}_p$-folded surface for some choice of orientation of $\Sing(X^u) = \Sing(\underline{X})$ then we say that $X$ is a \textit{weighted $\mathbb{Z}_p$-folded surface}. To avoid confusion we will write $\underline{X}$ when we are referring to the oriented folded surface obtained from $X$ by forgetting the weights. This is not to be confused with the $\mathbb{Z}_p$-folded surface $X^u$ in general but note that $\underline{X} = X^u$ in the special case when every weight is equal to $1$.
\end{dfn}

Lemma~\ref{lem:weightedfoldedalternatedef} is an analogue of Lemma~\ref{lem:alternatedefinition} for weighted $\mathbb{Z}_p$-folded surfaces and may be taken as an alternate definition of a weighted $\mathbb{Z}_p$-folded surface.

\begin{lem}\label{lem:weightedfoldedalternatedef}
    A weighted folded surface $X$ is a weighted $\mathbb{Z}_p$-folded surface if and only if there exists an orientation of $\Sing(X)$ with the following properties.
    \begin{enumerate}
        \item $\deg(B \xrightarrow{q_{\underline{X}}} q_{\underline{X}}(B))>0$ for all $B\in \pi_0(\partial \underline{X}_{cut})$.
        \item Let $X_{B}$ denote the unique component of $\Int X$ such that the lift of $X_B$ to $X_{cut}$ cobounds $B$. Then $\sum_{B \in \pi_0(q^{-1}(K))} w_{X_B} \cdot \deg(B \xrightarrow{q_{\underline{X}}} K) = p$ for all $K\in \pi_0(\Sing(X))$.
    \end{enumerate} 
\end{lem}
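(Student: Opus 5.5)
The plan is to reduce Lemma~\ref{lem:weightedfoldedalternatedef} to Lemma~\ref{lem:alternatedefinition} applied to the unweighted surface $X^u$. By Definition~\ref{dfn:weightedfolded}, $X$ is a weighted $\mathbb{Z}_p$-folded surface exactly when $X^u$ is a $\mathbb{Z}_p$-folded surface for some orientation of $\Sing(X^u)=\Sing(\underline{X})$. Lemma~\ref{lem:alternatedefinition} says this holds if and only if, for some orientation of $\Sing(X^u)$, the degrees of the boundary maps of $X^u_{cut}$ are all positive and sum to $p$ over each singular arc. So it suffices to express the boundary components of $X^u_{cut}$, and their degrees, in terms of those of $\underline{X}_{cut}$ and the weights.

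The first step is to describe $X^u_{cut}$ explicitly. Each component $I$ of $\Int(\underline{X})$ is replaced by $w_I$ parallel copies $I^{(1)},\dots,I^{(w_I)}$, pushed off along the trivial normal bundle while the singular boundary is fixed. The copies therefore meet only along $\Sing(\underline{X})$, and $X^u_{cut}$ is the disjoint union over $I$ of $w_I$ copies of the closure of $I$ in $\underline{X}_{cut}$. Each copy carries the same orientation as $I$, because the push-off is along a normal direction and preserves the orientation of the sheet.

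The second step is to match up the boundary components. Each $B\in\pi_0(\partial\underline{X}_{cut})$ lies in the closure of the unique component $X_B$. It therefore gives rise to exactly $w_{X_B}$ components $B^{(1)},\dots,B^{(w_{X_B})}$ of $\partial X^u_{cut}$. The quotient map $q_{X^u}$ restricted to each $B^{(i)}$ is the composite of the parallel-copy identification $B^{(i)}\cong B$ with $q_{\underline{X}}|_B$. Since this identification preserves the induced boundary orientations, $\deg(q_{X^u}|_{B^{(i)}})=\deg(q_{\underline{X}}|_B)$ for every $i$, and every component of $\partial X^u_{cut}$ arises in this way.

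Substituting into Lemma~\ref{lem:alternatedefinition}, condition~\eqref{altdef:degpositive} for $X^u$ becomes $\deg(q_{\underline{X}}|_B)>0$ for all $B$, which is condition (1). Condition~\eqref{altdef:sumdegp} for $X^u$ becomes $\sum_{B\in\pi_0(q^{-1}(K))}w_{X_B}\deg(q_{\underline{X}}|_B)=p$ for each singular arc $K$, which is condition (2). Both equivalences hold for the same orientation of $\Sing(X)$, which proves the lemma. The main obstacle I anticipate is the second step: I need to justify that the push-off really preserves orientations and degrees on the boundary, since a sheet $I$ may meet the same singular arc along several boundary components or wrap around it with degree $p$. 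I would handle this locally in the fibre-bundle model of Lemma~\ref{lem:regularneighbourhoods}, with the copies realised as nearby rays in the disk fibre that all map to the cone point.
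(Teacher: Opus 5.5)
Your proposal is correct and matches what the paper intends: the paper gives no separate proof, presenting the lemma as the weighted analogue of Lemma~\ref{lem:alternatedefinition} applied to $X^u$, and it uses exactly your identification of each $B\in\pi_0(\partial\underline{X}_{cut})$ with $w_{X_B}$ equal-degree boundary components of $X^u_{cut}$ in the proof of Lemma~\ref{lem:weightp/2}. The one point left implicit is that $w_{X_B}\geq 1$, so every $B$ has at least one copy and positivity of the degrees transfers in both directions.
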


Lemma~\ref{lem:weightedpfoldedsurfaces} is a generalisation of~\cite[Lemma IV.3.4]{Turaev2002torsions}.

\begin{lem}\label{lem:weightedpfoldedsurfaces}
    Let $X$ be a weighted $\mathbb{Z}_p$-folded surface embedded in a connected orientable 3-manifold $M$. There exists another weighted $\mathbb{Z}_p$-folded surface $Y$ with the following properties.
    \begin{enumerate}
        \item $[Y] = [X]$
        \item $\chi_-(Y^u)\leq \chi_-(X^u)$
        \item $|\Sing(\underline{Y})| = |\Sing(\underline{X})|$
        \item $M\setminus \underline{Y}$ is connected
    \end{enumerate}

    Moreover, for any $x \in H_2(M;\mathbb{Z}_p)$ there exists a weighted $\mathbb{Z}_p$-folded surface $X$ such that $[X]=x$, $||x||_{\mathbb{Z}_p} = \chi_{-}(X^u)$ and $M\setminus \underline{X}$ is connected.
\end{lem}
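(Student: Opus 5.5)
I would follow Turaev's proof of \cite[Lemma IV.3.4]{Turaev2002torsions}: an induction that merges interior components across complementary regions. Among all weighted $\mathbb{Z}_p$-folded surfaces $Y$ with $[Y]=[X]$, $\chi_-(Y^u)\leq\chi_-(X^u)$ and $\Sing(\underline{Y})=\Sing(\underline{X})$, choose one minimising the complexity
\[c(Y)=\bigl|\pi_0(\Int(\underline{Y}))\bigr|+\sum_{I} w_I.\]
I claim $M\setminus\underline{Y}$ is then connected. If it is not, pick a component $U$ of $M\setminus\underline{Y}$ that is not all of $M\setminus\underline{Y}$. Since $M$ is connected, the frontier of $U$ meets some interior sheet $I$ of $\underline{Y}$ which has a different complementary region $V\neq U$ on its other side. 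Such a sheet exists: if every sheet met by $\overline{U}$ had $U$ on both sides, then $U$ together with those sheets and the adjacent singular arcs would be open and closed in $M$, which is impossible.

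The main step is the modification. Let $\partial_+U$ be the union of the interior components of $\underline{Y}$ that meet $\overline{U}$, coorient each by comparing its orientation with the side facing $U$, and let $\epsilon_I=\pm1$ record the sign. I replace each weight by $w_I-\epsilon_I$ (or $w_I+\epsilon_I$, whichever choice lowers some weight) on the sheets with $U$ on exactly one side, and delete any sheet whose weight reaches $0$. Homologically this subtracts $\pm[\partial \overline{U}]$, and $\partial\overline{U}$ is null-homologous in $H_2(M;\mathbb{Z})$ and hence mod $p$, so $[Y]$ is unchanged. Equivalently, one can do a cut-and-paste of $Y^u$ along a parallel copy of $\partial\overline U$.

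Then I check the singular-arc condition of Lemma~\ref{lem:weightedfoldedalternatedef}. At a singular arc $K$ on the frontier of $U$, the sheets of $Y$ near $K$ divide a meridional disk into sectors, one or more of which lie in $U$. Since the degree data are $1$ or $p$ (Lemma~\ref{lem:classificationin3mflds}), the oriented sheets bounding a sector of $U$ enter with opposite signs $\epsilon$. Because all sheets along $K$ induce the same orientation on $K$, the weighted sum $\sum w_{X_B}\deg(q_B)$ is preserved: one adjacent sheet gains exactly what the other loses. The delicate case is a twisted arc or a sheet with $U$ on both sides, where the change cancels or is zero.

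For the Euler characteristic, weights change by at most one and sheets are only deleted, so $\chi_-(Y^u)$ is a sum of $w_I\chi_-(I)$-type terms. In the linear case, taking the sign that decreases the weighted sum over the sheets of $\partial U$ with nonpositive $\chi$ gives $\chi_-\leq$ the old value, as in Turaev. The sphere and disk components, where $\chi_-$ is not linear, need a separate remark that they can be discarded or weighted without increasing $\chi_-$. The complexity $c$ strictly decreases, contradicting minimality. I expect the main obstacle to be showing that the number of singular arcs is preserved, i.e.\ that no arc loses all its sheets. This should follow from $\sum w\deg = p>0$ being preserved, but I need to rule out the weights all collapsing at $K$; p prime and the $\{1,p\}$ degree dichotomy should handle this.

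For the final statement, take a $\mathbb{Z}_p$-folded surface $X$ realising $\|x\|_{\mathbb{Z}_p}$, which exists by Lemma~\ref{lem:existence} and the fact that the minimum is attained. Regard it as weighted with all weights $1$, so $X^u=X$, and apply the first part. This gives $Y$ with $[Y]=x$, $\chi_-(Y^u)\leq\|x\|_{\mathbb{Z}_p}$ and $M\setminus\underline{Y}$ connected. Since $Y^u$ is itself a $\mathbb{Z}_p$-folded surface representing $x$, we also have $\chi_-(Y^u)\geq\|x\|_{\mathbb{Z}_p}$, so equality holds.
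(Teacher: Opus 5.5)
The set-up matches the paper's adaptation of Turaev's argument: choosing a complementary region $U$, modifying only the sheets with $U$ on exactly one side, the sector-counting argument that preserves $\sum w_{X_B}\deg(q_B)=p$ at each singular arc, and deducing the second statement from the first. The gap is the progress measure.

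You shift weights by only $\pm1$, with the direction dictated by the $\chi_-$ comparison, and assert that $c(Y)=|\pi_0(\Int(\underline{Y}))|+\sum_I w_I$ strictly decreases. It need not. Suppose the side being decreased is a single sheet of weight $1$, chosen because it has large $\chi_-$, and the other side consists of three annular sheets. Then $c$ changes by $-1-1+3=+1$. The $\chi_-$ requirement fixes which side is decreased, so you cannot pick the sign to make $c$ fall, and minimality of $c$ gives no contradiction. Separately, if every weight on the decreasing side is at least $2$, a unit shift deletes no sheet. Then no two complementary regions merge and nothing is gained.

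The missing idea is to shift by $m$, the minimum weight on the side being decreased, and to induct on the number of components of $M\setminus\underline{Y}$. In the paper, $m$ is $\min\{w_I\mid I\in\alpha_+\}$ or $\min\{w_J\mid J\in\alpha_-\}$, according to which of $\sum_{\alpha_+}\chi_-$ and $\sum_{\alpha_-}\chi_-$ is larger. With this shift, some sheet with $U$ on exactly one side reaches weight $0$ and is deleted. So $U$ merges with the distinct region on that sheet's other side, and the number of complementary components strictly drops. Meanwhile $[Y]$, $\Sing(\underline{Y})$ and the $\chi_-$ bound are preserved.

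Two of your worries are unnecessary:
\begin{itemize}
\item $\chi_-(Y^u)=\sum_I w_I\,\chi_-(I_{cut})$ is exactly linear in the weights, because $Y^u_{cut}$ is a disjoint union of $w_I$ copies of each $I_{cut}$. So disk and sphere sheets need no special treatment. The change under the shift is $m$ times (the sum of $\chi_-$ over the increased sheets minus that over the decreased sheets), which is at most $0$ by the choice of direction.
\item No singular arc can lose all its sheets, simply because the weighted degree sum there is preserved and equals $p>0$. Primality plays no role here.
\end{itemize}
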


\begin{proof}

    This proof adapted from \cite[Lemma IV.3.4]{Turaev2002torsions}, which is the analogue of Lemma~\ref{lem:weightedpfoldedsurfaces} for \textit{weighted surfaces}. Let $X$ be a weighted $\mathbb{Z}_p$-folded surface. Let $N$ be a component of the complement of $\underline{X}$ in $M$. If $M \setminus \underline{X}$ is not connected, then $\partial(\overline{N}) \neq \emptyset$. Let $\alpha_+$ (resp. $\alpha_-$) be the subset of $\pi_0(\Int(\underline{X}))$ given by the components on the boundary of $\overline{N}$ such that their orientation in $\Int(\underline{X})$ is the same as (resp. opposite to) the orientation of $\partial(\overline{N})$ induced by the orientation of $M$.  Now if $\alpha_{-} = \emptyset$ or $\sum_{I\in \alpha_{+}} \chi_-(I) \geq \sum_{J\in \alpha_{-} } \chi_-(J)$ then decrease each $w_{I\in \alpha_+}$ and increase each $w_{J\in \alpha_{-}}$ by $\text{min}\{w_I |I\in \alpha_+\}$. Otherwise, in the case where $\alpha_- \neq \emptyset$ and $\sum_{I\in \alpha_{+}} \chi_-(I) < \sum_{J\in \alpha_{-} } \chi_-(J)$, then increase each $w_{I\in \alpha_+}$ and decrease each $w_{J\in \alpha_-}$ by $\text{min}\{w_J |J\in \alpha_-\}$. Remove any components of $\Int(\underline{X})$ whose weights become $0$ from $\underline{X}$ and denote the resulting weighted oriented folded surface $Y$. The embedding of $(\underline{Y},\Sing(\underline{Y}))$ in $M$ factors through the embedding of $(\underline{X},\Sing(\underline{X}))$ in $M$. We can therefore abuse notation to denote by $[X],[Y]$ the fundamental classes of $X$ and $Y$ in $H_2(\underline{X};\mathbb{Z}_p)$. It follows from the construction of $Y$ that $m_*([X] - [Y]) \in H_2(\underline{X},\Sing(\underline{X});\mathbb{Z}_p)$ is an integer multiple of $\sum_{I\in \alpha_+}[I] - \sum_{J\in \alpha_-}[J]$ and hence $[X] - [Y]$ is an integer multiple of $[\partial \overline{N}]$, $H_2(X;\mathbb{Z}_p)$ class of the $2$-cycle boundary of $N$. It follows from definition~\ref{dfn:realisation} that $X$ and $Y$ represent equivalent classes in $H_2(M;\mathbb{Z}_p)$ since $[\partial \overline{N}]$ is annihilated by the map induced by $X\hookrightarrow M$.
    
    To see that $Y$ is a weighted $\mathbb{Z}_p$-folded surface and that $\Sing(X) = \Sing(Y)$ simply note that all the components of $Y^u$ still induce consistent orientations on the singular locus and that the long exact sequence of the pair $(\underline{X},\Sing(\underline{X}))$ implies that $\sum_{I\in \alpha_{+}}[\partial(I)] = \sum_{J\in \alpha_{-} }[\partial(J)]  \in H_1(\Sing(\underline{X}))$ and so there is no change in the total number of sheets of $Y^u$ identified along each singular arc. The method by which weights are changed implies that $\chi_-(Y^u)\leq \chi_-(X^u)$. It also follows from the construction that $M\setminus \underline{Y}$ has strictly fewer components than $M\setminus \underline{X}$.
    
    Iterating this process we eventually obtain a weighted $\mathbb{Z}_p$-folded surface $Y$ such that $M\setminus \underline{Y}$ is connected, satisfying the required properties. 

    The proof is completed by noting that for any $x\in H_2(M;\mathbb{Z}_p)$ and $\underline{X}$ a $\mathbb{Z}_p$-folded surface that is optimal in the sense that $||x||_{\mathbb{Z}_p} = \chi_-(\underline{X})$ we can assign weights $w_I = 1$ for all $I \in \pi_0(\Int(\underline{X}))$ to obtain a weighted $\mathbb{Z}_p$-folded surface $X$ such that $X^u = \underline{X}$ and hence $||x||_{\mathbb{Z}_p} = \chi_-(X^u)$. By following the above induction the resulting weighted $\mathbb{Z}_p$-folded surface $Y$ must also have the property that $||x||_{\mathbb{Z}_p} = \chi_-(Y^u)$ (this follows from the definition of $||x||_{\mathbb{Z}_p}$) and the additional property that $M\setminus \underline{Y}$ is connected.

\end{proof}

We now consider the more general situation of 2-polyhedra $Y$ embedded in a $3$-manifold $M$. Note that the interior $\Int(Y)$ of a general $2$-polyhedron is defined to be the set of points with a neighbourhood homeomorphic to an open disk and the singular locus is the $1$-polyhedron $\Sing(Y):= Y \setminus \Int(Y)$. If $M\setminus Y$ is connected then we call it \textit{non-separating}. We call a weighted folded surface $X$ non-separating if $\underline{X}$ is non-separating (even if $X^u$ may not be). Lemma~\ref{lem:injectivitynonsep} proves some useful properties of non-separating 2-polyhedra which can be translated into properties of non-separating weighted $\mathbb{Z}_p$-folded surfaces by setting $Y:=\underline{X}$.

\begin{lem}\label{lem:injectivitynonsep}
    Let $Y$ be a 2-polyhedron embedded in a closed orientable connected 3-manifold $M$. The following are equivalent:
    \begin{enumerate}
        \item $Y$ is non-separating in $M$. \label{enum:nonseparating}
        \item The induced homomorphism $f_{\mathbb{Z}}:H_2(Y;\mathbb{Z})\rightarrow H_2(M;\mathbb{Z})$ is injective. \label{enum:Zinjective}
        \item The induced homomorphism $f_{\mathbb{Z}_p}:H_2(Y;\mathbb{Z}_p)\rightarrow H_2(M;\mathbb{Z}_p)$ is injective for all $p\in \mathbb{Z}^{>1}$. \label{enum:Zkinjective}
    \end{enumerate}
\end{lem}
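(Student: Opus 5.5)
We plan to deduce all three conditions from the long exact sequence of the pair $(M,Y)$ together with Poincaré--Lefschetz duality. Fix a coefficient group $G$, either $\mathbb{Z}$ or $\mathbb{Z}_p$ with $p>1$. Let $N$ be a closed regular neighbourhood of $Y$ and set $W := \overline{M\setminus N}$, a compact orientable $3$-manifold with $\partial W = \partial N$. Since $Y$ is a deformation retract of $N$, we have $H_*(Y;G)\cong H_*(N;G)$. The pair sequence gives
\[ H_3(Y;G)\rightarrow H_3(M;G)\rightarrow H_3(M,Y;G)\rightarrow H_2(Y;G)\xrightarrow{f_G} H_2(M;G). \]
Because $Y$ is $2$-dimensional, $H_3(Y;G)=0$. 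Therefore $f_G$ is injective exactly when $H_3(M;G)\rightarrow H_3(M,Y;G)$ is surjective.

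The next step is to compute this map. By excision, $H_3(M,Y;G)\cong H_3(M,N;G)\cong H_3(W,\partial W;G)$. The manifold $W$ is compact and orientable, and every component of $W$ has nonempty boundary, since $Y\neq\emptyset$ and $M$ is connected. So each component $W_i$ contributes a copy of $G$ generated by its fundamental class $[W_i,\partial W_i]$. Hence $H_3(M,Y;G)\cong G^{c}$, where $c=|\pi_0(W)|=|\pi_0(M\setminus Y)|$.

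Under this identification, $[M]$ restricts to the fundamental class of every $W_i$ simultaneously, so the map $H_3(M;G)=G\rightarrow G^{c}$ is the diagonal. The diagonal $G\rightarrow G^c$ is surjective if and only if $c=1$, because $G\neq 0$. This single argument shows that \eqref{enum:nonseparating} is equivalent to \eqref{enum:Zinjective}, and also that \eqref{enum:nonseparating} is equivalent to injectivity of $f_{\mathbb{Z}_p}$ for each individual $p>1$. The equivalence with \eqref{enum:Zkinjective}, which asks for all $p$, follows at once.

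The step needing the most care is the identification of $H_3(M;G)\rightarrow H_3(W,\partial W;G)$ as the diagonal. We will justify it by naturality of fundamental classes: the composite $H_3(M)\rightarrow H_3(M,M\setminus \Int W_i)\cong H_3(W_i,\partial W_i)$ sends $[M]$ to $[W_i,\partial W_i]$, using the orientation of $W_i$ induced from $M$. With $\mathbb{Z}_p$ coefficients we will use the same statement after reducing mod $p$. We also need to use that $N$ is PL and that regular neighbourhoods exist, so that $W$ is a compact $3$-manifold; this is where the standing PL hypothesis is used. Finally, we note the assumption $Y\neq\emptyset$. If $Y$ were empty, $W=M$ would have empty boundary, $H_3(M,Y)=H_3(M)$, and the statement holds trivially.
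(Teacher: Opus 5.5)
Your proof is correct, but it is organised differently from the paper's.

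\textbf{The paper's route.} It proves (1)$\Leftrightarrow$(2) only with integer coefficients, by a rank count. It notes that $H_3(M,Y;\mathbb{Z})$ is free of rank $|\pi_0(M\setminus Y)|$ and that $H_3(M;\mathbb{Z})\cong\mathbb{Z}$ injects into it. Since $H_2(Y;\mathbb{Z})$ is torsion-free, the image of the connecting map is free of rank one less. It then obtains (2)$\Leftrightarrow$(3) by a separate algebraic comparison of the integral and mod $p$ kernels:
\begin{itemize}
\item a primitive integral kernel element reduces to a nonzero mod $p$ kernel element;
\item conversely, a mod $p$ kernel element is lifted to $H_3(M,Y;\mathbb{Z}_p)$ and then to $H_3(M,Y;\mathbb{Z})$. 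This lift needs $H_2(M,Y;\mathbb{Z})$ to be torsion-free, which the paper proves via Poincar\'e--Lefschetz duality and the universal coefficient theorem.
\end{itemize}

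\textbf{Your route.} You identify $H_3(M;G)\to H_3(M,Y;G)\cong G^{c}$ explicitly as the diagonal, for an arbitrary coefficient group $G$. One argument then handles $\mathbb{Z}$ and every $\mathbb{Z}_p$ at once. You avoid the coefficient long exact sequence, the primitive-element argument, and the torsion-freeness of $H_2(M,Y;\mathbb{Z})$. You also get the sharper statement directly: non-separation is equivalent to injectivity of $f_G$ for any single nonzero $G$.

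\textbf{Trade-off.} Your extra input is the geometric fact that $[M]$ restricts to the fundamental classes $[W_i,\partial W_i]$ of the complementary pieces. The paper's assertion that each component of $M\setminus Y$ gives a free generator of $H_3(M,Y;\mathbb{Z})$ tacitly uses the same excision to the complement of a regular neighbourhood, so you assume little beyond what the paper already does. The paper's approach, in exchange, needs only the size of $H_3(M,Y)$ and injectivity of $H_3(M)\to H_3(M,Y)$, not the explicit form of the map. Its integral-to-mod-$p$ transfer is a reusable algebraic argument in its own right.
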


\begin{proof}
    The implication $\text{\ref{enum:nonseparating}} \Leftrightarrow \text{\ref{enum:Zinjective}}$ can be derived from the long exact sequence of the pair $(M,Y)$ as follows.

    The number of components of $M\setminus Y$ is equal to the rank of $H_3(M,Y;\mathbb{Z})$ by taking each component to be a free generator. $H_3(Y;\mathbb{Z})=0$ by a dimension argument and so $j_{(M,Y)}:H_3(M;\mathbb{Z})\rightarrow H_3(M,Y;\mathbb{Z})$ is injective and so $\ker(H_3(M,Y;\mathbb{Z})\rightarrow H_2(Y;\mathbb{Z})) = H_3(M;\mathbb{Z}) = \mathbb{Z}$. $H_2(Y;\mathbb{Z})$ is torsion free so we can deduce that the kernel of $H_3(M,Y;\mathbb{Z})\rightarrow H_2(Y;\mathbb{Z})$ is generated by a primitive element of $H_3(M,Y;\mathbb{Z})$. It follows from the first isomorphism theorem that $\Ima(H_3(M,Y;\mathbb{Z})\rightarrow H_2(Y;\mathbb{Z}))$ is a free abelian group of rank one less than the rank of $H_3(M,Y;\mathbb{Z})$. It follows immediately that $\ker(H_2(Y;\mathbb{Z})\rightarrow H_2(M;\mathbb{Z})) = 0$ if and only if $M\setminus Y$ has a single connected component.

    We can now prove the implication $\text{\ref{enum:Zinjective}} \Leftrightarrow \text{\ref{enum:Zkinjective}}$ by demonstrating both contrapositives.

    Suppose $\ker(H_2(Y;\mathbb{Z})\rightarrow H_2(M;\mathbb{Z}))$ is non-trivial, then the above arguments imply that it contains a primitive element $x\in H_2(Y;\mathbb{Z})$. This primitive element cannot be in the image of $i_{\mathbb{Z}_p}$ which consists only of non-primitive elements in the subgroup $p \cdot H_2(Y;\mathbb{Z})$. Hence $j_{\mathbb{Z}_p}(x)\neq 0$. But since 
    \[0 = j_{\mathbb{Z}_p}\circ i_{(M,Y)}(x) =  i_{(M,Y)}\circ j_{\mathbb{Z}_p}(x)\] 
    it follows that 
    \[j_{\mathbb{Z}_p}(x) \in \ker(H_2(Y;\mathbb{Z}_p)\rightarrow H_2(M;\mathbb{Z}_p)).\] 

    Now suppose that $x$ is a non-trivial element of $\ker(H_2(Y;\mathbb{Z}_p)\rightarrow H_2(M;\mathbb{Z}_p))$. Lift $x$ to $x'\in H_3(M,Y;\mathbb{Z}_p)$ such that $d_{(M,Y)}(x') = x$. It follows from excising an open neighbourhood $N(Y)$ of $Y$ that $H_2(M,Y;\mathbb{Z})$ is isomorphic to $H_2(M\setminus N(Y),\partial N(Y);\mathbb{Z})$. By Poincar\'e-Lefschetz duality 
    \[H_2(M\setminus N(Y),\partial N(Y);\mathbb{Z}) \simeq H^1(M\setminus N(Y);\mathbb{Z}).\] 
    The universal coefficient theorem implies that 
    \[H^1(M\setminus N(Y);\mathbb{Z}) \simeq \text{Hom}(H_1(M\setminus N(Y);\mathbb{Z}), \mathbb{Z})\] 
    which must be torsion free. We therefore deduce that $H_2(M,Y;\mathbb{Z})$ is torsion free. 
    
    Hence $d_{\mathbb{Z}_p}:H_3(M,Y;\mathbb{Z}_p) \rightarrow H_2(M,Y;\mathbb{Z})$ annihilates all of $H_3(M,Y;\mathbb{Z}_p)$. It follows that $j_{\mathbb{Z}_p}:H_3(M,Y;\mathbb{Z})\rightarrow H_3(M,Y;\mathbb{Z}_p)$ is a surjection and so $x' = j_{\mathbb{Z}_p}(x'')$ for some $x'' \in H_3(M,Y;\mathbb{Z})$. Since $x = d_{(M,Y)} \circ j_{\mathbb{Z}_p} (x'') =  j_{\mathbb{Z}_p} \circ d_{(M,Y)} (x'')$ it follows that $d_{(M,Y)} (x'') \neq 0$. The exactness of the sequence of the pair $(M,Y)$ implies that $d_{(M,Y)} (x'') \in \ker(H_2(Y;\mathbb{Z})\rightarrow H_2(M;\mathbb{Z}))$. This completes the proof.
    
\end{proof}

When $p$ is prime, the second 
\textit{$\mathbb{Z}_p$-Betti number} of a $3$-manifold $M$ is defined to be the dimension of $H_2(M;\mathbb{Z}_p)$ as a $\mathbb{Z}_p$-vector space. Lemma~\ref{lem:components} leverages this property to determine an upper bound on the number of components of a weighted non-separating $\mathbb{Z}_p$-folded surface. 

\begin{lem}\label{lem:components} %
    If $X$ is a weighted non-separating $\mathbb{Z}_p$-folded surface embedded in $M$ such that every component of $X$ has a non-trivial fundamental class (equivalently $w_I < p$ for all $I\in \pi_0(\Int X)$) then $|\pi_0(\underline{X})|\leq \beta_2(M;\mathbb{Z}_p)$.
\end{lem}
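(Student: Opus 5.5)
The plan is to combine the injectivity statement of Lemma~\ref{lem:injectivitynonsep} with a linear independence argument in $H_2(\underline{X};\mathbb{Z}_p)$. Since $p$ is prime, $\mathbb{Z}_p$ is a field, so $H_2(M;\mathbb{Z}_p)$ is a vector space of dimension $\beta_2(M;\mathbb{Z}_p)$. It therefore suffices to produce $|\pi_0(\underline{X})|$ linearly independent vectors in $H_2(M;\mathbb{Z}_p)$.

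First, decompose $\underline{X}$ into its connected components $X_1,\ldots,X_k$. Each component $X_i$, with the weights restricted to its interior components, is itself a weighted $\mathbb{Z}_p$-folded surface. The conditions of Lemma~\ref{lem:weightedfoldedalternatedef} are local to each singular arc, and every singular arc lies in exactly one component. So the construction of Section~\ref{sec:realisation} applied to $X_i^u$ gives a fundamental class $[X_i]\in H_2(X_i;\mathbb{Z}_p)$. Since $H_2(\underline{X};\mathbb{Z}_p)=\bigoplus_i H_2(X_i;\mathbb{Z}_p)$, the classes $[X_1],\ldots,[X_k]$, viewed in $H_2(\underline{X};\mathbb{Z}_p)$, lie in distinct direct summands.

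Second, show that each $[X_i]$ is non-zero; this is exactly the hypothesis in the statement, and I will justify the parenthetical equivalence with $w_I<p$. Under the injection $H_2(X_i;\mathbb{Z}_p)\to H_2(X_i,\Sing(X_i);\mathbb{Z}_p)$, the class $[X_i]$ maps to $\sum_I w_I[\overline{I}]$ reduced mod $p$. Here $I$ runs over the interior components of $X_i$, and the relative classes $[\overline{I}]$ are the images of the fundamental classes of the components of $(X_{i})_{cut}$. The group $H_2(X_i,\Sing(X_i);\mathbb{Z}_p)\cong H_2(X_{cut},\partial X_{cut};\mathbb{Z}_p)$ is free on these classes, by excision and the fact that $q$ is a relative homeomorphism. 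Hence the image is non-zero as soon as some $w_I\not\equiv 0 \pmod p$. In particular this holds if $0<w_I<p$, since weights are positive integers.

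Third, the classes are linearly independent in $H_2(\underline{X};\mathbb{Z}_p)$. Each $[X_i]$ is non-zero, they lie in distinct summands, and we are working over a field, so any relation $\sum c_i[X_i]=0$ forces each $c_i[X_i]=0$ and hence $c_i=0$. Because $\underline{X}$ is non-separating, Lemma~\ref{lem:injectivitynonsep} (with $Y=\underline{X}$) shows that $H_2(\underline{X};\mathbb{Z}_p)\to H_2(M;\mathbb{Z}_p)$ is injective. The images of the $[X_i]$ are therefore $k$ linearly independent vectors in $H_2(M;\mathbb{Z}_p)$, which gives $k\le\beta_2(M;\mathbb{Z}_p)$.

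The main obstacle is the bookkeeping in the second step. One must identify the relative group correctly as a free $\mathbb{Z}_p$-module on the interior components. One must also check that the fundamental class of $X^u$, computed in $H_2(X^u;\mathbb{Z}_p)$, pushes forward to $H_2(\underline{X};\mathbb{Z}_p)$ as the weighted sum. This holds because the $w_I$ parallel copies of $I$ are homotopic rel boundary to $I$ in a neighbourhood of $\underline{X}$. This push-forward is how the class of a weighted surface should be read, consistent with the proof of Lemma~\ref{lem:weightedpfoldedsurfaces}.
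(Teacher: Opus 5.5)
Your proposal is correct and follows essentially the same route as the paper. Each component's fundamental class is non-zero because of its image in $H_2(\cdot,\Sing;\mathbb{Z}_p)$, the classes are linearly independent because the components are disjoint, and injectivity from Lemma~\ref{lem:injectivitynonsep} then transfers them to $H_2(M;\mathbb{Z}_p)$; your extra bookkeeping (the direct-sum splitting, freeness of the relative group on interior components, and reading the weighted class as the push-forward of $[X^u]$) just makes explicit what the paper's proof leaves implicit.
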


\begin{proof}
    Suppose $X$ is a weighted non-separating $\mathbb{Z}_p$-folded surface in $M$ such that every weight is less than $p$. Let $\pi_0(X)$ denote the set of weighted $\mathbb{Z}_p$-folded surfaces $Y$ such that $\underline{Y} \in \pi_0(\underline{X})$ and the weight function on $\pi_0(\Int Y)$ is the restriction of the weight function of $\pi_0(\Int X) \supseteq \pi_0(\Int Y)$. For each $Y \in \pi_0(X)$ we deduce that the fundamental class $[Y] \neq 0$ since $j_{(Y,\Sing Y)}([Y]) \neq 0$ if and only if $w_I \not \equiv 0 \mod p$ for all $I\in \Int X$. Hence $\angb{i_{(X,Y)}([Y])}$ is a linear subspace of $H_2(X;\mathbb{Z}_p)$. Since $\pi_0(\underline{X})$ is a pairwise disjoint set of subspaces of $X$ we can also deduce that $\{i_{(X,Y)}([Y])\ |\ Y \in \pi_0(X) \}$ is a linearly independent set. Hence $|\pi_0(\underline{X})| \leq \beta_2(X;\mathbb{Z}_p) = \dim H_2(X;\mathbb{Z}_p)$. Lemma~\ref{lem:injectivitynonsep} implies that $i_{(M,X)}:H_2(X;\mathbb{Z}_p)\rightarrow H_2(M;\mathbb{Z}_p)$ is injective and hence $\beta_2(X;\mathbb{Z}_p) \leq \beta_2(M;\mathbb{Z}_p)$. Putting together these two inequalities completes the proof. 
\end{proof}

Definition~\ref{dfn:removablecomponents} characterises \textit{removable components} of the interior of a folded surface that are so called because they give rise to certain surgeries that allow us to simplify $\mathbb{Z}_p$-folded surfaces in Lemmas~\ref{lem:noremovabledisks} and~\ref{lem:removenotdisks}. 

\begin{dfn}\label{dfn:removablecomponents}
    Let $X$ be a weighted $\mathbb{Z}_p$-folded surface and $q_{\underline{X}}:\underline{X}_{cut}\rightarrow X$ be the quotient map defined in Section~\ref{sec:theta}. Let $C \in \pi_0(\Int(\underline{X}))$. Denote $C_{cut}$ the component of $X_{cut}$ that is the closure of $q_{\underline{X}}^{-1}(C)$. We call $C$ \textit{removable} if the restriction $C_{cut}\xrightarrow{q_{\underline{X}}}\underline{X}$ is an embedding.
\end{dfn}

\begin{lem} \label{lem:weightp/2}
    If $X$ is a weighted non-separating $\mathbb{Z}_p$-folded surface and $C\in \pi_0(\Int(\underline{X}))$ is a component with weight $w_C>p/2$ then $C$ is removable.
\end{lem}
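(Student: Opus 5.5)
We argue by contradiction, using the sheet-counting condition of Lemma~\ref{lem:weightedfoldedalternatedef}. Suppose $C$ has weight $w_C>p/2$ but is not removable. Then the restriction $C_{cut}\xrightarrow{q_{\underline{X}}}\underline{X}$ fails to be injective. Since $q_{\underline{X}}$ is injective on the interior, the failure happens on $\partial C_{cut}$. There are two ways it can happen.

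\emph{Case (a): a single boundary component covers its arc with degree at least $2$.} Here some $B\in\pi_0(\partial C_{cut})$ has $\deg(B\xrightarrow{q_{\underline{X}}}K)\ge 2$. Then
\[\sum_{B'\in \pi_0(q^{-1}(K))} w_{X_{B'}}\deg(B')\ \ge\ 2w_C>p,\]
since every term in the sum is positive. This contradicts condition~(2) of Lemma~\ref{lem:weightedfoldedalternatedef}.

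\emph{Case (b): two boundary components of $C_{cut}$ map to the same arc.} Here distinct $B_1,B_2\in\pi_0(\partial C_{cut})$ satisfy $q(B_1)=q(B_2)=K$. Both terms in condition~(2) carry weight $w_C$ and degree at least $1$, so the sum is at least $2w_C>p$. This again contradicts condition~(2).

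In either case we reach a contradiction, so $q$ is injective on $\partial C_{cut}$. Hence $q$ is injective on all of $C_{cut}$. A continuous injection from a compact space into a Hausdorff space is an embedding, so $C$ is removable.

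The one point that needs care is checking that conditions~(1) and~(2) of Lemma~\ref{lem:weightedfoldedalternatedef} apply to arcs of $\underline{X}$ for a single fixed orientation of $\Sing(X)$. Positivity of every degree is exactly what lets us bound the sum below by its contribution from $C$ alone, with no cancellation from other sheets. I expect this to be routine once Lemma~\ref{lem:weightedfoldedalternatedef} is taken as given.

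Neither the non-separating hypothesis nor the primality of $p$ is needed.
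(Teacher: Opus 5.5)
Your proof is correct and is essentially the paper's argument. The paper also bounds $\sum_{B} w_C\deg(B\to K)\le p$ over the boundary components $B$ of $C_{cut}$ mapping to a given singular arc $K$, though it gets the bound by passing to $X^u$ and applying Lemma~\ref{lem:alternatedefinition} rather than citing Lemma~\ref{lem:weightedfoldedalternatedef} directly. It concludes directly that each such $K$ receives exactly one boundary component, of degree $1$, so $q_{\underline{X}}$ embeds $C_{cut}$; your two cases are just the contrapositive of that deduction.
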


\begin{proof}

    Let $C$ be any non-compact component of $\Int(\underline{X})$ such that $w_C>p/2$. Let $C_{cut}$ be the component of $\underline{X}_{cut}$ that is the closure of $q_{\underline{X}}^{-1}(C)$. For any $K\in \pi_0(\Sing(\underline{X}))$ let $B(C,K)$ be the set of boundary components of $C_{cut}$ mapped to $K$ by $q_{\underline{X}}$. The construction of $X^u$ induces homeomorphisms from $C$ to its $w_C$ parallel copies $C^i$ which extend uniquely to homeomorphisms of $C_{cut}\rightarrow C^i_{cut}$. This allows us to describe disjoint sets $B^u\subset \pi_0(\partial{X^u_{cut}})$ of the $w_C$ boundary components identified with $B \in B(C,K)$ by these homeomorphisms. The construction also implies that $q_{X^u}(\mathcal{B}) = q_{\underline{X}}(B) = K$ and that $\deg(\mathcal{B}\xrightarrow{q_{X^u}} K) = \deg(B\xrightarrow{q_{\underline{X}}}K)$ for all $\mathcal{B}\in B^u$. Summing the degrees of the corresponding maps to $K$, Lemma~\ref{lem:alternatedefinition} implies that $p = \sum_{\mathcal{B} \in \pi_0(q_{X^u}^{-1}(K))} \deg(\mathcal{B}\xrightarrow{q_{X^u}}K) \geq \sum_{\mathcal{B}\in B^u}\sum_{B\in B(C,K)}\deg(B\xrightarrow{q_{\underline{X}}} K) = \sum_{B\in B(C,K)}\deg(B\xrightarrow{q_{\underline{X}}} K)\cdot w_C$. If $w_C>p/2$ then this inequality implies that $\deg(B\xrightarrow{q_{\underline{X}}} K) = 1$ and $|B(C,K)| = 1$ for all $K\in \pi_0(q_{\underline{X}}(\partial C_{cut}))$. This shows that each boundary component of $C_{cut}$ is mapped homeomorphically onto a distinct component of $\Sing(\underline{X})$ and so $\partial{C_{cut}}\xrightarrow{q_{\underline{X}}} \Sing(\underline{X})$ is an embedding. It is already known that $\Int(\underline{C}_{cut})\xrightarrow{q_{\underline{X}}} C \subset \Int(\underline{X})$ is a homeomorphism. Together this implies that $C_{cut}\xrightarrow{q_{\underline{X}}} \underline{X}$ is an embedding, proving the claim.
    
\end{proof}

\begin{lem}\label{lem:removabledisksorclosed}
    If $C \in \pi_0(\Int(X))$ is homeomorphic to an open disk then either $C$ is a removable disk or the closure of $C$ in $X$ is a component of $X$ homeomorphic to the $\mathbb{Z}_p$-crosscap in Figure~\ref{fig:pcrosscaps}.
\end{lem}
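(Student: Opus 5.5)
The plan is to analyse the closure $C_{cut}$ of $q^{-1}(C)$ in $X_{cut}$. Since $C$ is an open disk and $q$ restricts to a homeomorphism from $C_{cut}\setminus\partial C_{cut}$ onto $C$, the compact surface $C_{cut}$ has interior an open disk. It is therefore either a closed disk with a single boundary circle $B$, or a sphere with $\partial C_{cut}=\emptyset$. In the sphere case $C$ would be compact and open, so $C_{cut}=C$ is a closed component embedded in $X$; it is removable by Definition~\ref{dfn:removablecomponents}, and we are done. So we reduce to the case where $C_{cut}$ is a closed disk with exactly one boundary component $B$. Here $q_B:B\rightarrow K:=q(B)$ is a covering of some singular arc $K$, and by Lemma~\ref{lem:alternatedefinition} its degree satisfies $1\le \deg(q_B)\le p$.

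Next I would split into cases on $\deg(q_B)$. If $\deg(q_B)=1$, then $q_B$ is a homeomorphism onto $K$. Combined with the homeomorphism on the interior, this makes $C_{cut}\rightarrow X$ injective and hence an embedding, so $C$ is a removable disk. If $\deg(q_B)>1$, then $C$ is not removable, and I must show that the closure of $C$ is a whole component of $X$ homeomorphic to the $\mathbb{Z}_p$-crosscap. If the statement is intended for embedded $X$ (or weighted surfaces with the cases of interest), Lemma~\ref{lem:classificationin3mflds} forces $\deg(q_B)=p$. Then condition~\eqref{altdef:sumdegp} of Lemma~\ref{lem:alternatedefinition} gives $\pi_0(q^{-1}(K))=\{B\}$, so no other sheet meets $K$. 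Consequently $C\cup K$ is open and closed in $X$, hence a component. Its associated $p$-fold graph has one interior vertex of genus $0$, one singular vertex, and one edge labelled $p$. This is exactly the graph in Figure~\ref{fig:pcrosscaps}, so by the classification lemma (homeomorphism type determined by $G(X)$) the component is the $\mathbb{Z}_p$-crosscap.

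The main obstacle is the intermediate case $1<\deg(q_B)<p$ for an abstract, non-embedded $\mathbb{Z}_p$-folded surface. For $p$ prime this case is not excluded by Lemma~\ref{lem:alternatedefinition} alone, since another boundary component could supply the remaining $p-\deg(q_B)$. I would first check whether the lemma is meant under the standing embeddedness hypothesis of this section; this seems likely, since the surrounding lemmas concern embedded, weighted non-separating surfaces. If so, I would invoke Lemma~\ref{lem:classificationin3mflds} as above to rule out the intermediate case. If weights are involved, condition~(2) of Lemma~\ref{lem:weightedfoldedalternatedef} gives $w_C\deg(q_B)\le p$. I would then apply the embeddability classification to $X^u$, whose boundary degrees equal those of $\underline{X}$, to again force $\deg(q_B)\in\{1,p\}$. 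This gives either $w_C=1$ with the crosscap conclusion, or $\deg(q_B)=1$ with removability.
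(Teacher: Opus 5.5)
Your proposal is correct and follows the paper's own proof: Lemma~\ref{lem:classificationin3mflds} forces $\deg(q_B)\in\{1,p\}$, degree $1$ makes $C_{cut}\rightarrow X$ an embedding so $C$ is removable, and degree $p$ leaves $B$ as the only sheet meeting $K$, so $\overline{C}$ is the $\mathbb{Z}_p$-crosscap. Your sphere case is vacuous, since an open disk is not compact and so $C_{cut}$ must be a closed disk; your passage to $X^u$ in the weighted setting usefully makes explicit the embeddedness hypothesis that the paper only uses implicitly.
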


\begin{proof}
    Let $B := \partial{C_{cut}}$. Lemma~\ref{lem:classificationin3mflds} implies that $\deg(B\xrightarrow{q_{X}} q_{X}(B)) \in \{1,p\}$. If $\deg(B\xrightarrow{q_{X}} q_{X}(B)) = p$ then $q_{X}(C_{cut}) = \overline{C}$ is the $\mathbb{Z}_p$-folded surface in Figure~\ref{fig:pcrosscaps}. If $\deg(B\xrightarrow{q_{X}} p(B)) = 1$ then $C_{cut}$ is mapped homeomorphically onto its image and so $C$ is a removable disk by definition.
\end{proof}

We can use these lemmas to restrict the class of weighted $\mathbb{Z}_p$-folded surfaces realising the function $||\cdot||_{\mathbb{Z}_p}$ a little further.

\begin{lem}\label{lem:noremovabledisks}

    Let $X$ be a weighted non-separating $\mathbb{Z}_p$-folded surface with a removable disk $D$ component of $\Int(X)$. There exists another weighted non-separating $\mathbb{Z}_p$-folded surface $Y$ such that:
    
    \begin{enumerate}
        \item $[X] = [Y]$
        \item $\chi_-(Y^u)\leq \chi_-(X^u)$
        \item $|\pi_0(\Sing(Y^u))| = |\pi_0(\Sing(X^u))|-1$
        \item $\chi(Y^u) = \chi(X^u) + p - 2w_D$
        \item $\max_{C\in \pi_0(\Int(\underline{Y}))} w_C \leq \max_{C\in \pi_0(\Int(\underline{X}))} w_C$
    \end{enumerate}
\end{lem}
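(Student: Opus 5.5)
Since $D$ is a removable disk, the closure $\overline{D}$ in $\underline{X}$ is an embedded disk whose boundary is an entire singular arc $K\in \pi_0(\Sing(\underline{X}))$, and $\partial D_{cut}\to K$ has degree $1$. By Lemma~\ref{lem:weightedfoldedalternatedef} the remaining sheets meeting $K$ contribute total weighted degree $p-w_D$. The plan is to delete $D$ and \emph{dissolve} $K$: remove $D$ from $\underline{X}$, reverse the orientation of the other sheets along $K$, and glue them to a parallel copy of $\overline{D}$ carrying weight $p-w_D$ with the opposite orientation. Equivalently, we replace the contribution of $D$ to the fundamental class by the homologous chain $-(p-w_D)[\overline D]$ modulo $p$. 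Concretely, in $X^u$ the $w_D$ parallel disks are swapped for $p-w_D$ oppositely oriented parallel disks pushed slightly off $\overline D$. The other $p-w_D$ sheets ending on $K$ are then capped off by these disks, and $K$ ceases to be singular.

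\textbf{Step 1 (construction and $\mathbb{Z}_p$-folded structure).} Take the $p-w_D$ sheets of $X^u$ incident to $K$. Each has degree one along $K$, since $\deg(q_B)$ is positive and the weighted degrees sum to $p-w_D$. Push a disk parallel to $\overline D$ for each such sheet and cap it off with orientation compatible with that sheet. Because $\overline D$ is embedded and non-separating considerations are local, these disks can be taken disjoint and nested in a collar of $\overline D$. We then collapse them back to a single weighted disk $D'$ with weight $p-w_D$, attached to the neighbouring interior components. The result is a weighted oriented folded surface $Y$ with $\Sing(\underline Y)=\Sing(\underline X)\setminus K$. Conditions (1)--(2) of Lemma~\ref{lem:weightedfoldedalternatedef} hold away from $K$ because nothing changes there, which gives item 3.

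\textbf{Step 2 (homology, Euler characteristic, weights).} The difference of the fundamental classes of $X$ and $Y$ is $w_D[\overline D]+(p-w_D)[\overline D] = p[\overline D]$ relative to $K$, up to sign. This vanishes with $\mathbb{Z}_p$ coefficients, and lifting via the injectivity argument of Section~\ref{sec:realisation} gives $[X]=[Y]$. For item 4, $X^u_{cut}$ loses $w_D$ disks, so $\chi$ drops by $w_D$. The $p-w_D$ boundary circles over $K$ are capped by disks, so $\chi$ rises by $p-w_D$, giving $\chi(Y^u)=\chi(X^u)+p-2w_D$. For item 2, note that capping boundary circles never increases $\chi_-$, and removing disks does not change $\chi_-$. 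If a capped component becomes a sphere or disk, its $\chi_-$ is $0$. So $\chi_-(Y^u)\le\chi_-(X^u)$. For item 5, the new disk weight is $p-w_D$ only if $D'$ is a new component. Otherwise $D'$ merges into existing components (each sheet's component is capped rather than reweighted), so weights are unchanged. This suggests we should not literally create a separate weighted disk. Instead, the caps are absorbed into the incident components, which keep their weights. The weight-$w_D$ component is deleted. Hence the maximum weight does not increase.

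\textbf{Step 3 (non-separation), the expected obstacle.} We must show $M\setminus\underline Y$ is connected. Removing $\overline D$ can only merge complementary regions. Adding caps parallel to $\overline D$ could separate, but the caps lie in a thin collar of $\overline D$ and are attached along $K$. I would argue via Lemma~\ref{lem:injectivitynonsep} instead: $\underline Y$ is isotopic to a subcomplex of $\underline X\cup$ (collar), and a separating region would yield a nonzero kernel class in $H_2(\underline Y;\mathbb{Z})$. That class would push to a kernel class of $H_2(\underline X;\mathbb{Z})\to H_2(M;\mathbb{Z})$. If this fails, I would fall back on applying the weight-shifting procedure of Lemma~\ref{lem:weightedpfoldedsurfaces}, which restores non-separation while preserving the other properties. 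The catch is that this procedure changes weights and $\chi$, so the exact identity in item 4 would need re-checking. Controlling this interaction is the main difficulty.
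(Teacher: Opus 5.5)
\textbf{The gap: non-separation.} Your Step 3 is left unresolved, and the argument you sketch does not go through as written. The step ``that class would push to a kernel class of $H_2(\underline{X};\mathbb{Z})\to H_2(M;\mathbb{Z})$'' is exactly what needs proof. $\underline{Y}$ is not a subcomplex of $\underline{X}$. The polyhedron $\underline{X}\cup(\text{caps})$ that contains both is itself separating, since a cap, $\overline{D}$ and a thin annulus near $K$ bound a ball, so mapping into it gives no contradiction. You would need a map $\underline{Y}\to\underline{X}$ homotopic in $M$ to the inclusion (collapsing each cap onto $\overline{D}$), together with a proof that it is injective on $H_2$; you supply neither. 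The fallback through Lemma~\ref{lem:weightedpfoldedsurfaces} is not available, because weight shifting can change $\chi(Y^u)$ and raise weights, which breaks items 4 and 5.

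\textbf{The missing idea} is a direct local comparison of complements. Let $k$ be the number of sheets of $\underline{X}$ at $K$, including $D$. Everything happens inside a ball $H$ around $\overline{D}$, and $\underline{X}\cap H$ and $\underline{Y}\cap H$ meet $\partial H$ in the same $k-1$ parallel circles. $H\setminus\underline{X}$ has $k$ components: the two sides of $D$ and the $k-2$ wedges between consecutive other sheets. $H\setminus\underline{Y}$ also has $k$ components: the regions cut out by the $k-1$ nested capped sheets. In both cases each component meets $\partial H$ in exactly one of the $k$ components of $\partial H$ minus these circles. Hence $M\setminus\underline{Y}$ has as many components as $M\setminus\underline{X}$. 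The paper phrases this as: $M\setminus\underline{Y}$ is obtained from $M\setminus\underline{X}$ by attaching $k-2$ $2$-handles, namely the slabs between consecutive parallel copies $D_i, D_{i+1}$ of $D$.

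\textbf{The rest.} You should adopt the correction you make in Step 2 as the definition: one cap per sheet of $\underline{X}$ at $K$, each inheriting that sheet's weight and orientation. A single disk $D'$ of weight $p-w_D$ would leave $K$ singular when $k\geq 3$ and could raise the maximal weight. With that correction, your construction and your checks of items 2--5 coincide with the paper's. Two small repairs are needed:
\begin{enumerate}
\item That every sheet meets $K$ with degree one follows from Lemma~\ref{lem:classificationin3mflds}: $K$ is straight because $D$ meets it with degree $1$. It does not follow from positivity of the degrees.
\item The comparison $[X]=[Y]$ must be made in $M$, not through the injectivity $H_2(\underline{X};\mathbb{Z}_p)\to H_2(\underline{X},\Sing(\underline{X});\mathbb{Z}_p)$, where $D$ and the caps are distinct generators. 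Either observe, as the paper does, that the two cycles differ by a cycle supported in a $3$-ball. Or make your count $w_D[\overline{D}]+(p-w_D)[\overline{D}]=0$ precise in $H_2(M,N(K);\mathbb{Z}_p)$, into which $H_2(M;\mathbb{Z}_p)$ injects since $H_2(N(K);\mathbb{Z}_p)=0$.
\end{enumerate}
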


\begin{proof}
    If $w_D = p$ then the closure of $D$ in $X$ is a connected component of $X$ which we denote $X_D$. $X_D$ represents a trivial homology class in $M$ since it embeds in a 3-ball regular neighbourhood of $D$. Let $Y$ be the weighted $\mathbb{Z}_p$-folded surface obtained by removing this $X_D$ from $X$. Since $\chi(X_{D}^u) = p$ it follows that $\chi(Y^u) = \chi(X^u) - p = \chi(X^u) +p - 2w_D$ as required. The rest of the requirements of the proof are similarly easy to prove.

    Hence, let $w_D <p$ and let $K = \partial{D}$ be the singular arc along its boundary. There is at least one other component $C$ of $\Int(X)$ such that $K\subseteq \partial{C}$. Lemma~\ref{lem:classificationin3mflds} and Lemma~\ref{lem:alternatedefinition} together imply that $\deg(B\xrightarrow{q_{\underline{X}}}K) = 1$ for all $B\in \pi_0(q_{\underline{X}}^{-1}(K))$. Let $k$ be the number of boundary components of $\underline{X}_{cut}$ in the preimage of $K$ under $q$. A similar argument to the proof of Lemma~\ref{lem:regularneighbourhoods} implies that the embedding $\underline{X}\cap N \hookrightarrow N$ where $N$ is a regular neighbourhood of $K$ is homeomorphic to the sub-bundle embedding of $K_{k,1}\times S^1 \hookrightarrow \mathbb{D}^2 \times S^1$ that restricts on sections to an embedding $K_{k,1}\hookrightarrow \mathbb{D}^2$ where the image in the unit disk is given by the $k\leq p$ intervals from the origin to $k$ equally spaced points on the unit circle. The interior of exactly one of these intervals is in $D$.
    
    Now push $D$ in a normal direction relative to $\partial{D}$ to obtain $k-1$ parallel copies $\{D_i\}_{i=1}^{k-1}$ and let $X' = (\underline{X}\setminus D)\cup \bigsqcup_{i=1}^{k-1}D_i$. $X'$ is a folded surface with $2k-2$ sheets meeting along $K$. Now the embedding $X'\cap N\hookrightarrow N$ is homeomorphic to the sub-bundle embedding of $K_{2k-2,1}\times S^1 \hookrightarrow \mathbb{D}^2 \times S^1$ that restricts on sections to an embedding $K_{2k-2,1}\hookrightarrow \mathbb{D}^2$. The image of $K_{2k-2,1}\hookrightarrow \mathbb{D}^2$ in the unit disk is given by the $2k-2$ unit intervals $\{[0,x_j]\}_{j=1}^{2k-2}$ where $x_j := e^{\frac{2i\pi j}{2k-2}}$ and the points $x_i \in D_i$ for all $1\leq i  \leq k-1$ and $x_j\in \Int(\underline{X})\setminus D$ for all $k - 1<j\leq 2k-2$. Replace $X'\cap N$ with the sub-bundle $\left(\bigsqcup_{i=1}^{k-1} I_i \right)\times S^1 \hookrightarrow \mathbb{D}^2\times S^1$ which is described on sections as the proper embedding of $k-1$ intervals $I_i := [x_i,x_{2k-1-i}]$ in $\mathbb{D}^2$. Orient and weight the components of the resulting folded surface to be consistent with the orientation and weight of the components of $\Int(\underline{X})$ where they overlap in $M$. Let the resulting weighted folded surface be $Y$. See Figure~\ref{fig:crosssection} for an illustration of the cross-sections of the intersection of $\underline{X},\underline{X'}$ and $\underline{Y}$ with $N$. The complement $M\setminus \underline{Y}$ is connected since it can be obtained from $M \setminus \underline{X}$ by gluing $(k-2)$ 2-handles that lie in the space between $D_i$ and $D_{i+1}$ for $1\leq i\leq k-2$. See Figure~\ref{fig:diskremoval} for a visualisation of an example of this disk removal procedure where $p=11$ and $k=4$.

    $Y$ is a weighted $\mathbb{Z}_p$-folded surface since the weights and orientation of sheets in neighbourhoods of every singular arc other than $K$ are unchanged. It can also be seen from the construction that $\Sing(Y) = \Sing(X)\setminus K$. Since $Y$ differs from $X$ only in the interior of a $3$-ball regular neighbourhood $H$ of $D$ it follows that $[X]-[Y] \in \text{Im}(H_2(H;\mathbb{Z}_p) \rightarrow H_2(M;\mathbb{Z}_p))$. Since $H_2(H;\mathbb{Z}_p) = 0$ it follows that $[X] = [Y]$. 
    
    $Y^u_{cut}$ is homeomorphic to the $2$-manifold obtained from $X^u_{cut}$ by removing $w_D$ copies of $D_{cut}$ and gluing disks to $p-w_D$ boundary components of $X^u_{cut} \setminus D_{cut}$. From this observation Equations~\eqref{eq:chiminusequation} and~\eqref{eq:chiequation} follow from elementary properties of the Euler characteristic.

    \begin{equation}\label{eq:chiminusequation}
        \chi_-(Y^u) = \chi_-(Y^u_{cut}) \leq \chi_-(X^u_{cut}) = \chi_-(X^u)
    \end{equation}

    \begin{equation}\label{eq:chiequation}
        \chi(Y^u) = \chi(Y^u_{cut}) = \chi(X^u)_{cut} - w_D + p-w_D = \chi(X^u_{cut})+ p - 2w_D
    \end{equation}
    
    The weights of $Y$ are all present among the weights of $X$ by construction. In particular, the maximum weight of $Y$ is bounded by the maximum weight of $X$ from above.

\end{proof}

\begin{figure}
    \centering
    \includegraphics[width=0.6\textwidth]{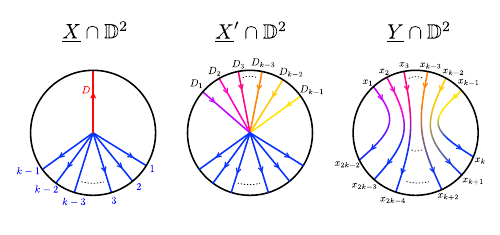}
    \caption{Intersections with meridional disk of $N(K)$ with $\underline{X}$, $\underline{X'}$ and $\underline{Y}$.}\label{fig:crosssection}
\end{figure}

\begin{figure}
    \centering
    \includegraphics[width=0.6\textwidth]{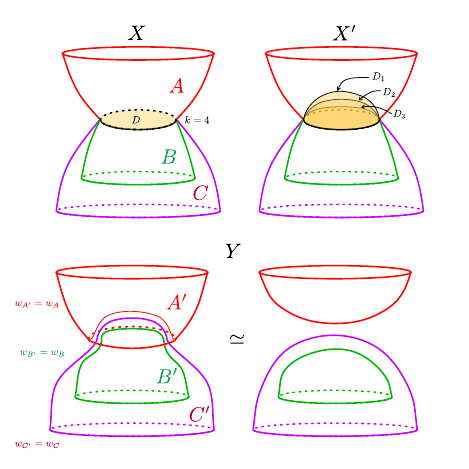}
    \caption{An example of the disk removal procedure when $p=11$ and $k=4$.}\label{fig:diskremoval}
\end{figure}

\begin{lem}\label{lem:removenotdisks}
    Let $X$ be a weighted non-separating $\mathbb{Z}_p$-folded surface with no removable disks, but a removable component $C$ of $\Int(\underline{X})$ such that $w_C>p/2$ with $\partial C \neq \emptyset$. There exists another weighted non-separating $\mathbb{Z}_p$-folded surface $Y$ with the following properties.

    \begin{enumerate}
        \item $[X] = [Y]$
        \item $\chi_-(Y^u)\leq \chi_-(X^u)$
        \item $|\pi_0(\Sing(Y^u))| = |\pi_0(\Sing(X^u))|- |\pi_0(\partial{C})|$ \label{property:sing}
    \end{enumerate}
\end{lem}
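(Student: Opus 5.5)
We generalise the disk removal surgery of Lemma~\ref{lem:noremovabledisks}, performing it simultaneously along every singular arc in $\partial C$. By Lemma~\ref{lem:weightp/2} and its proof, since $w_C>p/2$ the closure $C_{cut}$ embeds in $\underline{X}$. Each boundary component of $C_{cut}$ maps with degree $1$ onto a distinct singular arc $K_1,\ldots,K_m$, where $m=|\pi_0(\partial C)|$. Along each $K_i$ the sheets other than $C$ have total weighted degree $p-w_C<w_C$, and by Lemma~\ref{lem:classificationin3mflds} each $K_i$ is straight. So near $K_i$ the surface $\underline{X}$ looks like $K_{k_i,1}\times S^1\subset \mathbb{D}^2\times S^1$, with exactly one spoke belonging to $C$.

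The construction proceeds in three steps.

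\textbf{Step 1.} Push $C$ off itself, relative to $\partial C$, to get $k_i-1$ parallel sheets near each $K_i$. Globally this should be done by taking enough parallel copies of $C$ in its trivial normal bundle. This matches the $k_i-1$ other sheets at $K_i$, as in the construction of $X'$ in Lemma~\ref{lem:noremovabledisks}. Since $k_i$ can differ between arcs, I would first refine to the unweighted picture near each $K_i$: there are $p-w_C$ sheets of the remaining components, with multiplicity. Then take $p-w_C$ parallel copies of $C$ globally, discard the $w_C$ copies of $C$ itself, and reconnect near each $K_i$.

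\textbf{Step 2.} In a regular neighbourhood $N(K_i)$, replace the configuration by disjoint properly embedded intervals $I_j$ pairing each copy of $C$ with one other sheet, crossed with $S^1$, exactly as in Figure~\ref{fig:crosssection}. Orientations are compatible: the copies of $C$ induce the orientation of $K_i$ opposite to the one they carry after reversing, so the new sheets are consistently oriented. This removes $K_i$ from the singular locus. Every other singular arc is untouched, which gives property~\eqref{property:sing}.

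\textbf{Step 3.} Check the remaining properties.

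For the homology class, observe that $X^u$ and $Y^u$ together differ by $p$ parallel copies of $C$ capped along $\partial C$. Equivalently, $j_{\mathbb{Z}_p}$ of the relative difference is $p[C_{cut}]\equiv 0$. Hence $[X]=[Y]$, by the injectivity of $H_2(\,\cdot\,;\mathbb{Z}_p)\to H_2(\,\cdot\,,\Sing;\mathbb{Z}_p)$ used in Section~\ref{sec:realisation}.

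For $\chi_-$, note that $Y^u_{cut}$ is obtained from $X^u_{cut}$ by deleting $w_C$ copies of $C_{cut}$. It then adds $p-w_C$ copies of $C_{cut}$, each glued to boundary components of other sheets, with orientation reversed. Because $p-w_C<w_C$ and $\chi_-$ is subadditive under gluing along circles, one gets $\chi_-(Y^u)\le \chi_-(X^u)$. This uses that $C$ is not a disk, so $\chi(C_{cut})\le 0$ and $\chi_-(C_{cut})=-\chi(C_{cut})$ unless $C_{cut}$ is an annulus. The hypothesis of no removable disks is used here: it ensures that gluing an annulus or other components does not introduce disk components whose positive Euler characteristic would be lost.

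Non-separation is argued as in Lemma~\ref{lem:noremovabledisks}. The complement of $\underline{Y}$ is obtained from that of $\underline{X}$ by adding thickened regions between the parallel copies of $C$, which are products $C\times I$ attached along $C\times\partial I$. These reconnect, but cannot disconnect, since the region on one side of $C$ was already connected to the other side.

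\textbf{Main obstacle.} I expect the main difficulty to be the Euler characteristic bookkeeping in Step 3. Each gluing along a circle preserves $\chi$ but can merge components, so the claim $\chi_-(Y^u)\le\chi_-(X^u)$ needs a case analysis like the one in the proof of Lemma~\ref{lem:thurstonnorm}: disks, annuli, and negative-$\chi$ pieces. The first thing I would try is to show directly that $\chi(Y^u_{cut})=\chi(X^u_{cut})+(p-2w_C)\chi(C_{cut})\ge\chi(X^u_{cut})$. After that I would control components by that case analysis, using the absence of removable disks to rule out the bad case $\chi(S_i)<0<\chi(S_j)$.
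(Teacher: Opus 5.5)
Your surgery matches the paper's: you discard $2w_C-p$ of the $w_C$ copies of $C$ in $X^u$, reverse the remaining $p-w_C$, and join them to the other sheets at each $K_i$ by nested annuli. Your $\chi_-$ bound via $\max(0,-(a+b))\le\max(0,-a)+\max(0,-b)$ is also correct. It even avoids the no-removable-disks case analysis the paper uses to get equality at that step.

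The genuine gap is non-separation. What you actually build is an \emph{unweighted} $\mathbb{Z}_p$-folded surface, call it $Y'$, and you never specify $\underline{Y}$ or its weights. In general none can be read off, because different copies of $C$ are joined at different arcs $K_i$ to sheets of different components, or to different parallel copies of one component. Moreover, $Y'$ with unit weights usually separates. The slab between two parallel copies of $C$ is bounded by them: its faces $C\times\partial I$ are sheets of $Y'$, not gluing loci. The slab exits only through thin strips near the $K_i$, and these may lead into the enclosed slab between parallel copies of another component of weight $\ge 2$.

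Concretely, cut a non-separating torus $\Sigma\subset T^3$ along two parallel curves $K_1,K_2$ into annuli $C,D$. Take $p=5$, $w_C=3$, $w_D=2$, and orient $C$ as $\Sigma$ and $D$ oppositely. Every hypothesis holds, yet $Y'$ is two parallel copies of $-\Sigma$, which separate $T^3$.

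Appealing to Lemma~\ref{lem:noremovabledisks} does not help. There the work is done in $\underline{X}$, and each pushed-off disk inherits the weight of the single sheet it is glued to. That is exactly what breaks once $\partial C$ has several components and you pass to $X^u$.

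The missing step is the one the paper takes: give $Y'$ unit weights and apply Lemma~\ref{lem:weightedpfoldedsurfaces}. This yields a non-separating weighted $Y$ with $[Y]=[Y']$, $\chi_-(Y^u)\le\chi_-(Y')$ and the same singular arcs, so all three properties survive.

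You should also repair the homology step. Injectivity of $H_2(X;\mathbb{Z}_p)\to H_2(X,\Sing X;\mathbb{Z}_p)$ for one folded surface cannot compare two different surfaces. There are two ways to fix this:
\begin{itemize}
\item Work in $M$ relative to $L=\Sing(X^u)\supseteq\Sing(Y')$, where $H_2(M;\mathbb{Z}_p)\to H_2(M,L;\mathbb{Z}_p)$ is injective. You then need to check that each reconnecting annulus differs from its two spokes by the boundary of a sector times $S^1$.
\item Argue as the paper does: $Y'$ and $X^u$ agree outside a regular neighbourhood of $\overline{C}$. That neighbourhood is a handlebody since $\partial C\neq\emptyset$, so its $H_2(\,\cdot\,;\mathbb{Z}_p)$ vanishes.
\end{itemize}
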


\begin{proof}

    This proof is similar to the proof of Lemma~\ref{lem:noremovabledisks}, but we need to be a bit more careful with orientation in the construction and the $\chi_-$ of $X$ and $Y$. It will be sufficient to find a $\mathbb{Z}_p$-folded surface $Y'$ such that $[X] = [Y']$, $\chi_-(Y') \leq \chi_-(X^u)$ and $|\pi_0(\Sing(Y'))| = |\pi_0(\Sing(X^u))|- |\pi_0(\partial{C})|$ since we can use Lemma~\ref{lem:weightedpfoldedsurfaces} to collapse $Y'$ onto a non-separating weighted $\mathbb{Z}_p$-folded surface $Y$ such that $\Sing(Y) = \Sing(Y')$ and $\chi_-(Y^u)\leq \chi_-(Y')$. Additionally, if $w_C = p$ the proof is trivial by simply deleting $\overline{C}$ from the components of $X$ to obtain $Y$ so we assume that $p/2<w_C<p$ in the following.
    
    Let $\{C_i\}_{i=1}^{w_C}$ be the components of $\Int(X^u)$ parallel to $C$ as per the construction of $X^u$ from $\underline{X}$. Remove the components $\{C_i\}_{i=p-w_C+1}^{w_C}$ and reverse the orientation of the remaining components $\{C_i\}_{i=1}^{p-w_C}$ to create a new oriented folded surface $X'$. Let $K$ be one of the singular arcs on the boundary of $C$ and let $N(K)$ be a regular neighbourhood of $K$ in $M$. The embedding $N(K)\cap X'\rightarrow N(K)$ is homeomorphic to the embedding of trivial sub-bundles $K_{2p-2w_C,1}\times S^1 \hookrightarrow \mathbb{D}^2\times S^1$ where the embedding restricts on sections to the embedding $K_{2p-2w_C,1}\hookrightarrow \mathbb{D}^2$ whose image in the unit disk is $2p-2w_C$ unit intervals $\{[0,x_j]\}_{j=1}^{2p-2w_C}$ where $x_j:= e^{\frac{2i\pi j}{2p-2w_C}}$ and $x_i \in C_i$ for all $1\leq i\leq p-w_C$ and $x_j \in (X\setminus C)^u$ for all $p-w_C< j\leq 2p-2w_C$. Fix an orientation of $\mathbb{D}^2$ in the splitting $\mathbb{D}^2\times S^1$. The orientations of $\Int(X)$ and $\mathbb{D}^2$ induce an orientation on the edges of the graph $X'\cap \mathbb{D}^2 \simeq K_{2p-2w_C,1}$ such that the intervals $[0,x_j]$ where $1\leq j \leq p-w_C$ induce the opposite orientation on the endpoint $0$ to the intervals where $p-w_C < j \leq 2p-2w_C$. Replace $X'\cap N(K)$ with the proper embedding of annuli given by the sub-bundle $\left(\bigsqcup_{i=1}^{p-w_C} I_i \right)\times S^1 \hookrightarrow \mathbb{D}^2\times S^1$ which is described on sections as the proper embedding intervals $I_i := [x_i,x_{2p-2w_C-i+1}]$ in $\mathbb{D}^2$ where $1\leq i \leq p-w_C$. Orient $I_i$ such that it induces the same orientation on its endpoints $x_i,x_{2p-2w_C-i+1}$ as the sum of the edges $[0,x_i]$ and $[0,x_{2p-2w_C-i+1}]$ in the embedding $K_{2p-2w_C,1}\hookrightarrow \mathbb{D}^2$. This induces an orientation of the annuli $\left(\bigsqcup_{i=1}^{p-w_C} I_i \right)\times S^1$ such that the orientation induced on their boundary is counter to the orientation induced by $\Int(X')\cap {M\setminus N(K)}$ hence preserving orientability of the resulting folded surface. Perform these replacements to every singular arc on the boundary of $C$ and call the resulting folded surface $Y'$. Orient $\Int(Y')$ such that its orientation is consistent with $\Int(X')$ wherever the two overlap.
    
    From the construction above we have that 
    \[|\pi_0(\Sing(Y'))| = |\pi_0(\Sing(X^u))|- |\pi_0(\partial{C})|.\] 
    Additionally, $Y'$ differs from $X^u$ only in a regular neighbourhood of $C$ homeomorphic to a handlebody $H$ of genus $2g(C) + n(C) -1$ where $g(C)$ and $n(C)$ are the genus and number of boundary components of $\overline{C}$. A singular homology argument therefore implies that $[Y']- [X^u] \in \text{Im}(H_2(H;\mathbb{Z}_p)\rightarrow H_2(M;\mathbb{Z}_p)) = \{0\}$ since $H_2(H;\mathbb{Z}_p) = 0$ and so $[Y'] = [X^u] = [X]$ as required. 

    By assumption there are no removable disks in $X$ and so Lemma~\ref{lem:removabledisksorclosed} implies that $\chi(C_{cut})\leq 0$. $X'_{cut}$ is obtained from $X^u_{cut}$ by deleting $2w_C - p$ components of Euler characteristic $\chi(C)$ and so $\chi_-(X') = \chi_-(X^u) - (2w_C -p)\chi_-(C)\leq \chi_-(X^u)$. $Y'_{cut}$ is obtained from $X'_{cut}$ by attaching $p-w_C$ annuli at each singular circle in $\partial C$ between pairs of boundary components $B_1,B_2$ belonging to components $S_j\in \pi_0(X'_{cut})$ for $j \in \{1,2\}$. The component of $X^u$ containing the $C_i$ has no removable disks and is not a $\mathbb{Z}_p$-crosscap and so Lemma~\ref{lem:removabledisksorclosed} implies that $\chi(S_j)\leq 0$. Let $S$ be the compact 2-manifold obtained by identifying the boundary of an annulus to $B_1$ and $B_2$ of disjoint $S_1$ and $S_2$. An elementary Euler characteristic computation shows that $\chi(S) = \chi(S_1)+\chi(S_2)\leq 0$. It follows that $\chi_-(S) = \chi_-(S_1)+\chi_-(S_2)$. If $S_1 = S_2$ then $\chi(S) = \chi(S_1)\leq 0$ which implies that $\chi_-(S) = \chi_-(S_1)$. Inductively applying these statements to every annulus in the construction this shows that $\chi_-(Y') = \chi_-(X')\leq \chi_-(X^u)$. Applying Lemma~\ref{lem:weightedpfoldedsurfaces} to $Y'$ now completes the proof.
    
\end{proof}

\begin{lem}\label{lem:surfacecomponents}
    If $X$ is a weighted non-separating $\mathbb{Z}_p$-folded surface and $C$ is a component of $\Int(X)$ such that $\partial{C} = \emptyset$ and $w_C>p/2$ then there exists another weighted non-separating $\mathbb{Z}_p$-folded surface $Y$ such that:

    \begin{enumerate}
        \item $[X] = [Y]$
        \item $\chi_-(Y^u)\leq \chi_-(X^u)$
        \item $|\pi_0(Y^u)| = |\pi_0(X^u)| - (2w_C - p)$
        \item $X\setminus C = Y\setminus C$ as weighted $\mathbb{Z}_p$-folded surfaces.
    \end{enumerate}

\end{lem}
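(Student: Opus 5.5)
The plan is to mimic the weight-swapping trick in Lemma~\ref{lem:removenotdisks}, specialised to a closed component. Since $\partial C=\emptyset$, $\overline{C}=C$ is a closed oriented surface and a whole component of $\underline{X}$. It does not meet $\Sing(\underline{X})$, so Lemma~\ref{lem:weightedfoldedalternatedef} puts no constraint on $w_C$ coming from any singular arc. Its contribution to the fundamental class is $w_C[C]$ with $\mathbb{Z}_p$ coefficients, where $[C]$ is the ordinary fundamental class of the oriented surface $C$, and $w_C[C]=-(p-w_C)[C]$ in $H_2(M;\mathbb{Z}_p)$.

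Define $Y$ by keeping $\underline{Y}=\underline{X}$ as a polyhedron and all other weights and orientations unchanged. On $C$ we reverse the orientation and set $w'_C:=p-w_C$. If $w_C=p$, so that $w'_C=0$, we instead delete $C$ altogether. Property~4 then holds by construction.

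For property~1, the fundamental class of a weighted surface is additive over components of $\underline{X}$. The class of $C$ reversed with weight $p-w_C$ is $-(p-w_C)[C]=w_C[C]$ mod $p$, so $[Y]=[X]$.

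For property~2, $Y^u$ is obtained from $X^u$ by replacing $w_C$ parallel copies of $C$ with $p-w_C$ copies. Since $w_C>p/2$, this removes exactly $2w_C-p$ closed components, each homeomorphic to $C$. Because $\chi_-$ is a sum over components of nonnegative terms, deleting components cannot increase it, which gives $\chi_-(Y^u)\leq\chi_-(X^u)$. Property~3 is the same count: $|\pi_0(Y^u)|=|\pi_0(X^u)|-w_C+(p-w_C)$, and the parallel copies of a closed component are pairwise disjoint closed components by Definition~\ref{dfn:weightedfolded}.

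Non-separation needs a check. If $w_C<p$ then $\underline{Y}=\underline{X}$ as a subspace, so $M\setminus\underline{Y}$ is connected. If $w_C=p$ we delete $C$, and then $M\setminus\underline{Y}=(M\setminus\underline{X})\cup C$; adding back a surface to a connected open set and its closure region keeps it connected. Finally, $Y$ is still a weighted $\mathbb{Z}_p$-folded surface because no singular arc was touched.

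The only step needing care is property~1. There I would verify additivity explicitly through $j_{(\underline{X},\Sing\underline{X})}$, using that $C$ is a summand of $H_2(\underline{X};\mathbb{Z}_p)$ disjoint from $\Sing(\underline{X})$.
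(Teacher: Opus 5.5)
Your proposal is correct and follows the paper's proof essentially verbatim: reverse the orientation of $C$ and replace $w_C$ by $p-w_C$ (deleting $C$ when $w_C=p$). Then $[Y]=[X]$ follows from $w_C[C]\equiv -(p-w_C)[C] \pmod p$, and properties 2 and 3 follow from $Y^u$ losing $2w_C-p$ parallel copies of $C$. Your non-separation check, which the paper handles by noting $\underline{Y}\subseteq\underline{X}$, is sound because $M\setminus\underline{Y}$ lies between the connected set $M\setminus\underline{X}$ and its closure.
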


\begin{proof}
    
    If $w_C<p$ then let $Y$ be the weighted $\mathbb{Z}_p$-folded surface obtained from $X$ by reversing the orientation of $C$ and changing its weight via $w_C\mapsto p-w_C$. If $w_C = p$ then delete $C$ from the components of $\underline{X}$ entirely to obtain $Y$. If we ignore orientation, then $\underline{Y} \subseteq \underline{X}$ and so $Y$ must still be non-separating. If we let $[C]$ be the image of the fundamental class of $C$ in $H_2(M;\mathbb{Z}_p)$ then $[X]-[Y] = w_C[C] - (p-w_C)[-C] = p[C] = 0$. $Y^u$ can be obtained from $X^u$ by deleting $2w_C - p>0$ of the components of $X^u$ parallel to $C$ and reversing the orientation of the remaining $p-w_C$ components. This means that $\chi_-(Y^u) = \chi_-(X^u) - (2w_C-p)\chi_-(C) \leq \chi_-(X^u)$ and $|\pi_0(Y^u)| = |\pi_0(X^u)| - (2w_C - p)$ as required. 
\end{proof}

\subsection{Reduced $\mathbb{Z}_p$-folded surfaces}\label{sec:reducedpfolded}

We now concern ourselves with a subset of weighted $\mathbb{Z}_p$-folded surfaces characterised in Definition~\ref{dfn:reducedpfolds}.

\begin{dfn}\label{dfn:reducedpfolds}
    A \textit{reduced} $\mathbb{Z}_p$-folded surface is a weighted non-separating $\mathbb{Z}_p$-folded surface such that every weight is less than or equal to $p/2$. 
\end{dfn}

When $p\in \{2,3\}$ then every weight of a reduced $\mathbb{Z}_p$-folded surface is $1$ and so a reduced $\mathbb{Z}_p$-folded surface can be thought of as a non-separating $\mathbb{Z}_p$-folded surface in both cases. Lemma~\ref{lem:reduced} proves the existence of these reduced $\mathbb{Z}_p$-folded surfaces and that they are among the $\mathbb{Z}_p$-Thurston norm realising representatives of any given $H_2(M;\mathbb{Z}_p)$ class:

\begin{lem}\label{lem:reduced}
    For all $x \in H_2(M;\mathbb{Z}_p)$ there exists a reduced $\mathbb{Z}_p$-folded surface $X$ with no removable disks such that $||x||_{\mathbb{Z}_p} = \chi_{-}(X^u)$ and $w_C \leq p/2$ for every component $C \in \pi_0(\Int(\underline{X}))$.
\end{lem}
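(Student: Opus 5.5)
We start from the second part of Lemma~\ref{lem:weightedpfoldedsurfaces}. It gives a weighted non-separating $\mathbb{Z}_p$-folded surface $X$ with $[X]=x$ and $||x||_{\mathbb{Z}_p}=\chi_-(X^u)$. Each surgery below preserves three things: the class $[X]$, the non-separating property, and the inequality $\chi_-(\cdot^u)\le\chi_-(X^u)$. Since $\chi_-\ge||x||_{\mathbb{Z}_p}$ for every representative of $x$, norm-realisation is automatically preserved throughout. The work is therefore entirely about termination, and we induct on a complexity built from the singular locus and the components.

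\textbf{Step 1: remove removable disks.} While $X$ has a removable disk $D$, apply Lemma~\ref{lem:noremovabledisks}. This strictly decreases $|\pi_0(\Sing(X^u))|$ by one, does not increase the maximum weight, and keeps $X$ non-separating. Repeating, after finitely many steps $X$ has no removable disks.

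\textbf{Step 2: remove heavy components with boundary.} If some component $C$ of $\Int(\underline{X})$ has $w_C>p/2$ and $\partial C\neq\emptyset$, then $C$ is removable by Lemma~\ref{lem:weightp/2}. Since there are no removable disks, Lemma~\ref{lem:removenotdisks} applies and strictly decreases $|\pi_0(\Sing)|$ by $|\pi_0(\partial C)|\ge1$. The output may contain new removable disks, so we return to Step 1. Both Step 1 and Step 2 strictly decrease the number of singular arcs, so the alternation of Steps 1 and 2 terminates. At that point there are no removable disks and every heavy component is closed.

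\textbf{Step 3: remove heavy closed components.} For each closed component $C$ with $w_C>p/2$, apply Lemma~\ref{lem:surfacecomponents}. This replaces $C$ by $-C$ with weight $p-w_C<p/2$, or deletes $C$ when $w_C=p$, and leaves everything else unchanged. The singular locus is untouched and no disks are created, since $\partial C=\emptyset$ and $C$ is a closed component. So no new removable disks appear, and the resulting $X$ is reduced with all weights at most $p/2$.

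\textbf{Main obstacle.} The delicate point is bookkeeping the interactions between the lemmas. First, the final step of Lemma~\ref{lem:removenotdisks} invokes Lemma~\ref{lem:weightedpfoldedsurfaces}, which may raise weights above $p/2$. This is why we use the number of singular arcs, rather than the maximum weight, as the terminating measure for Steps 1 and 2. Second, each lemma as stated must output a non-separating surface with the same class. Third, the hypothesis ``no removable disks'' in Lemma~\ref{lem:removenotdisks} forces us to rerun Step 1 between applications of Step 2. Fourth, a removable closed sphere component is not a disk in the sense of Lemma~\ref{lem:removabledisksorclosed}, so it must be handled in Step 3. We would check each of these points explicitly.
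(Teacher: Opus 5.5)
Your proposal is correct and follows the paper's proof essentially step for step. Like the paper, you start from the second part of Lemma~\ref{lem:weightedpfoldedsurfaces}, alternate Lemmas~\ref{lem:noremovabledisks} and~\ref{lem:removenotdisks} (via Lemma~\ref{lem:weightp/2}) using the number of singular arcs as the terminating measure, then apply Lemma~\ref{lem:surfacecomponents} to the remaining heavy closed components, with norm-realisation preserved by the minimality in the definition of $||\cdot||_{\mathbb{Z}_p}$; your remark that Lemma~\ref{lem:weightedpfoldedsurfaces} can raise weights, so maximum weight cannot serve as the induction measure, makes explicit a point the paper leaves implicit.
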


\begin{proof}

    This is essentially a direct consequence of Lemma~\ref{lem:noremovabledisks}, Lemma~\ref{lem:removenotdisks} and Lemma~\ref{lem:surfacecomponents}. We start with a weighted non-separating $\mathbb{Z}_p$-folded surface $X$ such that $[X]=x$ and $\chi_-(X^u) = ||x||_{\mathbb{Z}_p}$ which exists due to Lemma~\ref{lem:weightedpfoldedsurfaces}. The programme is as follows:

    \begin{enumerate}
        \item If $X$ has any removable disks then apply Lemma~\ref{lem:noremovabledisks}. Since 
        \[|\pi_0(\Sing(\underline{X}))|<\infty,\] 
        it follows that repeatedly applying Lemma~\ref{lem:noremovabledisks} will eventually terminate in a non-separating $\mathbb{Z}_p$-folded surface with no removable disks.\label{step:disks}
        \item If $X$ has any component $C\in \pi_0(\Int(X))$ such that $w_C>p/2$ and $\partial{C}\neq \emptyset$ then Lemma~\ref{lem:weightp/2} implies that $C$ is removable. Apply Lemma~\ref{lem:removenotdisks}. If the resulting weighted non-separating $\mathbb{Z}_p$-folded surface has any removable disks then apply step~\ref{step:disks} again. \label{step:Cp/2}
        \item Repeatedly apply Step~\ref{step:Cp/2}. It follows from Lemma~\ref{lem:noremovabledisks} and Lemma~\ref{lem:removenotdisks} that each time this is done the number of singular arcs of $X$ is strictly reduced. Eventually this terminates because $|\pi_0(\Sing(\underline{X}))|<\infty$.
        \item The result of previous steps is a weighted non-separating $\mathbb{Z}_p$-folded surface with no removable disks in which the only components $C$ of $\Int(\underline{X})$ for which $w_C>p/2$ are closed surfaces in $\underline{X}$. Repeatedly apply Lemma~\ref{lem:surfacecomponents}. This does not affect the number of removable disks or non-surface components with weights greater than $p/2$. $|\pi_0(\underline{X})|<\infty$ and so this will eventually terminate in a weighted non-separating $\mathbb{Z}_p$-folded surface with no removable disks and such that every weight is less than $p/2$.
    \end{enumerate}
    
    Due to the minimality in the definition of $||\cdot||_{\mathbb{Z}_p}$ we have that the resulting weighted non-separating $\mathbb{Z}_p$-folded surface has the required properties.

\end{proof}

Consider the function $\chi_{p}: H_2(M;\mathbb{Z}_p) \rightarrow \mathbb{Z}$ defined by letting $X$ run over all reduced $\mathbb{Z}_p$-folded surfaces realising $x$.

\[\chi_{p}(x) := \max_{[X] = x} \chi(X^u)\]

After proving some preliminary lemmas we show that $\chi_p$ is well-defined by determining an upper bound in Lemma~\ref{lem:eulercharbounds}.

\begin{lem}\label{lem:noremovablediskschi}
    Suppose $X$ is a reduced $\mathbb{Z}_p$-folded surface in $M$. If $p>2$ then there exists a reduced $\mathbb{Z}_p$-folded surface $Y$ with no removable disks such that $[Y] = [X]$ and $\chi(Y^u)\geq \chi(X^u)$ with strict inequality if $X$ has a removable disk. If $p=2$ then there exists a reduced $\mathbb{Z}_2$-folded surface $Y$ with no removable disks such that $[Y]=[X]$ and $\chi(Y) =\chi(X)$.
\end{lem}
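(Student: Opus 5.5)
The plan is to apply Lemma~\ref{lem:noremovabledisks} repeatedly and check that each application keeps us inside the class of reduced $\mathbb{Z}_p$-folded surfaces while not decreasing $\chi$. Let $X$ be reduced and suppose it has a removable disk $D \in \pi_0(\Int(\underline{X}))$. Lemma~\ref{lem:noremovabledisks} produces a weighted non-separating $\mathbb{Z}_p$-folded surface $Y_1$ with the following properties:
\begin{itemize}
    \item $[Y_1]=[X]$;
    \item $\chi(Y_1^u)=\chi(X^u)+p-2w_D$;
    \item $\max_C w_C$ does not increase, by property (5), so every weight of $Y_1$ is still at most $p/2$ and $Y_1$ is again reduced;
    \item $|\pi_0(\Sing(Y_1^u))|=|\pi_0(\Sing(X^u))|-1$.
\end{itemize}
Since $X$ is reduced, $w_D\le p/2$, so $p-2w_D\ge 0$.

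Next I would sharpen this inequality. When $p>2$ is odd (the standing assumption is that $p$ is an odd prime), $w_D\le p/2$ forces $w_D\le (p-1)/2$. Hence $p-2w_D\ge 1$, and so $\chi(Y_1^u)>\chi(X^u)$. When $p=2$, the reduced condition gives $w_D=1$, so $p-2w_D=0$ and $\chi$ is preserved exactly. In that case all weights are $1$, so $X^u=\underline{X}$, which gives $\chi(Y)=\chi(X)$ as stated.

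Finally I would iterate. Each application removes one singular arc and the number of singular arcs is finite, so after finitely many steps we reach a reduced $Y$ with no removable disks. Chaining the inequalities gives $\chi(Y^u)\ge\chi(X^u)$, with strict inequality whenever at least one step occurred (that is, whenever $X$ had a removable disk) and $p>2$. For $p=2$ the chain consists of equalities.

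The main obstacle is checking that the output of Lemma~\ref{lem:noremovabledisks} really is reduced. Non-separation is part of that lemma's conclusion, and the weight bound comes from property (5). One subtlety to verify is the case $w_D=p$ in that lemma, since there the whole component is deleted. This case cannot occur here because reducedness gives $w_D\le p/2<p$.
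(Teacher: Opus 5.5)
Your proposal is correct and follows essentially the same route as the paper: iterate Lemma~\ref{lem:noremovabledisks}, use $w_D<p/2$ for odd prime $p$ to get $p-2w_D\ge 1$ (and $w_D=1$, giving equality, when $p=2$), and stop because each step removes a singular arc. Your explicit check that reducedness survives each step, via property (5) and the non-separation conclusion, and your remark that the $w_D=p$ case cannot occur, make precise points the paper leaves implicit.
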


\begin{proof}
    Suppose $X$ is a reduced $\mathbb{Z}_p$-folded surface with a removable disk $D$. If $p>2$ since $p$ is an odd prime we have that $w_D<p/2$ so the removal of $D$ as per Lemma~\ref{lem:noremovabledisks} results in a $\mathbb{Z}_p$-folded surface $Y$ such that $[Y] = [X]$, $\chi(Y^u) = \chi(X^u) + p-2w_D > \chi(X^u)$ and $Y$ has fewer singular arcs than $X$. The number of singular arcs in $X$ is finite so iterating we obtain a reduced $\mathbb{Z}_p$-folded surface $Y$ with no removable disks and such that $\chi(Y^u)>\chi(X^u)$.

    If $p=2$ then the removal of $D$ in Lemma~\ref{lem:noremovabledisks} strictly reduces the number of singular arcs and has no effect on the number of components of $M\setminus X$, the maximum weight of components of $\Int(X)$, or the Euler characteristic. Repeatedly removing disks thus will terminate in a $\mathbb{Z}_2$-folded surface $Y$ with no removable disks such that $\chi(Y) = \chi(X)$.

\end{proof}

\begin{lem}\label{lem:eulercharbounds}
    Let $M$ be a closed orientable 3-manifold. $\chi_p(x)\leq p\cdot \beta_2(M;\mathbb{Z}_p)$ for all $x\in H_2(M;\mathbb{Z}_p)$.

\end{lem}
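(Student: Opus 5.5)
Fix $x$ and a reduced $\mathbb{Z}_p$-folded surface $X$ representing $x$. We must show $\chi(X^u)\leq p\cdot\beta_2(M;\mathbb{Z}_p)$. The plan is to use Lemma~\ref{lem:noremovablediskschi} (for $p>2$, our standing assumption) to replace $X$ by a reduced $Y$ with $[Y]=x$, no removable disks, and $\chi(Y^u)\geq\chi(X^u)$. It then suffices to bound $\chi(Y^u)$, and we do this component by component of $\underline{Y}$.

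Since $Y$ is reduced, every weight satisfies $w_C\leq p/2<p$. So Lemma~\ref{lem:components} applies and gives $|\pi_0(\underline{Y})|\leq\beta_2(M;\mathbb{Z}_p)$. Moreover $\chi(Y^u)=\chi(Y^u_{cut})=\sum_{C} w_C\,\chi(C_{cut})$, summing over $C\in\pi_0(\Int\underline{Y})$. It therefore suffices to show that for each connected component $Z$ of $\underline{Y}$,
\[\sum_{C\subset Z} w_C\,\chi(C_{cut})\leq p.\]

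Only components $C$ with $\chi(C_{cut})>0$ contribute positively, so $C_{cut}$ is a sphere or a disk (annuli contribute $0$). If $C_{cut}$ is a sphere, then $Z=C$ is a closed sphere and its contribution is $w_C\cdot 2\leq p$, since $w_C\leq p/2$. If $C_{cut}$ is a disk, Lemma~\ref{lem:removabledisksorclosed} says $C$ is either a removable disk, which is excluded, or $Z$ is a $\mathbb{Z}_p$-crosscap. In the crosscap case $Z$ consists of the single disk $C$, and Lemma~\ref{lem:weightedfoldedalternatedef} forces $w_C\cdot p=p$, so $w_C=1$ and the contribution is $1\leq p$.

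In every remaining component $Z$, all interior pieces have $\chi(C_{cut})\leq 0$, so the contribution is at most $0$. Summing over at most $\beta_2(M;\mathbb{Z}_p)$ components gives $\chi(X^u)\leq\chi(Y^u)\leq p\,\beta_2(M;\mathbb{Z}_p)$. Since this bound holds for every reduced representative, $\chi_p(x)$ is well defined and bounded as claimed.

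The main point needing care is that Lemma~\ref{lem:components} requires every component to carry a nontrivial class. This is exactly why reduction to weights $\leq p/2<p$ is needed. One also has to check that no component $Z$ mixes a positive-$\chi$ piece with other pieces. This holds because closed spheres and crosscaps are whole components, and removable disks have already been eliminated.
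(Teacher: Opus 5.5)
Your proof is correct and follows the paper's argument essentially step for step. You pass to a reduced representative with no removable disks via Lemma~\ref{lem:noremovablediskschi}, bound each connected component's contribution by $p$ (closed surfaces give at most $2w\le p$, crosscaps give $1$ by Lemma~\ref{lem:removabledisksorclosed}, everything else is $\le 0$), and bound the number of components by $\beta_2(M;\mathbb{Z}_p)$ via Lemma~\ref{lem:components}; your use of Lemma~\ref{lem:weightedfoldedalternatedef} to show the crosscap has weight $1$ justifies a point the paper only asserts.
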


\begin{proof}

    Let $X$ be a reduced $\mathbb{Z}_p$-folded surface embedded in $M$ such that $[X] = x$. Lemma~\ref{lem:noremovablediskschi} implies that there exists another reduced $\mathbb{Z}_p$-folded surface $X_0$ realising the same homology class, with no removable disks and such that $\chi(X_0^u)\geq \chi(X^u)$. Let $Y$ be any connected component of $\underline{X_0}$ inheriting the weights of $X_0$. 
    
    Suppose $\Sing(Y) = \emptyset$. Then $\underline{Y}$ is a closed connected oriented surface so $\chi(\underline{Y}) \leq 2$. $Y^u$ is constructed by pushing $\underline{Y}$ in a normal direction to generate $w_{\underline{Y}}\leq p/2$ parallel copies so $\chi(Y^u) = w_{\underline{Y}}\cdot \chi(\underline{Y}) \leq w_{\underline{Y}} \cdot 2 \leq p$.

    Suppose $\Sing(Y) \neq \emptyset$. Since we assumed that there are no removable disks in $Y$, it follows from Lemma~\ref{lem:removabledisksorclosed} that if any component of $Y_{cut}$ is homeomorphic to a disk then $Y^u$ is a $\mathbb{Z}_p$-crosscap and $\Int(Y)$ consists of a single component of weight $1$ and $\chi(Y^u)=1$. Conversely if no component of $Y_{cut}$ is a disk then every component has non-positive Euler characteristic and so $\chi(Y^u)\leq 0$. 
    
    Hence in general for any $Y \in \pi_0(X_0)$ it follows that $\chi(Y^u)\leq p$. Lemma~\ref{lem:components} implies that the number of such connected components of $\underline{X_0}$ is bounded by the dimension of $H_2(M;\mathbb{Z}_p)$ as a $\mathbb{Z}_p$-vector space. Hence 
    \[\chi(X^u)\leq \chi(X_0^u) = \sum_{Y\in \pi_0(X)}\chi(Y^u) \leq p\cdot \beta_2(M;\mathbb{Z}_p).\] 
    This completes the proof.

\end{proof}

Lemma~\ref{lem:equatingfunctions} shows that the original $||\cdot||_{\mathbb{Z}_p}$ function can be recovered from $\chi_p$ in spaces where $H_2(M;\mathbb{Z}_p) = \mathbb{Z}_p$.

\begin{lem}\label{lem:equatingfunctions}
    If $H_2(M;\mathbb{Z}_p) = \mathbb{Z}_p$, then $||x||_{\mathbb{Z}_p} = \max(-\chi_p(x),0)$ for all $x\in H_2(M;\mathbb{Z}_p)$.
\end{lem}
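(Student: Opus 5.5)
We prove the two inequalities separately, using Lemma~\ref{lem:reduced} for one direction and, for the other, a bound on the Euler characteristic of components of a reduced surface, with Lemma~\ref{lem:components} supplying the hypothesis $H_2(M;\mathbb{Z}_p)=\mathbb{Z}_p$. Throughout, $p$ is an odd prime, so every weight of a reduced surface is strictly less than $p/2$ and hence non-zero mod $p$.

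\emph{First inequality: $\max(-\chi_p(x),0)\leq ||x||_{\mathbb{Z}_p}$.} By Lemma~\ref{lem:reduced} there is a reduced $\mathbb{Z}_p$-folded surface $X$ with no removable disks, $[X]=x$ and $\chi_-(X^u)=||x||_{\mathbb{Z}_p}$. Since $\chi_-(X^u)=\sum_i\max(0,-\chi(S_i))\geq -\sum_i\chi(S_i)=-\chi(X^u)$, summing over the components $S_i$ of $X^u_{cut}$, we get $||x||_{\mathbb{Z}_p}\geq -\chi(X^u)\geq -\chi_p(x)$. Together with $||x||_{\mathbb{Z}_p}\geq 0$ this gives the inequality.

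\emph{Second inequality: $||x||_{\mathbb{Z}_p}\leq\max(-\chi_p(x),0)$.} By Lemma~\ref{lem:eulercharbounds} the maximum defining $\chi_p(x)$ is attained. Take a maximiser $X$ and apply Lemma~\ref{lem:noremovablediskschi}; we may then assume $X$ is reduced, has no removable disks, and satisfies $\chi(X^u)=\chi_p(x)$. Lemma~\ref{lem:components} applies because all weights are non-zero mod $p$, and with $\beta_2=1$ it shows $\underline{X}$ is empty or connected. If $X$ is empty, then $x=0$ and $\chi_p(x)=0$. Since $||0||_{\mathbb{Z}_p}=0$ (witnessed by the empty surface), the identity holds in this case.

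Otherwise $\underline{X}$ is connected, and we split into cases as in the proof of Lemma~\ref{lem:eulercharbounds}.
\begin{enumerate}
\item $\Sing(\underline{X})=\emptyset$. Then $X^u$ consists of $w$ parallel copies of a closed oriented surface $F$. If $F$ has genus at least $1$, then $\chi$ is non-positive on every component, so $\chi_-(X^u)=-\chi(X^u)$.
\item $\Sing(\underline{X})\neq\emptyset$ and $X$ is a $\mathbb{Z}_p$-crosscap. Then $\chi_-(X^u)=0$.
\item $\Sing(\underline{X})\neq\emptyset$ and $X$ is not a crosscap. By Lemma~\ref{lem:removabledisksorclosed} no component of $X^u_{cut}$ is a disk, so each has $\chi\leq 0$ and again $\chi_-(X^u)=-\chi(X^u)$.
\end{enumerate}
In every case except the sphere case below, $\chi_-(X^u)=\max(-\chi(X^u),0)$. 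This gives $||x||_{\mathbb{Z}_p}\leq\chi_-(X^u)=\max(-\chi_p(x),0)$.

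The remaining case, which I expect to be the main obstacle, is $F=S^2$. Spheres have trivial $H_2(M;\mathbb{Z}_p)$ class only when they bound balls, which is not automatic in a reducible $M$. A non-separating $S^2$ is still allowed, but then $\chi_p(x)>0$ and $\chi_-(X^u)=0$, so the identity holds anyway. In general, whenever $\chi(X^u)>0$ we have $\chi_-(X^u)=0$ directly: in the crosscap case $\chi=1$, and in the $w$ spheres case $\chi=2w$, and in both $\chi_-=0$. So the claim reduces to the equality $\chi_-(X^u)=\max(-\chi(X^u),0)$ for a connected reduced surface without removable disks. The real work is the case analysis showing that positive-$\chi$ pieces and negative-$\chi$ pieces never coexist in such an $X^u$. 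This is exactly where connectedness, obtained from $\beta_2=1$ via Lemma~\ref{lem:components}, is needed.
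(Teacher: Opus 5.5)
Your argument is correct and is essentially the paper's. It uses connectedness of $\underline{X}$ from Lemma~\ref{lem:components}, the case split from the proof of Lemma~\ref{lem:eulercharbounds} to get $\chi_-(X^u)=\max(-\chi(X^u),0)$, and Lemmas~\ref{lem:reduced} and~\ref{lem:noremovablediskschi} to pass to $||x||_{\mathbb{Z}_p}$ and $\chi_p(x)$; the only variation is that your first inequality needs nothing beyond $\chi_-\geq-\chi$. Your closing paragraph undersells this: the sphere case is settled by $\chi(X^u)=2w>0$ and $\chi_-(X^u)=0$, so your cases 1--3 already are the whole ``real work''. Also, the aside that a sphere is null-homologous only if it bounds a ball is false, since every separating sphere is null-homologous, though nothing in your proof depends on it.
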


\begin{proof}

    Let $X$ be a reduced $\mathbb{Z}_p$-folded surface in $M$ with no removable disks such that $[X] = x$ (see Lemma~\ref{lem:reduced}). If $x=0$ then $\chi_p(0) = ||0||_{\mathbb{Z}_p}=0$ satisfies the equality trivially. 
    
    Now assume that $x\neq 0$. Lemma~\ref{lem:components} implies that $\underline{X}$ must be connected since it is non-separating and non-empty. The proof of Lemma~\ref{lem:eulercharbounds} shows that $\chi(X^u)>0$ in precisely two cases \- when $X^u$ is a $\mathbb{Z}_p$-crosscap and when $X^u$ is a disjoint union of $w_{\underline{X}} \leq p/2$ parallel $2$-spheres. In both cases $\chi_-(X^u) = 0 = \max(-\chi(X^u),0)$. 

    If $\chi(X^u)\leq 0$ then $\underline{X}$ cannot be a $2$-sphere and no component of $\Int(X)$ can be a disk due to Lemma~\ref{lem:removabledisksorclosed}. This implies that $\chi(C)\leq 0$ for all $C \in \pi_0(\Int(X^u_{cut}))$. By definition $\chi_-(X^u) = \chi_-(X^u_{cut}) = -\chi(X^u_{cut}) = -\chi(X^u) = \max(-\chi(X^u),0)$. 

    Now take the minimum of both sides of the equation 
    \[\chi_-(X^u) = \max(-\chi(X^u),0)\] 
    where $X$ runs over all reduced $\mathbb{Z}_p$-folded surfaces realising $x$ with no removable disks. Lemma~\ref{lem:reduced} implies that the left hand side equates to $||x||_{\mathbb{Z}_p}$ and Lemma~\ref{lem:noremovablediskschi} implies that the right hand side equates to $\max(-\chi_p(x),0)$, completing the proof.

\end{proof}

When $p=2$ or $3$ we have a special case in which every weight of a reduced weighted $\mathbb{Z}_p$-folded surface must be equal to $1$ and so the weights can be dropped and $||x||_{\mathbb{Z}_p}$ is determined by the maximum Euler characteristic of a non-separating $\mathbb{Z}_p$-folded surface $X$ such that $[X]=x$. The main result of a paper by Bredon and Wood on non-orientable surfaces in lens spaces~\cite{bredonwood1969} determines $\chi_2:H_2(L(2n,q);\mathbb{Z}_2)\rightarrow \mathbb{Z}$. We recall the special case of Bredon and Wood's result for $L(2n,1)$ for comparison with Theorem~\ref{thm:l3n1norm} and Conjecture~\ref{conj:lpn1}.

\begin{thm}[Non-orientable surfaces in orientable 3-manifolds~\cite{bredonwood1969}]\label{thm:l2nqnorm}
    Let $M = L(2n,1)$ and $x$ be the non-trivial element of $H_2(M; \mathbb{Z}_2) = \mathbb{Z}_2$. Then $\chi_2(x) = 2-n$. It follows immediately from Lemma~\ref{lem:equatingfunctions} that $||x||_{\mathbb{Z}_2} = \max(n-2,0)$.
\end{thm}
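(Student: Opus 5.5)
The statement has two halves. The second half, $||x||_{\mathbb{Z}_2} = \max(n-2,0)$, follows formally from the first, $\chi_2(x)=2-n$. Indeed $H_2(L(2n,1);\mathbb{Z}_2)\cong H^1(L(2n,1);\mathbb{Z}_2)\cong \mathbb{Z}_2$ by Poincar\'e duality and universal coefficients, so Lemma~\ref{lem:equatingfunctions} applies with $p=2$. The only subtlety is that Lemma~\ref{lem:equatingfunctions} was stated for the general setup. For $p=2$ we invoke Lemma~\ref{lem:noremovablediskschi} in its $p=2$ form, which preserves $\chi$ rather than increasing it. Together with Lemma~\ref{lem:reduced}, this still gives the two minimisations needed in that proof. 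So the plan concentrates on $\chi_2(x)=2-n$.

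First I would translate $\chi_2$ back into the classical language. With $p=2$ every weight of a reduced surface is $1$, so a reduced $\mathbb{Z}_2$-folded surface is simply a non-separating $\mathbb{Z}_2$-folded surface. By the construction in the proof of Lemma~\ref{lem:nonorientable}, forgetting $\Sing(X)$ gives an embedded closed unoriented surface with the same $\chi$ and the same $\mathbb{Z}_2$-class. Conversely, any embedded surface can be given a $\mathbb{Z}_2$-folded structure by choosing a system of curves $\Sing$ cutting it to an orientable surface without disconnecting it. Non-separation is unaffected, since it concerns only the underlying space. By Lemma~\ref{lem:components}, a non-separating representative of $x\neq 0$ is connected, because $\beta_2=1$. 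It cannot be orientable, because $H_2(L(2n,1);\mathbb{Z})=0$ forces an orientable closed surface to be null in $H_2(\mathbb{Z}_2)$. Hence $\chi_2(x)$ is exactly the maximal Euler characteristic of a connected non-orientable surface embedded in $L(2n,1)$ representing $x$.

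For the lower bound $\chi_2(x)\ge 2-n$, I would give an explicit construction. Split $L(2n,1)$ into two solid tori $V_1\cup V_2$. In $V_1$, take the twisted band that meets $\partial V_1$ in a $(2,1)$-type curve. On the boundary torus, join it to a surface in $V_2$ built by banding together meridian disks. Alternatively, start from the $\mathbb{Z}_2$-crosscap in $L(2,1)=\mathbb{R}P^3$ and note that each additional unit of $n$ costs one extra crosscap. A direct count gives the connected sum of $n$ projective planes, with $\chi = 2-n$. It is non-separating because it is dual to the nonzero class of $H^1(\mathbb{Z}_2)$.

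The upper bound $\chi\le 2-n$ is the main obstacle. Here I would simply cite~\cite{bredonwood1969}, since the statement is attributed to them. If a self-contained route were wanted, I would put an incompressible, maximal-$\chi$ representative in minimal position with respect to the genus-one Heegaard torus. I would then analyse the intersection pattern with the meridian disks of the two solid tori, and finally run an induction on $n$ that reduces $L(2n,1)$ to $L(2n-2,1)$ at the cost of one crosscap. This is the combinatorial argument the paper later generalises to $p=3$, and it is the step I expect to be hardest.
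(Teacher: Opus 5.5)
Your proposal is correct and takes the paper's route. The paper cites Bredon and Wood~\cite{bredonwood1969} for $\chi_2(x)=2-n$ and reads off $||x||_{\mathbb{Z}_2}=\max(n-2,0)$ from Lemma~\ref{lem:equatingfunctions}. Your additional steps make explicit what the paper leaves implicit in that citation: identifying reduced $\mathbb{Z}_2$-folded surfaces with connected non-orientable surfaces via Lemmas~\ref{lem:nonorientable} and~\ref{lem:components}, checking the $p=2$ case of Lemma~\ref{lem:noremovablediskschi}, and sketching the $\chi=2-n$ surface.
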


The main result of this paper is Theorem~\ref{thm:l3n1norm} which determines the $\mathbb{Z}_3$-Thurston norm and $\chi_3$ for the family of lens spaces $L(3n,1)$ where $n \in \mathbb{Z}^{>0}$.

\begin{thm}\label{thm:l3n1norm}
    Let $M = L(3n,1)$ and $x \in H_2(L(3n,1);\mathbb{Z}_3)$ such that $x\neq 0$. 
    \[\chi_3(x) = 2-n\] 
    and 
    \[||x||_{\mathbb{Z}_3} = \max(n-2,0).\]
\end{thm}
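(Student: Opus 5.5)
The theorem reduces to showing $\chi_3(x)=2-n$. Indeed $H_2(L(3n,1);\mathbb{Z}_3)\cong\mathbb{Z}_3$ by the universal coefficient theorem, since $H_1=\mathbb{Z}_{3n}$ and $H_2(\,\cdot\,;\mathbb{Z})=0$. So Lemma~\ref{lem:equatingfunctions} gives $||x||_{\mathbb{Z}_3}=\max(n-2,0)$ once $\chi_3(x)$ is known. Since $p=3$, every reduced $\mathbb{Z}_3$-folded surface has all weights $1$. Thus $\chi_3(x)$ is the maximum Euler characteristic of a non-separating $\mathbb{Z}_3$-folded surface representing $x$. By Lemma~\ref{lem:components} such a surface is connected. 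The two nonzero classes $\pm x$ are exchanged by reversing orientation, so it suffices to treat one of them. The proof splits into a lower bound, by construction, and an upper bound, which is the hard part.

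\textbf{Lower bound.} Write $L(3n,1)$ as $V_1\cup V_2$ with meridians $\mu_1,\mu_2$, glued so that $\mu_2$ has slope $(3n,1)$ on $\partial V_1$. Take the twisted singular arc $K$ to be the core of $V_1$. In $V_1$, use the model neighbourhood of Lemma~\ref{lem:existence}, which meets $\partial V_1$ in a $(3,q)$ torus knot $\gamma$. Then cap $\gamma$ off in $V_2$ by an orientable surface $S$ with $\partial S=\gamma$. Choosing $q$ so that $\gamma$ sits in $V_2$ as a torus knot of type $(3,\,\cdot\,)$ of minimal complexity, one wants $S$ to be a once-punctured surface of genus roughly $(n-1)/2$. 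Alternatively one can use a chain of twisted bands, mirroring the Bredon--Wood surfaces. The resulting $X$ has $X_{cut}\cong S$, so $\chi(X)=\chi(S)$, and the construction is tuned to give $\chi=2-n$. Non-separation follows because $X$ carries a nonzero $\mathbb{Z}_3$ class, together with Lemma~\ref{lem:injectivitynonsep}. Small $n$ should be checked by hand: $n=1$ gives the $\mathbb{Z}_3$-crosscap in $L(3,1)$, and $n=2$ gives $\chi=0$.

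\textbf{Upper bound.} Let $X$ be a non-separating $\mathbb{Z}_3$-folded surface with $[X]=x$ and maximal $\chi$. By Lemma~\ref{lem:noremovablediskschi} we may assume $X$ has no removable disks, and we also assume $X$ is incompressible in the sense of the $\mathbb{Z}_3$-compression surgeries of Section~\ref{sec:3foldsurgeries}. Those surgeries must be the reduced versions, so that non-separation is preserved and $\chi$ does not drop. The genuinely new ingredient is the step that follows. Isotope $X$ to be mock-transverse to the meridian disk $D_2$ of $V_2$. Then $X\cap D_2$ is a graph in the disk with trivalent vertices at the points of $\Sing(X)$ and endpoints on $\partial D_2$. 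This graph is a meridional intersection pattern, and the homological constraint $d_{\mathbb{Z}_3}(x)=[\Sing X]$ prescribes its endpoints, winding $3n$ times. The surgeries are then used to simplify the pattern: remove trivial arcs and bigons, and reduce to \emph{snake}-like configurations. Finally, cut $L(3n,1)$ along an outermost piece of the pattern. This should yield a recursion relating $\chi$ of an optimal surface in $L(3n,1)$ to one in $L(3(n-1),1)$, losing exactly $1$. Induction from the base case $\chi_3=1$ in $L(3,1)$ then gives $\chi_3(x)\le 2-n$.

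\textbf{Main obstacle.} I expect the main difficulty to be this combinatorial simplification. One must show that every incompressible, non-separating pattern can be brought to a standard form in which the recursion applies. Two things must be controlled while doing so: singular arcs can be twisted or straight, and surgeries must never separate the manifold. Straight arcs, whose local model is three sheets meeting along a circle, are the delicate case. I would first try to show, by an Euler-characteristic and orientation argument, that an optimal $X$ can be taken to have a single twisted singular arc, and only then run the disk-pattern analysis.
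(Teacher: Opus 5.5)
\textbf{There is a genuine gap.} Your reduction to $\chi_3$ (via Lemma~\ref{lem:equatingfunctions}) and the connectedness claim (via Lemma~\ref{lem:components}) are fine. Your upper-bound outline also heads the same way as the paper. The lower bound, however, fails as stated.

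\textbf{The lower-bound construction.} If $S\subset V_2$ is an orientable surface with connected boundary $\gamma$, then $[\gamma]=0$ in $H_1(V_2;\mathbb{Z})$. So $\gamma$ must be the meridian $\mu_2$, which winds $3n$ times around the core of $V_1$, not $3$ times. Equivalently, by your own constraint, $[\Sing X]=d_{\mathbb{Z}_3}(x)$ is a nonzero $3$-torsion element of $H_1(L(3n,1);\mathbb{Z})\cong\mathbb{Z}_{3n}$, i.e.\ $\pm n$ times a generator. The core of $V_1$ is a generator, so your construction exists only for $n=1$.

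There is also a parity obstruction. A connected $X$ with a single twisted singular arc has $X_{cut}$ a once-punctured surface, so $\chi(X)=1-2g$ is odd. It can never equal $2-n$ for even $n$; your claimed $\chi=0$ for $n=2$ is impossible in that shape. The same remark shows your proposed preliminary step for the upper bound, reducing an optimal $X$ to one twisted arc, is false for even $n$.

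Also, representing $x\neq 0$ does not by itself give non-separation for a general connected $X$, because $H_2(X;\mathbb{Z}_3)$ can have rank larger than $1$.

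\textbf{How the paper gets the lower bound.} It uses no explicit family; the lower bound is the inverse of the recursion step. Start from an optimal $X\subset L(3n,1)$ meeting $S_1$ in one meridian disk. In $B_2$, add a small disk $E'$ near a point of $X\cap\partial S_2$, inverting the orientation of the cut-off disk $E$. Enlarge the gluing disks $D^{\pm}$ by disks $D_0^{\pm}$, each meeting $\tilde Z$ in a boundary tripod, and reglue. This gives a $Z\subset L(3(n+1),1)$ with $\chi(Z)=\chi(X)-1$, and the paper checks separately that $Z$ is non-separating. Induction from Lemma~\ref{lem:l3_1norm} then gives $\chi_3\geq 2-n$.

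\textbf{The upper bound.} ``Cut along an outermost piece'' is missing the ingredient that makes it work.
\begin{enumerate}
\item You must first replace the Heegaard splitting (Lemma~\ref{lem:intervalbetterintersection}) until $X\cap S_1$ is a single meridian disk. The boundary vertices of $\Gamma_X$ are determined by these meridian disks (there are $3nd$ of them), not by $\Sing X$. Only when $d=1$ do all $3n$ of them have the same colour.
\item That colour condition forces $h$ even and $b_{2i-1}=0$. Combined with $c_2=c_4=0$, the cell count~\eqref{eq:cellcounting} gives $b_2\geq 2+\sum_{i\geq 3}b_{2i}$.
\item Hence there are either two adjacent $2$-edge boundary cells, which form a boundary tripod, or a $4$-edge cell next to a $2$-edge cell. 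The latter is removed by a snake move followed by a reduced $4$-compression (Lemma~\ref{lem:unbalancedquads}); this is where $q=1$ enters.
\item For the recursion, do a reduced $2$-compression along the bigon on $\partial B_2$ determined by the tripod, so that the two adjacent sheets $E,E'$ are disks.
\item Delete $E'$, flip $E$, and reglue $B_2$ along a smaller disk $D_Y$ that omits the tripod. This gives $Y\subset L(3(n-1),1)$ with $\chi(Y)=\chi(X)+1$.
\item Non-separation of $Y$ must be verified separately. The paper does this by comparing the equivalence relations on $\pi_0(\partial B_2\setminus\partial\tilde X')$.
\end{enumerate}
Without these steps, ``this should yield a recursion losing exactly $1$'' is only an expectation. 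The straight/twisted distinction you flag as delicate plays no role in the paper's argument.
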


Interactive 3D graphics of the maximal Euler characteristic non-orientable surfaces in $L(2n,1)$ and the maximal Euler characteristic non-separating $\mathbb{Z}_3$-folded surfaces in $L(3n,1)$ for $n\in \{1,2,3\}$ are available at \href{https://willhobkirk.com/}{willhobkirk.com}.

We also posit Conjecture~\ref{conj:lpn1} based on data gathered from the construction of \textit{canonical $\mathbb{Z}_p$-folded surfaces} using a large census of one vertex triangulations via a procedure described in Section~\ref{sec:canonicalpfoldedsurfaces}.

\begin{conj}\label{conj:lpn1}
    Let $M = L(pn,1)$. Then 
    \[\max \left\{\chi_p(x) : x \in H_2(M;\mathbb{Z}_p)\setminus \{0\}\right\} = 2-n.\]
\end{conj}

\section{Canonical $\mathbb{Z}_p$-folded surfaces}\label{sec:canonicalpfoldedsurfaces}

\subsection{Construction}\label{sec:normalpfoldedsurfacesconstruction}

See Section~\ref{sec:triangulations} for the definition of a triangulation. 
Throughout Sections~\ref{sec:normalpfoldedsurfacesconstruction} and~\ref{sec:normalisreduced} let $\mathcal{T}$ be a 1-vertex triangulation of a closed oriented $3$-manifold $M$ and let $\varphi \in H^1(M;\mathbb{Z}_p)$. In~\cite{Jaco_2009} for the case $p=2$ the authors give a construction of a canonical normal surface $S_{\varphi}(\mathcal{T})$ realising a class in $H_2(M;\mathbb{Z}_2)$ dual to $\varphi$ and combine this with the result of Bredon and Wood~\cite{bredonwood1969} to determine the complexity of the family of lens spaces $L(2n,1)$ for $n\in \mathbb{Z}^{>0}$. For odd numbers $p>2$ we construct a canonical $\mathbb{Z}_p$-folded surface $S_{\varphi}(\mathcal{T})$ dual to any $\varphi \in H^1(M;\mathbb{Z}_p)$. In Section~\ref{sec:naivecomplexity} we combine this with Theorem~\ref{thm:l3n1norm} to determine a new lower bound on the complexity of the lens spaces $L(3n,1)$. This connection with $3$-manifold complexity was the original motivation for this research and we provide the following construction in full generality for any odd $p>2$ in the hope that it can be used for similar results in the future. 

\begin{rem}
    This construction generalises easily to compact oriented $3$-manifolds. In this context it yields proper embeddings of canonical $\mathbb{Z}_p$-folded surfaces with boundary that realise any $H_2(M,\partial M;\mathbb{Z}_p)$ class. 
\end{rem}

Each oriented edge $e$ of $\mathcal{T}$ determines a class $[e]\in H_1(M;\mathbb{Z}_p)$ since $\mathcal{T}$ has a single vertex. Let $\pi_p:\mathbb{Z}\rightarrow \mathbb{Z}_p$ be the standard quotient. Since $p$ is odd we can define a unique simplicial $1$-cochain $\tilde{\varphi}$ such that $-p/2<\tilde{\varphi}(e)<p/2$ for every oriented edge $e$ of $\mathcal{T}$ and such that $\pi_p\circ\tilde{\varphi}$ is a $1$-cocycle representing $\varphi$. We will now construct a canonical $\mathbb{Z}_p$-folded surface $S_{\varphi}$ that is in general position relative to the $1$-skeleton of the triangulation such that the oriented $2$-manifold $\text{Int}(S_{\varphi})$ intersects each oriented edge $e \in \mathcal{T}^1$ in a signed total of $\tilde{\varphi}(e)$ points. Counting transverse intersections modulo $p$ shows that $[S_{\varphi}] \in H_2(M;\mathbb{Z}_p)$ is the Poincar\'e dual of $\varphi \in H^1(M;\mathbb{Z}_p)$. The construction depends only on the triangulation and the class of $\varphi$ and so we call it the canonical $\mathbb{Z}_p$-folded surface dual to $\varphi$ in $\mathcal{T}$. 

We will describe $S_{\varphi}$ by describing its intersection with each edge, triangle and tetrahedron of $\mathcal{T}$. 

Let $e$ be an edge of $\mathcal{T}$. Simple modular arithmetic verifies that the value of $|\tilde{\varphi}(e)|<p/2$ is independent of a choice of orientation of $e$. Fix $|\tilde{\varphi}(e)|$ equally spaced points in the interior of each edge $e\in \mathcal{T}^1$ to comprise the vertices of $\text{Int}(S_{\varphi}) \cap \mathcal{T}^1$.

Let $t$ be an oriented triangle of $\mathcal{T}$ and let $e_1,e_2,e_3$ be the edges of its boundary with induced orientations such that $\partial(t) = \sum_{i=1}^3 e_i$. Clearly the bounds on $\tilde{\varphi}$ imply the inequalities $-\frac{3}{2}p<d\tilde{\varphi}(t) = \tilde{\varphi}(\partial(t)) <\frac{3}{2}p$. $\pi_p\circ\tilde{\varphi}$ is a $1$-cocycle so $\pi_p\circ d\tilde{\varphi} = 0$ and hence $d\tilde{\varphi}(t) \in \{-p,0,p\}$ for all oriented triangles $t$. Fix one point at the barycenter of each triangle $t \in \mathcal{T}^2$ with the property that $d\tilde{\varphi}(t) = \pm p$ to comprise the vertices of $\text{Sing}(S_{\varphi})\cap \mathcal{T}^2$. 

Note that it is not strictly necessary to rely on the geometry of the standard $3$-simplex for the notions of \textit{equally spaced} and \textit{barycenter} but doing so allows us to avoid a tedious proof that the construction is unique up to piecewise-linear normal isotopy if any set of points is allowed.

Let $t$ be an oriented triangle of $\mathcal{T}$. If $d\tilde{\varphi}(t) = 0$ then $\sum_{i=1}^3 \tilde{\varphi}(e_i) = 0$ implies that without loss of generality we have $|\tilde{\varphi}(e_1)| = |\tilde{\varphi}(e_2)| + |\tilde{\varphi}(e_3)|$. There is a unique non-crossing pairing of the vertices we have fixed in the interior of $e_1$ with the vertices in the interiors of $e_2$ and $e_3$. Linear spans of these vertex pairs form a proper embedding of $\frac{1}{2}\sum_{i=1}^3 |\tilde{\varphi(e_i)}|$ intervals in $t$. The orientation of $t$ induces an orientation of these intervals such that the induced orientation of the vertices in the interior of $e_1$ has the same sign as $\tilde{\varphi}(e_1)$. If $d\tilde{\varphi}(t) = p $ (resp. $-p$) then the fact that $\tilde{\varphi}(e_{i\in\{1,2,3\}})<p/2$ (resp. $>-p/2$) implies that $\tilde{\varphi}(e_{i\in\{1,2,3\}})>0$ (resp. $<0$). This implies that $p = |d\tilde{\varphi}(t)| = \sum_{i=1}^3 |\tilde{\varphi}(e_i)|$ and so we have fixed $p$ vertices on the boundary of $t$ and one in the interior. Take the intervals spanned by the interior vertex with each boundary vertex to form an embedding of $K_{p,1}$ in $t$. The orientation of $t$ induces an orientation of these intervals such that the induced orientation on each boundary vertex has the same sign as $d\tilde{\varphi}(t)$. When the orientation of $t$ is reversed these linearly embedded $1$-complexes are fixed with reversed orientations. When this construction is performed for each triangle of $\mathcal{T}$ the result is a 1-complex embedded in $\mathcal{T}^2$ in general position relative to $\mathcal{T}^1$ which comprises the intersection $S_{\varphi} \cap \mathcal{T}^2$. When $p=3$ there are $3$ different ways that $S_{\varphi}$ may intersect a triangle up to rotational symmetry and reversal of orientation. When $p=5$ there are $5$. These are pictured in Figure~\ref{fig:p35triangleboundary}.

\begin{figure}
    \centering
    \includegraphics[width=0.6\textwidth]{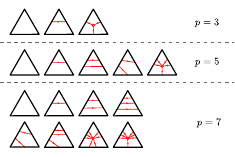}
    \caption{Possible boundary patterns of triangles $p=3,5$ up to symmetry and orientation reversal.}\label{fig:p35triangleboundary}
\end{figure}

Now let $\tilde{\mathfrak{t}}$ be the preimage of tetrahedron $\mathfrak{t}$ of $\mathcal{T}$ oriented in the same way as its image in $M$. Let $\partial(\tilde{\mathfrak{t}}) = \sum_{i=1}^4 \tilde{t_i}$ where the $\tilde{t_i}$ are the four faces of $\tilde{\mathfrak{t}}$ with induced orientations and $t_i$ are their images in $|\mathcal{T}|$. The orientation of each $t_i$ induces an orientation of the 1-complex $S_{\varphi} \cap t_i$ in the way described above which lift together to an oriented 1-complex on the boundary of the preimage tetrahedron $\partial(\
tilde{\mathfrak{t}})$. The way the orientations of the intervals of this boundary pattern were determined ensures that components of the boundary pattern are equivalent up to orientation preserving homeomorphism to a union of oriented simple closed curves or a bipartite $p$-valent graph $(A,B,E)$ with all edges oriented from a vertex in $A$ to a vertex in $B$. When $p=3$ there are precisely $4$ possible boundary patterns of a tetrahedron up to symmetry and orientation reversal and these are pictured in Figure~\ref{fig:p3boundarypatterns}.

\begin{figure}
    \centering
    \includegraphics[width=0.6\textwidth]{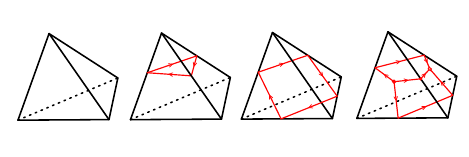}
    \caption{Possible boundary patterns of tetrahedron preimages when $p=3$ up to symmetries of the tetrahedron.}\label{fig:p3boundarypatterns}
\end{figure}

$0 = d^2 \tilde{\varphi}(\mathfrak{t}) = \sum_{i=1}^4 d\tilde{\varphi}(t_i)$ implies without loss of generality that either:

\begin{enumerate}
    \item $d\tilde{\varphi}(t_{1,2,3,4})=0$ \label{case:nosingularlocus}
    \item $d\tilde{\varphi}(t_{1})= -d\tilde{\varphi}(t_{2}) = p$ and $d\tilde{\varphi}(t_{3,4}) = 0$ \label{case:onesingularlocus}
    \item $d\tilde{\varphi}(t_{1,2})= -d\tilde{\varphi}(t_{3,4}) = p$ \label{case:twosingularloci}
\end{enumerate}

On closer inspection case~\ref{case:twosingularloci} is not actually possible. This is because if we let $\partial(t_1) = e_1+e_2+e_3$ then as we noted above $\tilde{\varphi}(e_{1,2,3})>0$. The edges $e_1,e_2,e_3$ also appear as part of the boundary of $t_2,t_3,t_4$ with the opposite induced orientation. In particular this implies that $d\tilde{\varphi}(t_2)<p$ which is a contradiction. This leaves us with case~\ref{case:nosingularlocus} in which the boundary pattern on $\partial(\tilde{\mathfrak{t}})$ is an oriented compact 1-manifold and case~\ref{case:onesingularlocus} where one component of the boundary pattern is the bipartite multigraph $G_p$ with two vertices and $p$ edges.

We now fill in the interior of $\tilde{\mathfrak{t}}$ with oriented linear $2$-simplices that are the convex hulls of the vertices of the boundary pattern such that the resulting space is a $\mathbb{Z}_p$-folded surface in the interior of the tetrahedron inducing the required orientation on its boundary. To do this in a well-defined way we need to introduce some orderings of the vertices of the boundary pattern. 

Let $C$ be a simple closed curve component of the boundary pattern. It is well-known from the theory of normal surfaces and easily verifiable that $C$ intersects either $3$ or $4$ edges of $\tilde{\mathfrak{t}}$ once each. In the former case simply take the linear $2$-simplex spanned by the three intersection points and orient it such that it induces the orientation of $C$ on its boundary. In the latter case let $\{v_1, v_2,v_3,v_4\}$ be the vertices labelled in the cyclic order induced by the orientation of $C$ such that $v_1$ is either of the two vertices coloured green in Figure~\ref{fig:normaldisks} (a different choice will yield a PL-isotopic $\mathbb{Z}_p$-folded surface so it does not really matter too much). Take the linear $2$-simplices spanned by $\{v_1,v_2,v_3\}$ and $\{v_1,v_3,v_4\}$ oriented such that they induce the orientation of $C$ on the boundary. We will call the resulting oriented triangulated disk a \textit{normal disk}.

\begin{figure}
    \centering
    \includegraphics[width=0.3 \textwidth]{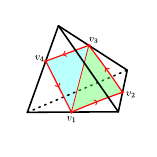}
    \caption{Triangulation of normal disks and vertex labelling.}\label{fig:normaldisks}
\end{figure}

Now let $C$ be one of the $p$ oriented paths of a component of the boundary pattern homeomorphic to $G_p$ described above and let $v_-$ be the source vertex and $v_+$ be the sink vertex in the interiors of two faces of $\tilde{\mathfrak{t}}$. $C$ intersects the edges of $\tilde{\mathfrak{t}}$ transversally in $n \in \{1,2,3\}$ points which we label $\{v_i\}_{i=1}^n$ - see Figure~\ref{fig:normalpfolds}. If $n=1$ take the linear $2$-simplex spanned by $\{v_-,v_1,v_+\}$. If $n=2$ take the two linear $2$-simplices spanned by $\{v_-,v_1,v_2\}$ and $\{v_-,v_2,v_+\}$. If $n=3$ take the three linear $2$-simplices spanned by $\{v_-,v_1,v_2\}, \{v_-,v_2,v_3\}$ and $\{v_-,v_3,v_+\}$. Orient these $2$-simplices such that they induce the orientation of $C$ on their boundary. When this is performed for each path from $v_-$ to $v_+$ in the embedding of $G_p\hookrightarrow \partial \tilde{\mathfrak{t}}$ then the result is $p$ oriented triangulated disks identified along a singular interval spanned by $\{v_+,v_-\}$. The orientation of the disks induces a consistent orientation on the singular edge from $v_+$ to $v_-$ and induces the orientation of $G_p$ on the remaining boundary where they meet $\partial \tilde{\mathfrak{t}}$. We call the union of these disks a \textit{normal p-fold}. 

\begin{figure}
    \centering
    \includegraphics[width=0.6\textwidth]{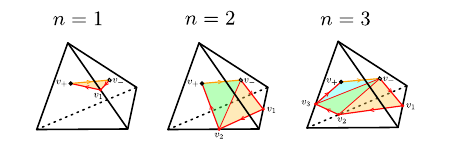}
    \caption{Triangulation of normal $p$-folds and vertex labelling.}\label{fig:normalpfolds}
\end{figure}

Normal $p$-folds and normal disks do not coexist in the same tetrahedron since they imply contradictory signs of $\tilde{\varphi}$ on adjacent pairs of edges (this is an easy case checking exercise). It is also easily checked that the $2$-simplices used in the construction of normal $p$-folds and normal disks intersect in the interior of $\tilde{\mathfrak{t}}$ only if they share an entire edge and so the resulting oriented $2$-complex is a $\mathbb{Z}_p$-folded surface in the interior of $\tilde{\mathfrak{t}}$ which induces the oriented boundary pattern on $\partial \tilde{\mathfrak{t}}$. $M$ is oriented so the identifications between faces of the tetrahedra are orientation reversing so when the normal pieces in each $\tilde{\mathfrak{t}}$ are identified along their boundaries in the image $\mathfrak{t}$ the orientation is preserved by the gluing to form a $\mathbb{Z}_p$-folded surface $S_{\varphi}(\mathcal{T})$. $\text{Sing}(S_{\varphi}(\mathcal{T}))$ is the oriented $1$-manifold given by the image of the oriented intervals from $v_+$ to $v_-$ in each tetrahedron where Case~\ref{case:onesingularlocus} applies. It follows from the definition of Poincar\'e duality in terms of the cap product and by counting signed transverse intersections of $S_{\varphi}$ with the oriented edges of the $1$-skeleton of $\mathcal{T}$ that $[S_{\varphi}(\mathcal{T})]$ is the Poincar\'e dual of $\varphi$. 

\subsection{Relation to reduced $\mathbb{Z}_p$-folded surfaces}\label{sec:normalisreduced}

Lemma~\ref{lem:normalpfoldsarereduced} associates a reduced $\mathbb{Z}_p$-folded surface to each canonical $\mathbb{Z}_p$-folded surface that has the same Euler characteristic. Corollary~\ref{cor:chi3bound} therefore bounds the Euler characteristic of a canonical surface by $\chi_p$ of its dual class. This bound can then be used to derive some information about the triangulations of a particular $3$-manifold that exist, and we demonstrate this in the proof of Corollary~\ref{cor:naivecomplexitybound} in Section~\ref{sec:l3n1normproof}. Via Lemma~\ref{lem:equatingfunctions} one can easily translate this into an analogous result relating the Euler characteristic of canonical $\mathbb{Z}_p$-folded surfaces to the $\mathbb{Z}_p$-Thurston norm when $H_2(M;\mathbb{Z}_p) = \mathbb{Z}_p$.

\begin{lem}\label{lem:normalpfoldsarereduced}
    For each canonical $\mathbb{Z}_p$-folded surface $S_{\varphi}(\mathcal{T})$ there exists a reduced $\mathbb{Z}_p$-folded surface $X$ such that $[X] = [S_{\varphi}(\mathcal{T})]$ and $\chi(X^u) = \chi(S_{\varphi}(\mathcal{T}))$.

\end{lem}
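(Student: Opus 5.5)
The plan is to start from $S := S_{\varphi}(\mathcal{T})$, regarded as a weighted $\mathbb{Z}_p$-folded surface with all weights $1$ (so $S^u = S$), and to produce a non-separating weighted surface with every weight at most $p/2$. The generic tool is the weight-shifting procedure in the proof of Lemma~\ref{lem:weightedpfoldedsurfaces}. That procedure preserves the class and the singular locus, but only guarantees $\chi_-(Y^u)\leq \chi_-(X^u)$; it controls neither $\chi$ itself nor the weights. So I will not apply it blindly. Instead I will exploit the special structure of canonical surfaces to run a controlled version in which each complementary region is dealt with so that $\chi$ is unchanged and weights stay below $p/2$.

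\textbf{Step 1: analyse the complementary regions.} The key observation is a ``no stacking'' property of canonical surfaces: on every edge $e$ the surface meets $e$ in $|\tilde{\varphi}(e)|<p/2$ points, and all of these have the same sign. Let $N$ be a component of $M\setminus S$ whose closure has nonempty boundary, with $\alpha_\pm$ as in the proof of Lemma~\ref{lem:weightedpfoldedsurfaces}. I want to show that $N$ is a product region between parallel sheets.

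\begin{itemize}
\item Since $\mathcal{T}$ has one vertex, every component of $M\setminus S$ other than the one containing the vertex lies in the interiors of tetrahedra, away from the $1$-skeleton except in open edge segments.
\item Hence such an $N$ is a union of pieces of the form (normal piece)$\times I$, glued together.
\item These pieces are bounded by two sheets $I_+$ and $I_-$ that are normally parallel.
\item The consistent sign of intersections along each edge means the two sheets carry the same transverse orientation. So as boundary components of $\overline{N}$, one lies in $\alpha_+$ and the other in $\alpha_-$.
\end{itemize}

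\textbf{Step 2: collapse the parallel sheets.} Replace the two parallel sheets bounding $N$ by a single sheet, combining their weights. This is the weight shift of Lemma~\ref{lem:weightedpfoldedsurfaces} with $\alpha_+$ and $\alpha_-$ paired up, so the class is unchanged. I also want $\chi(X^u)$ unchanged, because $X^u$ remains isotopic to $S$ (parallel copies are exactly what the weighting encodes). Iterating, every complementary region except the one containing the vertex gets absorbed. In the limit the weight of a sheet equals the number of parallel normal copies of it, which is bounded by the edge intersection count $|\tilde{\varphi}(e)|<p/2$ on any edge it crosses. This gives reducedness. The remaining step is to show that the single leftover region makes $\underline{X}$ non-separating.

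\textbf{Step 3 (expected main obstacle).} The hardest part is justifying the structure claimed in Step 1. I need to show that every non-vertex complementary region really is an $I$-bundle over a sheet, and that it does not pass around a singular arc. Near a normal $p$-fold, the wedges between consecutive sheets contain no edge segments of $\mathcal{T}$ that avoid the vertex. I still need to check that such wedges lie in the vertex region or in product regions.

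My first attempt would be to prove that every region meeting a singular arc contains a subsegment of an edge adjacent to the vertex. If instead some twisted $I$-bundle regions occur, I would fall back on Lemma~\ref{lem:weightedpfoldedsurfaces} together with the reductions of Lemmas~\ref{lem:noremovabledisks}--\ref{lem:surfacecomponents}. These preserve $\chi$ only in the specific cases where the exchanged sheets have equal Euler characteristic. Parallel normal sheets are homeomorphic, which is what should make the Euler characteristic bookkeeping work.
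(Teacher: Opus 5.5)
Your route is the paper's. You start from $S_{\varphi}(\mathcal{T})$ with all weights $1$ and repeat the weight shift of Lemma~\ref{lem:weightedpfoldedsurfaces} across complementary regions $N$ that do not contain the vertex. You keep $\chi(X^u)$ fixed because the sheets on the two sides of $N$ match, and you get reducedness from the edge counts $|\tilde{\varphi}(e)|<p/2$. The paper shifts weights by one at a time and states the matching as $\chi(N_+)=\chi(N_-)$, read off from the five local types of $N\cap\mathfrak{t}$; deleting one sheet and adding its weight to the other is the same move. The gap is that the structural claim of Step~1, on which all the bookkeeping rests, is never proved, and the route you propose for it in Step~3 does not work.

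\textbf{Why Step~3 fails.} Your first attempt is to show that every region meeting a singular arc is the vertex region.
\begin{itemize}
\item This is fine for $p=3$. Every singular triangle then has edge values $(1,1,1)$, and each of the three wedges of a normal $3$-fold contains a vertex of the tetrahedron.
\item For $p\geq 5$ the local structure cannot give it. A singular triangle must have an edge $e$ with $|\tilde{\varphi}(e)|\geq 2$, so two consecutive legs of its $K_{p,1}$ end at adjacent points of $e$.
\item The corresponding sheets can run parallel through the tetrahedron. For example, if $e$ is the edge shared by the two singular faces, both go straight from $v_+$ through $e$ to $v_-$. The wedge between them then contains edge segments but no vertex.
\item If in addition $e$ has degree one, the two sheets are disks bounded by the singular loop. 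The ball between them is then a complementary region that misses the vertex and meets $\Sing$.
\end{itemize}
The fallback does not help either. Lemma~\ref{lem:noremovabledisks} changes $\chi$ by $p-2w_D\neq 0$, and Lemmas~\ref{lem:removenotdisks} and~\ref{lem:surfacecomponents} only control $\chi_-$, so none of them can give the required equality. Twisted $I$-bundles are also not the issue.

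\textbf{What is needed, and suffices.} In each tetrahedron the pieces are either normal disks (with at most one quad type) or a single $p$-fold. So a component of $\mathfrak{t}\setminus S_{\varphi}$ containing no vertex of $\mathfrak{t}$ meets $\partial\mathfrak{t}$ in a vertex-free strip between two arcs that cross the same edges in the same order. It is therefore one of two things: a product of a normal disk with $[0,1]$, or a wedge between two consecutive $p$-fold sheets. The wedge is such a product with the singular edge times $[0,1]$ collapsed. These are the five local types the paper uses.

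The pinching is harmless for your Step~2.
\begin{itemize}
\item A non-vertex region is $F\times[0,1]$ with the singular part of $\partial F$ pinched.
\item Its fibres are oriented by the common transverse orientation of the sheets, so there is no twisted case.
\item Its two ends are distinct sheets $I_\pm$. If one sheet bounded $N$ on both sides, $\overline{N}$ would be open and closed in $M$ and would contain the vertex.
\item The ends have homeomorphic cut-closures and meet the same singular arcs with the same degrees.
\end{itemize}
Deleting $I_+$ and adding its weight to $I_-$ therefore preserves the weighted $\mathbb{Z}_p$-condition, the class, and $\chi(X^u)$. The fibres along each edge $e$ pair the points of $I_+\cap e$ with those of $I_-\cap e$, so the total weight at the points of $\underline{X}\cap e$ stays equal to $|\tilde{\varphi}(e)|<p/2$, which bounds every weight. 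With this local lemma in place your argument is complete for every odd $p$.
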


\begin{proof}
    Set $\underline{X} = S_{\varphi}$ and $w_C = 1$ for every $C \in \pi_0(\text{Int}(\underline{X}))$. If $\underline{X}$ is non-separating in $M$ then we are done.

    Otherwise, there must exist a component $N$ of $M \setminus \underline{X}$ that does not contain the single vertex of $\mathcal{T}$. Let $\mathfrak{t}$ be the image of a tetrahedron of $\mathcal{T}$ and let $N'$ be a connected component of $N\cap\mathfrak{t}$. Recall that the intersection $X \cap \mathfrak{t}$ consists of normal $p$-folds or normal disks but not both (see Section~\ref{sec:normalpfoldedsurfacesconstruction}) and so up to isotopy and reversal of orientation of $\partial(N')\subset S_{\varphi} \cap \mathfrak{t}$ there are precisely $5$ possibilities for $N',\partial(N') \hookrightarrow \mathfrak{t}$ pictured in Figure~\ref{fig:N'intersectionwitht}.
    
    Let $N_+$ (resp. $N_-$) be the subsets of $\text{Int}(S_{\varphi}) \cap \partial N$ on the boundary of $N$ with consistent (resp. inconsistent) orientation with the orientation induced by $N\subset M$. Following the proof of Lemma~\ref{lem:weightedpfoldedsurfaces} we can obtain a new weighted $\mathbb{Z}_p$-folded surface $Y$ by decreasing the weights of components of $N_+$ by one and increasing the weights of components of $N_-$ by one. The five cases in Figure~\ref{fig:N'intersectionwitht} clearly show that there is a cell decomposition of $\overline{N_+}$ and $\overline{N_-}$ with the same number of $2$-cells and $1$-cells. For each edge $e$ of $\mathcal{T}$ the degree of the vertices in $e\cap N_+$ and $e \cap N_-$ in the cell decompositions is equal to the degree of $e$ in $\mathcal{T}$ and so the number of $0$-cells is also the same. Hence $\chi(N_+) = \chi(N_-)$ and so $\chi(Y^u) = \chi(X^u) + \chi(N_-) - \chi(N_+) = \chi(X^u)$. Set $X=Y$. Repeat this process fixing $N$ until the number of components of $M \setminus \underline{X}$ decreases, then set $N$ to be another component of $M \setminus \underline{X}$ that does not contain the vertex of $\mathcal{T}$. Iterate until $M \setminus \underline{X}$ is connected. 
    
    Once this is complete $X$ is clearly a non-separating weighted surface with the property that $\chi(X^u) = \chi(S_{\varphi}(\mathcal{T}))$. Singular homology arguments from the proof of Lemma~\ref{lem:weightedpfoldedsurfaces} imply that $[X] = [S_{\varphi}(\mathcal{T})]$. For every edge $e$ of $\mathcal{T}$ oriented such that $\tilde{\varphi}(e)\geq 0$ it is also clear from the proof that the sum of the weights of the components of $\text{Int}(\underline{X})$ intersecting $e$ is $\tilde{\varphi}(e)$ at every step. In particular since $\tilde{\varphi}(e)<p/2$ this bounds every weight and shows that $X$ is also a reduced $\mathbb{Z}_p$-folded surface as required.
    
\end{proof}

\begin{figure}
    \centering
    \includegraphics[width=0.6\textwidth]{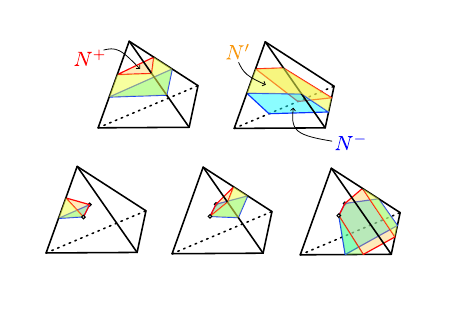}
    \caption{$N'$ a connected component of $\mathfrak{t}\cap N$ and $\partial N'$ with the decomposition into oriented cells in $N_+$ and $N_-$ induced by the orientation of $\underline{X}$. All cases up to symmetries of $\mathfrak{t}$ and reversals of orientation of the folded surface.}\label{fig:N'intersectionwitht}
\end{figure}

Corollary~\ref{cor:chi3bound} follows from Lemma~\ref{lem:normalpfoldsarereduced}

\begin{cor}\label{cor:chi3bound}
    Suppose $S_{\varphi}(\mathcal{T})$ is a canonical $\mathbb{Z}_p$-folded surface dual to a class $\varphi \in H^1(\mathbb{Z}_p)$ where $\mathcal{T}$ is a one-vertex triangulation of a closed orientable irreducible $3$-manifold $M$. It follows that $\chi(S_{\varphi}(\mathcal{T})) \leq \chi_p(\varphi^*)$. Moreover if $\mathcal{C}$ is a collection of $1$-vertex triangulations of $M$ then $\max_{\mathcal{T}\in \mathcal{C}}\chi(S_{\varphi}(\mathcal{T})) \leq \chi_p(\varphi^*)$.
\end{cor}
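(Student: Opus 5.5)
The plan is to deduce the corollary directly from Lemma~\ref{lem:normalpfoldsarereduced} and the definition of $\chi_p$ as a maximum over reduced representatives. Here $\varphi^*\in H_2(M;\mathbb{Z}_p)$ denotes the Poincar\'e dual of $\varphi$. The construction in Section~\ref{sec:normalpfoldedsurfacesconstruction} already established that $[S_{\varphi}(\mathcal{T})] = \varphi^*$, by counting signed intersections of $\Int(S_{\varphi})$ with the oriented edges of $\mathcal{T}^1$ modulo $p$.

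The argument has three steps.

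\begin{enumerate}
\item Fix a one-vertex triangulation $\mathcal{T}$ and apply Lemma~\ref{lem:normalpfoldsarereduced} to $S_{\varphi}(\mathcal{T})$. This produces a reduced $\mathbb{Z}_p$-folded surface $X$ with $[X] = [S_{\varphi}(\mathcal{T})] = \varphi^*$ and $\chi(X^u) = \chi(S_{\varphi}(\mathcal{T}))$.
\item Since $X$ is one of the reduced surfaces realising $\varphi^*$ over which the maximum defining $\chi_p(\varphi^*)$ is taken, we get $\chi(S_{\varphi}(\mathcal{T})) = \chi(X^u) \leq \chi_p(\varphi^*)$. For this maximum to exist as an integer, we need that the set of reduced representatives is nonempty and that the values are bounded above.
\begin{itemize}
\item Nonemptiness follows from Lemma~\ref{lem:reduced}, or directly from the surface $X$ just constructed.
\item Boundedness follows from Lemma~\ref{lem:eulercharbounds}, which gives $\chi(Y^u)\leq p\cdot\beta_2(M;\mathbb{Z}_p)$ for every reduced $Y$. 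Since Euler characteristics are integers, the supremum is attained.
\end{itemize}
\item For the second statement, the bound in step 2 holds for each $\mathcal{T}\in\mathcal{C}$ with the same right-hand side $\chi_p(\varphi^*)$, which depends only on $M$ and $\varphi$. Taking the maximum over $\mathcal{C}$ gives the claim. If $\mathcal{C}$ is infinite, read the maximum as a supremum; it is still bounded by $\chi_p(\varphi^*)$, and it is attained because the values are integers bounded above.
\end{enumerate}

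The only point needing care is the identification $[S_{\varphi}(\mathcal{T})]=\varphi^*$, together with the fact that $\chi(S_{\varphi}(\mathcal{T}))$ means the Euler characteristic of the unweighted folded surface, which agrees with $\chi(X^u)$ when all weights are $1$. I would state these explicitly, citing the end of Section~\ref{sec:normalpfoldedsurfacesconstruction}.

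Irreducibility of $M$ is not used in this argument. I would note that it is only carried as a standing hypothesis for the later complexity application.
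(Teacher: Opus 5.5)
Your proposal is correct and is essentially the paper's proof: apply Lemma~\ref{lem:normalpfoldsarereduced} to obtain a reduced representative of $\varphi^*$ with the same Euler characteristic, then invoke the definition of $\chi_p$ as a maximum over reduced representatives. Your additional remarks are accurate refinements that the paper leaves implicit: that the maximum is attained by Lemma~\ref{lem:eulercharbounds}, that the bound passes to a supremum over a possibly infinite $\mathcal{C}$, and that irreducibility is not needed.
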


\begin{proof}
    This follows immediately from Lemma~\ref{lem:normalpfoldsarereduced} and the definition of $\chi_p$ since $S_{\varphi}$ is a $\mathbb{Z}_p$-folded surface realising the dual class $\varphi^*$.
\end{proof}

We posit Conjecture~\ref{conj:maxcanonicalischi_p} which states that the maximum Euler characteristic of a reduced $\mathbb{Z}_p$-folded surface is obtained by a canonical $\mathbb{Z}_p$-folded surface representing the same class for some one-vertex triangulation.

\begin{conj}\label{conj:maxcanonicalischi_p}
    Let $\mathcal{C}$ be the set of all one-vertex triangulations of a closed orientable irreducible $3$-manifold $M$ and let $\varphi \in H^1(M;\mathbb{Z}_p)$. The following holds:
    \[\max_{\mathcal{T}\in \mathcal{C}}\chi(S_{\varphi}(\mathcal{T})) = \chi_p(\varphi^*)\]
\end{conj}

This conjecture holds for the lens space families $L(2n,q)$ and $L(3n,1)$ for all $n\in \mathbb{Z}^{>0}$ and integer $q$ coprime to $2n$ with equality given by the minimal layered triangulations. We have not been able to come up with a neat topological construction that shows it holds in general.

\section{$\mathbb{Z}_3$-folded surface surgeries}\label{sec:3foldedsurfs}

\subsection{Generalised compression surgeries}\label{sec:3foldsurgeries}

Let $X$ be a $\mathbb{Z}_3$-folded surface embedded in an orientable closed $3$-manifold $M$. We now define a family of surgeries called \textit{2k-compressions} through which a new embedded $\mathbb{Z}_3$-folded surface can be obtained realising the same homology class. This technique is inspired by the theory of compressible surfaces developed in~\cite{haken1961}. The surgeries will all depend on an open disk $D$ embedded in $M$ such that $\overline{D}\cap X = \partial(D)$ and $\partial(D)\cap \text{Sing}(X)$ is a finite set of $2k$ isolated points on the boundary. We call $D$ a \textit{$2k$-compression disk}. We will start by describing the \textit{0-compression surgery} which is essentially the same as the usual disk compression encountered in normal surface theory applied to $\Int(X)$. The \textit{$2k$-compression surgery} is a generalisation of this defined up to a choice of orientation. For the purpose of proving Theorem~\ref{thm:l3n1norm} the only required surgeries are those for $k\in\{0,1,2\}$ for reasons which will become clear. It is crucial to note that even if $X$ was non-separating, $X'$ will not necessarily be. We will deal with this problem in Section~\ref{sec:surfaceelimination}.

Let $N$ be a closed regular neighbourhood of $D$ in $M$ (starting with a triangulation for which $X$ and $D$ are subcomplexes). Let $D_1,D_2$ be the two open disk components of $\partial(N) \setminus X$ belonging to the same component of $N \setminus X$ as $D$. Let $Y$ be the $2$-polyhedron $X\sqcup D_1 \sqcup D_2$ (in particular $Y$ is a simple polyhedron - see~\cite{matveev:2003}). The inclusion $X\subset Y$ induces an orientation on $\Int(Y) \setminus (D_1\cup D_2)$.

If $k=0$ then there is an open annulus $A \in \pi_0(\Int(Y))$ such that $\partial(D)\in A$ and $\partial(A) = \partial(D_1)\sqcup \partial(D_2)$. Let $X' := Y \setminus A$ and note that orientations of $D_1\sqcup D_2 \subset X'$ can clearly be chosen such that $X'$ is a $\mathbb{Z}_3$-folded surface. By fixing a cell complex structure for $Y$ a standard Euler characteristic computation can be used to show that $\chi(X') = \chi(X)+2$. We say that $X'$ is the result of 0-compression of $X$ along $D$. See Figure~\ref{fig:interiorcompression} for a visualisation.

\begin{figure}
    \centering
    \begin{tabular}{c@{}c@{}c}
        $X$ & $Y$ & $X'$\\
        \begin{annotate}{\includegraphics[width=0.25\textwidth]{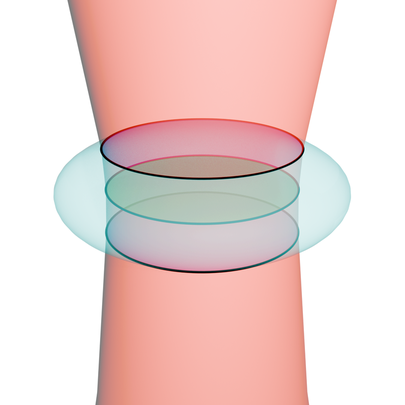}}{0.25}

            \callout{-6,6}{$N$}{-5,2};
            \callout{6,6}{$D_1$}{0,2};
            \callout{6,3}{$D$}{0,0};
            \callout{6,-6}{$D_2$}{0,-2};
            \draw[thick,rounded corners = 10pt, \arcol] (-6,0) -- (-4,2.5) -- (0,3.5) -- (4,2.5) -- (6,0) -- (4,-2.5) -- (0,-3) -- (-4,-2.5) -- cycle;
        \end{annotate}
        & 
        \begin{annotate}{\includegraphics[width=0.25\textwidth]{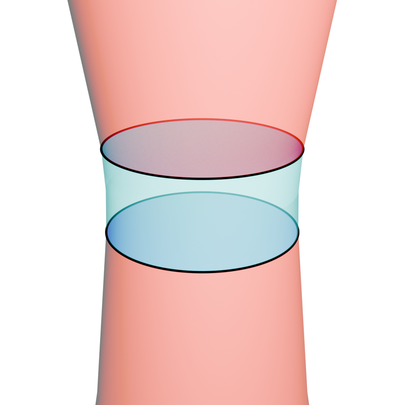}}{0.25}

            \callout{-6,6}{$N'$}{-4,2.5}
            \callout{6,-3}{$A$}{0,-1}
            \draw[thick,rounded corners = 10pt, \arcol] (-4,-2) -- (-4,2.5) -- (0,3.5) -- (4,2.5) -- (4,-2) -- (0,-3) -- cycle;
        \end{annotate}
        &
        \begin{annotate}{\includegraphics[width=0.25\textwidth]{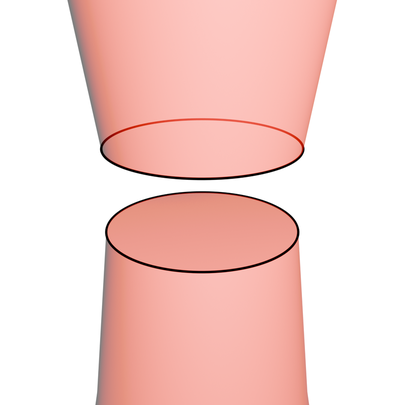}}{0.25}

        \end{annotate}
    \end{tabular}

    \caption{The  $0$-compression surgery \- $X, Y$ and $X'$. The compression disk $D$, its regular neighbourhood $N$, the disks $D_1,D_2\subset \partial N$, the region $N'$ of $N\setminus X$ containing the parallel disks $D,D_1,D_2$ and the annulus $A$ which is a component of $\Int(Y)$.}
    \label{fig:interiorcompression}
\end{figure}

Now consider the case where $k>0$. Again, let $N'$ be the component of $N\setminus X$ that contains $D$ and such that $D_1\sqcup D_2 \subset \partial N'$. The orientation of $N'\subset M$ induces an orientation on each component of $\Int(Y)$ contained in its boundary. Other than $D_1,D_2$, there are $2k$ components of $\Int(Y)$ contained in $\partial N'$, all homeomorphic to disks. Let $\{A^+_i\}_{i=1}^{k}$ be the subset of these components such that the orientation induced on $A^+_i$ by $N' \subset M$ is the same as the orientation that $A^+_i$ inherits from the component of $\Int(X)$ which contains it. Similarly let $\{A^-_i\}_{i=1}^{k}$ be those components such that the two orientations are opposite one another. Let $X^{\pm} := Y \setminus \bigsqcup_{i=1}^k A_i^{\pm}$ and orient $\Int(X^{\pm})$ such that $X^{\pm}\setminus N$ is identical to $X \setminus N$. 

It follows from checking that the local conditions from Definition~\ref{dfn:pfoldedsurface} are satisfied everywhere inside $N$ that $X^{\pm}$ is a $\mathbb{Z}_3$-folded surface. We say that $X^{\pm}$ is the result of a $2k$-gon compression (of sign $\pm$) of $X$ along $D$. Note that in general $X^+$ is isotopic to $X^-$ for $k=1$ but not for $k=2$. Fixing a cell decomposition for $Y$ in which $D_1,D_2$ and $A_i^{\pm}$ are $2$-cells, a standard Euler characteristic computation shows that $\chi(X^{\pm}) = \chi(X) + 2-k$. See Figure~\ref{fig:2goncompression} for a visualisation of $2$-compressions and Figure~\ref{fig:4goncompression} for a visualisation of $4$-compressions for each of $X^{+}$ and $X^{-}$.

\begin{figure}
    \centering
    \begin{tabular}{c@{}c@{}c@{}c}
        $X$ & $Y$ & $X^+$ & $X^-$ \\

        \begin{annotate}{\includegraphics[width = 0.175\linewidth]{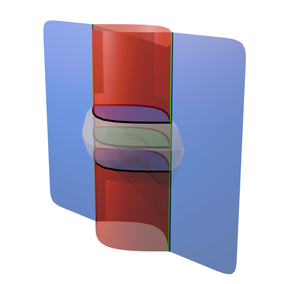}}{0.175} 
            \callout{-6,3}{\tiny $D_1$}{-0.5,1.25};
            \callout{-6,0}{\tiny $D$}{-0.5,0.25};
            \callout{-6,-3}{\tiny $D_2$}{-0.5,-0.75};
        \end{annotate}
        &
        \begin{annotate}{\includegraphics[width = 0.175\linewidth]{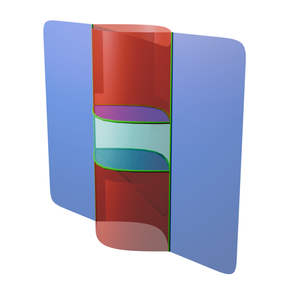}}{0.175} 
            \draw[thick,\arcol, rounded corners=2pt] (1.5,-1.75) -- (-1.5,-1.75) -- (-2.5,-1.25) -- (-2.5,2) -- (1,2) -- (1.5,1.5) -- cycle;
            \callout{-6,-6}{\tiny $N'$}{-1.5,-2};
            \callout{6,-6}{\tiny $A^+$ }{-1,0};
        \end{annotate}
        
        & 
        \begin{annotate}{\includegraphics[width = 0.175\linewidth]{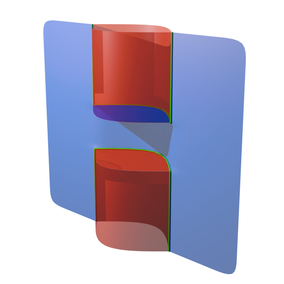}}{0.175}
            
        \end{annotate}
        & 
        \begin{annotate}{\includegraphics[width = 0.175\linewidth]{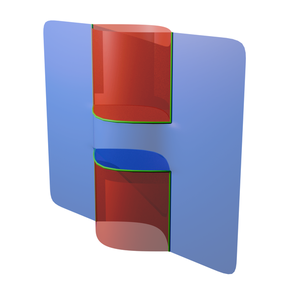}}{0.175}
            
        \end{annotate}
        \\
        \begin{annotate}{\includegraphics[width = 0.175\linewidth]{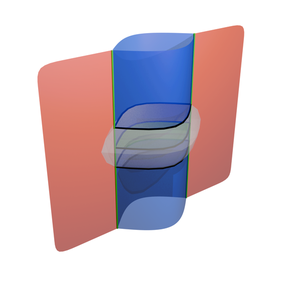}}{0.175} 
            
            \callout{6,6}{\tiny $N$}{3 ,2};
            \draw[thick, \arcol, rounded corners](-2.25 ,-1.75) -- (-2.25,1) -- (-1 ,2.1) -- (3 ,2.1) -- (3 ,-1) -- (1,-2) -- cycle;
        \end{annotate}
        &
        \begin{annotate}{\includegraphics[width = 0.175\linewidth]{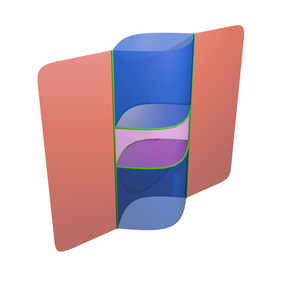}}{0.175}
            \callout{6,-6}{\tiny $A^-$ (front)}{1,0};
            \callout{-6,6}{\tiny $A^+$ (back)}{0,1};
        \end{annotate}
        & 
        \begin{annotate}{\includegraphics[width = 0.175\linewidth]{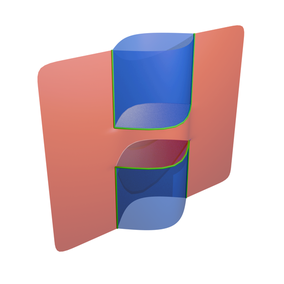}}{0.175}
            
        \end{annotate}
        & 
        \begin{annotate}{\includegraphics[width = 0.175\linewidth]{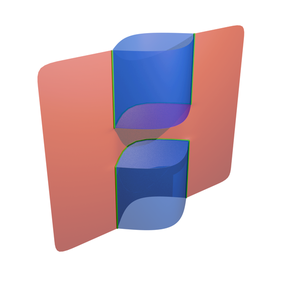}}{0.175}
            
        \end{annotate}
        \\
        
    \end{tabular}
    \caption{The 2-gon compression surgery with all components referenced in proofs labelled. Note that $N$ is a closed $3$-ball regular neighbourhood of the $2$-compression disk $D$ and that $N'$ is the component of $N \setminus Y$ that contains~$D$.}
    \label{fig:2goncompression}
\end{figure}

\begin{figure}
    \centering
    \begin{tabular}{c@{}c@{}c@{}c}
        $X$ & $Y$ & $X^+$ & $X^-$ \\

        \begin{annotate}{\includegraphics[width = 0.175\linewidth]{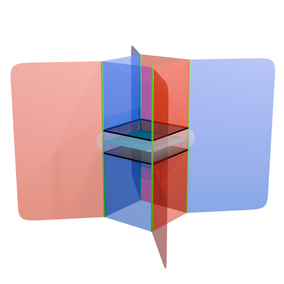}}{0.175} 
            \callout{6,6}{\tiny $N$}{2.5,1.5};
            \draw[thick,\arcol, rounded corners] (-2.5,-1) -- (-2.5,1) -- (0,1.75) -- (2.75,1.25) -- (2.75,-0.75) -- (0.5,-1.75) -- cycle;
        \end{annotate}
        &
        \begin{annotate}{\includegraphics[width = 0.175\linewidth]{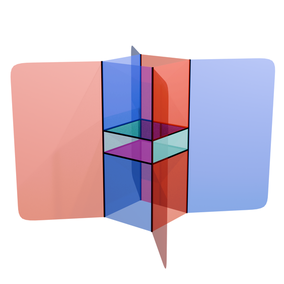}}{0.175} 
            \draw[thick,\arcol, rounded corners=2pt] (-2.1,-0.75) -- (-2.1,0.75) -- (0,1.4) -- (2.5,0.9) -- (2.5,-0.6) -- (0.5,-1.25) -- cycle;
            \callout{-6,-6}{\tiny $N'$}{-1,-1.5};
        \end{annotate}
        
        & 
        \begin{annotate}{\includegraphics[width = 0.175\linewidth]{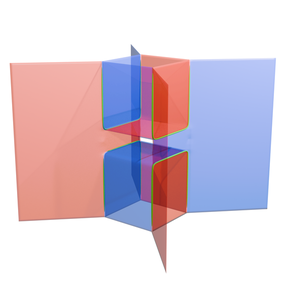}}{0.175}
            
        \end{annotate}
        & 
        \begin{annotate}{\includegraphics[width = 0.175\linewidth]{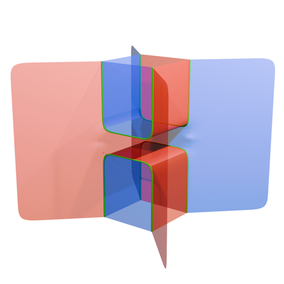}}{0.175}
            
        \end{annotate}
        \\
        \begin{annotate}{\includegraphics[width = 0.175\linewidth]{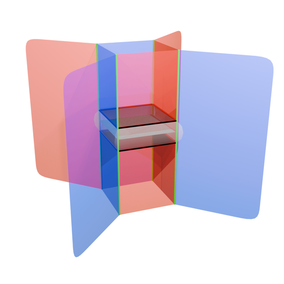}}{0.175} 
            
        \end{annotate}
        &
        \begin{annotate}{\includegraphics[width = 0.175\linewidth]{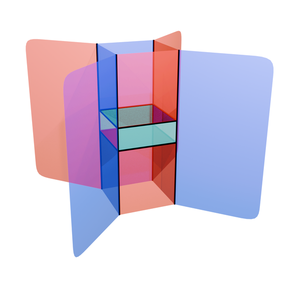}}{0.175}

        \end{annotate}
        & 
        \begin{annotate}{\includegraphics[width = 0.175\linewidth]{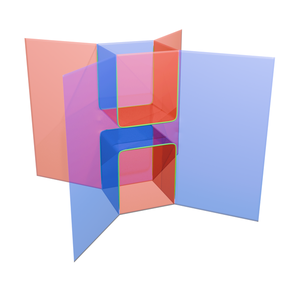}}{0.175}
            
        \end{annotate}
        & 
        \begin{annotate}{\includegraphics[width = 0.175\linewidth]{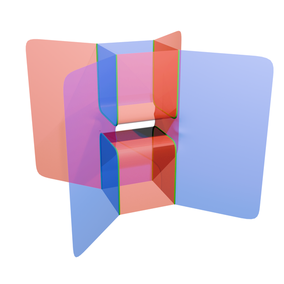}}{0.175}
            
        \end{annotate}
        \\
        \begin{annotate}{\includegraphics[width = 0.175\linewidth]{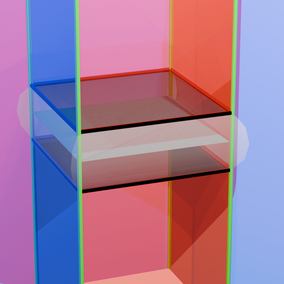}}{0.175} 
            \callout{6,3}{\tiny $D_1$}{-1,2};
            \callout{6,0}{\tiny $D$}{-1,0};
            \callout{6,-3}{\tiny $D_2$}{-1,-1.5};
        \end{annotate}
        &
        \begin{annotate}{\includegraphics[width = 0.175\linewidth]{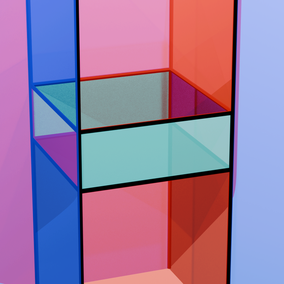}}{0.175}

            \callout{3,-6}{\tiny $A^+_1$ (front)}{0,0};
            \callout{-3,6}{\tiny $A^+_2$ (back)}{-2,2};
            \callout{6,5}{\tiny $A^-_1$ (side)}{2.5,2};
            \callout{-6,-5}{\tiny $A^-_2$ (side)}{-4,0};
        \end{annotate}
        & 
        \begin{annotate}{\includegraphics[width = 0.175\linewidth]{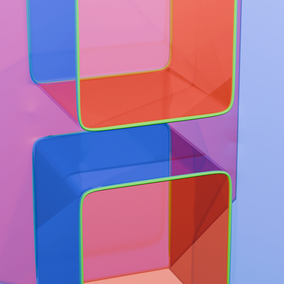}}{0.175}
            
        \end{annotate}
        & 
        \begin{annotate}{\includegraphics[width = 0.175\linewidth]{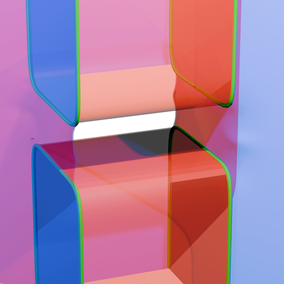}}{0.175}
            
        \end{annotate}
        \\
        
    \end{tabular}
    \caption{The 4-gon compression surgery with all components referenced in proofs labelled. Note that $N$ is a closed $3$-ball regular neighbourhood of the $4$-compression disk $D$ and that $N'$ is the component of $N\setminus Y$ that contains~$D$.}
    \label{fig:4goncompression}
\end{figure}

There are several ways to show that $[X'] = [X]$ for the above surgeries, but it is sufficient to fix $2$-cycles representing the fundamental classes of the two $\mathbb{Z}_3$-folded surfaces that differ only inside $N\subset M$ and note that $H_2(N;\mathbb{Z}_3) = 0$ since $N\simeq \mathbb{D}^3$. 

\subsection{Surface Elimination}\label{sec:surfaceelimination}

Suppose $X$ is a $\mathbb{Z}_3$-folded surface embedded in $M$ and $x \in H_2(X;\mathbb{Z}_2)$. Recall the quotient map $q:X_{cut}\rightarrow X$ from Definition~\ref{dfn:pfoldedsurface}. For each $C\in \pi_0(\Int(X))$ we abuse notation to consider $C\in H_2(X,\text{Sing}(X);\mathbb{Z}_2)$ to be the class given by the image of the fundamental class of the component $\tilde{C}:= \overline{q^{-1}(C)}$ of $X_{cut}$ under the isomorphism $H_2(X_{cut},\partial X_{cut};\mathbb{Z}_2)\rightarrow H_2(X,\text{Sing}(X);\mathbb{Z}_2)$. The map $H_2(X;\mathbb{Z}_2)\rightarrow H_2(X,\text{Sing}(X);\mathbb{Z}_2)$ is an injection by a dimension argument and so there is a unique set $I(x) \subset \pi_0(\Int(X))$ such that $x\mapsto \sum_{C\in I(x)} C$. Lemma~\ref{lem:surfacerealisation} describes the relationship between closed surfaces embedded in $X$ and classes in $H_2(X;\mathbb{Z}_2)$.

\begin{lem}\label{lem:surfacerealisation}
    Let $X$ be a $\mathbb{Z}_3$-folded surface embedded in a $3$-manifold $M$ and $x\in H_2(X;\mathbb{Z}_2)$. Let $S(x) := \bigcup_{C\in I(x)} \overline{C}$. The following are true:
    \begin{enumerate}
        \item The set $S(x)\subset X$ is homeomorphic to a closed surface.
        \item The class $x\in H_2(X;\mathbb{Z}_2)$ is the image of the fundamental class of $S(x)$ under the induced map $H_2(S(x);\mathbb{Z}_2)\rightarrow H_2(X;\mathbb{Z}_2)$. 
        \item If $x\in \ker(H_2(X;\mathbb{Z}_2)\rightarrow H_2(M;\mathbb{Z}_2))$ then $S(x)$ is orientable and every component of $I(x)$ is removable.
    \end{enumerate}
\end{lem}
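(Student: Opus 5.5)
We work entirely with $\mathbb{Z}_2$ coefficients and use the injection $H_2(X;\mathbb{Z}_2)\hookrightarrow H_2(X,\Sing(X);\mathbb{Z}_2)\cong H_2(X_{cut},\partial X_{cut};\mathbb{Z}_2)$. The last group is freely generated by the components $\tilde C$ of $X_{cut}$. A class $\sum_{C\in I}C$ lies in the image exactly when its boundary vanishes in $H_1(\Sing(X);\mathbb{Z}_2)$. For each singular arc $K$, this boundary has coefficient $\sum_{C\in I}\sum_{B\subset\partial\tilde C,\,q(B)=K}\deg(q_B)$ modulo $2$.

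By Lemma~\ref{lem:classificationin3mflds}, each $K$ is either straight or twisted. If $K$ is straight, it carries three sheets of degree $1$. If $K$ is twisted, it carries one sheet of degree $3$. A twisted arc $K$ therefore has odd coefficient whenever its unique component lies in $I$, so such components never occur in $I(x)$. At a straight arc, the condition says that an even number of the three sheets belong to components of $I(x)$, counted with multiplicity when one component contributes several sheets. That number is $0$ or $2$.

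For part~(1), at every point of $S(x)\cap\Sing(X)$ exactly two sheets of $S(x)$ meet along a straight arc. Locally $S(x)$ is therefore two half-planes glued along a line, which is a manifold chart. Points of $\Int(X)$ in $S(x)$ are already manifold points, so $S(x)$ is a closed surface. For part~(2), the $\mathbb{Z}_2$ fundamental class of $S(x)$ maps into $H_2(X,\Sing(X);\mathbb{Z}_2)$ as the sum of the closures of its interior components, which is $\sum_{C\in I(x)}C$. Injectivity then identifies it with $x$.

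For part~(3), suppose $x$ maps to $0$ in $H_2(M;\mathbb{Z}_2)$. Then the closed surface $S(x)$ is $\mathbb{Z}_2$-null-homologous in the closed orientable $M$. Intersection with $\mathbb{Z}_2$ $1$-cycles shows that every loop meets $S(x)$ an even number of times, so $S(x)$ separates $M$ into two sides. A two-sided surface in an orientable manifold is orientable. This step needs a little care: one applies it componentwise, or argues via a checkerboard colouring of $M\setminus S(x)$.

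Removability is the step I expect to be the main obstacle. For $C\in I(x)$ we must show that $q|_{\tilde C}$ is injective. It suffices to show that no two boundary circles of $\tilde C$ map to the same straight arc $K$, since degrees are already $1$. Suppose instead that both $S(x)$-sheets at $K$ came from the same $C$. I would use the orientation of $\Int(X)$ together with the $\mathbb{Z}_3$ condition that all three sheets induce the same orientation on $K$. Two sheets of the same oriented component $C$ glued along $K$ with the same induced boundary orientation give an orientation-reversing gluing, which produces a Möbius band neighbourhood of a loop in $S(x)$ crossing $K$. That contradicts the orientability established above.

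More generally, the same argument shows that any two sheets of $S(x)$ meeting at $K$ carry $X$-orientations that disagree across $K$. So $S(x)$ is oriented by flipping the $X$-orientation on some components. The obstacle is to rule out the self-gluing cleanly using only the two-sidedness of $S(x)$. I would try to phrase this via a colouring of the components of $I(x)$ by "agree/disagree with the side orientation", and show that the two sheets at each $K$ receive opposite colours, hence lie in distinct components.
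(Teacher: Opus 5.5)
Your proposal is correct and follows the paper's proof step by step: the parity count at each singular arc, the injectivity into $H_2(X,\Sing(X);\mathbb{Z}_2)$ for part~(2), and the M\"obius band argument for removability are exactly what the paper uses, and your explicit exclusion of twisted arcs and your componentwise two-sidedness argument fill in details the paper leaves implicit. The obstacle you flag is already resolved by your own argument: all three sheets induce the same orientation on a straight arc, whereas an orientation of $S(x)$ induces opposite orientations from its two sheets there, so the two $S(x)$-sheets at each arc get opposite agree/disagree colours and therefore lie in distinct components.
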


\begin{proof}

    Let $I(x)\subset \pi_0(\Int(X))$ be the set of components such that 
    \[H_2(X;\mathbb{Z}_2)\rightarrow H_2(X,\text{Sing}(X);\mathbb{Z}_2)=\mathbb{Z}_2[\pi_0(\Int(X))]\] 
    maps $x \mapsto \sum_{C \in I(x)} C$. Let $S(x)$ be the sub-polyhedron given by $\overline{\bigcup I(x)}$. The number of sheets of $I(x)$ cobounding each arc of $S(x)\cap \text{Sing}(X)$ must be even since $\sum_{C \in I(x)} C$ is in the kernel of the connecting homomorphism $H_2(X,\text{Sing}(X);\mathbb{Z}_2)\rightarrow H_1(\text{Sing}(X);\mathbb{Z}_2)$. Since $X$ is a $\mathbb{Z}_3$-folded surface this even number must be $0$ or $2$ and this is sufficient to show that $S(x)$ is a closed $2$-manifold. 

    Now let $f^{S(x)}:S(x)\rightarrow M$, $i^{S(x)}:S(x)\rightarrow X$ and $f^X:X\rightarrow M$ be the three embeddings such that $f^{S(x)} = f^X\circ i^{S(x)}$. Let $[S(x)]\in H_2(S(x);\mathbb{Z}_2)$ be the fundamental class of $S(x)$. The composition of maps $H_2(S(x);\mathbb{Z}_2)\xrightarrow{i^{S(x)}_*} H_2(X;\mathbb{Z}_2) \hookrightarrow H_2(X,\text{Sing}(X);\mathbb{Z}_2)$ clearly sends $[S(x)]\mapsto \sum_{C \in I(x)} C$. The second map is an injection by a dimension argument and so we deduce that $x = i^{S(x)}_*([S])$. 
    
    Now if $x\in \ker(f^X_*:H_2(X;\mathbb{Z}_2)\rightarrow H_2(M;\mathbb{Z}_2))$ then it follows that $f^{S(x)}_*([S]) = f^X_*\circ i^{S(x)}_*([S]) = f^X_*(x) = 0$. It is well known and easily verified that homologically trivial closed surfaces in $3$-manifolds are orientable. It follows immediately that any $C\in I(x)$ must be removable since if any two boundary arcs of $\partial C_{cut}$ are identified with the same singular arc $\alpha$ in $X$ then we can find a simple closed curve embedded in $\overline{C}$ intersecting $\alpha$ transversally once whose regular neighbourhood in $\overline{C}$ is a M\"obius band.
    
\end{proof}

Our aim is to use the surgeries described in Section~\ref{sec:3foldsurgeries} to help us find a non-separating $\mathbb{Z}_3$-folded surface $X$ such that $\chi(X) = \chi_3([X])$ and the boundary pattern of $X$ is as simple as possible. The problem with the $2k$-compression surgeries as they are is that they do not necessarily preserve the property of non-separation. Recall that Lemma~\ref{lem:injectivitynonsep} allows us to measure the extent to which a $\mathbb{Z}_3$-folded surface is non-separating by the rank of 
\[\ker(H_2(X;\mathbb{Z}_2)\rightarrow H_2(M;\mathbb{Z}_2)).\] 
Lemma~\ref{lem:surfaceelimination} provides a method to obtain a $\mathbb{Z}_3$-folded surface $X'$ that is a subcomplex of $X$ such that the rank of 
\[\ker(H_2(X';\mathbb{Z}_2)\rightarrow H_2(M;\mathbb{Z}_2))\] 
is strictly less than the rank of 
\[\ker(H_2(X;\mathbb{Z}_2)\rightarrow H_2(M;\mathbb{Z}_2)).\] 
Induction on Lemma~\ref{lem:surfaceelimination} yields a non-separating $\mathbb{Z}_3$-folded surface realising any class in $H_2(M;\mathbb{Z}_3)$.

\begin{lem}\label{lem:surfaceelimination}
    Let $x \in \ker(H_2(X;\mathbb{Z}_2)\rightarrow H_2(M;\mathbb{Z}_2))$ where $X$ is a $\mathbb{Z}_3$-folded surface embedded in $M$. Let $\mathfrak{o}$ be a choice of orientation of $S(x)$ induced by an orientation of a component of its complement in $M$ and denote the opposite choice $-\mathfrak{o}$. Let $I(x,\mathfrak{o}) \subseteq I(x)$ be the components oriented consistently by the orientation of $\Int(X)$ and $\mathfrak{o}$. Let $X(x,\mathfrak{o})$ be the $\mathbb{Z}_3$-folded surface with underlying set $X \setminus \bigsqcup I(x,\mathfrak{o})$ and the orientation of $\Int(X(x,\mathfrak{o}))$ which is consistent with the orientation of $\Int(X)$ on the set $\Int(X(x,\mathfrak{o})) \setminus \overline{\bigsqcup I(x,-\mathfrak{o})}$ and opposite on the set $\bigsqcup I(x,-\mathfrak{o})$. We say that $X(x,\mathfrak{o})$ is the surface elimination of $x$ with orientation $\mathfrak{o}$. It has the following properties:

    \begin{enumerate}
        \item $[X(x,\mathfrak{o})] = [X]$
        \item If $I(x,\mathfrak{o}) \neq \emptyset$ then $\ker(H_2(X(x,\mathfrak{o});\mathbb{Z}_2)\rightarrow H_2(M;\mathbb{Z}_2))$ is a proper subgroup of $\ker(H_2(X;\mathbb{Z}_2)\rightarrow H_2(M;\mathbb{Z}_2))$ under the inclusion $i_{(X,X(x,\mathfrak{o}))}$
        \item $\chi(X(x,\mathfrak{o})) = \chi(X) - \sum_{C\in I(x,\mathfrak{o})}\chi(C)$
    \end{enumerate}
\end{lem}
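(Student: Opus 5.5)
Our plan is to first check that $X(x,\mathfrak{o})$ really is a $\mathbb{Z}_3$-folded surface, and then verify the three properties in turn. By Lemma~\ref{lem:surfacerealisation}, $S(x)$ is a closed orientable surface, every $C\in I(x)$ is removable, and $S(x)$ meets each singular arc of $X$ in either $0$ or $2$ sheets. Since $x$ is trivial in $H_2(M;\mathbb{Z}_2)$, the surface $S(x)$ separates $M$, so $\mathfrak{o}$ is defined. Fix a singular arc $\alpha\subset S(x)$. Because $\alpha$ is straight or twisted (Lemma~\ref{lem:classificationin3mflds}), and a twisted arc has a single sheet, $\alpha$ must be straight with exactly two sheets in $I(x)$. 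Orientability of $S(x)$ and the $\mathbb{Z}_3$ condition then force these two sheets to induce the same orientation on $\alpha$. Since $\mathfrak{o}$ induces opposite orientations on $\alpha$ from its two sides, exactly one of the sheets is $\mathfrak{o}$-consistent and the other is $-\mathfrak{o}$-consistent. We then delete the first sheet and flip the second. Locally around $\alpha$ this leaves two sheets: the third original sheet, and the flipped one. These now induce opposite orientations on $\alpha$, so they glue across $\alpha$ into an interior point, and $\alpha$ disappears from the singular locus. Along arcs not meeting $S(x)$ nothing changes. Hence $X(x,\mathfrak{o})$ is a $\mathbb{Z}_3$-folded surface, possibly with fewer singular arcs. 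Here we should also remark that the flipped components glue to their neighbours consistently on $\Int$.

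For property 1 we work with $\mathbb{Z}_3$ chains. On the level of $H_2(X,\Sing X;\mathbb{Z}_3)$, the difference between the cycle for $X$ and that for $X(x,\mathfrak{o})$ is $\sum_{C\in I(x,\mathfrak{o})}C+\sum_{C\in I(x,-\mathfrak{o})}2C$. Modulo $3$, $2C=-C$, so this equals $\pm[S(x),\mathfrak{o}]$, the integral fundamental class of the oriented surface $S(x)$ reduced mod $3$. Since $S(x)$ bounds a region of $M$, its integral class is zero, so the two folded surfaces represent the same class in $H_2(M;\mathbb{Z}_3)$. To make this rigorous we lift both cycles to a common ambient complex containing $X$.

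For property 3 we use the description of $X(x,\mathfrak{o})_{cut}$. It is obtained from $X_{cut}$ by deleting the closures $\tilde C$ for $C\in I(x,\mathfrak{o})$, and by regluing along the arcs of $S(x)$ where singular arcs vanish. The regluing identifies pairs of boundary circles, which leaves the Euler characteristic unchanged. Hence $\chi$ drops by $\sum_{C\in I(x,\mathfrak{o})}\chi(C_{cut})$, and $\chi(C_{cut})=\chi(C)$ up to the paper's convention identifying the two.

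Property 2 is the main obstacle. The inclusion $X(x,\mathfrak{o})\subset X$ (as sets) induces an injection on $H_2(\,\cdot\,;\mathbb{Z}_2)$, because both groups inject into relative groups generated by interior components, compatibly. The difficulty is that the singular locus changes, so the relative groups do not match directly. We would handle this by comparing both to $H_2(X\setminus\bigsqcup I(x,\mathfrak{o}),\Sing X;\mathbb{Z}_2)$ with the old singular set. The image of the new kernel then lies in the old kernel. Every class in the image is supported away from $I(x,\mathfrak{o})$, whereas $x$ has support meeting $I(x,\mathfrak{o})\neq\emptyset$. Therefore $x$ is not in the image, and the inclusion is proper.
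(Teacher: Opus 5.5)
Your proof is correct. For property 2 it is essentially the paper's argument: $H_2(X(x,\mathfrak{o});\mathbb{Z}_2)\to H_2(X;\mathbb{Z}_2)$ is injective by a dimension count, and $x$ is not in the image because $j_{(X,\Sing X)}(x)$ is supported on the deleted components $I(x,\mathfrak{o})$.

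For properties 1 and 3 you take a genuinely more direct route. The paper passes through an intermediate weighted surface $X_0(x,\mathfrak{o})$, obtained by deleting $I(x,\mathfrak{o})$ and giving weight $2$ to the components of $I(x,-\mathfrak{o})$. It then argues in three steps:
\begin{itemize}
\item $[X_0(x,\mathfrak{o})]=[X]$, by the weight-shifting reasoning of Lemma~\ref{lem:weightedpfoldedsurfaces};
\item $X(x,\mathfrak{o})$ and $X_0(x,\mathfrak{o})^u$ represent the same class, checked separately on closed components (using $1\equiv -2 \bmod 3$) and on components with boundary (using that their regular neighbourhood is a union of handlebodies with vanishing $H_2(\,\cdot\,;\mathbb{Z}_3)$);
\item $\chi$ is computed by bookkeeping through $\chi(X_0(x,\mathfrak{o})^u)$.
\end{itemize}

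You instead compute the difference of the two fundamental classes in one step in $H_2(X,\Sing X;\mathbb{Z}_3)$, as $\sum_{C\in I(x,\mathfrak{o})}C+2\sum_{C\in I(x,-\mathfrak{o})}C\equiv[S(x),\mathfrak{o}]$. You get $\chi$ directly from the cut surfaces: delete the $\tilde C$, then glue pairs of boundary circles.

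Your route is shorter and avoids the handlebody step entirely. Your local analysis at an arc of $S(x)$ also makes explicit why $X(x,\mathfrak{o})$ is a $\mathbb{Z}_3$-folded surface, which the paper only sketches: twisted arcs cannot lie in $S(x)$, and exactly one of the two $S(x)$-sheets is $\mathfrak{o}$-consistent. A small correction there: the two sheets inducing the same orientation on the arc follows from the $\mathbb{Z}_3$ condition alone, not from orientability of $S(x)$. The paper's route has the advantage of presenting surface elimination as an instance of the weight-shifting already used in Lemmas~\ref{lem:weightedpfoldedsurfaces} and~\ref{lem:removenotdisks}.

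To make property 1 airtight, note that your ``common ambient complex'' is just $X$ itself:
\begin{itemize}
\item push $[X(x,\mathfrak{o})]$ into $H_2(X;\mathbb{Z}_3)$;
\item use naturality for the map of pairs $(X(x,\mathfrak{o}),\Sing X(x,\mathfrak{o}))\subset(X,\Sing X)$ to identify its image in $H_2(X,\Sing X;\mathbb{Z}_3)$ as the sum of the constituent $\Int X$ components with their new orientations;
\item invoke injectivity of $j_{(X,\Sing X)}$, which holds since $H_2(\Sing X;\mathbb{Z}_3)=0$.
\end{itemize}
This gives $[X]-[X(x,\mathfrak{o})]$ equal to the image of $[S(x),\mathfrak{o}]$ in $H_2(X;\mathbb{Z}_3)$, which dies in $M$ because $\mathfrak{o}$ is a boundary orientation.
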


\begin{proof}
    The proof of this lemma uses the same tricks as Lemmas~\ref{lem:weightedpfoldedsurfaces} and~\ref{lem:removenotdisks}. First view $X$ as a weighted $\mathbb{Z}_3$-folded surface by assigning weights of $1$ to all components of $\Int(X)$. Let $X_0(x,\mathfrak{o})$ be the result of deleting the components in $I(x,\mathfrak{o})$ and setting weights $w_D = 2$ for all $D\in I(x,-\mathfrak{o})$ then the same reasoning in Lemma~\ref{lem:weightedpfoldedsurfaces} implies that $[X_0(x,\mathfrak{o})] = [X]$ since they differ by the trivial $H_2(M;\mathbb{Z}_3)$ class represented by $S(x)$ with orientation $\mathfrak{o}$. Let $I_0(x)$ be the set of components in $\Int(X)$ cobounding a singular arc in $S(x)$ that are not in $I(x)$. Clearly since $S(x)$ is a closed surface we can deduce that every component of $S(x)\cap \text{Sing}(X)$ is cobounded by precisely one component of $I_0(x)$, one component of $I(x,\mathfrak{o})$ and one component of $I(x,-\mathfrak{o})$, all inducing a consistent orientation on their shared boundary.
    
    This means that if we take $X_0(x,\mathfrak{o})$, reverse the orientation of the components of $I(x,-\mathfrak{o})$ and set their weights equal to $1$ then the two components cobounding each arc of $S(x)\cap \text{Sing}(X)$ now induce opposite orientations along their shared boundary and have the same weight. We can therefore extend the inverted orientation of $\bigsqcup I(x,-\mathfrak{o})$ and the original orientation of $\bigsqcup I^0(x)$ to an orientation of $2$-manifold obtained by gluing them along $S(x)\cap\text{Sing}(X)$. We remove $S(x)\cap\text{Sing}(X)$ from the singular locus and call the resulting oriented folded surface $X(x,\mathfrak{o})$. To see that $X(x,\mathfrak{o})$ is a $\mathbb{Z}_3$-folded surface simply note that the neighbourhoods and orientations of points outside of a regular neighbourhood of $\bigsqcup I(x,-\mathfrak{o})$ are identical to in $X$ and inside this regular neighbourhood $X(x,\mathfrak{o})$ is an oriented $2$-manifold. 
    
    The image of $X(x,\mathfrak{o})$ and its orientation data in $M$ is identical to $(X_0(x,\mathfrak{o}))^u$ outside a regular neighbourhood of each component in $I(x,-\mathfrak{o})$. 
    
    Let $\Sigma$ be a closed $2$-manifold component of $I(x,-\mathfrak{o})$ with the orientation inherited from $X(x,\mathfrak{o})$ and let $[Sigma]\in H_2(M;\mathbb{Z}_3)$ be the class represented by $\Sigma$. The component of the weighted $\mathbb{Z}_3$-folded surface $X_0(x,\mathfrak{o})$ with underlying set $\Sigma$ has the opposite orientation and weight $2$ so it represents the class $-2[\Sigma] \in H_2(M;\mathbb{Z}_3)$. These two classes are equivalent because $1\equiv -2 \mod 3$. It follows that the change made to closed $2$-manifold components does not affect the class represented.
    
    Let $H$ be a regular neighbourhood of $\bigsqcup_{D\in I(x,-\mathfrak{o}), \partial D \neq \emptyset} \overline{D}$. Since each $\overline{D}$ is a compact $2$-manifold with non-empty boundary it has a regular neighbourhood homeomorphic to a handlebody. $H$ is a disjoint union of handlebodies and so $H_2(H;\mathbb{Z}_3) = 0$. It follows from this that the change made to the components of $X_0(x,\mathfrak{o})$ with non-empty boundary also does not affect the class represented.
    
    It follows from these observations that $[X(x,\mathfrak{o})] = [X_0(x,\mathfrak{o})] = [X]$.

    A dimension argument can be used to show that the inclusion $X(x,\mathfrak{o})\subset X$ induces an injection 
    \[H_2(X(x,\mathfrak{o});\mathbb{Z}_2)\rightarrow H_2(X;\mathbb{Z}_2).\] 
    Moreover it is clear that $x$ is not in the image of this homomorphism since $S(x)\not \subset X(x,\mathfrak{o})$. Finally, $X(x,\mathfrak{o})$ has the same image set as $\underline{X_0(x,\mathfrak{o})}$ and so
    \[\chi(X(x,\mathfrak{o})) = \chi(\underline{X_0(x,\mathfrak{o})}) = \chi(\Int(X_0(x,\mathfrak{o}))).\] 
    Recall that the only components of $\Int(X_0(x,\mathfrak{o}))$ with weight not equal to $1$ are those in $I(x,-\mathfrak{o})$ which have weight $2$ and so 
    \[\chi(\Int(X_0(x,\mathfrak{o}))) = \chi(X_0(x,\mathfrak{o})^u) - \sum_{D \in I(x,-\mathfrak{o})}\chi(D).\] 
    Counting the multiplicities of interior components we can obtain 
    \[\chi(X_0(x,\mathfrak{o})^u) = \chi(X)- \sum_{C\in I(x,\mathfrak{o})}\chi(C) + \sum_{D\in I(x,-\mathfrak{o})}\chi(D).\] 
    Combining these statements yields the desired Euler characteristic 
    \[\chi(X(x,\mathfrak{o})) = \chi(X) - \sum_{C\in I(x,\mathfrak{o})}\chi(C).\] 

    The fact that the inclusion 
    \[i_{(X,X(x,\mathfrak{o}))}:H_2(X(x,\mathfrak{o});\mathbb{Z}_2)\rightarrow H_2(X;\mathbb{Z}_2)\] 
    identifies $\ker i_{(M,X(x,\mathfrak{o}))}$ with a proper subgroup of $\ker i_{(M,X)}$ follows from noting the fact that $x$ cannot be in the image of $i_{(X,X(x,\mathfrak{o}))}$ since the class
    \[j_{(X,\Sing X)}(x) \in H_2(X,\Sing X;\mathbb{Z}_2)\] 
    is supported by the classes of components of $I(x,\mathfrak{o})$ that are not in $X(x,\mathfrak{o})$.
    
\end{proof}

\subsection{Separating surfaces in compressed $\mathbb{Z}_3$-folded surfaces}\label{sec:compressionsurfacebirth}

The surgeries in Section~\ref{sec:3foldsurgeries} do not preserve the non-separating property of $\mathbb{Z}_3$-folded surfaces in general. To rectify this using the surface elimination procedure described in Section~\ref{sec:surfaceelimination} we need to understand exactly how a separating $\mathbb{Z}_3$-folded surface can arise from a $2k$-compression surgery of a non-separating $\mathbb{Z}_3$-folded surface $X$. To avoid unnecessary repetition we will use $X'$ to simultaneously denote the following.

\begin{itemize}
    \item The result of a $0$-compression surgery of $X$.
    \item Either of $X^+$ or $X^-$, positive and negative $2k$-compression surgeries of $X$ in the case $k\geq 1$.
\end{itemize} 

Let $Y,N,N',D,D_1$ and $D_2$ be the spaces relevant to the construction of $X'$ from $X$ as defined in Section~\ref{sec:3foldsurgeries}. The embeddings of $X,X'$ and $Y$ in $M$ factor in the following commutative diagram.

\begin{center}
    \begin{tikzcd}[sep = huge]
        X \ar[r,hook,"i_{(Y,X)}"] \ar[rd,hook,swap,"i_{(M,X)}" {sloped}] &
        Y  \ar[d,hook, swap,"i_{(M,Y)}"]&
        X' \ar[l,hook,"i_{(Y,X')}" {sloped}] \ar[ld,hook, swap,"i_{(M,X')}" {sloped}]\\
        & M & \\
    \end{tikzcd}
\end{center}

Recall that due to Lemma~\ref{lem:injectivitynonsep} the induced map $i_{(M,X)}:H_2(X;\mathbb{Z}_2)\rightarrow H_2(M;\mathbb{Z}_2)$ is injective since the embedding of $X$ is non-separating in $M$. Similarly, $X'$ is non-separating if and only if $i_{(M,X')}:H_2(X';\mathbb{Z}_2)\rightarrow H_2(M;\mathbb{Z}_2)$ is injective. The map $i_{(Y,X)}:H_2(X;\mathbb{Z}_2)\rightarrow H_2(Y;\mathbb{Z}_2)$ is injective because the long exact sequence of the pair guarantees that $\ker(i_{(Y,X)})$ is equal to the image of the connecting homomorphism $d_{(Y,X)}:H_3(Y,X;\mathbb{Z}_2)\rightarrow H_2(X;\mathbb{Z}_2)$ and $H_3(Y,X;\mathbb{Z}_2)=0$ by a dimension argument. The map $i_{(Y,X')}:H_2(X';\mathbb{Z}_2)\rightarrow H_2(Y;\mathbb{Z}_2)$ is injective for the same reason. Hence the kernels $\ker(i_{(M,X)}:H_2(X;\mathbb{Z}_2)\rightarrow H_2(M;\mathbb{Z}_2))$ and $\ker(i_{(M,X')}:H_2(X';\mathbb{Z}_2)\rightarrow H_2(M;\mathbb{Z}_2))$ embed in $\ker(i_{(M,Y)}:H_2(Y;\mathbb{Z}_2)\rightarrow H_2(M;\mathbb{Z}_2))$ via the respective inclusions $i_{(Y,X)}$ and $i_{(Y,X')}$.

Another dimension argument using the long exact sequence of the pair $(Y,\Sing Y)$ implies that $j_{(Y,\Sing Y)}:H_2(Y;\mathbb{Z}_2)\rightarrow H_2(Y,\Sing Y;\mathbb{Z}_2)$ is also injective since $H_2(\Sing Y;\mathbb{Z}_2) = 0$. 

Let $\pi_{\{D_1,D_2\}}$ be the homomorphism given by the composition $\ker(i_{(M,Y)}:H_2(Y;\mathbb{Z}_2)\rightarrow H_2(M;\mathbb{Z}_2))\xrightarrow{j_{(Y,\Sing Y)}} H_2(Y,\Sing Y;\mathbb{Z}_2) \rightarrow \mathbb{Z}_2 D_1 \oplus \mathbb{Z}_2 D_2$ where the last map is the projection defined by identifying $H_2(Y,\Sing Y;\mathbb{Z}_2) = \mathbb{Z}_2[\pi_0(\Int Y)]$ as in the proof of Lemma~\ref{lem:surfacerealisation}. The only components of $\Int(Y)$ not in $X \subset Y$ are $D_1$ and $D_2$ and so any class in $\ker \pi_{\{D_1,D_2\}}$ lies in the intersection of the image of $i_{(Y,X)}$ and the kernel of $i_{(M,Y)}$. Moreover since $i_{(Y,X)}:H_2(X;\mathbb{Z}_2)\rightarrow H_2(Y;\mathbb{Z}_2)$ is injective it follows that $\Ima i_{(Y,X)}\cap \ker i_{(M,Y)}\simeq  \ker{i_{(M,X)}} = 0$. Therefore $\pi_{\{D_1,D_2\}}$ is injective. This already implies that $\ker i_{(M,X')}$ contains at most three non-trivial classes due to the sequence of inclusions in Equation~\eqref{eq:sequenceofinclusions}.

\begin{equation}\label{eq:sequenceofinclusions}
    \ker i_{(M,X')}\lhook\joinrel\xrightarrow{i_{(Y,X')}} \ker i_{(M,Y)} \lhook\joinrel\xrightarrow{\pi_{\{D_1,D_2\}}} \mathbb{Z}_2 \oplus \mathbb{Z}_2
\end{equation}

Let $N'$ be the component of $N \setminus X$ containing $D_1,D_2$ and $D$. Let $[\partial(N')] \in H_2(Y;\mathbb{Z}_2)$ be the image of the fundamental class of the boundary of $N'$ under the homomorphism induced by the inclusion $\partial(N')\subset Y$. Clearly $[\partial(N')]$ is annihilated by $i_{(M,Y)}$ since it is represented by the boundary of a region in $M$. $\partial(N')$ contains the components $D_1$ and $D_2$ of $\Int Y$ and so $\pi_{\{D_1,D_2\}}([\partial(N')]) = D_1 + D_2$. Suppose $X'$ is separating in $M$. Let $x$ be a non-trivial element of $\ker(i_{(M,X')}:H_2(X';\mathbb{Z}_2)\rightarrow H_2(M;\mathbb{Z}_2))$. It follows that $i_{(Y,X')}(x)$ is a non-trivial element of $\ker(i_{(M,Y)})$. Hence $\pi_{\{D_1,D_2\}} \circ i_{(Y,X')}(x) \neq 0$. If $\pi_{\{D_1,D_2\}} \circ i_{(Y,X')}(x) = D_1 + D_2$ then $i_{(Y,X')}(x) + [\partial N'] \in \ker \pi_{\{D_1,D_2\}}$ and hence $i_{(Y,X')}(x) + [\partial N'] = 0 \Rightarrow i_{(Y,X')}(x) = [\partial N']$. This is a contradiction since in Section~\ref{sec:3foldsurgeries} $X'$ was constructed from $Y$ by removing a subset of $\partial(N')$ and hence $[\partial N'] \not\in i_{(Y,X')}(H_2(X';\mathbb{Z}_2))$. It follows that $\ker (i_{(M,X')}:H_2(X';\mathbb{Z}_2)\rightarrow H_2(M;\mathbb{Z}_2))$ contains at most one non-trivial class $x$ and that the orientable surface $S(x)$ realising $x$ contains exactly one of $D_1$ or $D_2$. Refer to Section~\ref{sec:surfaceelimination} for more details on realising $\mathbb{Z}_2$ $2$-homology classes of $\mathbb{Z}_3$-folded surfaces as surfaces.

For $0$-compressions this completely describes how the unique separating surface $S(x)$ can intersect a neighbourhood of the compression disk $D$ - that is, either the cup or the cap in Figure~\ref{fig:interiorcompression} is contained in $S(x)$. 

Now let $k>0$ and $X^{\pm}$ be the two possible $2k$-gon compressions about $D$ and assume that both $X^+$ and $X^-$ separate $M$. Let $x_{\pm}$ be the unique non-trivial classes in $\ker(i_{(M,X^{\pm})}:H_2(X^{\pm};\mathbb{Z}_2)\rightarrow H_2(M;\mathbb{Z}_2))$. Define $\pi_{\{A_i^{\pm}\}_{i=1}^k}:\ker(i_{(M,Y)})\rightarrow \mathbb{Z}_2[\{A_i^{\pm}\}_{i=1}^k]$ similarly to $\pi_{\{D_1,D_2\}}$ above. By definition $X^{\pm} = Y \setminus \bigsqcup_{i=1}^k A_i^{\pm}$ and so it follows that $\pi_{\{A_i^{\pm}\}_{i=1}^k}\circ i_{(Y,X^{\pm})}(x_{\pm}) = 0$. Now without loss of generality we suppose that $\pi_{\{D_1,D_2\}} \circ i_{(Y,X^+)}(x_+) = D_1$ (this is valid since the labelling of $D_1,D_2$ depends only on an arbitrary choice of orientation of the compressing disk $D$). We now separate the two cases $\pi_{\{D_1,D_2\}}\circ i_{(Y,X^-)}(x_-) = D_1$ and $\pi_{\{D_1,D_2\}}\circ i_{(Y,X^-)}(x_-) = D_2$.

\begin{enumerate}
    \item Suppose that 
    \[\pi_{\{D_1,D_2\}}\circ i_{(Y,X^-)}(x_-) = D_1.\] 
    It follows that 
    \[\pi_{\{D_1,D_2\}}(i_{(Y,X^+)}(x_+) + i_{(Y,X^-)}(x_-)) = 0\] 
    and hence 
    \[i_{(Y,X^+)}(x_+) + i_{(Y,X^-)}(x_-) = 0 \Rightarrow i_{(Y,X^+)}(x_+) = i_{(Y,X^-)}(x_-) =: x'.\] 
    Hence $\pi_{\{A_i^{+}\}_{i=1}^k}(x') = 0$ and $\pi_{\{A_i^{-}\}_{i=1}^k}(x') =0$.
    \item Suppose $\pi_{\{D_1,D_2\}}\circ i_{(Y,X^-)}(x_-) = D_2$. Then 
    \[\pi_{\{D_1,D_2\}}(i_{(Y,X^+)}(x_+) + i_{(Y,X^-)}(x_-)) = D_1 + D_2\] 
    and 
    \[\pi_{\{D_1,D_2\}}([\partial N']+i_{(Y,X^+)}(x_+) + i_{(Y,X^-)}(x_-)) = 0.\] 
    Hence 
    \[i_{(Y,X^+)}(x_+) + i_{(Y,X^-)}(x_-) = [\partial N'].\] 
    Since $A_i^+$ and $A_i^-$ are contained in $\partial N'$ for all $1\leq i \leq k$ it follows that 
    \[\pi_{\{A_i^{\pm}\}_{i=1}^k}([\partial N']) = \sum_{i=1}^k A_i^{\pm}.\] 
    Therefore we deduce that:
    \begin{align*}
        \pi_{\{A_i^{\pm}\}_{i=1}^k} \circ i_{(Y,X^{\mp})}(x_{\mp})  &= \pi_{\{A_i^{\pm}\}_{i=1}^k}(i_{(Y,X^{+})}(x_{+}) + i_{(Y,X^{-})}(x_{-}))\\
        &= \pi_{\{A_i^{\pm}\}_{i=1}^k}([\partial N']) \\
        &= \sum_{i=1}^k A_i^{\pm}.
    \end{align*}
\end{enumerate} 

The map $H_2(X^\pm,\Sing X^\pm;\mathbb{Z}_2)\rightarrow H_2(Y,\Sing Y;\mathbb{Z}_2)$ induced by the inclusion $(X^{\pm},\Sing X^{\pm})\subset (Y,\Sing Y)$ is injective by another dimension argument. It follows from the naturality of the sequences of relative homology that if the coefficient of a component $C\in \pi_0(\Int Y)$ is non-zero in $j_{(Y,\Sing Y)}\circ i_{(Y,X^\pm)}(x_\pm)$ then there is a unique component $C'$ of $\Int X^\pm$ such that $C\subset C'$ and that the coefficient of $C'$ in $j_{(X^\pm,\Sing X^\pm)}(x^\pm)$ is non-zero. Let $g^{\pm}: \pi_0(\Int Y) \setminus \{A_i^{\pm}\}_{i=1}^k \rightarrow \pi_0(\Int X^\pm)$ be the function defined such that $g^\pm(C) := C'$ where $C'$ is the unique component containing $C$. Let $I(x_{\pm})$ be the set of components of $\Int X^\pm$ contained in the separating surface $S(x_{\pm})\subset X^\pm$ realising $x_\pm$ as in Section~\ref{sec:surfaceelimination}. The above cases are equivalent to the following.

\textit{Case 1}
\begin{itemize}
    \item $g^+(D_1) \in I(x_+)$
    \item $g^+(D_2) \notin I(x_+)$ and $g^+(A_i^-) \notin I(x_+)$ for all $1\leq i \leq k$.
    \item $g^-(D_1) \in I(x_-)$
    \item $g^-(D_2) \notin I(x_-)$ and $g^-(A_i^+)\notin I(x_-)$ for all $1\leq i \leq k$.
\end{itemize}

\textit{Case 2}
\begin{itemize}
    \item $g^+(D_1) \in I(x_+)$ and $g^+(A_i^-) \in I(x_+)$ for all $1\leq i \leq k$.
    \item $g^+(D_2) \notin I(x_+)$
    \item $g^-(D_2) \in I(x_-)$ and $g^-(A_i^+)\in I(x_-)$ for all $1\leq i \leq k$.
    \item $g^-(D_1) \notin I(x_-)$
\end{itemize}

The remaining elements of $I(x_+)$ and $I(x_-)$ which intersect a neighbourhood of the surgery are determined by the requirement that an even number of the sheets meeting in each singular arc of $X^\pm$ must belong to $S(x_{\pm})$. Figure~\ref{fig:2compressionsurfaces} shows the two cases for how $S(x_+) \subset X^+$ can intersect with a neighbourhood of the $2$-compression surgery (we omit $S(x_-) \subset X^-$ since $X^+$ and $X^-$ are ambient isotopic and so the picture looks very similar). Figure~\ref{fig:4compressionsurfaces} shows the two cases for how $S(x_+) \subset X^+$ and $S(x_-) \subset X^-$ can intersect a neighbourhood of a $4$-compression surgery.

\subsection{Reduced $2k$-compression}\label{sec:reducedsurgeries}

We now restrict our focus to closed orientable $3$-manifolds in which every embedding of $S^2$ separates the ambient space. These are precisely the closed orientable $3$-manifolds whose prime decomposition does not admit an $S^1 \times S^2$ summand (see Hatcher's notes~\cite{Hatcher2001NotesOB} for a short proof of this fact). 

Suppose $X$ is a non-separating $\mathbb{Z}_3$-folded surface embedded in such a $3$-manifold $M$ such that $\chi(X) = \chi_3([X])$. We now combine the $2k$-compression surgeries of Section~\ref{sec:3foldsurgeries} and the surface elimination surgery of Section~\ref{sec:surfaceelimination} to define reduced versions of the $2k$-compression surgeries for $k=0,1,2$ such that the resulting $\mathbb{Z}_3$-folded surface is also non-separating and of maximal Euler characteristic. Definition~\ref{dfn:reduced2kcompression} describes the \textit{reduced $2k$-compression} surgeries indirectly. 

\begin{dfn}[Reduced $2k$-compression surgery]\label{dfn:reduced2kcompression}
    Let $X$ be a non-separating $\mathbb{Z}_3$-folded surface embedded in a $3$-manifold $M$ with the property that
    \[\chi(X) = \chi_3([X]).\] 
    Let $X'$ be the result of a $2k$-compression of $X$ about some $2k$-compression disk $D$. Let $X''$ be a $\mathbb{Z}_3$-folded surface with the following properties:

    \begin{itemize}
        \item $X''\subset X'$.
        \item $[X''] = [X]$.
        \item $\chi(X'')=\chi_3([X])$.
        \item $X''$ is non-separating in $M$.
    \end{itemize}

    Then we say that $X''$ is the result of a \textit{reduced $2k$-compression surgery} of $X$ about $D$.

\end{dfn}

Lemma~\ref{lem:reduced2kcompressionsexist} proves that these reduced $2k$-compression surgeries exist for $k\in \{0,1,2\}$ when $M$ satisfies certain requirements.

\begin{lem}\label{lem:reduced2kcompressionsexist}
    Let $X$ be a non-separating $\mathbb{Z}_3$-folded surface embedded in a closed orientable $3$-manifold $M$ with the property that every $2$-sphere embedded in $M$ separates the ambient space and let $D$ be a $2k$-compression disk of $X$ in $M$ for some $k\in \{0,1,2\}$. It follows that there exists a reduced $2k$-compression surgery of $X$ about $D$. 
\end{lem}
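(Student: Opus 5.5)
The plan is to take the (possibly separating) compressed surface $X'$ and, if necessary, apply a single surface elimination from Lemma~\ref{lem:surfaceelimination}, using the maximality $\chi(X)=\chi_3([X])$ (as in Definition~\ref{dfn:reduced2kcompression}) to control the Euler characteristic. Let $X'$ be the $0$-compression when $k=0$, and either $X^+$ or $X^-$ when $k\in\{1,2\}$. In every case $[X']=[X]$ and $\chi(X')=\chi(X)+2-k\geq \chi(X)$.

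\textbf{Step 1: $X'$ non-separating.} Then $X'$ is a reduced $\mathbb{Z}_3$-folded surface, since all weights are $1$. So $\chi(X')\leq \chi_3([X])=\chi(X)$. For $k\in\{0,1\}$ this is impossible, and for $k=2$ it forces equality, so $X'':=X'$ works.

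\textbf{Step 2: $X'$ separating.} Section~\ref{sec:compressionsurfacebirth} shows that $\ker(i_{(M,X')})$ contains exactly one non-trivial class $x$. By Lemma~\ref{lem:surfacerealisation}, $S(x)$ is a closed orientable surface containing exactly one of $D_1,D_2$, and every $C\in I(x)$ is removable. Since $\ker(i_{(M,X')})=\{0,x\}$, eliminating $x$ with an orientation $\mathfrak{o}$ for which $I(x,\mathfrak{o})\neq\emptyset$ gives $X'':=X'(x,\mathfrak{o})$. This surface satisfies:
\begin{itemize}
\item $X''\subset X'$;
\item $[X'']=[X']=[X]$;
\item its kernel is a proper subgroup of $\{0,x\}$, hence trivial, so $X''$ is non-separating by Lemma~\ref{lem:injectivitynonsep}.
\end{itemize}
By Lemma~\ref{lem:surfaceelimination}, $\chi(X'')=\chi(X')-\sum_{C\in I(x,\mathfrak{o})}\chi(C)$. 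It therefore suffices to choose $\mathfrak{o}$ so that this is at least $\chi(X)$, because maximality of $\chi_3$ then forces equality.

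\textbf{Step 3: the Euler characteristic count.} The singular circles of $S(x)$ contribute $0$, so $\sum_{C\in I(x)}\chi(C)=\chi(S(x))$. The sets $I(x,\mathfrak{o})$ and $I(x,-\mathfrak{o})$ partition $I(x)$. Choosing $\mathfrak{o}$ to minimise the sum gives
\[\sum_{C\in I(x,\mathfrak{o})}\chi(C)\leq \tfrac12\chi(S(x)).\]
Hence, if no component of $S(x)$ is a sphere, $\chi(X'')\geq \chi(X)+2-k$. If $S(x)$ is a sphere, the minimum is at most $1$, since each piece is a non-disk or a disk. Removable disks with $\chi=1$ need care: such a piece contributes $1$, and then $\chi(X'')\geq\chi(X)+1-k$. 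This suffices for $k\in\{0,1\}$.

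One must also check that the minimising orientation has $I(x,\mathfrak{o})\neq\emptyset$. Both sides are nonempty because $S(x)$ passes through singular arcs, where it meets one sheet of each induced orientation. The exception is when $S(x)$ is disjoint from $\Sing(X')$; then $S(x)$ is a closed component, and deleting it only increases $\chi$ when $\chi\leq 0$. A sphere component with $\chi>0$ instead bounds a ball in $M$ by hypothesis, so it cannot have been born non-trivially from $X$; this excludes that case.

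\textbf{Main obstacle: $k=2$ with $S(x)$ a sphere.} This is the case where both $X^+$ and $X^-$ separate and the slack above is $1$ short. Here I would use the explicit description from Cases 1 and 2 of Section~\ref{sec:compressionsurfacebirth}, applied to both signs. Since every $2$-sphere separates $M$, the sphere $S(x_\pm)$ bounds a region on each side. I would then show that the side of $S(x_\pm)$ not containing $D_1$ (in Case 1) or the side containing the $A_i^\mp$ (in Case 2) consists of non-disk pieces, or of disks balanced by the removed $A_i^{\pm}$. This would give $\sum_{I(x,\mathfrak{o})}\chi(C)\leq 0$ for one of the two signs, and hence $\chi(X'')\geq\chi(X)$. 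If this local analysis fails, I would fall back on comparing $X^+$ with $X^-$: the two elimination counts should sum to $\chi(S(x_+))+\chi(S(x_-))-2$ with the $D_i$ contributions cancelling, so one of them has non-positive elimination sum.
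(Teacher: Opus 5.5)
Your argument for $k\in\{0,1\}$ is correct, and it is shorter than the paper's. Choosing the cheaper of $\pm\mathfrak{o}$ bounds the elimination cost by $\tfrac12\chi(S(x))$, which beats the slack $2-k$. You avoid the paper's case analysis of $g^+(A_1^-)$ and never need the fact that $X$ has no removable disks.

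Two repairs are needed.
\begin{itemize}
\item State why $S(x)$ is either a single sphere or has no sphere components. A sphere component separates $M$, so it is null-homologous and gives a non-zero element of $\ker i_{(M,X')}=\{0,x\}$; hence it is all of $S(x)$. Without this, two sphere components would already break your bound for $k=1$.
\item Your exceptional paragraph is wrong as stated. For $k=0$, a sphere component of $X'$ disjoint from $\Sing(X')$ is not excluded. It is exactly what happens when $\partial D$ bounds a disk in $\Int X$, and for $\chi$-optimal $X$ it is the only possibility (cf.\ Lemma~\ref{lem:0incompressibility}). It is harmless, since deleting it costs $2=\chi(X')-\chi(X)$. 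For $k\geq 1$ the case cannot occur, simply because $S(x)$ contains $g^{\pm}(D_1)$ or $g^{\pm}(D_2)$, which meets the singular locus.
\end{itemize}

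The genuine gap is $k=2$ when both $X^{\pm}$ separate and both $S(x_{\pm})$ are spheres. There balancing only gives $\chi(X'')\geq\chi(X)-1$, and you have only described what you hope to show. Your fallback fails as written: the claimed identity is unjustified, and even granting it, two spheres give a total of $2+2-2=2$, so one count is at most $1$, not at most $0$.

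The missing ingredient is that a $\chi$-optimal $X$ has no removable disks (Lemma~\ref{lem:noremovablediskschi}) and no sphere components. Hence every component of $I(x_{\pm})$ not meeting the surgery ball $N$ has $\chi\leq 0$. You never invoke this, and without it nothing in your argument stops both sides of $S(x_+)$, and likewise of $S(x_-)$, from costing exactly $1$.

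With it, the paper finishes in two steps.
\begin{itemize}
\item Case~2 of Section~\ref{sec:compressionsurfacebirth} is impossible. Maximality forces $\chi(\mathcal{A}^{\mp})\geq 0$. Removing $\overline{A^{\mp}_1},\overline{A^{\mp}_2}$ from $\mathcal{A}^{\mp}$ then leaves components of $\Int X$ of total Euler characteristic $\geq 2$, i.e.\ two removable disks of $X$, a contradiction.
\item In Case~1, the only pieces of $S(x_{\pm})$ meeting $N$ are $g^{\pm}(D_1)$ and $\mathcal{E}^{\pm}$. Moreover $\mathcal{E}^{\mp}=g^{\pm}(D_1)\setminus\overline{D_1}$ and $\chi(\mathcal{E}^{\mp})=\chi(g^{\pm}(D_1))+1$.
\begin{itemize}
\item If $g^{+}(D_1)$ is not a disk, then $\chi(g^{+}(D_1))\leq 0$ and $X^{+}(x_{+},\mathfrak{o}_{+})$ works.
\item If $g^{+}(D_1)$ is a disk, then $\mathcal{E}^{-}$ is two disks. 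So $g^{-}(D_1)$ has at least two boundary components, $\chi(g^{-}(D_1))\leq 0$, and $X^{-}(x_{-},\mathfrak{o}_{-})$ works.
\end{itemize}
\end{itemize}
What your sketch needs is this coupling of $g^{+}(D_1)$ with $g^{-}(D_1)$ through $\mathcal{E}^{\mp}$, together with the no-removable-disk property.
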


\begin{proof}

    The proofs of this statement for $k=0,1,2$ are the constructions in Section~\ref{sec:casek0}, Section~\ref{sec:casek1} and Section~\ref{sec:casek2} respectively. Note that when $k\in\{1,2\}$ the reduced $2k$-compressions arising from a particular $2k$-compression surgery may not be unique.

\end{proof}

We find it convenient to introduce some notation before progressing with the body of the proof. Let $Y$ be the $2$-polyhedron $X\cup D_1 \cup D_2$ defined as in Section~\ref{sec:3foldsurgeries}. Note that for $k=0$ we have the inclusion $\Int(Y)\setminus A \subset \Int X'$. Let $g':\pi_0(\Int Y)\setminus \{A\} \rightarrow \pi_0(\Int X')$ be the function on sets of connected components induced by this inclusion such that $g'(C) = C' \Leftrightarrow C\subset C'$. For $k\geq 1$ we will use the function $g^{\pm}:\pi_0(\Int Y)\setminus \{A_i^{\pm}\}_{i=1}^{k} \rightarrow \pi_0(\Int X^{\pm})$ defined similarly as in Section~\ref{sec:compressionsurfacebirth}.

\subsubsection{Reduced $0$-compressions}\label{sec:casek0}

Let $X'$ be the result of a $0$-compression of $X$. Since $\chi(X') = \chi(X)+2$ (see Section~\ref{sec:3foldsurgeries}) the maximality of the Euler characteristic of $X$ among non-separating $\mathbb{Z}_3$-folded surfaces forces $X'$ to admit a separating surface. In Section~\ref{sec:compressionsurfacebirth} it is shown that this separating surface is unique and contains exactly one of the two disks $D_1,D_2$ parallel to the $0$-compression disk. Let $x\in \ker(H_2(X';\mathbb{Z}_2)\rightarrow H_2(M;\mathbb{Z}_2))$ be the unique non-trivial class. Up to a choice of orientation of $D$ or equivalently choice of labelling of $D_1,D_2$ we can assume $g'(D_1)\in I(x)\subset \pi_0(\Int X')$. Let $\mathfrak{o}$ be the orientation of $S(x)\subset X'$ that is consistent with the orientation of $g'(D_1)$ as a component of $\Int(X')$. Let $X'' := X'(x,\mathfrak{o})$. We say that $X''$ is the result of a reduced $0$-compression of $X$ about $D$. Recall from Lemma~\ref{lem:surfaceelimination} that $\chi(X'') = \chi(X')- \sum_{C\in I(x,\mathfrak{o})}\chi(C)$. Every element of $I(x)$ other than $g'(D_1)$ does not intersect the neighbourhood of $D$ in which the $0$-compression takes place and so has a neighbourhood in $M$ identical to a neighbourhood of a component of $\Int X$. In particular no component of $\Int X$ can be a $2$-sphere since $X$ is non-separating and every $2$-sphere embedded in $M$ separates it. Similarly no component of $\Int X$ can be a removable disk due to Lemma~\ref{lem:noremovablediskschi}. Every component of $I(x)$ is removable due to Lemma~\ref{lem:surfacerealisation} and must have non-positive Euler characteristic with the exception possibly of $g'(D_1)$ and so $\chi(X'') \geq \chi(X') - \chi(g'(D_1))$. Equation~\eqref{eq:eulercharineqk0} follows from the observation that $g'(D_1)$ is a connected compact $2$-manifold and so $\chi(g'(D_1))\leq 2$.

\begin{equation}\label{eq:eulercharineqk0}
    \chi(X'') \geq \chi(X')-\chi(g'(D_1)) \geq \chi(X') -2 = \chi(X)
\end{equation}

Lemma~\ref{lem:surfaceelimination} implies that $\ker(H_2(X'';\mathbb{Z}_2)\rightarrow H_2(M;\mathbb{Z}_2))$ is a proper subgroup of $\ker(H_2(X';\mathbb{Z}_2)\rightarrow H_2(M;\mathbb{Z}_2)) = \mathbb{Z}_2$ and so $\ker(H_2(X'';\mathbb{Z}_2)\rightarrow H_2(M;\mathbb{Z}_2)) = 0$. Hence, Lemma~\ref{lem:injectivitynonsep} implies that $X''$ is non-separating in $M$. The maximality of $\chi(X)$ among non-separating $\mathbb{Z}_3$-folded surfaces realising $[X]$ implies that $\chi(X'') \leq \chi(X)$ and combining this with Equation~\eqref{eq:eulercharineqk0} implies that $\chi(X'') = \chi(X) = \chi_3([X])$ as required. 

In Definition~\ref{dfn:0incompressibility} we generalise the notion of \textit{incompressibility} as a property of surfaces embedded in $3$-manifolds to a property of $\mathbb{Z}_3$-folded surfaces.

\begin{dfn}[$0$-incompressibility]\label{dfn:0incompressibility}
    Let $X$ be a $\mathbb{Z}_3$-folded surface embedded a $3$-manifold $M$. We call $X$ \textit{$0$-incompressible} if any $0$-compressing disk $D$ of $X$ has the property that $\partial(D)$ bounds a disk in $\Int X$.
\end{dfn}

Lemma~\ref{lem:0incompressibility} follows from the fact that $g'(D_1)$ must be a $2$-sphere when $X$ is a non-separating $\mathbb{Z}_3$-folded surface of maximal Euler characteristic, given any $0$-compressing disk $D$. 

\begin{lem}[$0$-incompressibility]\label{lem:0incompressibility}
    Let $X$ be a non-separating $\mathbb{Z}_3$-folded surface embedded in a closed orientable $3$-manifold $M$ with the property that every $2$-sphere embedded in $M$ separates the ambient space. If $\chi(X) = \chi_3([X])$ then $X$ must be $0$-incompressible.
\end{lem}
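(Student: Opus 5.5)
We argue by contradiction, running the reduced $0$-compression of Section~\ref{sec:casek0} and tracking where the Euler characteristic inequalities must be equalities. Suppose $\chi(X)=\chi_3([X])$ and let $D$ be a $0$-compressing disk of $X$ such that $\partial(D)$ does not bound a disk in $\Int X$. Let $X'$ be the $0$-compression of $X$ along $D$, so $\chi(X')=\chi(X)+2$. Since $X$ is non-separating of maximal Euler characteristic, $X'$ must separate $M$; otherwise $\chi_3([X])\ge\chi(X)+2$. By Section~\ref{sec:compressionsurfacebirth} there is then a unique non-trivial class $x\in\ker(H_2(X';\mathbb{Z}_2)\to H_2(M;\mathbb{Z}_2))$. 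After relabelling $D_1,D_2$ we may assume $g'(D_1)\in I(x)$, and we form $X''=X'(x,\mathfrak{o})$ as in Section~\ref{sec:casek0}.

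The argument there gives the chain
\[\chi(X)=\chi(X'')\ge \chi(X')-\sum_{C\in I(x,\mathfrak{o})}\chi(C)\ge \chi(X')-\chi(g'(D_1))\ge \chi(X')-2=\chi(X),\]
so every inequality is an equality and $\chi(g'(D_1))=2$. Since $g'(D_1)$ is a connected compact $2$-manifold, it is a $2$-sphere component of $\Int X'$ with empty boundary. This sphere is the union of $D_1$ with the part of the component of $\Int X\setminus\partial D$ lying on the $D_1$ side of the annulus $A$.

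Next we translate back to $X$. Let $C_0$ be the component of $\Int X$ containing $\partial(D)$. Compression replaces a neighbourhood $A$ of $\partial D$ in $C_0$ by $D_1\sqcup D_2$. The component $g'(D_1)$ is obtained from one side $P$ of $C_0\setminus A$ by capping the boundary circle $\partial D_1$. For $g'(D_1)$ to be a closed sphere, $P$ must have no boundary other than $\partial D_1$, meaning it meets no singular arc, and $P\cup D_1\cong S^2$. Hence $P$ is a disk in $\Int X$ whose boundary is parallel to $\partial D$, so $\partial D$ bounds a disk in $\Int X$. This contradicts the choice of $D$ and establishes $0$-incompressibility.

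The main obstacle is the topological step identifying $P$. We must check that $g'(D_1)$ contains no pieces of $\Int X'$ beyond $P\cup D_1$, in particular that there is no case where $\partial D$ is non-separating in $C_0$ and both sides join up. If $\partial D$ were non-separating in $C_0$, then $D_1$ and $D_2$ would lie in the same component of $\Int X'$. In that case $g'(D_1)=g'(D_2)$, contradicting the conclusion of Section~\ref{sec:compressionsurfacebirth} that $S(x)$ contains exactly one of $D_1,D_2$. So $\partial D$ separates $C_0$, and the component containing $D_1$ is exactly $P\cup D_1$. We use the hypothesis that every $2$-sphere in $M$ separates only through its role in the Euler characteristic estimate of Section~\ref{sec:casek0}, namely to exclude sphere components of $\Int X$ in $I(x)$.
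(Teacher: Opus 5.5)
Your proof is correct and follows the paper's argument. You run the reduced $0$-compression of Section~\ref{sec:casek0} and note that $\chi(X'')=\chi(X)$ forces $\chi(g'(D_1))=2$, so $g'(D_1)$ is a $2$-sphere and the piece of $\Int X$ on the $D_1$ side of $A$ is a disk bounded (up to parallelism) by $\partial D$. You also check that $\partial D$ separates its component of $\Int X$: since $S(x)$ contains exactly one of $D_1,D_2$, we have $g'(D_1)\neq g'(D_2)$. The paper's proof leaves this point implicit, and your argument supplies it.
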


\begin{proof}    
    
    We adopt the notation already established in this section, allowing $D$ to be any $0$-compressing disk of $X$. The equality $\chi(X'') = \chi(X)$ forces the Equality $\chi(X')-\chi(g'(D_1)) = \chi(X') -2 \Rightarrow \chi(g'(D_1)) = 2$ and so $g'(D_1)\simeq S^2$. Let $A$ be the open annulus in $X$ bounded by $\partial D_1 \sqcup \partial D_2$. $g'(D_1)$ is formed by gluing the disk $D_1$ along $\partial D_1$ to the component $B$ of $\Int(X)\setminus A$ that cobounds $\partial D_1$. It follows from elementary Euler characteristic arguments that $B$ is homeomorphic to a closed disk. 
\end{proof}

\subsubsection{Reduced $2$-compressions}\label{sec:casek1}

Let $X^+$ be the positive $2$-compression of $X$ about a $2$-compression disk $D$. Since $\chi(X^+) = \chi(X)+1$ (see Section~\ref{sec:3foldsurgeries}) we can deduce from the maximality of $\chi(X)$ among all non-separating $\mathbb{Z}_3$-folded surfaces that $X^+$ admits a surface which separates $M$. In Section~\ref{sec:compressionsurfacebirth} we showed that this surface is unique and contains exactly one of $D_1$ and $D_2$. Let $x_+\in \ker(H_2(X^+;\mathbb{Z}_2)\rightarrow H_2(M;\mathbb{Z}_2))$ be the unique non-trivial class such that $S(x_+)$ is this separating surface. Without loss of generality, either by inverting the orientation of $D$ or by relabelling $D_1$ and $D_2$, we can assume that $g^+(D_1)\in I(x_+)\subset \pi_0(\Int X^+)$. Let $\mathfrak{o}$ be the orientation of $S(x_+)$ such that $g^+(D_1) \in I(x_+,\mathfrak{o})$.

Suppose that $g^+(A_1^-) \in I(x_+)$. If this is the case then $g^+(A_1^-)$ is the only element of $I(x_+,-\mathfrak{o})$ that intersects the neighbourhood $N$ of $D$ in which the surgery takes place. This can be seen in Figure~\ref{fig:2compressionsurfaces} which depicts the two subsets $I(x_+,-\mathfrak{o})$ and $I(x_+,\mathfrak{o})$ in different colours to highlight this fact. In particular the same reasoning from Section~\ref{sec:casek0} implies that every element of $I(x_+,-\mathfrak{o})$ other than $g^+(A_1^-)$ must have non-positive Euler characteristic and so $\chi(X^+(x_+,-\mathfrak{o})) \geq \chi(X^+)-\chi(g^+(A_1^-)) = \chi(X)+1-\chi(g^+(A_1^-))$. Similarly, $X^+(x_+,-\mathfrak{o})$ must be non-separating due to Lemma~\ref{lem:surfaceelimination} and so the maximality of $\chi(X)$ among non-separating $\mathbb{Z}_3$-folded surfaces implies that $\chi(X^+(x_+,-\mathfrak{o})) = \chi(X)$ and so $g^+(A_1^-)$ must be a removable disk. $g^+(A_1^-)$ cannot be connected in $\Int X^+$ to any of the other components depicted in Figure~\ref{fig:2goncompression} which intersect the neighbourhood of the surgery since these other components are either not in $S(x_+)$ or they belong to $I(x_+,\mathfrak{o})$. This implies that by removing the closure of $A_1^-$ from $g^+(A_1^-)$ we recover either a component or a union of two components of $\Int X$. An elementary Euler characteristic computation yields that $\chi(g^+(A_1^-) \setminus \overline{A_1^-}) = \chi(g^+(A_1^-)) + 1 = 2$ and hence $g^+(A_1^-) \setminus \overline{A_1^-}$ is a union of two disks (they are not closed surfaces and hence cannot be $S^2$). Since these disks are connected in $X$ they also must be removable due to Lemma~\ref{lem:removabledisksorclosed}. However Lemma~\ref{lem:noremovablediskschi} contradicts this since it implies that no component of $\Int X$ is a removable disk. It follows that our initial assumption is wrong and hence $g^+(A_1^-) \not\in I(x_+)$.

We therefore know that the only element of $I(x_+,\mathfrak{o})$ intersecting the neighbourhood of the surgery $N$ is $g^+(D_1)$ - this is the first case in Figure~\ref{fig:2compressionsurfaces}. Since all other elements of $I(x_+,\mathfrak{o})$ have neighbourhoods identical to components of $\Int X$ we can deduce that none of them are removable disks or $2$-spheres. Moreover $g^+(D_1)$ has non-empty boundary and so $\chi(g^+(D_1))\leq 1$. Applying Lemma~\ref{lem:surfaceelimination} produces a non-separating $\mathbb{Z}_3$-folded surface $X^+(x_+,\mathfrak{o})$ with $\chi(X^+(x_+,\mathfrak{o})) = \chi(X')- \sum_{C\in I(x,\mathfrak{o})}\chi(C)$. The observation that $\chi(C)\leq 0$ for all $C\in I(x,\mathfrak{o})\setminus \{g^+(D_1)\}$ yields Equation~\ref{eq:eulercharineqk1}.

\begin{equation}\label{eq:eulercharineqk1}
    \chi(X^+(x_+,\mathfrak{o})) \geq \chi(X^+) - \chi(g^+(D_1)) \geq \chi(X^+)-1 =\chi(X)
\end{equation}

The maximality of $\chi(X)$ among all non-separating $\mathbb{Z}_3$-folded surfaces in $M$ implies that $\chi(X^+(x_+,\mathfrak{o}))\leq \chi(X)$. Combining this with Equation~\eqref{eq:eulercharineqk1} yields the required equality $\chi(X^+(x_+,\mathfrak{o})) = \chi(X) = \chi_3([X])$.

Alternately if we let $\mathcal{E}^+$ be the unique element of $I(x_+,-\mathfrak{o})$ which intersects $N$ (labelled in Figure~\ref{fig:2compressionsurfaces}) then identical reasoning implies that $X^+(x_+,-\mathfrak{o})$ is non-separating, Equation~\eqref{eq:eulercharineqk1_2} holds and 
\[\chi(X^+(x_+,-\mathfrak{o})) = \chi(X).\]

\begin{equation}\label{eq:eulercharineqk1_2}
    \chi(X^+(x_+,-\mathfrak{o})) \geq \chi(X^+) - \chi(\mathcal{E}^+) \geq \chi(X^+)-1 =\chi(X)
\end{equation}

Hence both $X^+(x_+,\mathfrak{o})$ and $X^+(x_+,-\mathfrak{o})$ satisfy the requirements of Definition~\ref{dfn:reduced2kcompression} as reduced $2$-compressions about $D$. 

Definition~\ref{dfn:2incompressible} gives the analogue of incompressibility for $2$-compressions.

\begin{dfn}[$2$-incompressibility]\label{dfn:2incompressible}
    Let $X$ be a $\mathbb{Z}_3$-folded surface embedded in a $3$-manifold. If for any $2$-compressing disk $D$ of $X$ there is an interval $I \subset \text{Sing}(X)$ with endpoints $\partial I = D\cap \text{Sing}(X)$ such that the theta graph $I\cup \partial D$ is the boundary of two disks in $\Int(X) \setminus \partial D$, we say that $X$ is \textit{$2$-incompressible}.
\end{dfn}

Lemma~\ref{lem:2incompressibility} follows immediately from the preceding discussion.

\begin{lem}\label{lem:2incompressibility}
    Let $X$ be a non-separating $\mathbb{Z}_3$-folded surface embedded in a closed orientable $3$-manifold $M$ with the property that every $2$-sphere embedded in $M$ separates the ambient space. If $\chi(X) = \chi_3([X])$ then $X$ is $2$-incompressible.
\end{lem}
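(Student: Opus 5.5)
We argue as for Lemma~\ref{lem:0incompressibility}, extracting more from the equality case of the reduced $2$-compression in Section~\ref{sec:casek1}. Let $D$ be any $2$-compressing disk of $X$ and form the positive $2$-compression $X^+$ about $D$. Since $\chi(X^+)=\chi(X)+1$ and $\chi(X)=\chi_3([X])$, Section~\ref{sec:compressionsurfacebirth} gives a unique non-trivial class $x_+$ with separating surface $S(x_+)$ containing exactly one of $D_1,D_2$; say $g^+(D_1)\in I(x_+,\mathfrak{o})$. The discussion of Section~\ref{sec:casek1} rules out $g^+(A_1^-)\in I(x_+)$, so we are in the first case of Figure~\ref{fig:2compressionsurfaces}. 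There $g^+(D_1)$ is the only element of $I(x_+,\mathfrak{o})$ meeting $N$, and the only element of $I(x_+,-\mathfrak{o})$ meeting $N$ is $\mathcal{E}^+$.

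\textbf{Step 1: both inequalities are tight.} In \eqref{eq:eulercharineqk1}, equality $\chi(X^+(x_+,\mathfrak{o}))=\chi(X)$ forces $\chi(g^+(D_1))=1$, since the other terms are non-positive. As $g^+(D_1)$ has non-empty boundary (it meets $\Sing(X^+)$ along the singular arc through $\partial D_1$), it is a disk. In the same way, equality in \eqref{eq:eulercharineqk1_2} forces $\mathcal{E}^+$ to be a disk.

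\textbf{Step 2: undo the surgery.} The component $g^+(D_1)$ is $D_1$ glued along an arc of $\partial D_1$ to the part of $\Int(X)$ on that side of $\partial D$. Removing $\overline{D_1}$ from the disk $g^+(D_1)$ leaves a region of $\Int(X)\setminus\partial D$ with $\chi=1$ in which one arc of $\partial D$ lies on the boundary. An elementary Euler characteristic count, as at the end of the proof of Lemma~\ref{lem:0incompressibility}, shows this region is a closed disk $E_1\subset \Int(X)\setminus\partial D$. Its boundary consists of one of the two arcs of $\partial D$ cut at the two points of $\partial D\cap\Sing(X)$, together with an arc $I\subset \Sing(X)$ joining those two points. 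Applying the same argument to $\mathcal{E}^+$ gives a second disk $E_2$. Its boundary is the other arc of $\partial D$ together with a singular arc.

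\textbf{Step 3: identify the two singular arcs.} This is the step I expect to be the main obstacle. We must show that the singular arcs bounding $E_1$ and $E_2$ are the same interval $I$, so that $I\cup\partial D$ is a theta graph bounding both disks. I would read this off the local picture of Figure~\ref{fig:2goncompression}. The sheets $D_1$ and $\mathcal{E}^+$ meet the same singular arc of $X^+$ inside $N$, and a singular arc of $X^+$ meeting $N$ is obtained from the singular locus of $X$ near $D\cap\Sing(X)$. Following that singular component outside $N$ on both disks, we find that both boundary arcs trace the same path in $\Sing(X)$ between the two endpoints. This path is the interval $I$. If the two arcs were different, the disks $E_1$ and $E_2$ would close up the singular circle into something incompatible with $\mathcal{E}^+$ and $g^+(D_1)$ being disks. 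With $I$ identified, $I\cup\partial D$ bounds the two disks $E_1,E_2\subset\Int(X)\setminus\partial D$, which is exactly Definition~\ref{dfn:2incompressible}.
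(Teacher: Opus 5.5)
Your proposal follows the paper's own proof essentially step for step. Tightness in \eqref{eq:eulercharineqk1} and \eqref{eq:eulercharineqk1_2} forces $g^+(D_1)$ and $\mathcal{E}^+$ to be disks, deleting $\overline{D_1}$ yields the first disk of the theta graph, and the common singular interval is read off Figure~\ref{fig:2compressionsurfaces}; that figure is all the paper offers at your Step~3 as well.

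To make Step~3 airtight, use that $g^+(D_1)$ and $\mathcal{E}^+$ both lie in $I(x_+)$, so by Lemma~\ref{lem:surfacerealisation} they are removable disks. Each is therefore bounded by an entire singular circle of $X^+$. Both contain the singular arc of $X^+$ along which $D_1$ meets $\mathcal{E}^+$, so they are bounded by the same circle, and deleting that arc leaves the common interval $I\subset\Sing(X)$.
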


\begin{proof}
    We adopt the notation already established in this section, allowing $D$ to be any $0$-compressing disk of $X$.
    
    Combining the fact that $\chi(X^+(x_+,\mathfrak{o})) = \chi(X)$ with Equation~\eqref{eq:eulercharineqk1} implies that $\chi(g^+(D_1)) = 1$ and so $g^+(D_1)$ is homeomorphic to an open disk. $g^+(D_1)$ is the interior of the boundary connect sum of the closure of $D_1$ and the closure of $g^+(D_1) \setminus \overline{D_1}$ which is a component of $\Int(X) \setminus \partial D_1$. It follows that $\chi(g^+(D_1)) = \chi(D_1) + \chi(B)-1$ and hence $\chi(g^+(D_1) \setminus \overline{D_1})=1$ which implies that $g^+(D_1) \setminus \overline{D_1}$ is an open disk bounded by an interval of $\partial D_1$ and an interval of $\Sing(X)$.

    Combining the fact that $\chi(X^+(x_+,-\mathfrak{o})) = \chi(X)$ with Equation~\eqref{eq:eulercharineqk1_2} implies that $\chi(\mathcal{E}^+) = 1$ and so $\mathcal{E}^+$ is homeomorphic to an open disk. $\mathcal{E}^+$ is a component of $\Int(X) \setminus \partial D_1$ that is bounded by a complementary interval of $\partial D_1$ and the same interval of $\Sing(X)$ (see Figure~\ref{fig:2compressionsurfaces}), forming the required theta graph.

    We assert that this is sufficient to prove Lemma~\ref{lem:2incompressibility} since $D_1$ also runs over all $2$-compressing disks of $X$.

\end{proof}

\begin{figure}
    \centering
    \begin{tabular}{c@{}c}
        \fbox{Case 1: $g^+(A_1^-) \notin I(x_+)$} & \fbox{Case 2: $g^+(A_1^-)\in I(x_+)$}  \\

        \begin{annotate}{\includegraphics[width = 0.3\linewidth]{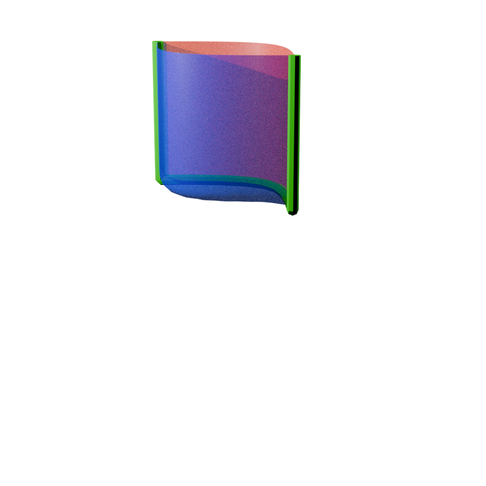}}{0.4}

            \callout{-3,-3}{$g^+(D_1)$}{-1,3}
        \end{annotate}
        &
        \begin{annotate}{\includegraphics[width = 0.3\linewidth]{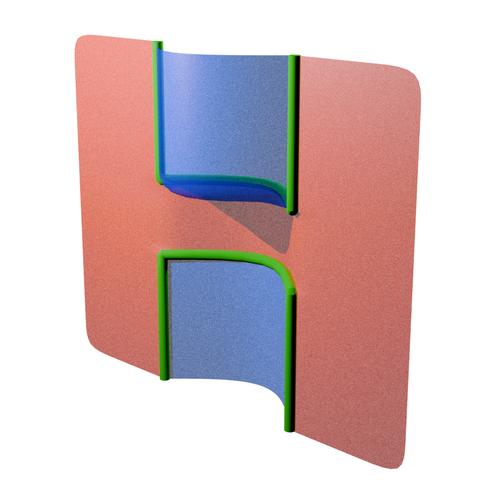}}{0.4} 

        \callout{-3,-3}{$g^+(A_1^-)$}{-1,0.5}
        \end{annotate}

        \\
        \begin{annotate}{\includegraphics[width = 0.3\linewidth]{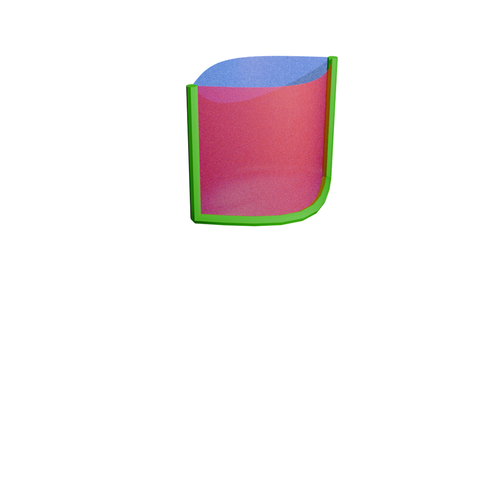}}{0.4} 

            \callout{4,-3}{$\mathcal{E}^+$}{0.5,3}
        \end{annotate}
        &
        \begin{annotate}{\includegraphics[width = 0.3\linewidth]{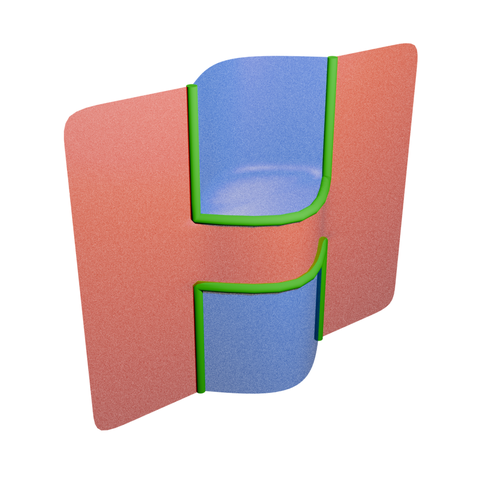}}{0.4}

        \end{annotate}
        \\
        
    \end{tabular}
    \caption{The two cases (up to the obvious symmetries) for how $S(x_+)\subset X^+$ can intersect a neighbourhood of the $2$-gon compression disk by which $X^+$ was created from the non-separating $\mathbb{Z}_3$-folded surface $X$. Note that the class $x_+\in H_2(X^+;\mathbb{Z}_2)$, which $S(x_+)$ realises, is the unique non-trivial element of $\ker i_{(M,X^+)}$. Components of $I(x_+,\mathfrak{-o})$ are \textcolor{red}{red}. Components of $I(x_+,\mathfrak{o})$ are \textcolor{blue}{blue}.}
    \label{fig:2compressionsurfaces}
\end{figure}

\subsubsection{Reduced $4$-compressions}\label{sec:casek2}

Let $X^+$ and $X^-$ be the two $4$-compressions of $X$ about a $4$-compression disk $D$. Because $\chi(X^{\pm}) = \chi(X)$ (see Section~\ref{sec:3foldsurgeries}), it is possible that $X^+$ or $X^-$ is non-separating. If $X^+$ is non-separating we can take $X'':= X^+$ as the result of a reduced $4$-compression of $X$ about $D$. Otherwise if $X^-$ is non-separating then we take $X'':= X^-$. 

We now assume that $X^+$ and $X^-$ both separate $M$. As described in detail in Section~\ref{sec:compressionsurfacebirth} there are unique non-trivial classes $x_+ \in \ker(H_2(X^{+};\mathbb{Z}_2)\rightarrow H_2(M;\mathbb{Z}_2))$ and $x_- \in \ker(H_2(X^{-};\mathbb{Z}_2)\rightarrow H_2(M;\mathbb{Z}_2))$. As in Section~\ref{sec:compressionsurfacebirth} we assume without loss of generality that $g^+(D_1) \in I(x_+)$. Recall the two cases $g^-(D_i)\in I(x_-)$ for some $i\in \{1,2\}$. Let $\mathfrak{o}_+$ be the orientation of $S(x_{+})$ that is consistent with the orientation of $g^{+}(D_1)$ in $\Int X^+$ and let $\mathfrak{o}_-$ be the orientation of $S(x_{-})$ that is consistent with the orientation of $g^-(D_i)$ in $\Int X^-$.

First assume that $g^-(D_2)\in I(x_-)$. It follows that the components $g^{\pm}(A^{\mp}_1)$ and $g^{\pm}(A^{\mp}_2)$ are both in $I(x_{\pm},-\mathfrak{o}_{\pm})$. Since we do not know a priori whether these components are distinct we will use one symbol $\mathcal{A}^{\mp} := g^{\pm}(A^{\mp}_1) \cup g^{\pm}(A^{\mp}_2)$ to denote their union.

$\mathcal{A}^{\mp}$ contains the only elements of $I(x_{\pm},-\mathfrak{o}_{\pm})$ that intersect the neighbourhood of the surgery. It follows that every other component of $I(x_{\pm},-\mathfrak{o}_{\pm})$ has a neighbourhood in $M$ identical to a component of $X$ and so must have non-positive Euler characteristic by similar reasoning as in Section~\ref{sec:casek1}. Hence it follows that $\chi(X^{\pm}(x_\pm,-\mathfrak{o}_{\pm})) \geq \chi(X^{\pm}) - \chi(\mathcal{A}^{\mp}) = \chi(X) - \chi(\mathcal{A}^{\mp})$. Lemma~\ref{lem:surfaceelimination} implies that the $\mathbb{Z}_3$-folded surface $X^{\pm}(x,-\mathfrak{o})$ is non-separating and so to avoid a contradiction with the maximality of $\chi(X)$ it follows that $\chi(\mathcal{A}^{\mp}) \geq 0$. Now note that since the only components of $I(x_{\pm},-\mathfrak{o}_{\pm})$ that intersect a neighbourhood of the surgery are $g^{\pm}(A^{\mp}_1)$ and $g^{\pm}(A^{\mp}_2)$ it follows that we can recover a union of components $C$ of $\Int X$ by removing the closures of $A^{\mp}_1$ and $A^{\mp}_2$ from $\mathcal{A}^{\mp}$. An elementary Euler characteristic computation yields that $\chi(C) = \chi(\mathcal{A}^{\mp})+2 \geq 2$. Since $C$ consists only of non-compact components it follows that two of the components of $C$ are disks that are connected in $X$. Hence it follows from Lemma~\ref{lem:removabledisksorclosed} that these two disks are both removable which is a contradiction of Lemma~\ref{lem:noremovablediskschi} which implies that $X$ has no removable disks. Hence our initial assumption that $g^-(D_2)\in I(x_-)$ was wrong. 

By default it follows that $g^-(D_1)\in I(x_-)$. Section~\ref{sec:compressionsurfacebirth} implies that the only element of $I(x_{\pm},\mathfrak{o}_{\pm})$ intersecting $N$ is $g^{\pm}(D_1)$. Let $\mathcal{E}^{\pm}$ denote the union of the components of $I(x_{\pm},-\mathfrak{o}_{\pm})$ that intersect $N$ (this will either be one or two components, which are labelled in Figure~\ref{fig:4compressionsurfaces}). From Figure~\ref{fig:4compressionsurfaces} we see that $\mathcal{E}^+ = g^-(D_1)\setminus \overline{D_1}$ and $\mathcal{E}^- = g^+(D_1)\setminus \overline{D_1}$. An elementary Euler characteristic argument yields the fact that $\chi(\mathcal{E}^{\pm}) = \chi(g^{\mp}(D_1))+1$. Every component of $I(x_{\pm})$ other than $g^{\pm}(D_1)$ and the components of $\mathcal{E}^{\pm}$ does not intersect the surgery neighbourhood $N$ and so must have non-positive Euler characteristic since components of $\Int(X)$ with identical neighbourhoods exist and no component of $\Int X$ is a removable disk or a $2$-sphere. 

Suppose $g^{+}(D_1)$ is anything other than a removable disk. It follows that $\chi(g^{+}(D_1))\leq 0$ and so $\chi(X^+(x_+,\mathfrak{o}_+)) \geq \chi(X^+) - \chi(g^{+}(D_1)) \geq \chi(X^+) = \chi(X)$. As in Section~\ref{sec:casek1}, Lemma~\ref{lem:surfaceelimination} implies that $H_2(X^+(x_+,\mathfrak{o}_+);\mathbb{Z}_2)\rightarrow H_2(M;\mathbb{Z}_2)$ is injective and Lemma~\ref{lem:injectivitynonsep} implies that $X^+(x_+,\mathfrak{o}_+)$ is a non-separating $\mathbb{Z}_3$-folded surface. Hence we can take $X^+(x_+,\mathfrak{o}_{+})$ as the result of a reduced $4$-compression in this case.

Otherwise, suppose $g^{+}(D_1)$ is a removable disk. It follows from the preceding discussion that we can conclude that $\chi(\mathcal{E}^-) = \chi(g^+(D_1))+1 = 2$ and so $\mathcal{E}^-$ must be a disjoint union of two removable disks (i.e. $\overline{g^+(D_1)}$ is the boundary connect sum of the two components of $\overline{\mathcal{E}^-}$). This forces $g^-(D_1)$ to have at least two boundary components and so $\chi(g^-(D_1))\leq 0$. We now note that $X^-(x_-,\mathfrak{o}_{\pm})$ is non-separating and that $\chi(X^-(x_-,\mathfrak{o}_{\pm}))\geq \chi(X)$ for the same reasons in the former case. In both cases since $\chi(X)$ is maximal among all non-separating $\mathbb{Z}_3$-folded surfaces realising $[X]$ we conclude that $\chi(X^-(x_-,\mathfrak{o}_{\pm})) = \chi(X)$. Thus $X^-(x_-,\mathfrak{o}_{\pm})$ meets the requirements of Definition~\ref{dfn:reduced2kcompression} and is the result of a reduced $4$-compression of $X$ about $D$ in this case.

\begin{figure}
    \centering
    \begin{tabular}{c@{}c@{}|@{}c@{}c}
        \multicolumn{2}{c}{\fbox{Case 1}}  & \multicolumn{2}{c}{\fbox{Case 2}} \\ 
        $X^+$ & $X^-$ & $X^+$ & $X^-$ \\

        \begin{annotate}{\includegraphics[width = 0.19\linewidth]{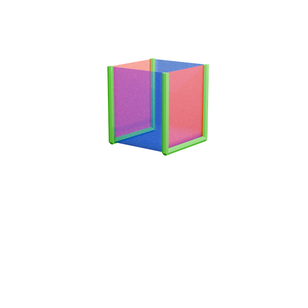}}{0.19} 

            \note{0,-5}{\parbox{1.9cm}{\textcolor{red}{everything red} is $\mathcal{E}^+$}}

        \end{annotate}
        &
        \begin{annotate}{\includegraphics[width = 0.19\linewidth]{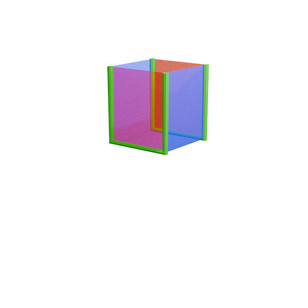}}{0.19} 
            \note{0,-5}{\parbox{1.9cm}{\textcolor{red}{everything red} is $\mathcal{E}^-$}}

        \end{annotate}
        
        & 
        \begin{annotate}{\includegraphics[width = 0.19\linewidth]{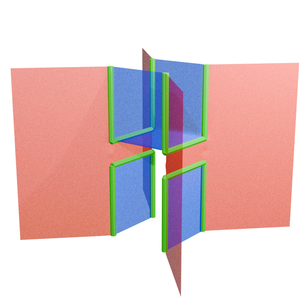}}{0.19}

            \note{0,6}{\textcolor{red}{everything red} is $\mathcal{A}^+$}
            \callout{-6,-6}{$g^+(D_1)$}{0,1}
        \end{annotate}
        & 
        \begin{annotate}{\includegraphics[width = 0.19\linewidth]{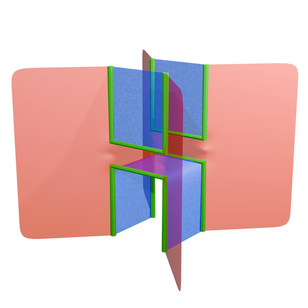}}{0.19}
            
        \end{annotate}
        \\
        \begin{annotate}{\includegraphics[width = 0.19\linewidth]{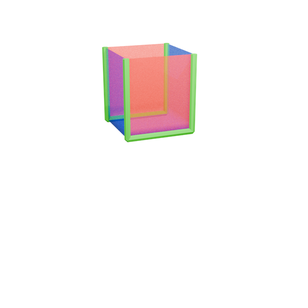}}{0.19} 
            \note{0,-5}{\parbox{2cm}{\textcolor{blue}{everything blue} is $g^+(D_1)$}}
            
        \end{annotate}
        &
        \begin{annotate}{\includegraphics[width = 0.19\linewidth]{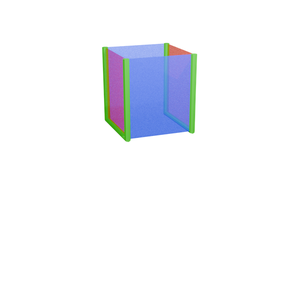}}{0.19}
            \note{0,-5}{\parbox{2cm}{\textcolor{blue}{everything blue} is $g^-(D_1)$}}
        
        \end{annotate}
        & 
        \begin{annotate}{\includegraphics[width = 0.19\linewidth]{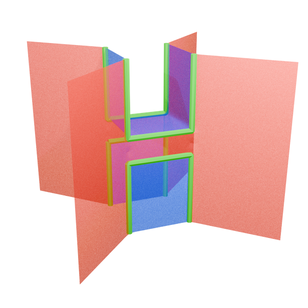}}{0.19}

        \end{annotate}
        & 
        \begin{annotate}{\includegraphics[width = 0.19\linewidth]{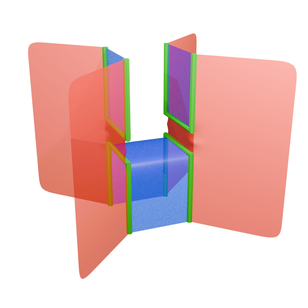}}{0.19}

            \callout{-1,6}{$g^-(D_2)$}{-0.5,0}
            
        \end{annotate}
        \\
        
    \end{tabular}
    \caption{The two cases (up to the obvious symmetries) for how $S(x_+)\subset X^+$ and $S(x_-)\subset X^-$ can intersect a neighbourhood of the $4$-gon compression disk that yields $X^\pm$ from the non-separating $\mathbb{Z}_3$-folded surface $X$. Note that the classes $x_{\pm}\in H_2(X^{\pm};\mathbb{Z}_2)$, which $S(x_{\pm})$ realise, are the unique non-trivial elements of $\ker i_{(M,X^{\pm})}$. Components of $I(x_{\pm},\mathfrak{-o})$ are \textcolor{red}{red}. Components of $I(x_{\pm},\mathfrak{o})$ are \textcolor{blue}{blue}. Recall that $\mathcal{A}^{\pm}$ is the union of $g^{\pm}(A_1^{\mp})$ and $g^{\pm}(A_2^\mp)$ which are respectively the components of $\Int X^+$ that contain $A_1^{\mp}$ and $A_2^{\mp}$.}
    \label{fig:4compressionsurfaces}
\end{figure}

\section{Proof of Theorem~\ref{thm:l3n1norm}}\label{sec:l3n1normproof}

\subsection{Decompositions of lens spaces}\label{sec:decompositions}

In this section everything stated will apply to any lens space $L = L(k,q)$ where $\gcd(k,q)=1$ and $k>q>0$. In later sections we will specialise to $k=3n$ for $n\in\mathbb{Z}^{>0}$ and finally $q=1$ for the proof of Theorem~\ref{thm:l3n1norm}.

Let $S_1$ and $S_2$ be a pair of solid tori. For $i\in \{1,2\}$, let $\mu_i$ be a meridional homology class of $H_1(\partial S_i;\mathbb{Z})$ generating the kernel of $H_1(\partial S_i;\mathbb{Z})\rightarrow H_1(S_i;\mathbb{Z})$ (i.e. realised by a simple closed curve bounding a disk in $S_i$). We use the term \textit{meridional disk} to refer to a disk $D_m$ properly embedded in $S_i$ such that $(\partial S_i,\partial D_m)$ is collared in $(S_i,D_m)$ and $\partial D_m$ is a simple closed curve which for some choice of orientation realises the homology class of $\mu_i$.

Let $\lambda_i$ be a choice of longitudinal class such that its image under the map $H_1(\partial S_i;\mathbb{Z})\rightarrow H_1(S_i;\mathbb{Z})$ generates $H_1(S_i;\mathbb{Z})$. There exists a homeomorphism $\phi:\partial S_1 \rightarrow \partial S_2$ such that $\phi_*(\mu_1) = k \lambda_2 + q\mu_2$ for coprime $k,q$. The quotient space obtained by identifying the boundaries of the solid tori with the gluing map $\phi$ is the lens space $L(k,q)$. We denote the splitting torus $T:= S_1\cap S_2$. Define $\bar{q}$ to be the unique positive integer such that $k>\bar{q}$ and $q \bar{q} \equiv 1 \mod k$. Fix $\lambda_1$ such that $\phi^{-1}_*(\mu_2) = k \lambda_1 + \bar{q} \mu_1$. We call this Heegaard decomposition the \textit{canonical Heegaard decomposition} of $L(k,q)$.

Another Heegaard splitting $L(k,q) = S_1' \cup S_2'$ with splitting torus $T' = S_1'\cap S_2'$ is equivalent to the splitting $S_1\cup S_2$ if there exists a homeomorphism $f:L(k,q)\rightarrow L(k,q)$ that maps one Heegaard decomposition to the other such that $f(S_1) = S_1', f(S_2) = S_2', f(T) = T'$.

Now suppose $X$ is a $\mathbb{Z}_p$-folded surface embedded in $L(k,q)$. Lemma~\ref{lem:changingcorecircle} will enable us to find a genus $1$ Heegaard splitting of $L(k,q)$ that is well adapted to $X$:

\begin{lem}\label{lem:changingcorecircle}

    Let $L$ be a lens space with a splitting torus $T$ and let $C$ be a core circle of one of the solid tori of $L\setminus T$. Let $h_t:L\rightarrow L$ be an ambient isotopy of $L$. Let $T_1$ be the torus given by the boundary of a regular neighbourhood (the star) of the circle $h_1(C)$. Then there exists a homeomorphism $g:L\rightarrow L$ such that $g(T) = T_1$. In other words $T_1$ gives an equivalent Heegaard splitting.
\end{lem}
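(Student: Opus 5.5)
The plan is to build $g$ in two steps: first carry $T$ along the isotopy, then compare $h_1(T)$ with $T_1$ using uniqueness of regular neighbourhoods. Suppose $C$ is a core circle of $S_1$, so $S_1$ is itself a regular neighbourhood of $C$ in $L$. The case of $S_2$ is symmetric.

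The homeomorphism $h_1$ maps $S_1$ onto $h_1(S_1)$, maps $S_2$ onto $h_1(S_2)$, and maps $T=\partial S_1$ onto $h_1(T)$. Hence the splitting $h_1(S_1)\cup h_1(S_2)$ is equivalent to the original one via $h_1$. Moreover, $h_1(S_1)$ is a regular neighbourhood of $h_1(C)$, because PL homeomorphisms carry regular neighbourhoods to regular neighbourhoods: one can subdivide so that $h_1$ is simplicial, and then a second derived neighbourhood is sent to a second derived neighbourhood.

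Let $N_1$ be the star of $h_1(C)$ in a suitable subdivision, so that $T_1=\partial N_1$. By the uniqueness theorem for regular neighbourhoods (Hudson–Zeeman), there is a PL homeomorphism $k:L\to L$ that is isotopic to the identity, fixes $h_1(C)$, and carries $h_1(S_1)$ onto $N_1$. Because $k$ is a homeomorphism of $L$, it also carries the closure of the complement of $h_1(S_1)$, namely $h_1(S_2)$, onto the closure of $L\setminus N_1$. Set $g:=k\circ h_1$. Then $g(S_1)=N_1$, $g(S_2)=\overline{L\setminus N_1}$ and $g(T)=T_1$, which is exactly the required equivalence of Heegaard splittings.

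The main obstacle is PL bookkeeping. One has to check that the star of $h_1(C)$ really is a regular neighbourhood, which it is after passing to a derived subdivision in which $h_1(C)$ is a full subcomplex. One also has to check that the uniqueness theorem applies to a neighbourhood of a 1-dimensional submanifold of the interior of a closed 3-manifold, and it does. I would first quote Rourke–Sanderson's uniqueness of regular neighbourhoods directly, and fall back on an explicit collar argument if the triangulation conventions cause trouble. Finally, the complementary solid torus is automatically a solid torus, since it is the homeomorphic image of $S_2$, so no separate argument is needed for it.
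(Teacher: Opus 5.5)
Your proof is correct and follows the same strategy as the paper: transport the splitting by $h_1$, then use uniqueness of regular neighbourhoods of $h_1(C)$ to carry $h_1(T)$ onto $T_1$, and compose the two maps to get $g$. The only difference is in the first step. The paper uses the thickened torus between $T$ and the boundary $T_0$ of a small regular neighbourhood of $C$ to build an extra ambient isotopy $\psi_t$ with $\psi_1(T)=T_0$. You instead observe directly that $S_1$, which collapses onto $C$, is itself a regular neighbourhood of $C$, so that preliminary isotopy is not needed.
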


\begin{proof}
    Let $T_0$ be the torus boundary of a regular neighbourhood of $C$. $T_0$ and $T$ cobound a thickened torus and this can be used to construct an isotopy carrying $T$ to $T_0$. This can be extended to an ambient isotopy $\psi_t: L\rightarrow L$ by the PL isotopy extension theorem (see~\cite{hudson1966}) such that $\psi_1(T) = T_0$. The tori $h_1(T_0)$ and $T_1$ are the boundaries of two regular neighbourhoods of the same circle $h_1(C)$ given by different simplicial structures of $L$. The uniqueness of regular neighbourhoods implies that there exists a homeomorphism $f:L\rightarrow L$ such that $f|_{h_1(C)} = id$ and $f(h_1(T_0)) = T_1$. In particular if we set $g = f\circ h_1 \circ \psi_1$ we have a homeomorphism of $L$ such that $g(T) = T_1$ as required. 

\end{proof}

\subsection{Mocktransversality and meridional intersection patterns}\label{sec:mocktransversality}

We now quote the definition of mock transversality given by McCrory in \cite[p.286]{mcrory1975} which adapts Buonchristiano, Rourke and Sanderson in~\cite{rourkesanderson1976}. 

\begin{dfn}\label{dfn:mocktransverse}
    Suppose we have two sub-polyhedra $P,Q$ in a compact $n$-manifold $M$. We say that $P$ is a \textit{proper subpolyhedron} of $M$ if $(\partial M, P\cap \partial M)$ is collared in $(M,P)$.
    
    Given a PL cell complex $\mathcal{C}$ of $M$ we say that $P$ is \textit{transverse} to $\mathcal{C}$ if for each cell $\alpha$ of $\mathcal{C}$ the intersection $(\partial \alpha, P\cap \partial \alpha)$ collars in $(\alpha, P\cap \alpha)$. 
    
    We say that $P$ is \textit{mocktransverse} to $Q$ if $Q$ is a subcomplex of $\mathcal{C}$ and $P$ is transverse to $\mathcal{C}$.

\end{dfn}

Note that mocktransversality is a symmetric property. Theorem~\ref{thm:relmocktransversality} recalls the statement given in~\cite[Theorem~4.7]{mcrory1975} of the Relative Mocktransversality Theorem due to~\cite{rourkesanderson1976}.

\begin{thm}[Relative Mocktransversality Theorem~\cite{rourkesanderson1976}]\label{thm:relmocktransversality}
    Let $P$ and $Q$ be proper subpolyhedra of a compact $n$-manifold $M$, with $P\cap \partial M$ mocktransverse to $Q\cap \partial M$ in the $n-1$-manifold $\partial M$. There exists an arbitrarily small ambient isotopy $h_t$ of $M$ such that $h_t|_{\partial M} = id$ for all $t$ and $h_1(P)$ is mocktransverse to $Q$.
\end{thm}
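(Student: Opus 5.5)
The approach is an induction over the cells of a fixed PL cell complex $\mathcal{C}$ of $M$ that contains $Q$ as a subcomplex. The complex stays fixed throughout, and only $P$ is moved, by isotopies supported near individual cells.

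First I would choose a triangulation $K$ of $M$ in which $P$, $Q$ and $\partial M$ are all subcomplexes. Using the hypothesis on the boundary, I would arrange that near $\partial M$ everything is a product. More precisely, take a collar $\partial M\times[0,1]$ in which $P$ is $(P\cap\partial M)\times[0,1]$. Then take a cell structure on $\partial M$ containing $Q\cap\partial M$ to which $P\cap\partial M$ is transverse (this exists by mocktransversality in $\partial M$), and cross it with $[0,1]$ inside the collar. Away from the collar I would use the cells of a fine subdivision of $K$; this fineness is what lets the isotopy be arbitrarily small. The result is a cell complex $\mathcal{C}$ with $Q$ a subcomplex, and $P$ is already transverse to every cell meeting $\partial M\times[0,\tfrac12]$.

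Next I would process the remaining cells $\alpha$ in order of increasing dimension. The inductive hypothesis is that $P$ is transverse to every cell of dimension less than $\dim\alpha$. In particular, near each face of $\partial\alpha$ the intersection $P\cap\partial\alpha$ is collared in the faces. Each cell $\alpha$ is PL a cone $v*\partial\alpha$ on its boundary, with $v$ an interior point. The move is a radial isotopy of $\alpha$, fixing $\partial\alpha$ and supported in a small neighbourhood of $\alpha$, that pushes $P\cap\alpha$ toward $v$. After the push, a thin shell $\partial\alpha\times[0,\epsilon]$ meets $P$ exactly in $(P\cap\partial\alpha)\times[0,\epsilon]$, which gives the collar condition for $\alpha$. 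To make this an ambient isotopy of $M$, I would use the local product structure of the star of $\alpha$, in the form (link)$\,*\,\alpha$. I would extend the radial push by coning along the link coordinate, damping it out toward the frontier of the star, and fixing everything outside a small regular neighbourhood of $\alpha$.

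The step I expect to be the main obstacle is making sure that the push for $\alpha$ does not destroy transversality to cells already treated, or to cells of the same dimension. My plan here is to make the extension in the star respect the join structure, and to check that on each higher cell $\beta\supset\alpha$ the isotopy preserves all faces of $\beta$ other than the interior of $\alpha$. It must fix every lower-dimensional face of $\partial\alpha$ setwise together with its existing collar. If the naive extension fails, I would first replace $\mathcal{C}$ by its derived or dual cell structure relative to $K$, where stars of cells are honest joins, so that each push becomes a join of a radial map with the identity. Cells of equal dimension have disjoint interiors, so their pushes can be taken with disjoint supports away from their common boundaries, where the isotopies are the identity.

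Finally, the support of every isotopy lies in $M\setminus(\partial M\times[0,\tfrac12))$, so $h_t|_{\partial M}=\mathrm{id}$. Every point moves only within the star of a cell of a fine subdivision, so the composite isotopy is arbitrarily small. Since $Q$ is a subcomplex of $\mathcal{C}$ and $h_1(P)$ is transverse to $\mathcal{C}$, $h_1(P)$ is mocktransverse to $Q$.
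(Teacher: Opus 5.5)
There is a genuine gap, and it is in the local move itself.

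\textbf{Why the local move cannot work.} As you design it, the isotopy for a cell $\alpha$ is a radial isotopy of $\alpha$ fixing $\partial\alpha$, joined with the identity on the link. So $h_1(\alpha)=\alpha$ and $h_1|_{\partial\alpha}=\mathrm{id}$, hence $h_1(P)\cap\alpha=h_1(P\cap\alpha)$. The restriction $h_1|_{\alpha}$ is then a homeomorphism from $(\alpha,P\cap\alpha)$ onto $(\alpha,h_1(P)\cap\alpha)$ that is the identity on $\partial\alpha$. Composing a collar of the new pair with $h_1^{-1}$ gives a collar of the old pair. So $P$ is transverse to $\alpha$ after your move if and only if it was before. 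Your claim that a thin shell then meets $P$ in $(P\cap\partial\alpha)\times[0,\epsilon]$ is false. What you are picturing is the cone--collar replacement $P\cap\alpha\rightsquigarrow\tfrac12(P\cap\alpha)\cup(P\cap\partial\alpha)\times[\tfrac12,1]$. That produces a different polyhedron, not the image of $P\cap\alpha$ under a homeomorphism rel $\partial\alpha$.

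\textbf{A concrete example.} Work near the origin in $\mathbb{R}^3$. Let $\beta$ be a $1$-cell along the $x$-axis in the boundary of a $2$-cell $\alpha\subset\{z=0,\ y\geq 0\}$, and let $P=\{y=|x|\}$. Then $P$ meets $\beta$ in a single interior point, so $P$ is transverse to $\beta$. But $P\cap\alpha$ is a V with its vertex on $\beta$, which is not collared, and every move of your type keeps it a V.

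\textbf{The global consequence.} All your moves map every cell of $\mathcal{C}$ onto itself. Away from the collar, $\mathcal{C}$ is a subdivision of $K$, so $P$ is a union of cells there and the composite isotopy maps $P$ onto itself. A nonempty subcomplex of dimension less than $n$ is never transverse. An $n$-cell $\alpha\not\subset P$ with a vertex in $P$ has $\emptyset\neq P\cap\alpha\subset\partial\alpha$, and a collar would push $P\cap\partial\alpha$ into the interior of $\alpha$.

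\textbf{Your fallback does not help.} A derived subdivision still leaves $P$ a subcomplex. Passing to the dual cells of $K$ makes $P$ transverse for free, because a subcomplex of $K$ meets each dual cell either not at all or in the cone from the barycentre. But then $Q$ is no longer a subcomplex, and that is exactly the tension the theorem has to resolve.

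\textbf{The missing idea.} A collar on $\alpha$ can only be created by moving $P$ relative to $\alpha$ in the normal (link) directions. In the example you could tilt $P$ near the origin so that it meets $\alpha$ in a single ray, or push it off $\beta$. Neither isotopy preserves $\alpha$, and the second does not even preserve $\partial\alpha$.

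Once your moves are allowed to drag $P$ across cells, the issue you flagged as the main obstacle becomes the whole problem. An inductive hypothesis of the form ``transverse to all cells of lower dimension'' only describes $P\cap\partial\alpha$ inside $\partial\alpha$. It says nothing about how $P$ approaches $\partial\alpha$ in $M$: the example is transverse to $\beta$ and still badly placed near it. So this hypothesis is too weak to let you build the collar on $\alpha$ without spoiling its faces and neighbouring cells.

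A working argument has to supply this normal control. One option is a stronger inductive hypothesis, giving $P$ a product structure near the lower skeleton in $M$. Another is to extract the collars from conical structure, as in the dual-cell observation above.

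The paper itself gives no proof here; it quotes the theorem from Buoncristiano--Rourke--Sanderson via McCrory. Your handling of the boundary collar, of $h_t|_{\partial M}=\mathrm{id}$ and of smallness is fine, but it does not touch this central point.
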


We can now use Lemma~\ref{lem:changingcorecircle} and Theorem~\ref{thm:relmocktransversality} to prove Lemma~\ref{lem:meridionaldiskintersection}.

\begin{lem}\label{lem:meridionaldiskintersection}
    For any $\mathbb{Z}_p$-folded surface $X$ embedded in $L(k,q)$ there exists a genus $1$ Heegaard splitting $L(k,q) = S_1\cup S_2$ such that $X\cap S_1$ consists of a finite number $d$ of meridional disks and $(S_1\cap S_2, X\cap S_1\cap S_2)$ is bicollared in $(L(k,q),X)$.
    Moreover if we fix a meridional homology class $\mu_1$ of $S_1$ and let $d_{\pm}$ be the number of disks of $X \cap S_1$ whose boundary (with the induced orientation) realises the class $\pm \mu_1 \in H_1(\partial S_1;\mathbb{Z})$ then the homology class of $[X]$ uniquely determines $d_+ - d_- \mod p$. We will call a genus $1$ Heegaard splitting of this kind \textbf{adapted} to $X$.
\end{lem}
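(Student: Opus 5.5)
The plan is to take a core circle $C$ of one solid torus of the canonical splitting, move it off $X$ and into general position, and then take a thin regular neighbourhood of the moved circle as $S_1$. Lemma~\ref{lem:changingcorecircle} guarantees that the resulting torus gives an equivalent Heegaard splitting.

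\textbf{Step 1: put $C$ in general position.} Fix the canonical Heegaard decomposition and a core circle $C$ of the first solid torus. Apply Theorem~\ref{thm:relmocktransversality} with $M = L(k,q)$ (closed, so the boundary condition is vacuous), $P = C$ and $Q = X$. This gives an ambient isotopy $h_t$ with $h_1(C)$ mocktransverse to $X$. Since $\dim C + \dim \Sing(X) = 2 < 3$, a further small general-position isotopy makes $h_1(C)$ disjoint from $\Sing(X)$. Mocktransversality then means $h_1(C)$ meets $\Int(X)$ in finitely many points, each a transverse intersection. Concretely, there is a cell structure in which $X$ is a subcomplex and $h_1(C)$ is transverse to each cell, so near each intersection point $X$ looks locally like a plane pierced by an arc.

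\textbf{Step 2: build $S_1$ and $S_2$.} Take $S_1$ to be a regular neighbourhood (a star in a subdivision of the cell structure above) of $h_1(C)$, chosen small enough to miss $\Sing(X)$. Near each of the $d$ intersection points, transversality lets us choose the neighbourhood as a product. Then $X\cap S_1$ is exactly $d$ disks, each a fibre $\{pt\}\times \mathbb{D}^2$ of the $\mathbb{D}^2$-bundle $S_1 \to h_1(C)$, so each is a meridional disk. The same product structure shows that $(\partial S_1, X\cap\partial S_1)$ is bicollared in $(L,X)$. By Lemma~\ref{lem:changingcorecircle}, $\partial S_1$ gives a Heegaard splitting equivalent to the canonical one, and we set $S_2$ to be the closure of the complement.

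\textbf{Step 3: show $d_+-d_- \bmod p$ is determined by $[X]$.} Orient $h_1(C)$ so that it is dual to $\mu_1$. The intersection number of $h_1(C)$ with $\Int(X)$ is $d_+ - d_-$. Counting transverse intersections modulo $p$ computes the $\mathbb{Z}_p$ intersection pairing
\[
H_1(L;\mathbb{Z}_p)\times H_2(L;\mathbb{Z}_p)\to\mathbb{Z}_p
\]
evaluated on $([h_1(C)],[X])$. The same principle was used for canonical surfaces in Section~\ref{sec:normalpfoldedsurfacesconstruction}, and it is valid here because $h_1(C)$ avoids $\Sing(X)$. For this one can cap with the Poincar\'e dual, or lift to $y' \in H_2(M,L;\mathbb{Z})$ as in the proof of Lemma~\ref{lem:thurstonnorm} and take the integer intersection modulo $p$. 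The class $[h_1(C)]$ is the image of the class of the canonical core, so the pairing depends only on $[X]$, and hence so does $d_+ - d_- \bmod p$.

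\textbf{Main obstacle.} The step I expect to be hardest is making precise, in Step 3, that the signed count modulo $p$ equals the homological pairing for a folded surface rather than a manifold. The intended route is to work in $H_2(M,\Sing X;\mathbb{Z}_p)$, where $[X]$ maps to $j_{\mathbb{Z}_p}q_*[X_{cut}]$. Since the curve lies in $M\setminus \Sing X$, the pairing with a $1$-cycle disjoint from $\Sing X$ is well defined. On the relative class it reduces to the ordinary intersection of the oriented surface $X_{cut}$ with the curve.
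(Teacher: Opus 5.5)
Your proof is correct. Steps~1 and~2 match the paper's argument: make a core circle mocktransverse to $X$, use the dimension count to see that it meets only $\Int(X)$, transversally, and take a regular neighbourhood, invoking Lemma~\ref{lem:changingcorecircle}.

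You depart from the paper in Step~3. The paper uses no intersection pairing. It uses the Mayer--Vietoris connecting map $\partial_*\colon H_2(L;\mathbb{Z}_p)\to H_1(\partial S_1;\mathbb{Z}_p)$, which sends $[X]\mapsto (d_+-d_-)[\mu_1]$. This holds at chain level: the mod~$p$ fundamental cycle of $X$ is the sum of its oriented $2$-simplices, and its part in $S_1$ is the sum of the oriented meridian disks, whose boundaries sum to $(d_+-d_-)\mu_1$. Since $[\mu_1]$ spans a copy of $\mathbb{Z}_p$, this reads off $d_+-d_-\bmod p$.

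That chain-level argument avoids what you flag as the main obstacle, namely justifying intersection theory for a folded surface. It also gives the converse for free. Because $H_2(S_1;\mathbb{Z}_p)\oplus H_2(S_2;\mathbb{Z}_p)=0$, the map $\partial_*$ is injective, so $d_+-d_-\bmod p$ determines $[X]$. That converse is used later, in the $k=1$ case of Lemma~\ref{lem:intervalbetterintersection}.

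Your route needs an extra duality step. One option is $H_2(M,\Sing X;\mathbb{Z}_p)\cong H^1(M\setminus \Sing X;\mathbb{Z}_p)$; another is pairing the integral lift $y'$ from Lemma~\ref{lem:thurstonnorm} with the curve in $M\setminus N(\Sing X)$. Your sketch of this is adequate. To get the converse you would further need that the core generates $H_1(L;\mathbb{Z}_p)$ and that the $\mathbb{Z}_p$ pairing is nondegenerate.

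In exchange, your approach identifies $d_+-d_-\bmod p$ intrinsically as the pairing of $[X]$ with the core class. That makes it visibly independent of the chosen adapted splitting, up to the sign of the core. It also makes rigorous the informal ``count intersections mod $p$'' principle the paper uses for canonical surfaces.
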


\begin{proof}
    Let $C_1$ be a core circle of the solid torus $S_1$ in the genus $1$ Heegaard splitting $L(k,q)=S_1\cup S_2$. Theorem~\ref{thm:relmocktransversality} implies that there exists an ambient isotopy $h_t$ of $L(k,q)$ such that $h_1(C_1)$ is mocktransverse to $X$. Use Lemma~\ref{lem:changingcorecircle} to replace $S_1$ with a closed regular neighbourhood of $h_1(C_1)$ and $S_2$ with the closure of its complement. It follows from a dimension argument that $h_1(C_1)$ intersects $X$ only at points in $\text{Int}(X)$ and that these intersections are locally transverse (i.e. locally modelled by $\mathbb{R}\times \{0\}\times \{0\}$ and $\{0\}\times \mathbb{R}\times \mathbb{R}$). It follows from considering the intersection of $S_1$ with these local neighbourhoods of intersection points that $X$ intersects the new $S_1$ in a disjoint union of meridional disks and that there is at least a local bicollaring of $(\partial S_1,X\cap \partial S_1)$ in $L(k,q)$. This local bicollaring extends to a global bicollaring since in particular $\partial S_1$ is the boundary of a regular neighbourhood and so must have a trivial normal bundle in $L(k,q)$.
    
    Now orient these disks with the orientation inherited from $\text{Int}(X)$ and fix a meridional homology class $\mu_1 \in H_1(\partial(S_1);\mathbb{Z})$. Let $[\mu_1]$ be the image of $\mu_1$ under the change of coefficients $H_1(\partial S_1;\mathbb{Z})\rightarrow H_1(\partial S_1;\mathbb{Z}/p\mathbb{Z})$. The Mayer-Vietoris sequence on the homology of $L(k,q),S_1,S_2$ with coefficients in $\mathbb{Z}/p\mathbb{Z}$ yields a well defined map $\partial_*:H_2(L(k,q);\mathbb{Z}/p\mathbb{Z})\rightarrow H_1(\partial S_1;\mathbb{Z}/p\mathbb{Z})$ which sends $[X]\mapsto (d_+-d_-)[\mu_1]$. The subgroup generated by $[\mu_1]$ is $\mathbb{Z}/p\mathbb{Z}$ so this determines the $\mod p$ equivalence class of $d_+-d_-$. Moreover since $H_2(S_1;\mathbb{Z}/p\mathbb{Z})\oplus H_2(S_2;\mathbb{Z}/p\mathbb{Z})\cong 0$ the exactness of the Mayer-Vietoris sequence yields that $\ker \partial_*=0$ and so this class is unique.
    
\end{proof}

\begin{lem}\label{lem:transversemeridionaldisk}
    Suppose that $X$ is a $\mathbb{Z}_p$-folded surface embedded in $L(k,q)$ with $S_1\cup S_2$ a genus $1$ Heegaard splitting adapted to $X$ as per Lemma~\ref{lem:meridionaldiskintersection} and $T:= S_1\cap S_2$. There exists an oriented meridional disk $D$ of $S_2$ with the following properties.

    \begin{itemize}
        \item $\partial D$ is mocktransverse to $X\cap T$ in $T$.
        
        \item $\partial D \cap X$ is a set of $kd$ points with a cyclic order induced by the orientation of $\partial D$. 

        \item Let $\sigma:\partial D \cap X\rightarrow \partial D \cap X$ be the permutation which maps each point to its immediate successor in the cyclic order. For all $b\in \partial D \cap X$ let $M$ be the component of $X\cap S_1$ that contains it. The point $\sigma^d(b)$ lies on the boundary of the same meridional disk $M$ and the sign of the intersection of $\partial M$ with $\partial D$ at both points is the same.
    
        \item $(D,D \cap X)$ bicollars in $(S_2,S_2\cap X)$.
    \end{itemize}
\end{lem}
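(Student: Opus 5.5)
The plan is to build $D$ from an explicit model of $T$ and then perturb it. By Lemma~\ref{lem:meridionaldiskintersection}, $X\cap S_1$ is a disjoint union of $d$ meridional disks, and $(T, X\cap T)$ is bicollared in $(L(k,q),X)$. So $X\cap T$ is a disjoint union of $d$ parallel essential simple closed curves $\partial M_1,\dots,\partial M_d$ on $T$, each of slope $\mu_1$. The complement $T\setminus X$ is therefore a union of $d$ open annuli $R_1,\dots,R_d$, arranged cyclically with $R_j$ lying between $\partial M_j$ and $\partial M_{j+1}$ (indices mod $d$).

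The first step is to choose coordinates. Identify $T=\mathbb{R}^2/\mathbb{Z}^2$ so that each $\partial M_j$ is a straight line of slope $\mu_1$. Using the canonical Heegaard decomposition, the meridian of $S_2$ is represented in $H_1(T;\mathbb{Z})$ by $\phi_*^{-1}(\mu_2)=k\lambda_1+\bar q\mu_1$. Take $\partial D$ to be a straight (then PL-approximated) simple closed curve in this class, oriented so that it meets each $\partial M_j$ with the same sign. Its algebraic intersection with $\mu_1$ is $\pm k$, and straight curves realise geometric intersection equal to algebraic intersection, so $\partial D$ meets each $\partial M_j$ in exactly $k$ points, all of the same sign. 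This gives $kd$ points in total.

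Next, read off the cyclic order. Following $\partial D$ in its positive direction, each arc of $\partial D\setminus X$ lies in a single annulus $R_j$. Because the curve is straight and crosses the parallel curves transversally in a consistent direction, consecutive intersection points pass from $\partial M_j$ to $\partial M_{j+1}$. Hence $\sigma$ carries a point of $\partial M_j$ to a point of $\partial M_{j+1}$, and $\sigma^d$ returns to $\partial M_j$. The intersection sign is the same at every point, so it is in particular the same at $b$ and at $\sigma^d(b)$. The lifting-to-$\mathbb{R}^2$ picture makes this routine.

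Finally, realise $\partial D$ as the boundary of a meridional disk of $S_2$ with the required collaring. Since $\partial D$ is an essential curve in the meridian class of $S_2$, it bounds a properly embedded disk $D\subset S_2$. Choose $D$ to be collared near $T$ using the collar of $T$ in $S_2$; on that collar, $X$ is a product by the bicollaring. Then apply Theorem~\ref{thm:relmocktransversality} in $S_2$, relative to $\partial S_2$, to make $D$ mocktransverse to $X\cap S_2$. This step also yields that $\partial D$ is mocktransverse to $X\cap T$ in $T$, since a transverse straight intersection of $1$-manifolds in a surface is mocktransverse after subdividing.

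The main obstacle is this last step. The perturbation must be fixed on $\partial S_2$ and must not alter the intersection count on $T$. The relative version of Theorem~\ref{thm:relmocktransversality}, which fixes $\partial M$, is exactly what provides this. The remaining work is to check that mocktransversality of $D$ with $X\cap S_2$ gives the bicollaring of $(D,D\cap X)$ in $(S_2,S_2\cap X)$. I would argue this by taking a regular neighbourhood of $D$ in a cell complex in which $X$ is transverse, as in the proof of Lemma~\ref{lem:meridionaldiskintersection}. Triviality of the normal bundle of the disk $D$ then makes the local bicollaring global.
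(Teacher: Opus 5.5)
Your proposal is correct and follows essentially the paper's route: the paper puts $X\cap T$ into normal position with respect to a one-vertex, two-triangle triangulation of $T$ and takes $\partial D$ to be the edge $e_{\mu_2}$, which plays exactly the role of your straight line on the flat torus. It then applies Theorem~\ref{thm:relmocktransversality} relative to $\partial S_2$, and obtains the bicollar by amalgamating the cell structure so that $D$ becomes a single $2$-cell rather than via your normal-bundle argument. One small correction: mocktransversality of $\partial D$ with $X\cap T$ in $T$ is a hypothesis of the relative theorem, not something it yields. You do have the justification for it (the transverse straight crossings), but it must be established before the theorem is invoked.
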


\begin{proof}

    Let $d$ be the number of meridional disks in $S_1\cap X$. Let $T = S_1\cap S_2$ be the splitting torus. Since $\partial S_1$ and $\partial S_2$ are identified with $T$ in $L(k,q)$ we can interpret $\mu_1,\mu_2,\lambda_1,\lambda_2$ as elements of $H_1(T;\mathbb{Z})$. We first describe $(T,X\cap T, \partial D)$. Since mocktransversality and the sign of intersections are preserved by ambient isotopies it is sufficient to describe curves on $T$ that are ambient isotopic to $X\cap T$ and $\partial D$. To this end consider the triangulation $\mathcal{T}$ of $T$ with one vertex, two triangles and three edges $e_{\lambda_2},e_{\mu_2},e_{\lambda_2+\mu_2}$ which realise the homology classes $\lambda_2,\mu_2, \lambda_2+\mu_2$ respectively when viewed as $1$-cycles when oriented as in Figure~\ref{fig:Ttriangulation}. 

\begin{figure}
    \centering
    \includegraphics[width=0.7\linewidth]{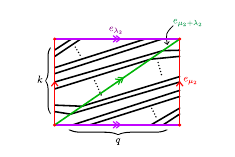}
    \caption{The distinguished triangulation of $\partial S_2$ and the unique normal meridional curve realising $\mu_1$.}
    \label{fig:Ttriangulation}
\end{figure}

    Standard arguments from the theory of normal curves imply that for each simple closed curve $Y$ embedded in $T$ that does not bound a disk there exists an ambient isotopy of $T$ carrying $Y$ to a curve $Y'$ that is normal with respect to the triangulation $\mathcal{T}$ (see Theorem 3.2.2 of~\cite{matveev:2003} for details of the normalisation procedure). This very simple triangulation $\mathcal{T}$ also has the property that such a normal curve is unique up to normal isotopy. If $Y$ is an oriented simple closed curve then the solution to the matching equations of $\mathcal{T}$ that describes $Y'$ is determined by the Poincar\'e dual $PD([Y])\in H^1(T;\mathbb{Z})$ of the homology class $[Y]\in H_1(T;\mathbb{Z})$ realised by $Y$. To be precise the number of points of intersection of $Y'$ with the edge $e_{\alpha}$ is given by $|PD([Y])(\alpha)|$ for $\alpha \in \{\mu_2,\lambda_2,\mu_2+\lambda_2\}$ where we identify $H^1(T;\mathbb{Z}) = \text{Hom}(H_1(T;\mathbb{Z}),\mathbb{Z})$ via the universal coefficient theorem and $|\cdot|:\mathbb{Z}\rightarrow \mathbb{Z}^{\geq0}$ is the usual absolute value function. $PD(a\mu_2 + b\lambda_2)(c\mu_2+d\lambda_2) = \pm(ad-bc)$ with the $\pm$ depending on a particular choice of Poincar\'e duality map $PD$ or equivalently a fundamental class of $T$.
    
    Now suppose we have performed an ambient isotopy to $T$ and put $X\cap T$ into a position such that it is normal with respect to $\mathcal{T}$. Each component $\partial M$ of $X\cap T$ is a simple closed curve that inherits an orientation from a corresponding component $M$ of $X\cap S_1$ which cobounds it and so realises the homology class $\pm \mu_1 = \pm(k\lambda_2 + q\mu_2) \in H_1(T;\mathbb{Z})$. The above discussion shows that $\partial M$ intersects $e_{\mu_2}$ in $k$ points, $e_{\lambda_2}$ in $q$ points and $e_{\mu_2+\lambda_2}$ in $k-q$ points (since we assume $k>q$). This normal curve is shown in Figure~\ref{fig:Ttriangulation}. It follows from the intersection form or by an inductive argument following these normal arcs through the triangulation that every intersection of $\partial M$ with the oriented edge $e_{\mu_2}$ must be of the same sign. The required cyclic ordering comes from the fact that there are $d$ such components of $X\cap S_1$ and so the normal curve $X\cap T$ consists of $d$ parallel copies of this normal curve, intersecting $e_{\mu_2}$ in $kd$ points. 

    We now set $\partial D:= e_{\mu_2}$. It follows from Definition~\ref{dfn:mocktransverse} and the properties of normal curves that $\partial D$ is mocktransverse to $X\cap T$.
    
    $\partial D$ is a meridional curve of $S_2$ and so it cobounds a properly embedded disk $D$ in $S_2$. We now invoke the Relative Mocktransversality Theorem quoted in Theorem~\ref{thm:relmocktransversality} to obtain an ambient isotopy of $S_2$ fixed on $\partial S_2 = T$ that allows us to replace $D$ with a disk that is mocktransverse to $X\cap S_2$ cobounding $\partial D$ as required.

    By definition~\ref{dfn:mocktransverse} there exists a PL cell complex $\mathcal{B}$ of $S_2$ such that $D$ is a subcomplex and to which $X\cap S_2$ is transverse. By dissolving the $3$-cells of this cell complex along the $2$-cells that are not contained in the subcomplex $\partial S_2 \cup D$ we obtain a PL cell complex $\mathcal{C}$ with a single $3$-cell, in which $D$ is a $2$-cell and every other $2$-cell is contained in the boundary torus. The local conditions of transversality are preserved under this cell complex amalgamation so $X$ is transverse to $\mathcal{C}$. It follows from definitions that $(D,D \cap X)$ collars in $(S_2,S_2\cap X)$ on both sides as required.

\end{proof}

We now define the $1$-dimensional analogue of a $\mathbb{Z}_p$-folded surface which will describe the topology of $D \cap X$.

\begin{dfn}\label{dfn:pfolded1mfld}
    A $\mathbb{Z}_p$-folded $1$-manifold with boundary is a $1$-polyhedron $\Gamma$ and two disjoint $0$-dimensional sub-polyhedra $\text{Sing}(\Gamma)$ and $\partial \Gamma$ with a choice of orientation of the $1$-manifold $\text{Int}(\Gamma) := \Gamma \setminus (\text{Sing}(\Gamma)\sqcup \partial(\Gamma))$ such that the following properties are satisfied:

    \begin{itemize}
        \item $\text{Sing}(\Gamma)$ is a set of $p$-valent vertices which we call the \textit{singular vertices}
        \item $\partial \Gamma$ is a set of univalent vertices which we call the \textit{boundary vertices}
        \item We call $\text{Int}(\Gamma)$ the interior of $\Gamma$. Every non-compact component of $\text{Int}(\Gamma)$ is bounded by two distinct vertices in the set $\text{Sing}(\Gamma) \sqcup \partial \Gamma$ and its orientation induces an orientation on each of these vertices. The orientation of $\text{Int}(\Gamma)$ must be such that $\forall v \in \text{Sing}(\Gamma) \sqcup \partial \Gamma$ every component of $\text{Int}(\Gamma)$ cobounding $v$ induces the same orientation on it. Hence we can say that the orientation of $\text{Int}(\Gamma)$ induces an orientation of $\text{Sing}(\Gamma) \sqcup \partial \Gamma$.
    \end{itemize}

    We say that $\Gamma$ is properly embedded in a $2$-manifold $S$ with boundary $\partial S$ if $\Gamma \cap \partial S = \partial \Gamma$.
    
\end{dfn}

\begin{dfn}\label{dfn:pfoldedwbdry}
    $\mathbb{Z}_p$-folded $1$-manifolds are the boundaries of \textit{$\mathbb{Z}_p$-folded surfaces with boundary} which can be defined by broadening the definition of a $\mathbb{Z}_p$-folded surface $X$ to include two new neighbourhood types: $[0,\infty) \times \mathbb{R}$ and $[0,\infty) \times K_{p,1}$. The boundary $\partial X$ is the complement of the points of $X$ that have local neighbourhoods given in the original Definition~\ref{dfn:pfoldedsurface}. If we let $v$ be the $p$-valent vertex of $K_{p,1}$ then we define $\Sing(X)$ as the set of points locally modelled by $(0,v)$ in either $[0,\infty) \times K_{p,1}$ or $\mathbb{R} \times K_{p,1}$ such that $\partial \Sing(X) = Sing(X) \cap \partial X$. We define $\Int(X):= X\setminus \Sing(X)$ as before. A $\mathbb{Z}_p$-folded surface with boundary $X$ is \textit{properly embedded} in a $3$-manifold with boundary $M,\partial M$ when $X\cap \partial M = \partial X$.
\end{dfn}

A large portion of the proof of Theorem~\ref{thm:l3n1norm} is an induction on objects of the kind described in Definition~\ref{dfn:meridionalintersectionpattern}.

\begin{dfn}[Meridional Intersection Patterns]\label{dfn:meridionalintersectionpattern}
    Suppose $X$ is a $\mathbb{Z}_p$-folded surface embedded in the lens space $L(k,q)$ with a genus $1$-Heegaard decomposition $S_1\cup S_2$ adapted to $X$ in the sense of Lemma~\ref{lem:meridionaldiskintersection} and $D$ is a meridional disk of $S_2$ of the kind described in Lemma~\ref{lem:transversemeridionaldisk}. We suggestively adopt the notation that $\Gamma_{X}:= X\cap D$. By combining the facts that $(D,D\cap X)$ bicollars in $(S_2,X)$, $(\partial S_2,X\cap \partial S_2)$ collars in $(S_2,X)$ and the definition of a $\mathbb{Z}_p$-folded surface (see Definition~\ref{dfn:pfoldedsurface}) it follows that $\Gamma_{X}$ must satisfy all topological properties of a $\mathbb{Z}_p$-folded $1$-manifold with no orientation data.

    To obtain an orientation of $\text{Int}(\Gamma_{X})$ we start by fixing an orientation of $L(k,q)$ and of $D$ arbitrarily. Let $B_2$ be the compact $3$-ball obtained by cutting $S_2$ along $D$. The orientation of $S_2\subset L(k,q)$ induces an orientation on $B_2$ and hence an orientation on $\partial B_2$. The subpolyhedron $X\cap S_2$ lifts to a subpolyhedron $f^{-1}(X\cap S_2)$ and the orientation of $\text{Int}(X)\cap S_2$ lifts to an orientation of $f^{-1}(\text{Int}(X)\cap S_2)$. Let $\tilde{D}$ be the disk in $\partial B_2$ such that the map $B_2\xrightarrow{f} S_2$ restricts to an orientation preserving homeomorphism $\tilde{D}\xrightarrow{f} D$. The map $f|_{\tilde{D}}$ identifies $\tilde{\Gamma}_X := \tilde{D}\cap f^{-1}(X\cap S_2)$ with $\Gamma_{X}$. Each component of $\text{Int}(\tilde{\Gamma}_X)$ is part of the boundary of a unique component of $f^{-1}(\text{Int}(X)\cap S_2)$ and so inherits an orientation. The fact that the orientation of each component of $\text{Int}(\tilde{\Gamma}_X)$ induces a consistent orientation on $\text{Sing}(\tilde{\Gamma}_{X})$ is an immediate consequence of the fact that $\text{Sing}(\tilde{\Gamma}_{X}) \subset f^{-1}(\text{Sing}(X))$ and the requirements on the orientation of components of $\text{Int}(X)$ that cobound each singular arc as per Definition~\ref{dfn:pfoldedsurface}. This gives $(\tilde{D},\tilde{\Gamma}_X)$ the structure of a $\mathbb{Z}_p$-folded $1$-manifold properly embedded in a $2$-manifold and hence also for the pair $(D,\Gamma_X)$ via the homeomorphism $(\tilde{D},\tilde{\Gamma}_X)\xrightarrow{f} (D,\Gamma_X)$. Note that a different choice of orientation of $L(k,q),D$ possibly reverses the orientation of $\Gamma_X$ but that this is inconsequential for our proof. We call such an embedding $\Gamma_X\hookrightarrow D$ for some choice of oriented $(S_1\cup S_2,D)$ a \textit{meridional intersection pattern} of $X$. 

\end{dfn}

\begin{lem}\label{lem:boundaryvertexpattern}
    Suppose $(D,\Gamma_X)$ is a meridional intersection pattern of $X\hookrightarrow L(k,q) = S_1\cup S_2$ and that $X\cap S_1$ consists of $d$ meridional disks. Let $\sigma$ be the permutation of $X\cap \partial D = \partial(\Gamma_X)$ described in Lemma~\ref{lem:transversemeridionaldisk}. The orientation of $b\in \partial(\Gamma_X)$ induced by the orientation of $\text{Int}(\Gamma_X)$ is the same as the orientation of $\sigma^d(b)$.
\end{lem}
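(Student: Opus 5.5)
The plan is to reduce the statement to the third bullet of Lemma~\ref{lem:transversemeridionaldisk}, which already says that $b$ and $\sigma^d(b)$ lie on the boundary of the same meridional disk $M$ of $X\cap S_1$, and that $\partial M$ crosses $\partial D$ with the same sign at both points. It then remains to show that the orientation which $\text{Int}(\Gamma_X)$ induces on a boundary vertex $b$ is a function only of the sign of the intersection of $\partial M$ with $\partial D$ at $b$. Here $M$ is the disk containing $b$, and the orientations of $L(k,q)$ and $D$ are fixed.

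To see this, I would work locally at $b\in \partial D\cap T$. By the bicollaring statements in Lemma~\ref{lem:meridionaldiskintersection} and Lemma~\ref{lem:transversemeridionaldisk}, a neighbourhood of $b$ in $S_2$ is modelled by $[0,1)\times\mathbb{R}^2$, with the following pieces:
\begin{itemize}
\item $T$ corresponds to $\{0\}\times\mathbb{R}^2$;
\item $D$ corresponds to $[0,1)\times\mathbb{R}\times\{0\}$;
\item $X\cap S_2$ corresponds to $[0,1)\times\{0\}\times\mathbb{R}$, a collar of the arc $X\cap T$ near $b$.
\end{itemize}
Since $b\in\text{Int}(X)$, the component of $\text{Int}(X)$ through $b$ carries a single orientation. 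That orientation restricts to the orientation of $\partial M$ (as the boundary of $M\subset S_1$) and also to the orientation of the edge of $\text{Int}(\Gamma_X)$ ending at $b$, obtained by slicing with $D$ as in Definition~\ref{dfn:meridionalintersectionpattern}.

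In the product model, I would check that the induced orientation of the endpoint $b$ of the arc $X\cap D$ equals $\epsilon\cdot s(b)$. Here $s(b)$ is the sign of the crossing of $\partial M$ with $\partial D$ in $T$, and $\epsilon$ is a global sign depending only on the fixed orientations of $L(k,q)$, $D$, and $T=\partial S_1=-\partial S_2$. This is a comparison of orientation conventions: a frame for $\text{Int}(X)$ at $b$ is (normal to $T$, tangent to $\partial M$), and intersecting with $D$ keeps the inward normal direction. Applying the identity at both $b$ and $\sigma^d(b)$, and using $s(b)=s(\sigma^d(b))$, gives the claim.

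The main obstacle is making the local sign identity hold with a constant $\epsilon$. One has to check that the lift through $B_2\to S_2$, passing to the side $\tilde D$, does not introduce a point-dependent sign. I expect this to work because $\tilde D$ is one fixed side, chosen once, and the collar of $T$ is compatible with it at every boundary point of $D$. So the convention is uniform along $\partial D$, and $\epsilon$ does not depend on $b$.
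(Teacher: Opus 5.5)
Your proposal is correct and follows essentially the paper's route. Both reduce to the third bullet of Lemma~\ref{lem:transversemeridionaldisk}, and then show that the orientation of the boundary vertex $b$ is a fixed function of the sign with which $\partial M$ crosses $\partial D$. The paper phrases this local step as the lifted arc $I\subset f^{-1}(X\cap T)$ and the edge $J\subset\text{Int}(\tilde{\Gamma}_X)$ being two boundary arcs of the same oriented piece of $f^{-1}(\text{Int}(X)\cap S_2)$ meeting at the corner $\tilde{b}$, so they induce opposite orientations there; this is exactly your frame computation with a uniform sign $\epsilon$.
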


\begin{proof}

    Let $B_2$ be the $3$-ball described in Definition~\ref{dfn:meridionalintersectionpattern} and let $A$ be the annular preimage of $T = S_1\cap S_2$ under the quotient map $B_2\xrightarrow{f} S_2$. The orientation of $\text{Int}(X)\cap S_2$ induces an orientation on the union of curves $X\cap \partial S_2$ that intersect $\partial D$ mocktransversally as per Lemma~\ref{lem:transversemeridionaldisk}. With this orientation, $f^{-1}(X\cap T)$ is a disjoint union of oriented intervals, each of which has one endpoint in $\partial(\tilde{\Gamma}_X)$. For each $b \in \partial(\Gamma_X)$ let $\tilde{b}\in \partial(\tilde{\Gamma}_X)$ be the point identified to it by $f$. Let $I$ be the interval in $f^{-1}(X\cap T)$ that cobounds $\tilde{b}$ and $J$ be the interval in $\text{Int}(\tilde{\Gamma}_X)$. Since $(T,X\cap T)$ collars in $(S_2,X\cap S_2)$ (see Lemma~\ref{lem:meridionaldiskintersection}) we can deduce that $I$ and $J$ are both intervals of the boundary of the same oriented component of $f^{-1}(\text{Int}(X)\cap S_2)$ and so must induce opposite orientations on the vertex $\tilde{b}$ they both cobound. The orientation that $I$ induces on $\tilde{b} = I\cap \partial \tilde{D}$ determines the sign of the intersection of $X\cap T$ with $\partial D$. The orientation induced on $X\cap T$ by the orientation of $\text{Int}(X)\cap S_2$ is the opposite of the orientation induced by $X\cap S_1$ and it remains true with either choice that the sign of the intersection of $X\cap T$ with $\partial D$ at any point $b\in \partial(\Gamma_X)$ determines the orientation of $b$ induced by the orientation of $\text{Int}(X)$. The proof is completed by applying Lemma~\ref{lem:transversemeridionaldisk}.

\end{proof}

Let $(D,\Gamma_X)$ be a meridional intersection pattern of a $\mathbb{Z}_p$-folded surface embedded in $L(k,q)$. The orientation induced by the orientation of $\text{Int}(\Gamma_X)$ partitions the vertices of $\text{Sing}(\Gamma_X)$ and $\partial \Gamma_X$ into \textit{black} and \textit{white} vertices. We therefore adopt the diagrammatic convention that these are depicted by solid and hollow dots. We refer to vertices belonging to these partitions as \textit{black} and \textit{white} vertices. We refer to components of $\Gamma_X$ other than simple closed curves as \textit{graph components} since they are homeomorphic to a multigraph with vertices given by the singular and boundary vertices. The constraints in Definition~\ref{dfn:pfolded1mfld} imply that such graph components are bipartite with respect to the vertex partition, $p$-valent at all singular vertices and univalent at all boundary vertices. The orientation data of $\Gamma_X$ beyond the partition it induces on the singular and boundary vertices is inconsequential to our arguments and so is omitted. Our diagrams are therefore invariant to the choice of orientation of $D$ and $L(k,q)$ up to inversion of the vertex colouring. Several examples of the meridional intersection pattern diagrams that follow these conventions for different lens spaces, prime $p$ and number of meridional disks $d$ are depicted in Figure~\ref{fig:meridionaldiskdiagrams}. In each example note that the orientation of the boundary vertices follows the cyclic pattern required by Lemma~\ref{lem:boundaryvertexpattern}.

\begin{figure}
    \centering
    \includegraphics[width=0.5\linewidth]{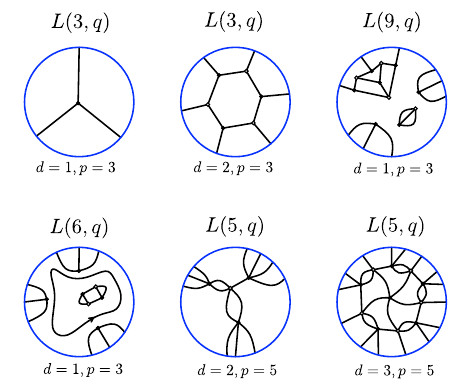}
    \caption{Examples of possible intersection patterns for different lens spaces $L(k,q)$, $\mathbb{Z}_p$-folded surfaces $X$ and $d$ meridional disks in the intersection $X\cap S_1$}
    \label{fig:meridionaldiskdiagrams}
\end{figure}

\subsection{Meridional intersection patterns of $\chi$-optimal $\mathbb{Z}_3$-folded surfaces in lens spaces}\label{sec:meridionalintpats}

In this section we restrict our focus to understanding non-separating, non-empty $\mathbb{Z}_3$-folded surfaces $X$ embedded in a lens space $L$ (other than $S^2\times S^1$) that have maximal Euler characteristic (i.e. $\chi(X) = \chi_3([X]))$. To package these adjectives we use Definition~\ref{dfn:chioptimal}, recalling that in the case $p\in \{2,3\}$ we take \textit{reduced} to mean \textit{non-separating} and $X^u = X$.

\begin{dfn}[$\chi$-optimal]\label{dfn:chioptimal}
    If $X$ is a non-empty reduced $\mathbb{Z}_p$-folded surface embedded in a $3$-manifold $M$ and $\chi(X^u) = \chi_p([X])$ then we say that $X$ is \textit{$\chi$-optimal}. 
\end{dfn}

Note that Lemma~\ref{lem:injectivitynonsep} implies that if $X$ is non-empty then $0 \neq [X] \in H_2(L;\mathbb{Z}_3)$ and so, with the exception of $S^2\times S^1$, such $\mathbb{Z}_3$-folded surfaces only exist in the family of lens spaces $L(3n,q)$ where $n\in \mathbb{Z}^{>0}$ and $q$ is coprime to $3n$ that have non-trivial $H_2(L(3n,q);\mathbb{Z}_3) = \mathbb{Z}_3$. The exclusion of $S^2\times S^1$ is necessary for the reduced $2k$-compression surgeries defined in Section~\ref{sec:reducedsurgeries} to exist. It is also not hard to show using Lemma~\ref{lem:components} that the essential $2$-sphere in $S^2\times S^1$ is a $\chi$-optimal $\mathbb{Z}_3$-folded surface and so $\chi_3(x)=2$ for either non-trivial class $x\in H_2(S^2\times S^1;\mathbb{Z}_3)$.

Lemma~\ref{lem:simplifiedl3nqintersectionpattern} finds a meridional intersection pattern of a $\chi$-optimal $\mathbb{Z}_3$-folded surface in $L(3n,1)$ that has been simplified as far as possible via Lemma~\ref{lem:simplifiedwithsurgieries} which performs a reduced $2k$-compression on the $\mathbb{Z}_3$-folded surface and Lemma~\ref{lem:intervalbetterintersection} which replaces the genus $1$ Heegaard decomposition of $L(3n,1)$.

\begin{lem}\label{lem:simplifiedl3nqintersectionpattern}
    There exists a $\chi$-optimal $\mathbb{Z}_3$-folded surface $X$ embedded in each lens space $L(3n,q)$ with $\gcd(3n,q)=1$ and a genus $1$ Heegaard splitting $S_1\cup S_2 = L(3n,q)$ adapted to $X$ such that $X\cap S_1$ consists of a single meridional disk and there exists a meridional intersection pattern $(D,\Gamma_X)$ with the following properties.

    \begin{itemize}
        \item No component of $D\setminus \Gamma_X$ is a disk bounded by a simple closed curve component of $\Gamma_X$ or a cycle of length $2$ or $4$ in a graph component of $\Gamma_X$.
        \item No component of $\Gamma_X$ is contained in the interior of $D$.
    \end{itemize}
\end{lem}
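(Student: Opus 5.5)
We argue by minimality. Among all $\chi$-optimal $\mathbb{Z}_3$-folded surfaces $X$ in $L(3n,q)$, together with adapted genus $1$ Heegaard splittings $S_1\cup S_2$ (Lemma~\ref{lem:meridionaldiskintersection}) and meridional disks $D$ as in Lemma~\ref{lem:transversemeridionaldisk}, we choose a triple minimising a complexity. We use the lexicographic pair $(d, |\Gamma_X|)$, where $d=|\pi_0(X\cap S_1)|$ and $|\Gamma_X|$ counts the vertices plus the closed components of $\Gamma_X$. Such $X$ exist: $H_2(L(3n,q);\mathbb{Z}_3)=\mathbb{Z}_3$ is nonzero, and $\chi_3$ is bounded above by Lemma~\ref{lem:eulercharbounds}. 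We then show that a minimal triple has the three required properties. Each violation produces either a reduced $2k$-compression (Lemma~\ref{lem:reduced2kcompressionsexist}, valid because every $2$-sphere in a lens space $L(3n,q)$ separates), giving a new $\chi$-optimal surface, or a change of Heegaard splitting. In both cases the complexity strictly decreases.

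First, innermost components. If a simple closed curve component of $\Gamma_X$ bounds a disk $E\subset D$ with $\Int E\cap X=\emptyset$, then $E$ is a $0$-compressing disk. Lemma~\ref{lem:0incompressibility} says $\partial E$ bounds a disk in $\Int X$, so the reduced $0$-compression replaces $X$ by $X''\subset X'$ and removes that curve from $\Gamma$. Since $X''$ agrees with $X'$ near $S_1$ and near $\partial D$, the splitting stays adapted. Similarly, a bigon or square face of a graph component of $\Gamma_X$, taken innermost, gives a $2$- or $4$-compressing disk. Lemma~\ref{lem:2incompressibility} and the construction in Section~\ref{sec:casek2} give reduced surgeries whose result meets $D$ in fewer singular vertices. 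We must check that surface elimination only deletes sheets, so $\Gamma_{X''}\subseteq\Gamma_{X'}$ and no new vertices appear. A graph component in $\Int D$ with no boundary vertices has, by an Euler characteristic count on a bipartite $3$-valent planar graph, a face of length $\le 4$. Bipartiteness rules out odd faces, and a count $V-E+F=2$ with $E=3V/2$ forces some face of degree $2$ or $4$. So this case reduces to the previous one, as does a closed curve component in $\Int D$.

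Next we reduce to $d=1$. By Lemma~\ref{lem:meridionaldiskintersection}, $d_+-d_-$ is determined mod $3$ by $[X]\neq 0$, so $d\ge 1$. If $d>1$, we use Lemma~\ref{lem:boundaryvertexpattern}: an outermost arc of $\Gamma_X$ joining two adjacent boundary points of opposite sign, or a pair of oppositely oriented meridional disks, yields a tube along the core of $S_1$. Isotoping the core circle across the outermost region and invoking Lemma~\ref{lem:changingcorecircle} replaces the splitting by an equivalent one with $d$ decreased by $2$. Alternatively, we $0$-compress along an annulus-spanning disk in $S_1$ to merge parallel meridional disks. Either way $d$ reduces to $1$, since $d_+-d_-\equiv\pm1$ and $d$ is minimal.

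The main obstacle is this $d$-reduction step. The paper's Lemma~\ref{lem:intervalbetterintersection} presumably isotopes the core of $S_1$ along an interval of $\Gamma_X$ or of $\Sing X$. We need to show this does not reintroduce innermost faces, which is why the complexity is ordered with $d$ first. We also need to verify that every surgery keeps the splitting adapted.
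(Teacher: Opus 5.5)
Your overall plan is the paper's. Its proof alternates Lemma~\ref{lem:simplifiedwithsurgieries} (reduced $2k$-compressions on innermost faces, with Corollary~\ref{cor:innermostdisk} for floating components) with Lemmas~\ref{lem:intervalbetterintersection} and~\ref{lem:transversemeridionaldisk} to lower $d$. Your lexicographic complexity with $d$ first is just a repackaging of that iteration. The compression half is fine, with one fix: for a floating graph component, pick it innermost and use that the count gives $c_2+c_4\ge 3$ faces on $S^2$, so a short face other than the outer one exists.

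The genuine gap is the $d$-reduction, and the mechanism you propose does not work.
\begin{itemize}
\item Components of $\Gamma_X$ meeting $\partial D$ are generally trivalent graphs, so an outermost arc need not exist.
\item Oppositely oriented meridional disks need not exist either, since $d_+-d_-\equiv\pm1\pmod 3$ allows $d=2$ with $d_+=2$. In that case the two boundary curves are parallel and coherently oriented. By Lemmas~\ref{lem:transversemeridionaldisk} and~\ref{lem:boundaryvertexpattern}, all vertices of $\partial\Gamma_X$ then have the same colour, so no arc component meets $\partial D$ at all. The reduction must instead push the core of $S_1$ across a boundary face containing a single singular vertex, which lowers $d$ by $1$, not $2$.
\item The ``$0$-compression in $S_1$'' is not available. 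Two parallel meridional disks cobound a product region whose remaining boundary lies on $\partial S_1$, not on $X$, and a $\chi$-optimal $X$ is $0$-incompressible anyway by Lemma~\ref{lem:0incompressibility}.
\item ``$d$ reduces to $1$ since $d$ is minimal'' is circular until you show that every $d>1$ can be lowered.
\end{itemize}

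What is needed is an arc $I$ properly embedded in $D$, meeting $\Gamma_X$ transversely in $\Int(\Gamma_X)$, with $|I\cap\Gamma_X|<|I'\cap\Gamma_X|\le d$ for a boundary arc $I'\subset\partial D$ with $\partial I'=\partial I$. Isotope the core of $S_1$ onto a curve containing $I'$ and swap $I'$ for $I$; Lemma~\ref{lem:changingcorecircle} then gives an adapted splitting with fewer than $d$ meridional disks.

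Producing $I$ is the content of Lemma~\ref{lem:intervalbetterintersection}. For an outermost component $\Gamma_X'$ one gets Equation~\eqref{eq:cellcounting}. If some boundary face has one or two edges of $\Gamma_X'$, this gives $I$ directly. Otherwise $c_2=c_4=0$ and $h\ge 2$ force $b_3\ge 4+\sum_{i\ge 5}b_i$, hence a special sequence of boundary faces with $3,4,\dots,4,3$ edges. The colour periodicity under $\sigma^d$, together with a Mayer--Vietoris argument excluding $d=2$ with opposite disks, then gives $d\ge k+2$.

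That count only uses $c_2=c_4=0$ and that every component meets $\partial D$, and both hold at your minimal triple. So invoking Lemma~\ref{lem:intervalbetterintersection} there, then Lemma~\ref{lem:transversemeridionaldisk} for a new $D$, is exactly what closes your argument.
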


The proof of this Lemma combines the surgeries described in Section~\ref{sec:reducedsurgeries} with the Heegaard splitting replacement in Lemma~\ref{lem:changingcorecircle}. 

\begin{lem}\label{lem:compressioninterior}
    Let $X$ be a $\chi$-optimal $\mathbb{Z}_3$-folded surface in $L(3n,q) = S_1\cup S_2$ with meridional intersection pattern $(D,\Gamma_X)$. If any component of $D\setminus \Gamma_X$ cobounds a simple closed curve component or a cycle of two or four edges in a graph component of $\Gamma_X$ then there exists another $\chi$-optimal $\mathbb{Z}_3$-folded surface $X'$ embedded in $L(3n,q)$ with the following properties.

    \begin{itemize}
        \item $X'\cap S_1$ is a subset of the meridional disks of $X\cap S_1$.
        \item $(D,D\cap X')$ bicollars in $(S_2,X'\cap S_2)$ and so $\Gamma_{X'}:= D\cap X'$ is a meridional intersection pattern as per definition~\ref{dfn:meridionalintersectionpattern}. 
        \item $|\pi_0(\text{Int}(\Gamma_{X'}))| < |\pi_0(\text{Int}(\Gamma_{X}))|$ and $|\Sing(\Gamma_{X'})| \leq |\Sing(\Gamma_{X})|$.
    \end{itemize} 
\end{lem}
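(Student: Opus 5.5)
The approach is to choose an innermost offending region of $D\setminus\Gamma_X$, turn it into a $0$-, $2$- or $4$-compression disk for $X$, and apply the reduced $2k$-compression of Lemma~\ref{lem:reduced2kcompressionsexist}. Lens spaces other than $S^2\times S^1$ are irreducible, so every embedded $2$-sphere separates and that lemma applies. Let $R$ be a component of $D\setminus\Gamma_X$ whose frontier is a simple closed curve component of $\Gamma_X$ or a cycle of length $2$ or $4$ in a graph component. Among all such regions, take $R$ innermost in $D$, so that $R$ meets $\Gamma_X$ only along its frontier.

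\textbf{Step 1: produce the compression disk.} Push $R$ slightly off $D$ into $S_2$, using the bicollar $(D,D\cap X)$ in $(S_2,X\cap S_2)$. The result is an open disk $D'$ with $\overline{D'}\cap X=\partial D'$. A cycle of length $2k$ alternates black and white singular vertices, so $\partial D'$ meets $\Sing(X)$ in exactly $2k$ points with $k\in\{0,1,2\}$. Hence $D'$ is a $2k$-compression disk.

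\textbf{Step 2: compress and check the conclusions.} Let $X''$ be a reduced $2k$-compression of $X$ about $D'$. By Definition~\ref{dfn:reduced2kcompression}, $X''\subset X'$ is non-separating, satisfies $[X'']=[X]$ and $\chi(X'')=\chi_3([X])$, so it is $\chi$-optimal once it is non-empty. Non-emptiness follows because $[X]\neq 0$. The surgery is supported in a small ball neighbourhood $N$ of $D'$, which can be taken inside $\operatorname{int} S_2$ and disjoint from $S_1$. Therefore $X'\cap S_1=X\cap S_1$, and since $X''\subset X'$, the set $X''\cap S_1$ is a union of a subset of the original meridional disks. (Surface elimination may delete whole components, which is why only a subset is claimed.)

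For the bicollaring, $N$ can be chosen so that $N\cap D$ is a collar neighbourhood of $\overline{R}$, and the replacement sheets $D_1,D_2$ and $A_i^{\pm}$ lie parallel to $R$ on either side of $D$, or cross $D$ only near the frontier of $R$ in a product fashion. Consequently $(D,D\cap X')$ still bicollars in $(S_2,X'\cap S_2)$. This property persists when passing to the subcomplex $X''$, since removing sheets and merging orientations does not destroy local product structure near $D$.

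\textbf{Step 3: count the change.} In $D$, the effect of the surgery is to delete the cycle bounding $R$. For $k=0$, the removed annulus contains the whole closed curve $\partial R$. For $k\geq 1$, the discarded $A_i^{\pm}$ carry the edges of the cycle, and the new sheets $D_1,D_2$ miss $D$. Then $\Gamma_{X'}$ is $\Gamma_X$ minus the frontier edges of $R$, possibly with vertices smoothed into interior arcs. Hence $|\Sing(\Gamma_{X'})|\leq|\Sing(\Gamma_X)|$, and taking the subcomplex $X''$ can only delete further edges and vertices. For the strict drop in $|\pi_0(\Int(\Gamma))|$, at least one edge of the cycle disappears. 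Smoothing a singular vertex merges interior arcs, so the number of arcs cannot increase.

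\textbf{Main obstacle.} The hardest step is verifying this local picture of $X'\cap D$ for the $4$-compression, including how orientations and the smoothing of the four singular points through $A^{\pm}$ interact with $D$. I would check it directly against Figure~\ref{fig:4goncompression}, isotoping $D'$ so that $N\cap D$ is exactly a collar of $\overline{R}$. I would also need to confirm that surface elimination never reintroduces intersections with $D$; this is automatic, since $X''\subset X'$ as sets.
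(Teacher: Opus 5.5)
Your argument follows the paper's route. You use the offending region of $D\setminus\Gamma_X$ as a $0$-, $2$- or $4$-compression disk and perform a reduced $2k$-compression with the ball neighbourhood inside the bicollar $[-1,1]\times D$. Then $D_1,D_2$ lie off $D$, and $X'$ is a product over $\Gamma_{X'}$ in a slightly thinner collar. Surface elimination only removes further whole pieces. The push-off of $R$ is unnecessary, since the paper compresses along $E\subset D$ itself. It only works because you then choose $N$ to straddle $D$; a genuinely small neighbourhood of the pushed-off disk would miss $D$ and leave $\Gamma_X$ unchanged.

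The one real inaccuracy is your local picture for $k\geq 1$. Since $X^{\pm}=Y\setminus\bigsqcup_{i=1}^k A_i^{\pm}$, only the $k$ rectangles of one sign are discarded. So only alternate edges of the $2k$-cycle disappear from $D$, not the whole cycle. Deleting every frontier edge of $R$ would leave $2k$ univalent vertices in the interior of $D$, contradicting the bicollaring you assert. What actually happens (Figure~\ref{fig:meridionalsurgeries}) is that each vertex of the cycle keeps exactly one cycle edge and its outgoing edge, so it is smoothed. Each surviving cycle edge then fuses with the outgoing edges at its two ends into a single arc.

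Your counting still goes through with this corrected picture. No singular vertices are created in $D$ and $2k$ are smoothed. At least one interior arc is deleted, and the remaining changes only merge arcs, so $|\pi_0(\Int(\Gamma))|$ strictly drops.
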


\begin{proof}

    Let $E$ be a component of $D\setminus \Gamma_X$ as described. If $E$ cobounds a simple closed curve in $\Gamma_X$ then $E$ is a $0$-compression disk. Similarly if $E$ cobounds a cycle of length $2$ or $4$ in $\Gamma_X$ then $E$ is a $2$ or $4$-compression disk respectively. Accordingly let $X'$ be the result of performing a reduced $2k$-compression of $X$ about $E$ where $k\in \{0,1,2\}$ as necessary (see Section~\ref{sec:reducedsurgeries} for details). The regular neighbourhood of $E$ used in the surgery can be taken to be a subcomplex of the bicollar $[-1,1]\times D$ such that the two parallel disks are $\{1\}\times E$ and $\{-1\}\times E$. The fact that $(D,D \cap X')$ still bicollars in $(S_2,X'\cap S_2)$ follows from taking a slightly smaller collar $[-1+\varepsilon,1 - \varepsilon]\times D$ and noting that in this collar $\text{Int}(X')$ is a subset of the components of $\text{Int}(X)$. The effect of $2k$-compressions on the meridional intersection pattern is summarised in Figure~\ref{fig:meridionalsurgeries}. When embedded orientable surfaces are removed as part of the reduction of these surgeries to recover a non-separating $\mathbb{Z}_3$-folded surface it is possible that the result will be an even simpler meridional intersection pattern with fewer interior components and singular vertices and some of the meridional disks in the intersection with $S_1$ may also be eliminated. 

\end{proof}

\begin{figure}
    \centering
    \includegraphics[width=0.8\linewidth]{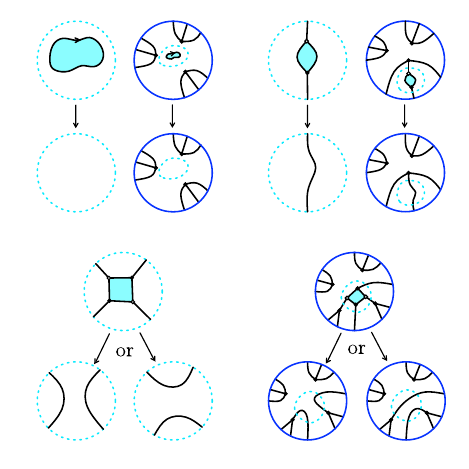}
    \caption{The impact of $2k$-compression surgeries on meridional intersection patterns shown in a neighbourhood of the compressing disks embedded in $D$. Note that due to the possible surface removal step it may be the case that the meridional intersection pattern resulting from a \textit{reduced} $2k$-compression is a subcomplex of one of the examples above. The two options for the $2$-compression correspond to $X^+$ and $X^-$.}
    \label{fig:meridionalsurgeries}
\end{figure}

\begin{lem}\label{lem:floatingcomponents}
    Let $\Gamma$ be a connected finite trivalent bipartite multigraph embedded in the plane. There exists an open disk component of $\mathbb{R}^2\setminus \Gamma$ that is bounded by a cycle of two or four edges in $\Gamma$.
\end{lem}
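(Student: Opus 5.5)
We argue with Euler's formula for the plane graph, together with a parity count that uses bipartiteness. Let $\Gamma$ have $V$ vertices, $E$ edges and $F$ faces, where the faces are the components of $\mathbb{R}^2\setminus\Gamma$ and include the unbounded one. Since $\Gamma$ is connected, Euler's formula gives $V-E+F=2$. Trivalence gives $2E=3V$. Substituting $V=\tfrac{2}{3}E$ yields $F = 2+\tfrac{1}{3}E$, so $\sum_f \ell(f) = 2E = 6F-12$. Here $\ell(f)$ is the length of the boundary walk of face $f$, with bridges counted twice.

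The key steps, in order, are as follows.

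First, bipartiteness forces every closed walk to have even length, so every face length $\ell(f)$ is even. Since $\Gamma$ is trivalent it is not a single vertex, so there are no faces of length $0$.

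Second, from $\sum_f(\ell(f)-6) = -12 < 0$, some face has $\ell(f)\in\{2,4\}$. In fact the short faces carry total deficit at least $12$.

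Third, we must make sure that at least one such short face is a \emph{bounded} open disk whose boundary is a genuine cycle of two or four edges. There are two issues.

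\begin{itemize}
\item \textbf{The unbounded face.} It contributes a deficit of at most $4$ (when its length is $2$). The bounded faces therefore still have total deficit at least $8$, so some bounded face has length $2$ or $4$. Every bounded face of a connected plane graph is an open disk.
\item \textbf{Walk versus cycle.} A boundary walk of length $2$ or $4$ might fail to be a simple cycle. A walk of length $2$ that traverses one edge twice would make that edge a pendant edge, which trivalence forbids; so a length-$2$ face is bounded by two parallel edges. For length $4$, a degenerate walk would traverse a bridge twice. This would force a vertex of degree $1$, or else a configuration like two bridges forming a length-$4$ walk around a path, which again has a degree-$1$ endpoint. Either way trivalence is contradicted, so the boundary is a simple cycle.
\end{itemize}

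An alternative, if the degenerate-walk case analysis gets messy, is to first observe that a connected trivalent graph with a bridge has a "leaf block" on each side. We can then apply the counting argument to a $2$-edge-connected leaf block, after suppressing its attachment vertex, to find a short face not touching the bridge.

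The main obstacle is this third step. The counting itself is routine; what needs care is the degenerate boundary walks in the presence of bridges, multi-edges, and nested components. If the direct argument fails, I would handle bridges by induction on the number of bridges, cutting at a bridge and working inside an innermost leaf block.
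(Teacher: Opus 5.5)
Your proposal is correct and is essentially the paper's argument: the paper passes to $S^2$ via one-point compactification and uses the same Euler characteristic count, $2=\sum_k(1-\tfrac{k}{3})c_{2k}$, to get $2c_2+c_4\ge 6$, then discards the single face containing $\infty$, which is exactly what your deficit bound does. Your extra check that short boundary walks are genuine cycles is left implicit in the paper; the one degenerate case you did not list, a length-$4$ walk that repeats a vertex but no edge (two digons sharing a vertex), is also excluded by trivalence because that vertex would have degree $4$.
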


\begin{proof}
    First consider the embedding $\Gamma \hookrightarrow \mathbb{R}^2\hookrightarrow S^2$ via the one point compactification and let $\mathcal{C}$ be the cell decomposition of $S^2$ induced by $\Gamma$ such that $\mathcal{C}^{(1)}=\Gamma$. All but one of the $2$-cells in $\mathcal{C}^{(2)}$ lift to a disk in $\mathbb{R}^2\setminus \Gamma$. Let $c_{2k}$ be the number of $2$-cells of $\mathcal{C}^{(2)}$ cobounding a $2k$-cycle in $\Gamma$. Each edge in $\Gamma$ occurs exactly twice in the boundaries of these $2$-cells and each vertex occurs exactly three times. From this observation it follows from computing the Euler characteristic by counting cells that:
    
    \begin{align*}
        2  &= \chi(S^2)\\
        &= \sum_{k=1}^{\infty} c_{2k} - \frac{1}{2}\sum_{k=1}^{\infty} 2k c_{2k} + \frac{1}{3}\sum_{k=1}^{\infty} 2k c_{2k} \\
        &= \sum_{k=1}^{\infty} (1-\frac{k}{3})c_{2k}\\
    \end{align*}
    
    This implies that $2c_2 + c_4 \geq 6$ and so $c_2+c_4 \geq 3$. Since there are at least three $2$-cells that are bounded by cycles of length $2$ or $4$ it follows that at least two of these do not contain the point at $\infty$ and therefore lift to open disks in $\mathbb{R}^2 \setminus \Gamma$ that bound cycles of length $2$ or $4$ as required.

\end{proof}

\begin{cor}\label{cor:innermostdisk}
    Let $X$ be a $\chi$-optimal $\mathbb{Z}_3$-folded surface embedded in $L(3n,q)$ with meridional intersection pattern $(D,\Gamma_X)$. If any component of $\Gamma_X$ is contained in the interior of $D$ then there is an open disk component of $D\setminus \Gamma_X$ bounded by a simple closed curve component or a cycle of length two or four in a graph component of $\Gamma_X$.
\end{cor}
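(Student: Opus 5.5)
Let $\Gamma_0$ be a component of $\Gamma_X$ contained in $\Int(D)$. Since it misses $\partial D$, it has no boundary vertices. By Definition~\ref{dfn:pfolded1mfld} with $p=3$, $\Gamma_0$ is therefore either a simple closed curve or a connected finite multigraph that is trivalent at every singular vertex. Its edge orientations colour the singular vertices black and white with every edge joining a black vertex to a white one, so this multigraph is bipartite. I will work with an innermost such component, reducing to the two cases of the statement. The conclusion is then obtained by combining the Jordan curve theorem with Lemma~\ref{lem:floatingcomponents}.

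First I choose $\Gamma_0$ to be innermost. Each interior component $\Gamma'$ of $\Gamma_X$ determines the bounded region of $D$ cut off by it, namely the union of the bounded components of $\mathbb{R}^2\setminus\Gamma'$ after identifying $\Int(D)$ with the plane. Among the finitely many interior components I take $\Gamma_0$ so that this region is minimal under inclusion. I must check that no other component of $\Gamma_X$ meets a face of $\Gamma_0$ that is a bounded disk. Any such component is disjoint from $\Gamma_0$ and connected, so it lies entirely in that face. In particular it misses $\partial D$, so it is an interior component whose enclosed region is strictly smaller. This contradicts minimality.

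If $\Gamma_0$ is a simple closed curve, the Jordan--Schoenflies theorem gives an open disk $E\subset\Int(D)$ with $\partial E=\Gamma_0$. By the innermost choice, $E$ contains no other point of $\Gamma_X$, so $E$ is a component of $D\setminus\Gamma_X$ bounded by a simple closed curve component, as required.

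If $\Gamma_0$ is a graph component, I apply Lemma~\ref{lem:floatingcomponents} to $\Gamma_0\hookrightarrow\Int(D)\cong\mathbb{R}^2$. This yields an open disk face $E$ of $\mathbb{R}^2\setminus\Gamma_0$ bounded by a cycle of length $2$ or $4$. Such a face is bounded and so lies inside $D$. By the innermost choice, $E$ meets no other part of $\Gamma_X$, so $E$ is a component of $D\setminus\Gamma_X$ of the required kind. The main point of care is that the cycle bounding $E$ in Lemma~\ref{lem:floatingcomponents} must genuinely be the full frontier of a disk face, rather than a face with a more complicated boundary walk. This is exactly what that lemma's Euler characteristic count provides, since its proof counts faces of the cellulation of $S^2$ induced by $\Gamma_0$. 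One subtlety is that this cellulation requires every face to be a $2$-cell. That holds because $\Gamma_0$ is connected, so every face of its complement in $S^2$ is an open disk.
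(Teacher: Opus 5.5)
Your proof is correct and follows essentially the same route as the paper. Both arguments use an innermost-component step to isolate a component of $\Gamma_X$ in $\Int(D)$ with no other component of $\Gamma_X$ in its bounded faces, then apply the Jordan curve theorem in the simple closed curve case and Lemma~\ref{lem:floatingcomponents} in the trivalent bipartite graph case; your version just spells out the minimal-region choice and the connectivity needed for the cellulation in more detail.
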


\begin{proof}
    This follows from a standard inner-most disk style argument to find a component $G$ of $\Gamma_X$ contained in the interior of $D$ such that every other component of $\Gamma_X$ is contained in a single open annulus component of $D\setminus G$. Then if $G$ is a simple closed curve the remaining component of $D\setminus G$ is an open disk cobounding it. Otherwise $G$ is a trivalent bipartite graph embedded in the plane and we can apply Lemma~\ref{lem:floatingcomponents} to find an open disk in $D\setminus G$ cobounding a cycle of length two or four.
\end{proof}

\begin{lem}\label{lem:simplifiedwithsurgieries}
    For any $d\in \mathbb{Z}^{>0}$ if there exists a $\chi$-optimal $\mathbb{Z}_3$-folded surface $X$ embedded in $L(3n,q) = S_1\cup S_2$ such that $X\cap S_1$ consists of $d$ meridional disks, then there exists a $\chi$-optimal $\mathbb{Z}_3$-folded surface $X'$ embedded in $L(3n,q)$ with a meridional intersection pattern $(D,\Gamma_{X'})$ such that:

    \begin{itemize}
        \item No component of $D\setminus \Gamma_{X'}$ is an open disk bounded by a simple closed curve component of $\Gamma_{X'}$ or a cycle of two or four edges in $\Gamma_{X'}$.
        \item $X'\cap S_1$ consists of at most $d$ meridional disks.
        \item Every component of $\Gamma{X'}$ intersects $\partial D$.

    \end{itemize}
\end{lem}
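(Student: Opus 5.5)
The plan is an induction on a complexity measure of the meridional intersection pattern, repeatedly applying Lemma~\ref{lem:compressioninterior} together with Corollary~\ref{cor:innermostdisk}. Start from the given $\chi$-optimal $X$ with $X\cap S_1$ consisting of $d$ meridional disks. By Lemma~\ref{lem:transversemeridionaldisk} and Definition~\ref{dfn:meridionalintersectionpattern}, choose a meridional intersection pattern $(D,\Gamma_X)$ for the splitting $S_1\cup S_2$. To each such pattern assign the pair $c(X)=(|\pi_0(\Int(\Gamma_X))|,|\Sing(\Gamma_X)|)$ of non-negative integers, ordered lexicographically, or simply by its first entry.

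At each step, check two conditions. First, is some component of $D\setminus\Gamma_X$ an open disk bounded by a simple closed curve component or by a $2$- or $4$-cycle? Second, is some component of $\Gamma_X$ contained in $\Int(D)$? If the second holds, Corollary~\ref{cor:innermostdisk} produces a disk as in the first, so it suffices to handle the first. In that case apply Lemma~\ref{lem:compressioninterior}. It produces a new $\chi$-optimal $X'$ whose intersection with $S_1$ is a subset of the $d$ meridional disks, for the same Heegaard splitting. The same disk $D$ still gives a meridional intersection pattern $\Gamma_{X'}=D\cap X'$. The first entry of $c$ strictly decreases and the second does not increase.

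Since $c$ takes values in a well-ordered set, the process terminates. The terminal $X'$ satisfies neither condition, which gives the first and third bullet points. The number of meridional disks never increases, so the second bullet point holds with at most $d$ disks.

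The main obstacle is bookkeeping rather than topology. I need Lemma~\ref{lem:compressioninterior} to be applicable at every stage: each intermediate surface must remain $\chi$-optimal and non-empty, and its pattern must remain genuinely meridional. The lemma guarantees the first two properties and the bicollaring. One subtlety is the case where the reduced compression removes all meridional disks in $S_1$, or the whole surface. Non-emptiness is preserved because $\chi$-optimality requires $[X']=[X]\neq 0$, and by Lemma~\ref{lem:meridionaldiskintersection} the count $d_+-d_-$ of meridional disks is then nonzero mod $3$. So at least one disk survives, and the pattern stays of the required kind.
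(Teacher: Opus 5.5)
Your proposal is correct and follows essentially the same route as the paper, which proves the lemma in two lines. It argues by induction on $|\pi_0(\Int(\Gamma_X))|$ using Lemma~\ref{lem:compressioninterior}, and obtains the third property from Corollary~\ref{cor:innermostdisk}. Your remark that $[X']=[X]\neq 0$ forces $d_+-d_-\not\equiv 0 \bmod 3$, so at least one meridional disk survives each reduced compression, supplies a detail the paper leaves implicit.
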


\begin{proof}
    This follows from induction on the number of components of the interior of the meridional intersection pattern using Lemma~\ref{lem:compressioninterior} and the first two properties are immediate. Corollary~\ref{cor:innermostdisk} implies the third property.

\end{proof}

\begin{lem}\label{lem:intervalbetterintersection}
    Let $X$ be a $\chi$-optimal $\mathbb{Z}_3$-folded surface embedded in $L(3n,q)$ with $S_1\cup S_2$ adapted to $X$ such that $X\cap S_1$ consists of $d>1$ meridional disks and $(D,\Gamma_X)$ is an intersection pattern of $X$ of the kind described in Lemma~\ref{lem:simplifiedwithsurgieries}. There exists an interval $I$ properly embedded in $D$ such that $I$ intersects $\Gamma_X$ locally transversally at points in $\text{Int}(\Gamma_X)$ and such that there is an arc $I'$ in $\partial D$ such that $\partial I' = \partial I$ and $|I\cap \Gamma_X| < |I'\cap \Gamma_X| \leq d$. Moreover there exists an equivalent genus $1$ Heegaard decomposition $S_1'\cup S_2'$ of $L(3n,q)$ adapted to $X$ such that $X\cap S_1'$ consists of fewer than $d$ meridional disks.
\end{lem}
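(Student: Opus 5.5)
We split the argument into a combinatorial part, which produces $I$, and a topological part, which uses $I$ to replace the Heegaard splitting.

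\textbf{Constructing $I$.} Label the $3nd$ boundary vertices of $\Gamma_X$ in cyclic order along $\partial D$. By Lemma~\ref{lem:transversemeridionaldisk} and Lemma~\ref{lem:boundaryvertexpattern}, the vertices $b$ and $\sigma^d(b)$ lie on the same meridional disk of $X\cap S_1$ and carry the same colour. Fix a boundary vertex $b$ and the arc $I'_0$ of $\partial D$ running from $b$ to $\sigma^d(b)$. The interior of $I'_0$ meets exactly $d-1$ points of $\Gamma_X$, one on each of the other meridional disks, so $|I'_0\cap\Gamma_X|\le d$ once an endpoint is counted after a small push.

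We want a properly embedded arc $I$ with the same endpoints, pushed slightly off $b$ and $\sigma^d(b)$ into adjacent complementary regions, that crosses strictly fewer edges. Every component of $\Gamma_X$ meets $\partial D$, and no complementary region is a disk bounded by a closed curve or by a $2$- or $4$-cycle. Using this, we would argue that some graph component or arc component with boundary vertices in the interior of $I'_0$ has two of its boundary vertices joined inside $D$. Such a join lets $I$ run through a face of $D\setminus\Gamma_X$ and bypass at least two crossings. Concretely, we would take the outermost component $G$ of $\Gamma_X$ hitting the interior of $I'_0$ and route $I$ along the side of $G$ facing away from $I'_0$.

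The main obstacle is the case where every component hitting the interior of $I'_0$ leaves through the complementary arc. Here we would choose $b$ differently, for example so that $I'_0$ is shortest among arcs cut off by a single component of $\Gamma_X$. We would then count the faces adjacent to $\partial D$ with an Euler-characteristic estimate in the style of Lemma~\ref{lem:floatingcomponents}, with the boundary taken into account, to force a face whose two sides on $\partial D$ bound a subarc of length $\le d$. We expect the trivalence of singular vertices and the colour-periodicity of Lemma~\ref{lem:boundaryvertexpattern} to give the strict inequality $|I\cap\Gamma_X|<|I'\cap\Gamma_X|$.

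\textbf{Replacing the splitting.} The endpoints of $I$ lie on $\partial D$ near $b$ and $\sigma^d(b)$. Join them by an arc $J$ in $S_1$, obtained by following the meridional disk through $b$ once around the core direction just outside that disk. The arc $J$ crosses no disk of $X\cap S_1$ except possibly where needed to close up, and it can be pushed to cross none.

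The closed curve $C'=I\cup J$ is isotopic to the core $C_1$ of $S_1$. The reason is that $I'\cup J$ is such a core, since walking $d$ steps along $\partial D$ together with $J$ winds once around $S_1$, and $I$ is isotopic to $I'$ rel endpoints in $D$.

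After a small push, $C'$ is mocktransverse to $X$ and meets $X$ only in the points of $I\cap\Gamma_X$, of which there are fewer than $d$. By Lemma~\ref{lem:changingcorecircle}, a regular neighbourhood $S_1'$ of $C'$ together with $S_2'$, the closure of its complement, is an equivalent genus $1$ Heegaard splitting. Arguing as in Lemma~\ref{lem:meridionaldiskintersection}, $X\cap S_1'$ consists of fewer than $d$ meridional disks and the splitting is adapted to $X$.
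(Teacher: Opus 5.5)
Your topological half is essentially the paper's argument. The combinatorial half, however, is a plan rather than a proof, and it places the difficulty in the wrong spot.

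\textbf{Where the combinatorial argument is missing.} Your easy case is fine: if some outermost component $G$ has all its boundary vertices inside a window of at most $d$ consecutive points of $\partial\Gamma_X$, an arc parallel to $G$ on its far side saves all $|\partial G|$ crossings. In general no such window exists, for example when $\Gamma_X$ is connected with $3nd>d$ boundary vertices. In that case, routing $I$ along the far side of a component crosses the edges emanating from its trivalent vertices on that side, so nothing is saved automatically. Your claim that ``two boundary vertices joined inside $D$'' lets $I$ bypass two crossings is therefore unjustified. For this hard case you only say you would re-choose $b$ and run an unspecified Euler characteristic count.

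The paper proceeds as follows. It first dispatches the two trivial configurations: an arc component with consecutive endpoints, or two consecutive boundary vertices joined through one singular vertex. It then takes an outermost component $\Gamma_X'$ and counts cells of the induced decomposition of $D$. With $b_1=b_2=0$, $c_2=c_4=0$ and $h\ge 2$, this gives $b_3\ge 4+\sum_{i\ge5}b_i$. Hence there are runs of consecutive boundary faces with edge counts $3,4,\dots,4,3$, of length $k$. Through such a run one threads $I$ crossing $k+1$ edges while $I'$ crosses $k+2$ boundary vertices. So the strict inequality $|I\cap\Gamma_X|<|I'\cap\Gamma_X|$ is automatic. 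The real issue is $|I'\cap\Gamma_X|\le d$, i.e.\ showing $d\ge k+2$ for some run.

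\textbf{Why colour-periodicity is not enough.} Along a run the colours are black, $k$ whites, black, so Lemma~\ref{lem:boundaryvertexpattern} only yields $d\ge k+1$. The borderline case $d=k+1$ must be excluded separately.
\begin{itemize}
\item For $k=1$ you need an ingredient absent from your proposal. If $d=2$ with alternating colours, the two meridional disks are oppositely oriented. Then $d_+-d_-=0$, so $[X]=0$ by Lemma~\ref{lem:meridionaldiskintersection}. This contradicts the fact that a non-empty non-separating $X$ has $[X]\neq 0$ (Lemma~\ref{lem:injectivitynonsep}).
\item For $k>1$ the paper uses a separate count. The odd boundary faces then come in disjoint adjacent pairs, each containing at most one face with three edges. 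This gives $b_3-2\le\sum_{i\ge2}b_{2i+1}\le b_3-4$, a contradiction.
\end{itemize}

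\textbf{Closing up the curve.} Once $I'$ is found, it must be extended, with $I$ pushed parallel to $\partial D$, to meet exactly $d$ consecutive boundary points. Then its endpoints lie in a single annulus of $\partial S_1\setminus X$ and can be joined by an arc $J$ inside that annulus. Your description of $J$ as going ``once around the core direction'' would make $J$ cross every meridional disk. An arc parallel to a sub-arc of the meridian curve through $b$, lying in the adjacent annulus, is the correct choice.
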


\begin{proof}

    If any component of $\Gamma_X$ is an interval whose boundary vertices are consecutive in the cyclic order of $\partial(\Gamma_X)\subset \partial D$ then it is easy to construct $I$ and $I'$ by taking another embedded interval parallel to this component of $\Gamma_X$ such that $0=|I\cap \Gamma_X| < |I'\cap \Gamma_X|=2 \leq d$. Similarly if any two consecutive vertices of $\partial(\Gamma_X)$ are cobounded by two components of $\text{Int}(\Gamma_X)$ that share a singular vertex on their boundary then it is also not hard to construct $I$ parallel to the union of these interior components such that $1=|I\cap \Gamma_X| < |I'\cap \Gamma_X|=2 \leq d$. These two simple cases are shown in Figure~\ref{fig:intervalbetterintersectionarcs}. We now assume that neither of these occurs. Let $\Gamma_X'$ be a connected component of $\Gamma_X$ with the property that the vertices of $\partial(\Gamma_X')$ form a convex set with respect to the cyclic order induced by the orientation of $\partial D$. The existence of such a $\Gamma_X'$ follows from the fact that the partition of the vertices of $\partial(\Gamma_X)$ induced by the connected components of $\Gamma_X$ is non-crossing. If $\Gamma_X$ has more than one connected component then there are at least two possible choices for $\Gamma_X'$. Since $\Gamma_X'$ is connected, every component of $D\setminus \Gamma_X'$ is an open disk and so there is a cell complex $\mathcal{C}$ of $D$ induced by $\Gamma_X'$ such that $\Gamma_X'\cup \partial D = \mathcal{C}^{(1)}$. If $\Gamma_X$ is not connected then there is a unique $2$-cell $H$ in $\mathcal{C}^{(2)}$ that contains $\Gamma_X\setminus \Gamma_X'$. If $\Gamma_X$ is connected then for the simplicity of the argument let $H$ be any of the $2$-cells whose boundary intersects $\partial D$. The orientation of $\partial D$ induces a cyclic order of the $1$-cells whose image is contained in $\partial D$. Each of these $1$-cells on the boundary is contained in precisely one $2$-cell and so this induces a cyclic order on this family of $2$-cells of $\mathcal{C}$ whose boundary intersects $\partial D$ which we denote $B$. By counting the number of $1$-cells contained in $\partial D$ it follows that $|B| = |\partial(\Gamma_X')|$. Clearly $H \in B$. If we cut the cyclic order of $B$ at $H$ then we obtain a linear order of $B\setminus \{H\} = \{B_i\}_{i=1}^{|\partial(\Gamma_X')|-1}$.

\begin{figure}
    \centering
    \includegraphics[width=0.5\linewidth]{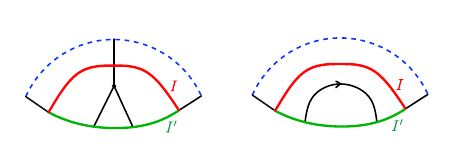}
    \caption{Two simple cases allowing for a Heegaard splitting adapted to $X$ such that $S_1\cap X$ consists of fewer meridional disks.}
    \label{fig:intervalbetterintersectionarcs}
\end{figure}

    Let $f:\mathcal{C}^{(2)}\rightarrow \mathbb{Z}^{>0}$ be the function that assigns to any $2$-cell the number of components of $\text{Int}(\Gamma_X')$ that are contained in its boundary.
    
    Let $b_i = |f^{-1}(i)\cap (B\setminus \{H\})|$, the number of $2$-cells in $B\setminus \{H\}$ that are bounded by $i$ components of $\text{Int}(\Gamma_X')$. Let $c_i = |f^{-1}(i) \setminus B|$ be the number of $2$-cells whose closure is contained in the interior of $D$ which are bounded by $i$ components of $\text{Int}(\Gamma_X')$. Let $h = f(H)$. The orientation data of $\Gamma_{X}'$ as a $\mathbb{Z}_3$-folded $1$-manifold implies that $c_i=0$ if $i$ is odd.
    
    The number of $2$-cells in $\mathcal{C}$ is $1+\sum_{i=1}^{\infty} \left[ b_i + c_{2i}\right]$. Each $1$-cell contained in $\text{Int}(\Gamma_X')$ is part of the boundary of two $2$-cells and so there are $\frac{h}{2}+\sum_{i=1}^{\infty} \left[\frac{2ic_{2i} +ib_i}{2} \right]$ such $1$-cells. Each $1$-cell contained in $\partial D$ is in the boundary of exactly one $2$-cell and so there are $1+\sum_{i=1}^{\infty} b_i$ such $1$-cells. The $0$-cells of $\mathcal{C}$ are either singular or boundary vertices of $\Gamma_X'$. Singular vertices appear in the boundary of three $2$-cells and so there are $\frac{h-1}{3}+\sum_{i=1}^{\infty}\frac{(i-1)b_i + 2i c_{2i}}{3}$ such vertices. Boundary vertices appear in the boundary of two $2$-cells and so there are $1+\sum_{i=1}^{\infty}b_i$ such vertices. $\mathcal{C}$ is a cell complex of this disk $D$ and so it follows that:

    \begin{multline*}
        1+\sum_{i=1}^{\infty} \left[ b_i + c_{2i}\right] - \left(\frac{h}{2}+\sum_{i=1}^{\infty} \left[\frac{2ic_{2i} +ib_i}{2} \right] + 1+\sum_{i=1}^{\infty} b_i\right) \\
         + \frac{h-1}{3}+\sum_{i=1}^{\infty}\frac{(i-1)b_i + 2i c_{2i}}{3} + 1+\sum_{i=1}^{\infty}b_i = \chi(D) = 1 \\
    \end{multline*}

    This simplifies to Equation~\ref{eq:cellcounting}.
    \begin{equation}\label{eq:cellcounting}
        \frac{4-h}{6} + \sum_{i=1}^{\infty}\left[\frac{4-i}{6}b_i +\frac{3-i}{3}c_{2i}\right] = 1
    \end{equation}
    
    By our assumption that neither case in Figure~\ref{fig:intervalbetterintersectionarcs} occurs it follows that $b_1=b_2=0$ and that $h\geq 2$. Since $\Gamma_X$ is of the kind described in Lemma~\ref{lem:simplifiedwithsurgieries} then it also follows that $c_2=c_4=0$. If we input these assumptions to the above equality we obtain inequality~\eqref{eq:prelimineq}

    \begin{equation}\label{eq:prelimineq}
        \sum_{i=1}^{\infty} (4-i)b_i \geq 4
    \end{equation}

    In particular it follows that $b_3 \geq 4+ \sum_{i=5}^{\infty} b_i$. Hence there exist at least three (closed) intervals in the total order of $B\setminus \{H\}$ of the form $[B_i,B_{i+k}]$ for $k\geq 1$ such that $f(B_i) = f(B_{i+k}) = 3$ and $f(B_{i+j})=4$ for all $1 \leq j<k$ if $k>1$. The family of subcomplexes that this describes are depicted in Figure~\ref{fig:b3tob3interval} and we will call this a \textit{special sequence} of length $k$. Also shown in this figure are a pair of embedded intervals $I,I'$ such that $I'\subset \partial D$ and $I$ is properly embedded in $D$ intersecting $\Gamma_X'$ transversally at points in $\text{Int}(\Gamma_X')$. In each case for an interval of this kind $[B_i,B_{i+k}]$ we have $|I'\cap \Gamma_X'| = k+2$ and $|I \cap \Gamma_X'| = k+1$. If we can show that $d\geq k+2$ then we are done. Hence we make the assumption that $d<k+2$ seeking a contradiction.

\begin{figure}
    \centering
    \includegraphics[width=0.7\linewidth]{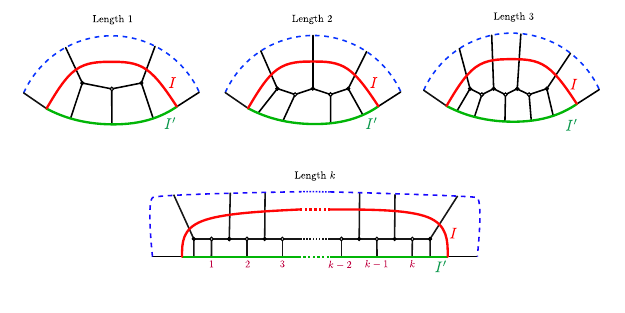}
    \caption{\textit{Special sequences} in meridional intersection patterns allowing for a reduction in the number of meridional disks.}
    \label{fig:b3tob3interval}
\end{figure}

    Because the edges of $\text{Int}(\Gamma_X')$ cobounding each vertex $v\in \text{Sing}(\Gamma_X')$ are oriented such that they induce a consistent orientation on $v$ it follows that a pair of vertices $v_1,v_2$ in $\partial(\Gamma_X')\sqcup \text{Sing}(\Gamma_X')$ have the same induced orientation if and only if any path of edges in $\Gamma_X'$ with endpoints $v_1,v_2$ is of even length. In particular if we label the boundary vertices $\partial(\Gamma_X') = \{v_i\}_{i=1}^{|\partial \Gamma_X'|}$ such that $\partial(B_i)\cap \partial(\Gamma_X') = \{v_i,v_{i+1}\}$ then since $\partial(B_i) \cap \Gamma_X'$ is a path of $f(B_i)$ edges in $\text{Int}(\Gamma_X')$ joining $v_i$ to $v_{i+1}$ we can deduce that $v_i$ has the same orientation as $v_{i+1}$ if and only if $f(B_i)$ is even. 

    In particular consider the vertices $I'\cap \Gamma_X' = \{v_i,\ldots v_{i+k+1}\}$. Without loss of generality if we assume that $v_i$ has \textit{black} orientation then $v_{i+1}$ must have \textit{white} orientation since $f(B_i) = 3$ is odd. Moreover since $f(B_{i+j})=4$ is even for all $1 \leq j<k$ it follows that (if $k>1$) the vertices $\{v_{i+1},\ldots, v_{i+k}\}$ are also white. Finally since $f(B_{i+k}) = 3$ is odd and $v_{i+k}$ is white it follows that $v_{i+k+1}$ is black. Recall from Lemma~\ref{lem:boundaryvertexpattern} that if $\sigma \in Sym(\partial \Gamma_X)$ is the permutation sending each boundary vertex to the next boundary vertex with respect to the cyclic order induced by the orientation of $\partial D$ then $\sigma^d$ preserves the orientation of $\partial \Gamma_X$. The vertices $\{v_{i+1},\ldots,v_{i+k+1}\}\subset \partial \Gamma_X$ are a closed interval in this cyclic order since they are a sub-interval of the closed interval $\{v_1,\ldots ,v_{|\partial\Gamma_X'|}\}$ (which is an interval by definition of $\Gamma_X'$). It follows that $d\geq k+1$ since if $d\leq k$ then $\sigma^d(v_i) = v_{i+d} \in \{v_{i+1},\ldots v_{i+k}\}$ does not preserve the orientation of $v_i$. Since we assumed that $d<k+2$ this implies that $d=k+1$.

    Suppose $k=1$. It follows that $d=k+1 = 2$ and so $X\cap S_1$ consists of two meridional disks of opposite orientations. From the Mayer-Vietoris sequence of $(L(3n,q),S_1,S_2)$ it follows that $[X]$ is in the kernel of $H_2(L(3n,q);\mathbb{Z}_3)\rightarrow H_1(S_1\cap S_2;\mathbb{Z}_3)$ and hence the image of $H_2(S_1;\mathbb{Z}_3)\oplus H_2(S_2;\mathbb{Z}_3)\rightarrow H_2(L(3n,q);\mathbb{Z}_3)$. But $H_2(S_1;\mathbb{Z}_3)=H_2(S_2;\mathbb{Z}_p) = 0$ and so $[X]=0$ which is a contradiction of our initial assumption. This homology argument implies that no special sequence with $k=1$ and $d<k+2$ exists in $B\setminus \{H\}$.

    Note that if $d=k+1$ and a special sequence of length $k>1$ exists then the black vertices in $\Gamma_X'$ are separated by $k$ white vertices in the total order. We call a $2$-cell $B_i\in B\setminus \{H\}$ \textit{odd} if $f(B_i)$ is odd. For $1<i<|\partial \Gamma_X'|-1$ if $B_i$ is odd then one of $\{v_i,v_{i+1}\}$ is black and so respectively one of $\{B_{i-1},B_{i+1}\}$ is odd and the other is even. If we partition the odd cells of $B\setminus \{H\}$ into subsets of adjacent cells then every subset will be a pair of adjacent cells except possibly for two singletons $\{B_1\}$ and $\{B_{|\partial \Gamma_X'|-1}\}$. 
    
    Since we assumed that no special sequence with $k=1$ exists it follows that each pair contains at most one cell in $f^{-1}(3)$. From this it follows that $b_3-2 \leq \sum_{i=2}^{\infty} b_{2i+1}$. A contradiction follows from the fact that $\sum_{i=2}^{\infty} b_{2i+1} \leq \sum_{i=5}^{\infty} b_{i} \leq b_3 -4$. Therefore $d\geq k+2$ and the first part of the Lemma is complete.

    Without loss of generality we can assume that $I$ and $I'$ are as above and that $|I'\cap \partial(\Gamma_X)| = d$. This is because where $|I'\cap \partial(\Gamma_X)| < d$ we can simply extend one end of $I'$ by some interval in $\partial D$ to add the required number of intersection points while extending $I$ parallel to $\partial D$ to add the same number of points in the interior of $D$. 
    
    $I'$ intersects the $d$ parallel meridians of $X\cap \partial S_1$ once each and so its endpoints $\partial I'$ are in the same component of $\partial S_1 \setminus X$. Let $C_1'$ be a simple closed curve on $\partial S_1$ consisting of the union of $I'$ with an interval in $\partial S_1 \setminus X$ connecting its endpoints. Since $C_1'$ intersects each meridian curve in $\partial S_1 \cap X$ in a single point we can deduce from the intersection form on the homology of the torus $\partial S_1$ that with some choice of orientation $[C_1'] = \lambda_1 + a \mu_1$ for some $a\in \mathbb{Z}$. It follows from the theory of normal arcs on a triangulation of the torus with two triangles that the ambient isotopy classes of oriented simple closed curves on the torus are classified by their homology classes. An ambient isotopy of $\partial S_1$ induces isotopies of the embedding $\partial S_1 \hookrightarrow L(3n,q)$ that fix $\partial S_1$ setwise. The PL isotopy extension theorem of~\cite{hudson1966} implies that this isotopy extends to an ambient isotopy of $L(3n,q)$ that carries $C_1'$ to any other curve on $\partial S_1$ realising the same homology class (up to a choice of orientation). For any $a\in \mathbb{Z}$ it can be shown constructively that there exists an annulus $A \subset S_1$ such that one component of $\partial A$ is the core circle $C_1$ and the other component is a curve on $\partial S_1$ which realises the homology class $\lambda_1 + a\mu_1$ with some choice of orientation. The extension of an isotopy over this annulus implies that $C_1$ is ambient isotopic to $C_1'$.
    
    Let $C_1''$ be the curve given by $(C_1' \setminus I')\sqcup I$. Together $I$ and $I'$ bound a disk in $D$ over which an isotopy can be performed to show that $C_1''$ is isotopic to $C_1'$. This isotopy extends to an ambient isotopy with a further application of the isotopy extension theorem.

    Following Lemma~\ref{lem:meridionaldiskintersection} it is sufficient to show that $C_1''$ is mocktransverse to $X$ in order to complete the proof. To see this note that $C_1''$ intersects $X$ at fewer than $d$ points in $\text{Int}(\Gamma_X)$ and that these intersection points are locally transverse in $D$. Since $(D,\Gamma_X)$ bicollars in $(S_2,X\cap S_2)$ by Lemma~\ref{lem:transversemeridionaldisk} it follows that $C_1''$ is locally transverse to the $2$-manifold $\text{Int}(X)$. Now recall the proof of Lemma~\ref{lem:meridionaldiskintersection} and note that $C_1''$ has the required properties of local transversality with $\text{Int}(X)$ and ambient isotopy equivalence with $C_1$ such that we can apply Lemma~\ref{lem:changingcorecircle} to obtain a new Heegaard decomposition $S_1'\cup S_2'$ of $L(3n,q)$ adapted to $X$ such that $S_1'$ is a regular neighbourhood of $C_1''$. Since $|C_1''\cap X|<d$ it follows that $S_1' \cap X$ consists of fewer than $d$ meridional disks as required.

\end{proof}

\begin{proof}[Proof of Lemma~\ref{lem:simplifiedl3nqintersectionpattern}]

    The proof of Lemma~\ref{lem:simplifiedl3nqintersectionpattern} is a straightforward induction on the number of meridional disks. The general structure of the argument is as follows:
    
    We start with any $\chi$-optimal $\mathbb{Z}_3$-folded surface $X$ embedded in a lens space with a genus $1$ Heegaard decomposition $S_1\cup S_2$ and a meridional intersection pattern $(D,\Gamma_X)$. Fixing $S_1,S_2,D$ we can apply Lemma~\ref{lem:simplifiedwithsurgieries} to replace $X$ with another $\chi$-optimal $\mathbb{Z}_3$-folded surface whose corresponding meridional intersection pattern has been maximally simplified with the reduced compression surgeries described in Section~\ref{sec:reducedsurgeries}. If $X\cap S_1$ consists of more than one meridional disk then we fix $X$ and apply Lemma~\ref{lem:intervalbetterintersection} to replace $S_1,S_2$ with an equivalent genus $1$ Heegaard decomposition adapted to $X$ such that now $S_1\cap X$ consists of strictly fewer meridional disks. We then use Lemma~\ref{lem:transversemeridionaldisk} to obtain a new mock-transverse meridional disk $D$ that determines a new meridional intersection pattern as per Definition~\ref{dfn:meridionalintersectionpattern}. 

    Repeatedly applying Lemmas~\ref{lem:simplifiedwithsurgieries}, ~\ref{lem:intervalbetterintersection} and ~\ref{lem:transversemeridionaldisk} results in a $\chi$-optimal $\mathbb{Z}_3$-folded surface, adapted genus $1$ Heegaard splitting and meridional intersection pattern $(D,\Gamma_X)$ with the required properties.

\end{proof}

\subsection{Snake moves}\label{sec:snakesec}

To continue simplifying these meridional intersection patterns we need to consider how $X\cap D$ and $X\cap S_2$ fit together. To this end, as in Definition~\ref{dfn:meridionalintersectionpattern}, let $B_2$ be the $3$-ball obtained by cutting $S_2$ along $D$ and $\tilde{X}$ be the $\mathbb{Z}_3$-folded surface with boundary properly embedded in $B_2$ obtained by lifting $X\cap S_2$ over the projection $f:B_2\rightarrow S_2$. The meridional intersection pattern $\Gamma_X \subset D$ determines $\partial \tilde{X} \subset \partial B_2$ as follows. Let $f^{-1}(D) = D^+\sqcup D^-$ where $D^+\xrightarrow{f}D$ is an orientation preserving homeomorphism and $D^-\xrightarrow{f}D$ is orientation reversing. The meridional intersection pattern $(D,\Gamma_X)$ determines the intersection of $\partial \tilde{X}$ with $D^+ \sqcup D^-$. Label the vertices of $X\cap \partial D = \{x_i\}_{i\in \mathbb{Z}_{3n}}$ such that the canonical cyclic order of $\mathbb{Z}_{3n}$ is equivalent to the cyclic order of the vertices induced by the orientation of $\partial{D}$. Let $x_i^{\pm}$ be the two vertices in $\partial D^{\pm}$ such that $f(x_i^{\pm}) = x_i$. $X\cap \partial S_2$ is a simple closed curve and with choice of sign/orientation it realises the homology class $\mu_1 = q\mu_2 + 3n \lambda_2 \in H_1(\partial S_2;\mathbb{Z})$ since it bounds a meridional disk in $S_1$. This slope implies that $\partial \tilde{X} \cap f^{-1}(\partial S_2)$ consists of $3n$ disjoint intervals properly embedded in the annulus $f^{-1}(\partial S_2)\subset \partial B_2$. With an appropriate initial choice of orientation of $D$ the endpoints of these intervals are given by the pairs $\{(x_i^{-},x_{i+q}^+)\}_{i\in \mathbb{Z}_{3n}}$ (see Figure~\ref{fig:snakemove} for example). We refer the reader to the Bredon and Wood paper~\cite{bredonwood1969} for more details, which uses the same approach to study non-orientable surfaces in lens spaces. Figure~\ref{fig:b_2boundarypattern} shows an example of $\partial{\tilde{X}} \subset \partial B_2$ with a particular choice of meridional intersection pattern the lift of $X\cap (\partial S_2 \cup D) \subset \partial S_2 \cup D$ to $\partial \tilde{X} \subset \partial B_2$ for the lens spaces $L(15,1),L(15,2),L(15,4)$. The diagrams in the Figure should be interpreted as stereographic projections of the embedding $\partial \tilde{X} \subset \partial B_2$ with the images of $D^+,D^-$ and $f^{-1}(\partial S_2)$ labelled accordingly.

\begin{figure}
    \centering
    \includegraphics[width=0.8\linewidth]{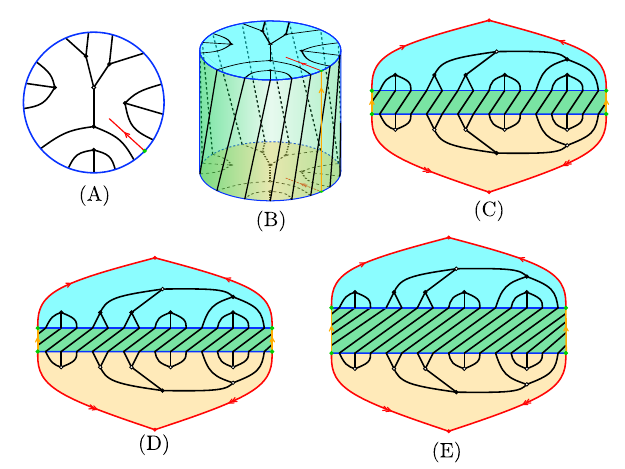}
    \caption{A meridional intersection pattern $(D,\Gamma_X)$ with $|\partial \Gamma_X| = 15$ and the corresponding $\mathbb{Z}_p$-folded $1$-manifold $\partial \tilde{X} \subset \partial B_2$ for the lens spaces $L(15,1), L(15,2),L(15,4)$.}
    \label{fig:b_2boundarypattern}
\end{figure}

We now have the flexibility to describe \textit{snake} moves. These moves are inspired by~\cite{segerman2024}. The effect of these moves on meridional intersection patterns is shown in Figure~\ref{fig:snakemove}, adding two new singular vertices. The snake move arises from an ambient isotopy that pulls an interval of $\text{Sing}(X)$ in the collar of $X\cap D$ on one side of $D$, over the collar of $X\cap \partial S_2$ and through $D$ in the collar on the other side. In Figure~\ref{fig:snakemodel} we show the lift of this isotopy to the embedding $\tilde{X}\subset B_2$ in the case $q \neq \pm 1 \mod 3n$. When $q= \pm1 \mod 3n$ snake moves arise in exactly the same way, differing only in the fact that the `head' of the snake emerges closer to its `tail' after passing through the meridional disk. We encourage the reader to visit \href{https://willhobkirk.com/}{willhobkirk.com} for interactive 3D models of this isotopy.Snake moves arise from components $C$ of $D\setminus \Gamma_X$ whose boundary consists of an interval of $\partial D$ and two intervals of $\text{Int}(\Gamma_X)$. The interval of $\text{Sing}(X)$ can be taken as an interval in the collar of the singular vertex in $\partial C$ in the positive or negative collar of $(D,\Gamma_X)$ and so there are two possible snake moves related to such a component $C$. These are summarised in Figure~\ref{fig:snakemove}.

\begin{figure}
    \centering
    \includegraphics[width=0.8\linewidth]{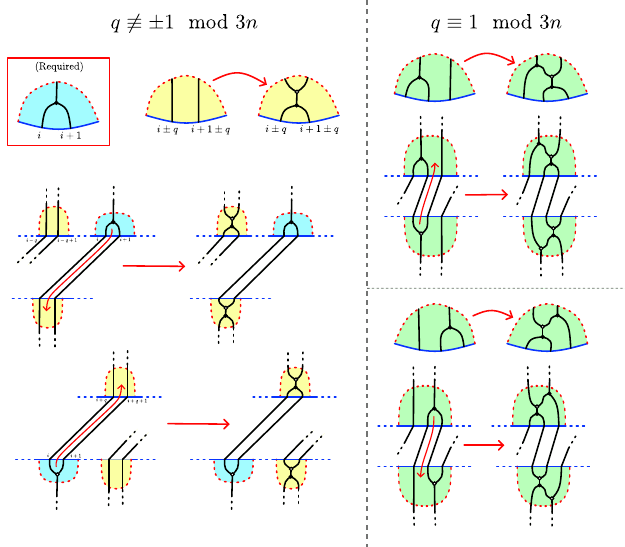}
    \caption{The meridional intersection pattern operations induced by the snake moves.}
    \label{fig:snakemove}
\end{figure}

\begin{figure}
    \centering
    \begin{tabular}{c@{}c}
        Before & After  \\

        \begin{annotate}{\includegraphics[width = 0.4\linewidth]{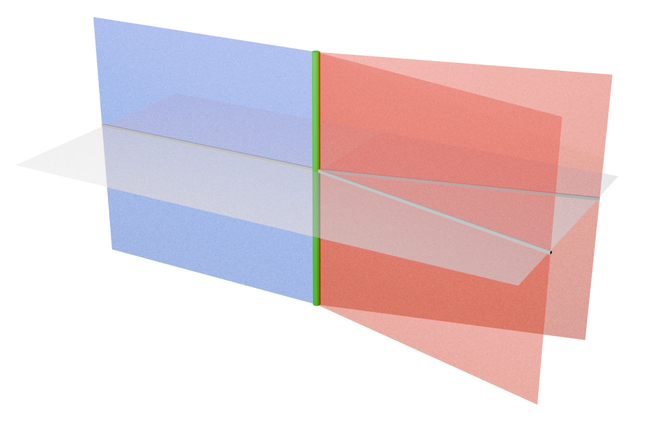}}{0.4}

        \end{annotate}
        &
        \begin{annotate}{\includegraphics[width = 0.4\linewidth]{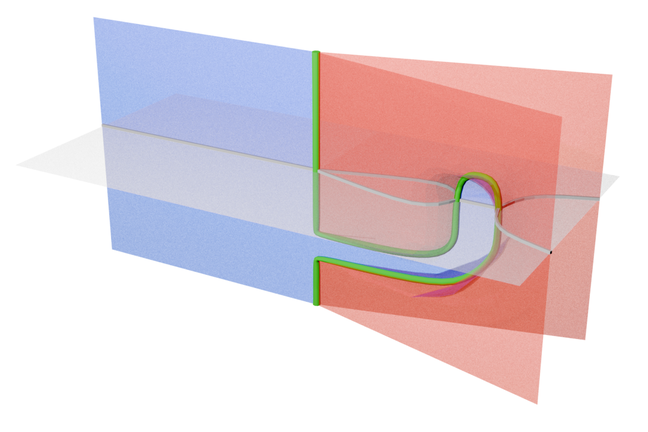}}{0.4} 

        \end{annotate}

        \\
        \begin{annotate}{\includegraphics[width = 0.4\linewidth]{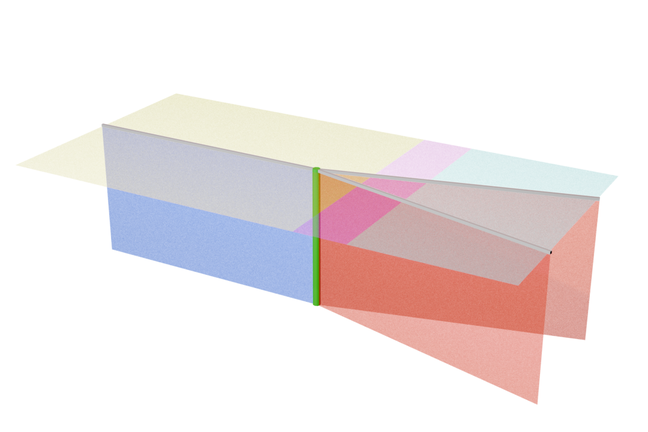}}{0.4} 

            \callout{5,-5}{$D^+$}{4,0}
            \callout{3,4}{$f^{-1}(\partial S_2)$}{1.5,0.5}
            \callout{-5,-5}{$D^-$}{-3,1}
        \end{annotate}
        &
        \begin{annotate}{\includegraphics[width = 0.4\linewidth]{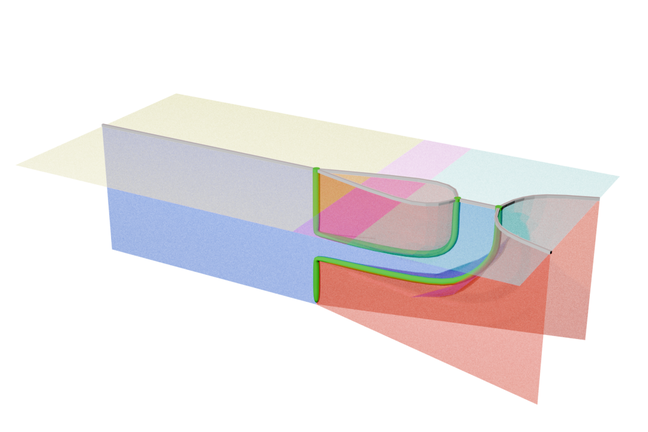}}{0.4}

        \end{annotate}
        \\

        \begin{annotate}{\includegraphics[width = 0.4\linewidth]{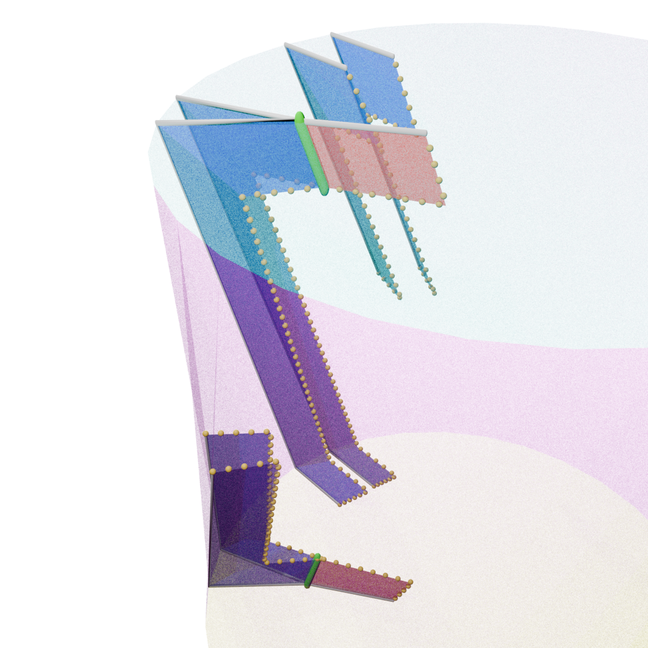}}{0.4} 

            \callout{-6,6}{$D^+$}{5,5}
            \callout{-6,0}{$f^{-1}(\partial S_2)$}{3,-1}
            \callout{-6,-6}{$D^-$}{3,-5}
        \end{annotate}
        &
        \begin{annotate}{\includegraphics[width = 0.4\linewidth]{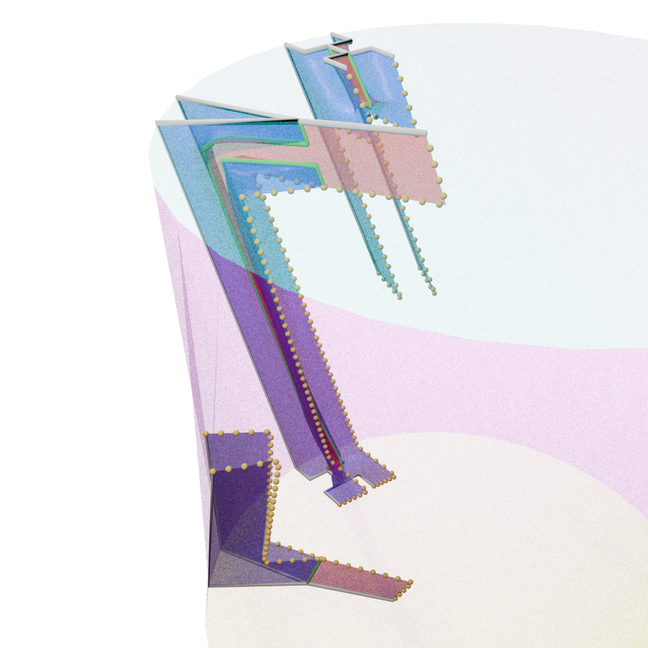}}{0.4}

        \end{annotate}
        
    \end{tabular}
    \caption{Several views of a snake move where $q\neq \pm1 \mod 3n$. Row 1 shows a snake that adds two points of intersection of the singular locus of a $\mathbb{Z}_3$-folded surface with an arbitrary plane. Row 2 and row 3 show snakes that slither across the collar of $(\partial X,\partial S_2)$ and induce the snake move of meridional intersection patterns in Figure~\ref{fig:snakemove}. In both cases we depict $\tilde{X}\subset B_2$ in a neighbourhood of a disk in $\partial B_2$ that intersects $f^{-1}(\partial S_2)$, $D^+$ and $D^-$. In row 3 we indicate the intersection of $\tilde{X}$ with the internal boundary of this neighbourhood with a yellow dotted line.}
    \label{fig:snakemodel}
\end{figure}

Note that the two snake moves for meridional intersection patterns of $L(3n,q)$ depend on the value of $q$ since this determines how the collar of $(\partial S_2,X\cap \partial S_2)$ and the upper and lower collars of $(D,\Gamma_X)$ match up. Given a meridional intersection pattern $(D,\Gamma_X)$ of a $\mathbb{Z}_3$-folded surface $X$ and a meridional intersection pattern $(\Gamma',D)$ obtained from $\Gamma_X$ by a snake move, then there exists a $\mathbb{Z}_3$-folded surface $X'$ ambient isotopic to $X$ such that $\Gamma' = X'\cap D$ is a meridional intersection pattern of $X'$. In particular, $(\Gamma',D)$ is a meridional intersection pattern of a $\chi$-optimal $\mathbb{Z}_3$-folded surface if $(D,\Gamma_X)$ is. 

Now label the intervals of $\partial D \setminus X$ in the cyclic order induced by the orientation of $D$ as $\{I_i\}_{i\in \mathbb{Z}_{3n}}$. Lemma~\ref{lem:unbalancedquads} gives one application of these snake moves that will be used in Section~\ref{sec:recursionrelation}.

\begin{lem}\label{lem:unbalancedquads}
    Let $X$ be a $\chi$-optimal $\mathbb{Z}_3$-folded surface embedded in $L(3n,q) = S_1\cup S_2$ intersecting $S_1$ in a single meridional disk and with meridional intersection pattern $(D,\Gamma_X)$. Suppose for some $i$ and choice of sign $\pm$ there are two disks $C_i,C_{i\pm q} \in \pi_0(D\setminus \Gamma_X)$ with the following properties.

    \begin{itemize}
        \item $\partial C_i$ is the closure of the disjoint union of four components of $\text{Int}(\Gamma_X)$ and $I_i$.
        \item $\partial C_{i\pm q}$ is the closure of the disjoint union of two components of $\text{Int}(\Gamma_X)$ and $I_{i\pm q}$.
    \end{itemize}

    It follows that there exists another $\chi$-optimal $\mathbb{Z}_3$-folded surface $X'$ such that $X\cap S_1 = X' \cap S_1$ and $\Gamma_X' := X'\cap D$ is a meridional intersection pattern of $X'$ with at least two fewer singular vertices.

\end{lem}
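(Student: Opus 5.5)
The plan is to apply a snake move at $C_{i\pm q}$, which by design transports the relevant configuration across the collar of $(\partial S_2, X\cap\partial S_2)$ into the region adjacent to $I_i$. Then we would perform a reduced $4$-compression (or a reduced $2$-compression) on the resulting pattern. The disk $C_{i\pm q}$ is bounded by $I_{i\pm q}$ and two interior edges meeting at a single singular vertex. It is therefore exactly the kind of component from which a snake move arises, as described before Figure~\ref{fig:snakemove}. Choose the side of the collar (positive or negative, matching the sign $\pm$) so that the head of the snake emerges through $D$ adjacent to $I_i$. This is forced by the identification of $x_j^-$ with $x_{j+q}^+$ in the lift $\partial\tilde X\subset\partial B_2$. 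By the remarks following Figure~\ref{fig:snakemodel}, the snake move is realised by an ambient isotopy. It therefore yields a $\chi$-optimal $X_1$ with $X_1\cap S_1 = X\cap S_1$ and a meridional intersection pattern $\Gamma_{X_1}$ that has two more singular vertices than $\Gamma_X$.

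Next we would check, from Figure~\ref{fig:snakemove}, what happens to the quadrilateral $C_i$. The two new singular vertices appear inside $C_i$ next to $I_i$. The edge structure there splits $C_i$ so that a component of $D\setminus\Gamma_{X_1}$ lies in the interior of $D$ and is bounded by a cycle of length $4$, or possibly $2$, in a graph component. The key combinatorial point is parity: $C_i$ already carries four interior edges, so its two boundary vertices on $I_i$ have the same colour. The snake inserts a bivalent pair of singular vertices of opposite colours. This closes off an even cycle of length at most four, consisting of the two new edges together with two edges of the old boundary of $C_i$.

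We then apply Lemma~\ref{lem:compressioninterior}, which performs a reduced $2k$-compression via Lemma~\ref{lem:reduced2kcompressionsexist}, to this interior disk. This gives a $\chi$-optimal $X'$ with $X'\cap S_1\subseteq X_1\cap S_1$. Since $X\cap S_1$ is a single meridional disk and $[X']=[X]\neq 0$, Lemma~\ref{lem:meridionaldiskintersection} forces equality $X'\cap S_1 = X\cap S_1$. We must then count singular vertices. From Figure~\ref{fig:meridionalsurgeries}, a $4$-compression on a quadrilateral deletes its four singular vertices, and a $2$-compression deletes two. The net change is therefore $+2-4=-2$ in the $4$-compression case. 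The surface-elimination step of a reduced compression only passes to a subcomplex, so it cannot increase this count.

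The main obstacle is the bookkeeping in the second step. We must confirm, for both sign choices and for the special case $q\equiv\pm1 \bmod 3n$, that the snake head really lands in $C_i$ and produces a $4$-cycle bounding an interior disk, rather than merely a $2$-cycle, which would give only net zero. We would settle this by drawing the lifts in $\partial B_2$ as in Figure~\ref{fig:b_2boundarypattern} and tracing the four boundary edges of $C_i$. If a $2$-cycle appears instead, the fallback is to follow the $2$-compression with a second reduced compression on the now-adjacent digon in $C_i$. That second compression removes two further vertices, again giving a net loss of at least two.
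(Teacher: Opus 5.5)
Your proposal follows the paper's proof. The snake move arising from $C_{i\pm q}$ cuts $C_i$ into a boundary piece and an interior disk bounded by a $4$-cycle. A reduced $4$-compression then removes at least four singular vertices against the two added, and the single meridional disk survives because $[X']=[X]\neq 0$. The $2$-cycle contingency you worry about cannot arise, so your fallback is unnecessary: the snake pinches together the two edges of $\partial C_i$ at $x_i$ and $x_{i+1}$ at two new trivalent (not bivalent) vertices, and the interior piece is always bounded by the far segments of these two edges together with the two middle edges of $\partial C_i$.
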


\begin{proof}
    If this is the case for a meridional intersection pattern of a $\chi$-optimal $\mathbb{Z}_3$-folded surface then perform a snake move arising from $C_{i\pm q}$ that separates $C_i$ into two disks, one of which is contained in $\text{Int}(D)$ and cobounds a $4$-cycle in the resulting meridional intersection pattern. Perform a reduced $4$-compression of the kind described in Lemma~\ref{lem:compressioninterior}. The snake move increases the number of singular vertices of the meridional intersection pattern by two and the reduced $4$-compression decreases the number of singular vertices by at least four giving the desired net reduction in the number of singular vertices of the meridional intersection pattern. Figure~\ref{fig:unbalancedquads} shows this diagrammatically. Note that the surface removal step used to define the reduced $4$-compression cannot eliminate the single meridional disk in the intersection with $S_1$ without changing the homology class.

\end{proof}

\begin{figure}
    \centering
    \includegraphics[width=0.8\linewidth]{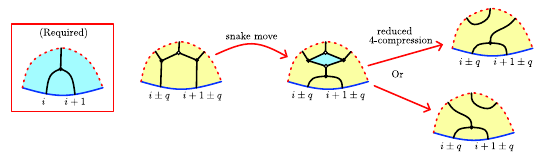}
    \caption{The snake move and subsequent $4$-compression move that results in a reduction in the size of $\Sing(\Gamma_X)$ in meridional intersection patterns of the kind described in Lemma~\ref{lem:unbalancedquads}.}
    \label{fig:unbalancedquads}
\end{figure}

\subsection{A recursion relation for $L(3n,1)$}\label{sec:recursionrelation}

Let $n\in \mathbb{Z}^{>0}$. Since $H_2(L(3n,1);\mathbb{Z}_3) = \mathbb{Z}_3$, $\chi_3(x) = \chi_3(-x)$ for all $x\in H_2(L(3n,1);\mathbb{Z}_3)$ and $\chi_3(0)=0$ it follows that $\chi_3:H_2(L(3n,1);\mathbb{Z}_3) \rightarrow \mathbb{Z}$ is fully determined by its value on any non-trivial class. Hence we define $\chi_3(L(3n,1)):= \chi_3(x)$ where $x$ is any non-trivial class in $H_2(L(3n,1);\mathbb{Z}_3)$.

In this section we will prove Lemma~\ref{lem:l3n1recursion}.

\begin{lem}\label{lem:l3n1recursion}
    $\chi_3(L(3n,1)) = \chi_3(L(3(n-1),1)) - 1$ for all $n>1$.
\end{lem}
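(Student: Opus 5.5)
We prove the two inequalities $\chi_3(L(3n,1)) \geq \chi_3(L(3(n-1),1)) - 1$ and $\chi_3(L(3n,1)) \leq \chi_3(L(3(n-1),1)) - 1$ separately. Both directions are modelled on the Bredon--Wood recursion for non-orientable surfaces, but carried out on meridional intersection patterns.

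\textbf{Lower bound.} This direction is constructive. Take a $\chi$-optimal $\mathbb{Z}_3$-folded surface $X_{n-1}$ in $L(3(n-1),1)$. Using Lemma~\ref{lem:simplifiedl3nqintersectionpattern}, arrange that it meets $S_1$ in a single meridional disk and has meridional intersection pattern $(D,\Gamma)$ with $3(n-1)$ boundary vertices.
\begin{enumerate}
\item Insert into $\Gamma$ three new boundary vertices, consecutive in the cyclic order, joined to one new singular vertex. This is a $K_{3,1}$ tripod attached along three consecutive intervals $I_i$. Check that it is still a valid pattern for $q=1$: the vertex colours must still satisfy Lemma~\ref{lem:boundaryvertexpattern} with $d=1$.
\item Reglue the lifts $\partial\tilde X\subset\partial B_2$ with the slope $\mu_1=\mu_2+3n\lambda_2$. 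Because $q=1$, the extra three arcs on $f^{-1}(\partial S_2)$ simply join $x_i^-$ to $x_{i+1}^+$.
\item Cap the new boundary pattern inside $B_2$ by adding one extra singular arc with three sheets, that is, a $\mathbb{Z}_3$-folded disk piece.
\item Compute the Euler characteristic. The claim to verify is that the new folded surface $X_n$ satisfies $\chi(X_n)=\chi(X_{n-1})-1$.
\item Check that $X_n$ is non-separating and represents the nonzero class. Non-separation follows from Lemma~\ref{lem:injectivitynonsep}, applied by showing it still meets $S_1$ in one meridional disk with the complement connected through $S_1$. The class is nonzero by Lemma~\ref{lem:meridionaldiskintersection}, since $d_+-d_-=1$.
\end{enumerate}
Alternatively, one can exhibit the explicit surfaces from the canonical construction on minimal layered triangulations and compute $\chi=2-n$ directly. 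As a fallback that is cleaner to verify, I would use these explicit surfaces together with $\chi_3(L(3,1))=1$, realised by the $\mathbb{Z}_3$-crosscap.

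\textbf{Upper bound.} This is the main work.
\begin{enumerate}
\item Start with a $\chi$-optimal $X\subset L(3n,1)$ in the normal form of Lemma~\ref{lem:simplifiedl3nqintersectionpattern}: $d=1$, no interior components of $\Gamma_X$, and no innermost $0$-, $2$- or $4$-gon faces.
\item Further minimise $|\Sing(\Gamma_X)|$ using snake moves and Lemma~\ref{lem:unbalancedquads}. With $q=1$, the hypothesis of that lemma reads: a quadrilateral boundary face $C_i$ is adjacent to a bigon boundary face $C_{i\pm1}$. So after minimisation, no boundary $4$-face is next to a boundary $2$-face.
\item Rerun the Euler-characteristic count of Equation~\eqref{eq:cellcounting} for the whole pattern, now with $d=1$. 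In this situation all $3n$ boundary vertices share a colour, so every boundary face has an even number of interior edges, and interior faces have at least $6$ edges. From the count, deduce that there is a boundary bigon $C_i$, that is, an arc of $\Gamma_X$ joining $x_i$ to $x_{i+1}$ through a single singular vertex, positioned so that the local picture is a tripod meeting three consecutive boundary vertices.
\item Cut this tripod out. This is the inverse of the lower-bound construction: it reglues $f^{-1}(\partial S_2)$ with three fewer arcs and produces a $\mathbb{Z}_3$-folded surface in $L(3(n-1),1)$ with Euler characteristic $\chi(X)+1$.
\item Verify that the result is non-separating and nontrivial exactly as in the lower bound. This gives $\chi_3(L(3(n-1),1))\geq\chi_3(L(3n,1))+1$.
\end{enumerate}

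\textbf{Expected obstacle.} The hardest step is the combinatorial lemma that a minimised pattern must contain the removable tripod configuration. If the Euler count alone is not enough, I would first try to exclude the remaining cases by repeatedly applying snake moves, each time creating a forbidden interior $4$-gon or an unbalanced quad, and tracking $|\Sing(\Gamma_X)|$ as the decreasing measure.
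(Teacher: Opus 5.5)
Your overall plan matches the paper's. The lower bound inserts a boundary tripod, which is the paper's construction of $Z\subset L(3(n+1),1)$ from $X$. The upper bound takes a lexicographically minimal pattern with $d=1$ and combines the count~\eqref{eq:cellcounting} with Lemma~\ref{lem:unbalancedquads} to force two adjacent boundary bigons, i.e.\ a boundary tripod (Lemma~\ref{lem:minimalimpliesboundarytripod}).

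\textbf{Main gap: upper-bound step~4.} The step ``cut this tripod out'' is not justified. A boundary tripod is information about $X\cap D$ only. In $\tilde X\subset B_2$ it yields a bigon $C\subset\partial B_2\setminus\partial\tilde X$ whose two edges $\alpha,\beta$ join the lifts $s^+,s^-$ of the tripod's singular vertex. However, the singular arcs of $\tilde X$ leaving $s^\pm$ may run to other singular vertices of the pattern. Likewise, the sheets $E,E'$ of $\tilde X$ containing $\alpha,\beta$ may be arbitrary surfaces carrying further boundary and singular arcs. If you reglue along a smaller disk $D_Y\subset D$ that excludes the tripod, the trivalent points $s^\pm$ land on the new boundary torus, so you must delete a sheet inside $B_2$. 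This produces a $\mathbb{Z}_3$-folded surface whose Euler characteristic is exactly one larger only when $E,E'$ are disks cobounding a single singular arc from $s^+$ to $s^-$. Calling the removal ``the inverse of the lower-bound construction'' assumes exactly this local structure, which an arbitrary $\chi$-optimal $X$ need not have.

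\textbf{The missing idea} is how the paper produces that structure. The quotient $q$ does not embed $C$, but a copy $C'$ pushed off $\partial B_2$ into the interior embeds in $S_2$ as a $2$-compression disk $q(C')$. A reduced $2$-compression along it (Section~\ref{sec:casek1}; cf.\ Lemma~\ref{lem:2incompressibility}) gives a $\chi$-optimal $X'$. Minimality of $|\Sing(\Gamma_X)|$ ensures the surface-elimination step does not change the pattern, so $\Gamma_{X'}=\Gamma_X$, and in $\tilde X'$ the sheets $E,E'$ are now disks. Only then does $\tilde Y=\tilde X'\setminus E'$, with $E$ reversed, satisfy $\chi(\tilde Y)=\chi(\tilde X')$. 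It then glues along $D_Y^\pm$ to a surface $Y\subset L(3(n-1),1)$ with $\chi(Y)=\chi(X')+1$.

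\textbf{Secondary gap, affecting both directions.} Meeting $S_1$ in a single meridional disk does not make the surface non-separating. You must show that every component of $S_2'\setminus Y$ reaches $\partial S_2'$. The paper does this by comparing the relations $\sim_{X'}$ and $\sim_Y$ on boundary components through the natural map $\pi_0(\partial B_2\setminus\partial\tilde X')\to\pi_0(\partial B_2\setminus\partial\tilde Y)$.
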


The proof of Theorem~\ref{thm:l3n1norm} then follows from induction on Lemma~\ref{lem:l3n1recursion} and the base case:

\begin{lem}\label{lem:l3_1norm}
    $\chi_3(L(3,1)) = 1$
\end{lem}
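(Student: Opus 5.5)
The plan is to prove the two inequalities $\chi_3(L(3,1))\geq 1$ and $\chi_3(L(3,1))\leq 1$ separately.

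\textbf{Lower bound.} I would construct a $\chi$-optimal candidate directly. The natural one is the $\mathbb{Z}_3$-crosscap of Figure~\ref{fig:pcrosscaps}: a single disk whose boundary wraps three times around one singular circle, so $\chi=1$. To embed it in $L(3,1)=S_1\cup S_2$, I take $\Sing(X)$ to be a core circle of $S_2$. I then use the model neighbourhood in the proof of Lemma~\ref{lem:existence} (Figure~\ref{fig:modelsingularnbhd}) with $p=3$ and the twisted slope, so that $X\cap\partial S_2$ is a single curve of slope $\mu_1=3\lambda_2+\mu_2$. That curve bounds a meridional disk of $S_1$. The union of the meridional disk and the twisted $K_{3,1}$-bundle is exactly the $\mathbb{Z}_3$-crosscap, and it is embedded by Lemma~\ref{lem:classificationin3mflds}. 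Its complement is a $3$-ball minus a neighbourhood of a spine, hence connected, so $X$ is non-separating. By Lemma~\ref{lem:injectivitynonsep}, $[X]\neq 0$; equivalently, Lemma~\ref{lem:meridionaldiskintersection} gives $d_+-d_-=1\not\equiv 0 \pmod 3$. This yields $\chi_3(L(3,1))\geq 1$. Alternatively, one can take the canonical $\mathbb{Z}_3$-folded surface of the one-tetrahedron layered triangulation and apply Corollary~\ref{cor:chi3bound}.

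\textbf{Upper bound.} I would follow the proof of Lemma~\ref{lem:equatingfunctions} and Lemma~\ref{lem:eulercharbounds}. Let $X$ be a reduced (here simply non-separating) $\mathbb{Z}_3$-folded surface with $[X]=x\neq 0$. By Lemma~\ref{lem:noremovablediskschi} I may assume $X$ has no removable disks without decreasing $\chi$. Since $\beta_2(L(3,1);\mathbb{Z}_3)=1$, Lemma~\ref{lem:components} shows that $\underline{X}$ is connected. The case analysis in Lemma~\ref{lem:eulercharbounds} then leaves three possibilities: $\chi(X)\leq 0$; $X$ is a $\mathbb{Z}_3$-crosscap with $\chi=1$; or $\Sing(X)=\emptyset$ and $X$ is a closed oriented surface with weight $1$ and $\chi(X)\leq 2$, where $\chi=2$ only for a sphere. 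I would exclude the sphere because every $2$-sphere in the irreducible manifold $L(3,1)$ separates, contradicting non-separation. Any other closed orientable surface has $\chi\leq 0$. Hence $\chi(X)\leq 1$.

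\textbf{Main obstacle.} The delicate point is the lower bound's embedding check: the glued crosscap really is non-separating and represents a nonzero class. I would verify non-separation via Lemma~\ref{lem:injectivitynonsep}, showing $H_2(X;\mathbb{Z}_3)\cong\mathbb{Z}_3$ injects because the Mayer--Vietoris boundary sends $[X]$ to $[\mu_1]\neq 0$ in $H_1(T;\mathbb{Z}_3)$. This is exactly the computation in the proof of Lemma~\ref{lem:meridionaldiskintersection}.
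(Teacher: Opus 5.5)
Your proof is correct and follows the paper's argument. The upper bound is the case analysis of Lemma~\ref{lem:eulercharbounds} with the $2$-sphere case ruled out; the paper does this via $H_2(L(3,1);\mathbb{Z})=0$, you via irreducibility. The lower bound is the embedded $\mathbb{Z}_3$-crosscap of Figure~\ref{fig:l3_1norm}.

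Two small corrections. First, your alternative lower bound does not work as stated: $L(3,1)$ has no one-tetrahedron triangulation ($c(L(3,1))=2$, as noted in the proof of Corollary~\ref{cor:naivecomplexitybound}); your explicit crosscap makes that route unnecessary anyway. Second, non-separation of the crosscap follows most directly from $H_2(X;\mathbb{Z})=0$ together with Lemma~\ref{lem:injectivitynonsep}, rather than from the garbled ``3-ball minus a spine'' description.
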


\begin{proof}
    No component of a non-separating $\mathbb{Z}_3$-folded surface embedded in $L(3,1)$ can be a $2$-sphere because $H_2(L(3,1);\mathbb{Z})=0$. The case analysis in the proof of Lemma~\ref{lem:eulercharbounds} paired with the observation yields an upper bound $\chi_3(L(3,1))\leq 1$.
    
    Figure~\ref{fig:l3_1norm} shows a non-separating $\mathbb{Z}_3$-folded surface $X \subset L(3,1)$ such that $\chi(X) = 1$ and so $\chi_3(x)\geq 1$ completing the proof.
\end{proof}

\begin{figure}
    \centering
    \includegraphics[width=0.7\linewidth]{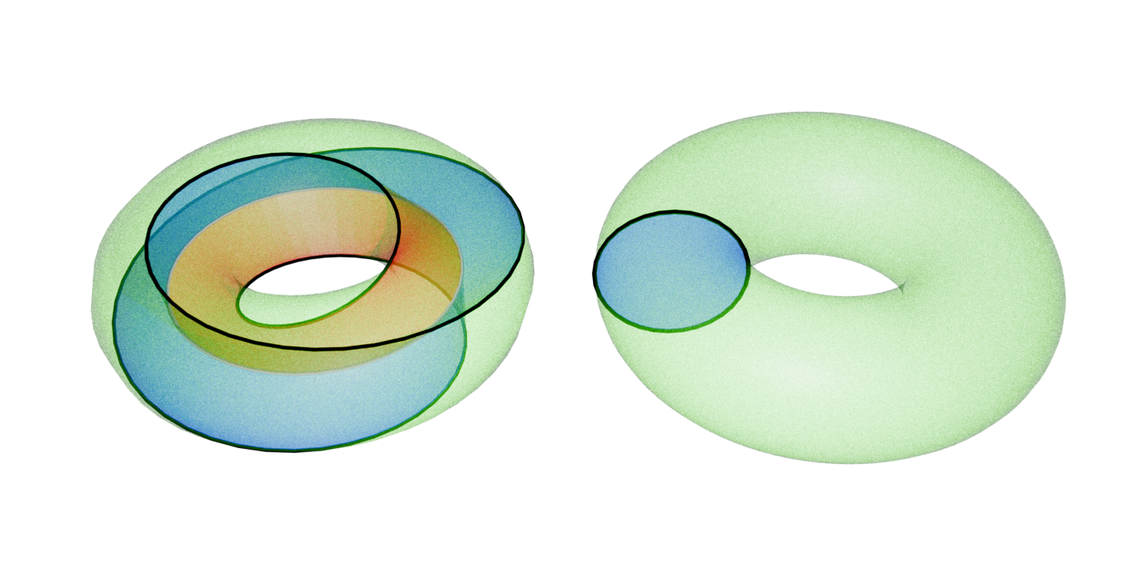}
    \caption{The $\mathbb{Z}_3$-crosscap that embeds in $L(3,1)$.}
    \label{fig:l3_1norm}
\end{figure}

The first ingredient in the proof of Lemma~\ref{lem:l3n1recursion} is some meridional intersection pattern combinatorics for which Definition~\ref{dfn:boundarytripods} is convenient.

\begin{dfn}[Boundary Tripods]\label{dfn:boundarytripods}
    Let $(D,\Gamma)$ be a meridional intersection pattern of a $\mathbb{Z}_3$-folded surface. A connected component $\Gamma_0$ of $\Gamma$ is a \emph{boundary tripod} if it has the following properties:

    \begin{itemize}
        \item $\Gamma_0$ is the graph $K_{3,1}$ (i.e. $\partial \Gamma_0$ consists of three vertices which cone to one singular vertex in $\text{Sing}(\Gamma_0)$).
        \item $\Gamma \setminus \Gamma_0$ is contained in one connected component of $D \setminus \Gamma_0$ (equivalently $\partial \Gamma_0$ is a closed interval in the cyclically ordered set $\partial \Gamma$).
    \end{itemize}
\end{dfn}

Figure~\ref{fig:boundarytripodbigon} shows what a boundary tripod looks like.
We can now prove Lemma~\ref{lem:minimalimpliesboundarytripod}.

\begin{lem}\label{lem:minimalimpliesboundarytripod}
    Let $(D,\Gamma_X)$ be a meridional intersection pattern of a $\chi$-optimal $\mathbb{Z}_3$-folded surface $X\subset L(3n,1) =S_1\cup S_2$ such that $X\cap S_1$ consists of a single meridional disk and $(|\Sing(\Gamma_X),|\pi_0(\Int(\Gamma_X))|)$ is minimal among such meridional intersection patterns (ordered lexicographically). It follows that $\Gamma_X$ has at least one \emph{boundary tripod}.
\end{lem}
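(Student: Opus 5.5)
The approach is to rerun the cell-counting argument from the proof of Lemma~\ref{lem:intervalbetterintersection}, now with $d=1$, and to use minimality to exclude every local configuration in which a boundary tripod fails to appear. First I will fix a minimal $(D,\Gamma_X)$. Minimality together with Lemma~\ref{lem:compressioninterior} and Corollary~\ref{cor:innermostdisk} shows that no component of $D\setminus\Gamma_X$ is a disk bounded by a closed curve, a $2$-cycle or a $4$-cycle, and that every component of $\Gamma_X$ meets $\partial D$. Indeed, any such compression would strictly decrease $|\pi_0(\Int(\Gamma_X))|$ without increasing $|\Sing(\Gamma_X)|$. Since $d=1$, Lemma~\ref{lem:boundaryvertexpattern} gives no colour constraint on $\partial\Gamma_X$. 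Instead the slope data $(x_i^-,x_{i+1}^+)$ for $q=1$ makes Lemma~\ref{lem:unbalancedquads} available: no boundary region bounded by four interior edges may be adjacent (with index shift $\pm1$) to a boundary region bounded by two interior edges.

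Next, I will take $\Gamma_X'$ to be an outermost component, i.e.\ one whose boundary vertices form an interval in the cyclic order, and set up $\mathcal{C}$, $B$, $H$, $b_i$, $c_{2i}$ and $h$ exactly as before. Equation~\eqref{eq:cellcounting} holds with $c_2=c_4=0$. I will then exclude $b_1$-regions. Such a region is an interval component of $\Gamma_X$ joining consecutive boundary points. A boundary-parallel interval component should be removable by a reduced compression, or else it contradicts $[X]\neq 0$ via the Mayer--Vietoris argument, since it creates a pair of oppositely oriented intersections next to each other. So $b_1=0$, and the counting gives
\[\frac{4-h}{6}+\sum_i\frac{4-i}{6}b_i\geq 1,\]
which forces $b_2+2b_3$ to be large relative to $\sum_{i\ge5}b_i$ and to $h$.

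The main obstacle is converting this surplus into a tripod. A boundary tripod is exactly a component $\Gamma_X'$ equal to $K_{3,1}$ lying outermost, which is the situation of two adjacent $b_2$-regions sharing the singular vertex. I will argue from the other direction. Suppose no outermost component is a tripod. Then every $b_2$-region is flanked by regions with $f\geq3$. Snake moves (Figure~\ref{fig:snakemove}) applied to a $b_2$-region $C_{i}$ create two singular vertices, and whenever a neighbour $C_{i\pm1}$ has $f=4$, Lemma~\ref{lem:unbalancedquads} lowers $|\Sing|$, contradicting minimality. So the neighbours of $b_2$-regions have $f=3$ or $f\geq5$. The key estimate I expect to be delicate is a discharging count along the linear order $B\setminus\{H\}$. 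Each $b_2$-region would receive charge from neighbouring $b_{\ge3}$-regions, and I want to show that the total $\sum(4-i)b_i$ cannot reach $4+h-4\geq 4$. In the case $h=1$ (the component is connected and $H$ is a boundary region) this needs separate care. If the count fails in some residual configuration, I will fall back on snake moves that turn a $3$-$2$-$3$ pattern into a tripod plus a $4$-cycle, followed by a reduced $4$-compression that lowers the lexicographic complexity, again contradicting minimality.
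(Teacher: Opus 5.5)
There is a genuine gap, and it comes from one incorrect claim. You write that for $d=1$, Lemma~\ref{lem:boundaryvertexpattern} gives no colour constraint on $\partial\Gamma_X$. In fact it gives the strongest possible one. With $d=1$ the lemma says $\sigma(b)$ has the same induced orientation as $b$ for every $b\in\partial\Gamma_X$, so \emph{all} boundary vertices have the same colour.

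Graph components are bipartite with respect to this colouring, so every path in $\Int(\Gamma_X')$ joining two boundary vertices has even length. Hence $f(B_i)$ is even for every boundary cell, and $h$ is even, so $h\geq 2$. This has several consequences for your plan:
\begin{itemize}
\item $b_1=b_3=b_5=\cdots=0$ automatically. Your separate argument for $b_1=0$ is unnecessary, and it appeals to ``oppositely oriented intersections next to each other'', which contradicts your own claim that there is no colour constraint.
\item The case $h=1$ cannot arise.
\item The $3$-$2$-$3$ configurations you intend to handle with a snake move producing a tripod plus a $4$-cycle do not exist. No lemma in the paper supports that move anyway.
\end{itemize}
As written, your central step is only an expectation that a discharging count works, backed by an unjustified fallback, so the proof is not complete.

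Once you have the parity, the count closes immediately; this is how the paper argues.
\begin{itemize}
\item From \eqref{eq:cellcounting} with $c_2=c_4=0$, $h\geq 2$ and all odd $b_i$ zero, you get $b_2\geq 2+\sum_{i\geq 3}b_{2i}$.
\item Two cells of $f^{-1}(2)$ cannot be adjacent in the linear order of $B\setminus\{H\}$. They would share a univalent boundary vertex and therefore its unique neighbouring singular vertex. That would make $\Gamma_X'$ an outermost $K_{3,1}$, i.e.\ a boundary tripod.
\item If each of the $b_2-1$ gaps between consecutive cells of $f^{-1}(2)$ contained a cell with $f\geq 6$, then $\sum_{i\geq 3}b_{2i}\geq b_2-1$, a contradiction.
\item So some gap consists only of cells with $f=4$. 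This gives a cell with $f=4$ adjacent to a cell with $f=2$.
\item Because $q=1$, adjacency is exactly the index shift $\pm q$ in Lemma~\ref{lem:unbalancedquads}. That lemma then lowers $|\Sing(\Gamma_X)|$, contradicting minimality.
\end{itemize}
Your observation that neighbours of $b_2$-cells cannot have $f=4$ is the right ingredient. What was missing is the parity constraint that rules out $f=3$ and $f=5$ and makes the count work.
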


\begin{proof}
    We will prove the contrapositive. Suppose $\Gamma_X$ has no boundary tripods. We seek to show that it is not minimal with respect to the lexicographic order. 
    
    First, if $D\setminus \Gamma_X$ admits any disk component that cobounds a simple closed curve or a cycle of $2$ or $4$ edges in $\Gamma_X$ it follows from Lemma~\ref{lem:compressioninterior} that it is not minimal. 
    
    Now assume that no component of $D\setminus \Gamma_X$ is a disk bounded by a simple closed curve or a cycle of length $2$ or $4$. Let $\Gamma_X'$ be a connected component of $\Gamma_X$ such that $\partial(\Gamma_X')$ is a closed interval in the cyclically ordered set $\partial(\Gamma_X)$ and define $B$, $f:B\rightarrow \mathbb{Z}^{>0}$, $H$, $h$, $b_i$ and $c_{2i}$ for $i\in \mathbb{Z}^{>0}$ exactly as in the proof of Lemma~\ref{lem:intervalbetterintersection}. Recall Equation~\ref{eq:cellcounting} which follows from an Euler characteristic argument by counting the cells of each dimension in the cell decomposition of $D$ induced by $\Gamma_X$.

    Since $X\cap S_1$ consists of a single meridional disk every vertex of $\partial \Gamma_X$ must have the same induced orientation. It follows from this that $h$ is even and $b_{2i-1}= 0$ for all $i\in \mathbb{Z}^{>0}$. We have also made an assumption that $c_2=c_4=0$. With these constraints Equation~\ref{eq:cellcounting} yields Equation~\ref{eq:inequalitycellcount}.

    \begin{equation}
        \label{eq:inequalitycellcount}
        \begin{aligned}
            1 &= \frac{4-h}{6} + \sum_{i=1}^{\infty}\left[\frac{4-i}{6}b_i +\frac{3-i}{3}c_{2i}\right] \\
                &\leq \frac{4-h}{6} + \sum_{i=1}^{\infty} \frac{4-i}{6}b_i \\
                &\leq \frac{2}{6} + \sum_{i=1}^{\infty} \frac{4-i}{6}b_i \\
                &= \frac{2}{6} + \sum_{i=1}^{\infty} \frac{4-2i}{6}b_{2i} \\
        \end{aligned}
    \end{equation}

    Rearranging we obtain $b_2 \geq 2 + \sum_{i=3}^{\infty}b_{2i}$. Note that the cells of $f^{-1}(2)$ cannot be adjacent in the total order of $B$ described in the proof of Lemma~\ref{lem:intervalbetterintersection} since otherwise it follows from Definition~\ref{dfn:boundarytripods} that $\Gamma_X'$ is a boundary tripod. If there is at least one element of $f^{-1}(\mathbb{Z}^{\geq 6})$ between consecutive elements of $f^{-1}(2)$ then $\sum_{i=3}^{\infty}b_{2i} \geq b_2 - 1$ which is a contradiction. We deduce from this that there must be cells in $f^{-1}(2)$ that are separated by an interval which is a subset of $f^{-1}(4)$. In particular we can locate a pair of cells $C_i$ and $C_{i\pm1}$ such that $f(C_i)= 4$ and $f(C_{i\pm1}) = 2$ and apply Lemma~\ref{lem:unbalancedquads} to reduce the number of singular vertices in $\Gamma_X$. This finishes the proof of the contrapositive.
    
\end{proof}

The remainder of the proof is more topological as follows:

\begin{proof}[Proof of Lemma~\ref{lem:l3n1recursion}]
    
    Let $X\subset L(3n,1) = S_1\cup S_2$ be $\chi$-optimal such that $X\cap S_1$ consists of a single meridional disk and $(D,\Gamma_X)$ is a meridional intersection pattern which is minimal in the sense of Lemma~\ref{lem:minimalimpliesboundarytripod} and accordingly let $\Gamma^{\psi}$ be a boundary tripod component of $\Gamma_X$. Figure~\ref{fig:boundarytripodbigon} shows how such a boundary tripod implies the existence of a disk $C$ in $\partial{B_2}\setminus \tilde{X}$ bounding a cycle of two edges in $\text{Int}(\tilde{X})$. Note that the map $q:B_2 \rightarrow S_2$ does not restrict to an embedding of $C$ and so the image of $C$ is not a $2$-compression disk of $X$. We can however use a collar of $(\partial B_2,\partial \tilde{X})$ in $(B_2,\tilde{X})$ to find a disk $C'$ parallel to $C$ in the interior of $B_2$ which embeds in $S_2$ (since the interior of $B_2$ embeds in $S_2$ as $S_2 \setminus (D\cup \partial S_2)$). A reduced $2$-compression of $X$ over the $2$-compression disk $q(C')$ yields another $\chi$-optimal $\mathbb{Z}_3$-folded surface $X'$ with a meridional intersection pattern $(D,\Gamma_{X'})$ which is some subcomplex of the meridional intersection pattern of $X$. This is because a $2$-compression of $C'$ which lies in $S_2\setminus (\partial S_2 \cup D)$ will yield a new $\mathbb{Z}_3$-folded surface with the same meridional intersection pattern, but the subsequent surface elimination step may remove some of the components of the interior of the $\mathbb{Z}_3$-folded surface that intersect $D$. Since $(X,\Gamma_X)$ already has the minimum possible number of singular vertices it follows that $\Gamma_X = \Gamma_{X'}$. Now let $\tilde{X}'$ be the lift of $X'\cap S_2$ over $B_2 \rightarrow S_2$.

\begin{figure}
    \centering
    \includegraphics[width=0.5\linewidth]{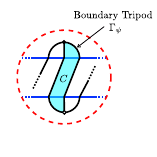}
    \caption{A boundary tripod in a meridional intersection pattern of $L(3n,1)$ which implies the existence of a bigon $C$ in $\partial B_2 \setminus \partial \tilde{X}$ cobounding two components of $\Int \partial \tilde{X}$.}
    \label{fig:boundarytripodbigon}
\end{figure}

    We will now describe a non-separating non-trivial $\mathbb{Z}_3$-folded surface $Y$ embedded in $L(3(n-1),1)$ such that $\chi(Y) = \chi(X')+1$. 

    We first describe $\tilde{Y} \subset B_2$ and then the map $B_2 \rightarrow S_2$. Let $E,E' \in \pi_0(\text{Int}(\tilde{X}'))$ be the two disks such that $(\partial E \cup \partial E') \cap \partial B_2 = \partial C$ (the fact that $E,E'$ are disks follows from the reduced $2$-compression we performed on $X$ to obtain $X'$). Let $\tilde{Y}$ be the $\mathbb{Z}_3$-folded surface in $B_2$ with underlying set $\tilde{X}' \setminus E'$ such that $\tilde{Y}\setminus \overline{E}$ inherits the orientation of $\text{Int}(\tilde{X}')\setminus (E'\cup E)$ in $\tilde{X}'$ and the orientation of $E$ in $\text{Int}(\tilde{Y})$ is the opposite of its orientation in $\text{Int}(\tilde{X}')$. A simple Euler characteristic computation on a cell complex of $\tilde{X}'$ in which $E'$ is a $2$-cell and $\partial E' \cap \partial C$ is a $1$-cell yields that $\chi(\tilde{Y}) = \chi(\tilde{X}')-1+1 = \chi(\tilde{X}')$.
    
    If we let $q_{X'}:B_2\rightarrow S_2$ be the quotient map such that $q_{X'}(\tilde{X}') = X'\cap S_2$ then $q_{X'}(\tilde{Y})$ is no longer a $\mathbb{Z}_3$-folded surface ($q_{X'}(\tilde{Y}) \cap \partial(S_2)$ is now a closed interval not a simple closed curve). To restore this property we will now modify $q_{X'}$. First let $D_Y \subset D$ be a closed disk such that $\partial D_Y$ is the union of an interval in $\partial D$ and an interval properly embedded in $D$ which separates the boundary tripod $\Gamma^{\psi}$ from $\Gamma_{X'} \setminus \Gamma^{\psi}$ in $D$ such that $D_Y\cap \Gamma_{X'} = \Gamma_{X'} \setminus \Gamma^{\psi}$ (see Figure~\ref{fig:newD_Yboundarypattern}). Lift $D_Y$ through $q_{X'}$ to a pair of disks $D_Y^{\pm} \subset D^{\pm}$. 

\begin{figure}
    \centering
    \includegraphics[width=0.8\linewidth]{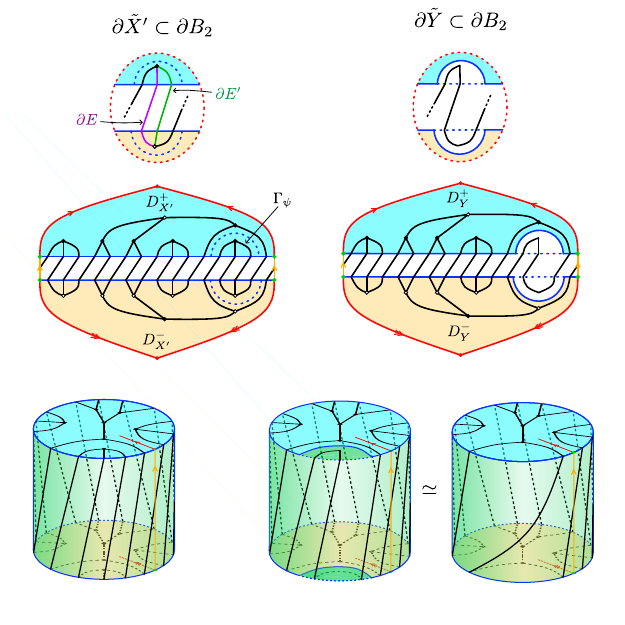}
    \caption{The two boundaries $\partial \tilde{Y} \subset \partial\tilde{X}'\subset \partial B_2$ and the disks $D_Y^{\pm}\subset D_{X'}^{\pm}$.}
    \label{fig:newD_Yboundarypattern}
\end{figure}
    
    Let $S_2'$ be the solid torus obtained by gluing $D_Y^{+}$ and $D_Y^{-}$ on the boundary of $B_2$ via the restrictions $q_{X'}:D_Y^{\pm}\rightarrow D_Y$ and let $q_{Y}:B_2 \rightarrow S_2'$ be the corresponding quotient map. To see that $q_Y(\tilde{Y})$ is a $\mathbb{Z}_3$-folded surface properly embedded in a solid torus note that $q_Y$ now identifies $\partial \tilde{Y} \cap D_Y^{+}$ and $\partial \tilde{Y} \cap D_Y^{-}$ via an orientation reversing map of $\mathbb{Z}_3$-folded $1$-manifolds. Figure~\ref{fig:newD_Yboundarypattern} also shows how $q_Y(\tilde{Y})\cap \partial S_2'$ is a $(3(n-1),1)$-torus knot. Let $S_1'$ be another solid torus and identify $\partial S_1'$ and $\partial S_2'$ by an orientation reversing homeomorphism such that this torus knot is identified with a meridional curve of $S_1'$. The resulting space is clearly $L(3(n-1),1)$. Let $Y$ be the $\mathbb{Z}_3$-folded surface embedded in $L(3(n-1),1)$ such that $Y\cap S_2' = q_Y(\tilde{Y})$ and $Y\cap S_1'$ is a single meridional disk properly embedded in $S_1'$ cobounding this meridional curve. Now note that since $X' \cap S_2$ is homeomorphic to the quotient of $\tilde{X}'$ by the identification of $q^{-1}(\Gamma_{X'}) \cap D^+$ and $q^{-1}(\Gamma_{X'}) \cap D^-$ with $\chi(\Gamma_{X'})$ it follows that $\chi(X' \cap S_2) = \chi(\tilde{X}') - \chi(\Gamma_{X'})$. Similarly $\chi(Y \cap S_2') = \chi(\tilde{Y}) - \chi(D_Y\cap Y)$. Since $D_Y\cap Y = \Gamma_{X'} \setminus \Gamma^{\psi}$ we have $\chi(D_Y\cap Y) = \chi(\Gamma_{X'}) - \chi(\Gamma^{\psi}) = \chi(\Gamma_{X'}) - 1$ and so $\chi(Y\cap S_2') = \chi(X' \cap S_2) + 1$. Now note that since $X'\cap S_1$ and $Y\cap S_1'$ are meridional disks of the solid tori $S_1$ and $S_1'$ respectively it follows that $\chi(Y) = \chi(Y\cap S_2')+1 = \chi(X' \cap S_2) + 2 = \chi(X)+1$. 
    
    For notational simplicity we now set $D_{X'} := D$, $D_{X'}^{\pm} := D^{\pm}$ and $\Gamma_Y := Y\cap D_Y$. We will now prove that $X'$ is non-separating in $L(3n,1)$ if and only if $Y$ is non-separating in $L(3(n-1),1)$. 
    
    Let $\sim_{X'}$ be an equivalence relation on $\pi_0(\partial B_2 \setminus \partial \tilde{X}')$ generated by the following relations:

    \begin{itemize}
        \item All $x\in \pi_0(\partial B_2 \setminus \partial \tilde{X}')$ such that $x\cap q_{X'}^{-1}(\partial S_2) \neq \emptyset$ belong to a single class.
        \item If there exists a component of $B_2\setminus \tilde{X}'$ that is a cobordism of $x$ and $y$ then $x\sim_{X'} y$.
        \item If there exists $z\in \pi_0(D_{X'} \setminus \Gamma_{X'})$ such that $z$ is contained in the interior of $D_{X'}$. Let $z^+,z^- \in \pi_0(\partial B_2 \setminus \partial \tilde{X}')$ such that $z^\pm \subset D_{X'}^{\pm}$ and $q_{X'}(z^{\pm}) = z$. Then $z^+\sim_{X'} z^-$. 
    \end{itemize}

    Define $\sim_{Y}$ an equivalence relation on $\pi_0(\partial B_2 \setminus \partial \tilde{Y})$ similarly swapping $S_2$ for $S_2'$.

    Note that the fact that $X'$ is non-separating in $L(3n,1)$ is equivalent to the fact that $X'\cap S_2$ is non-separating in $S_2$. When we lift to $\tilde{X}'$ this is equivalent to the condition that every component of $B_2\setminus \tilde{X}'$ intersects $\partial B_2$ and that $\pi_0(\partial B_2 \setminus \partial \tilde{X}')/\sim_{X'}$ consists of a single class.
    Similarly $Y$ is non-separating in $L(3(n-1),1)$ if and only if the boundary of every component of $B_2\setminus \tilde{Y}$ intersects $\partial B_2$ and $\pi_0(\partial B_2 \setminus \partial \tilde{Y})/\sim_{Y}$ consists of a single class.
    
    Since $\tilde{Y}\subset \tilde{X}'$ and $\tilde{X}'\setminus \tilde{Y}$ consists of a single disk $E'$ separating two components of $B_2\setminus \tilde{X}'$ which both intersect $\partial B_2$ it follows that every component of $B_2\setminus \tilde{Y}$ also intersects $\partial B_2$. 
    
    Let $h:\pi_0(\partial B_2 \setminus \partial \tilde{X}') \rightarrow \pi_0(\partial B_2 \setminus \tilde{Y})$ be the surjective function that maps each component of $\partial B_2 \setminus \partial \tilde{X}'$ to the unique component of $\partial B_2 \setminus \partial \tilde{Y}$ that contains it. We will show that $\pi_0(\partial B_2 \setminus \partial \tilde{Y})/\sim_{Y}$ consists of a single class by proving that $x \sim_{X'} y \Leftrightarrow h(x)\sim_{Y} h(y)$ for all $x,y \in \pi_0(\partial B_2 \setminus \partial \tilde{X}')$. It is sufficient to show that this is true for the generating relations with the following observations:

    \begin{itemize}
        \item $x$ intersects $q_{X'}^{-1}(\partial(S_2))$ if and only if $h(x)$ intersects $q_{Y}^{-1}(\partial(S_2'))$. This can be verified for each component that intersects $D^{+}_{X'}\setminus D^{+}_{Y}$ and $D^{-}_{X'}\setminus D^{-}_{Y}$ by means of Figure~\ref{fig:newD_Yboundarypattern} and it is true for all other components since $q_Y^{-1}(\partial S_2') = q_{X'}^{-1}(\partial S_2)\cup (D^{+}_{X'}\setminus D^{+}_{Y})\cup (D^{-}_{X'}\setminus D^{-}_{Y})$.
        
        \item If there exists a component of $B_2\setminus \tilde{X}'$ that is a cobordism of $x$ and $y$, then since $\tilde{Y} \subset \tilde{X}' \Rightarrow B_2\setminus \tilde{X}' \subset B_2\setminus \tilde{Y}$ there also exists a component of $B_2\setminus \tilde{Y}$ that is a cobordism of $x$ and $y$ and hence also $h(x)$ and $h(y)$. Moreover if $h(x)$ is cobordant to $h(y)$ in $B_2\setminus \tilde{Y}$ and $x$ is not cobordant to $y$ in $B_2\setminus \tilde{X}'$ then without loss of generality $x$ must be $C$ and $y$ must be cobordant to the component $F$ of $\partial B_2 \setminus \tilde{X}'$ such that $h(C) = h(F)$ (labelled in Figure~\ref{fig:newD_Yboundarypattern}). In this case we can use the fact that $F$ and $C$ both intersect $q_{X'}^{-1}(S_2)$ to deduce that $y\sim_{X'} F \sim_{X'} x$.
       
        \item If $z^{\pm}$ are components of $\partial B_2 \setminus \partial \tilde{X}'$ contained in the interior of $D^{\pm}_{X'}$, then the boundary of $z$ cannot intersect $\Gamma^{\psi}$. It follows that $z^{\pm}$ are both also contained in the interior of $D^{\pm}_Y$ and persist as components of $\partial B_2 \setminus \partial \tilde{Y}$. Conversely if $z^{\pm}$ are components of $\partial B_2 \setminus \partial \tilde{Y}$ contained in the interior of $D^{\pm}_Y$ then it is trivial that they are also components of $\partial B_2 \setminus \partial \tilde{X}'$ in the interior of $D^{\pm}_{X'}$ since $D^{\pm}_Y \subset D^{\pm}_{X'}$. Since $q_Y:D^{\pm}_Y\rightarrow D_Y$ is a restriction of $q_{X'}:D^{\pm}_{X'}\rightarrow D_{X'}$ it follows that $q_{X'}(z^+) = q_{X'}(z^-) \Leftrightarrow q_{Y}(z^+) = q_{Y}(z^-)$.
        
    \end{itemize}
    
    Since every component of $B_2\setminus \tilde{Y}$ also intersects $\partial B_2$ and $\pi_0(\partial B_2 \setminus \partial \tilde{Y})/\sim_{Y}$ consists of a single class it follows that $Y\cap S_2'$ is non-separating and hence $Y$ is non-separating. This means we can take $\chi(Y) = \chi(X')+1$ to be a lower bound on $\chi_3([Y])$. Recall that since $X$ was assumed to be $\chi$-optimal it follows that $X'$ is too since it is the result of a reduced $2$-compression. Hence we have proved Equation~\eqref{eq:firstineq}.

    \begin{equation}\label{eq:firstineq}
        \chi_3(L(3(n-1),1)) \geq \chi(Y) =\chi(X')+1 = \chi_3(L(3n,1))+1
    \end{equation}

    We will now prove that $\chi_3(L(3(n+1),1)) \geq \chi_3(L(3n,1))-1$ by constructing a non-separating $\mathbb{Z}_3$-folded surface $Z \subset L(3(n+1),1)$ such that $\chi(Z) = \chi(X)-1$. This is philosophically the dual of the construction of $Y$ above. We start by selecting a point $x$ in the interior of one of the intervals in the $q_{X}^{-1}(X \cap \partial S_2)$ and considering a regular neighbourhood $N(x)$ of $x$ in $B_2$. Since $x$ lies on $\partial B_2$,  $N(x)$ is a ball such that $\partial N(x)\setminus \partial B_2$ is a disk properly embedded in $B_2$, intersected transversally by $\tilde{X}$. Let $E$ be the disk $\tilde{X}\cap N(x)$ and $E'$ be the closure of either of the disk components of $\partial N(x)\setminus \partial B_2 \setminus \tilde{X}$. Let $\tilde{Z}$ be the $\mathbb{Z}_3$-folded surface properly embedded in $B_2$ with  underlying set $\tilde{X}\cup E'$ where the orientation of $E$ has been inverted and the orientation of $E'$ is set such that $E$ and $E'$ induce a consistent orientation on their common boundary $\partial N(x)\setminus \partial B_2 \cap \tilde{X}$. A similar Euler characteristic argument to before yields that $\chi(\tilde{Z}) = \chi(\tilde{X})$. The intersection of each of these components with $\partial B_2$ is shown in Figure~\ref{fig:newD_Zboundarypattern}.

    Now let $D_0^{\pm}$ be a pair of disks contained in $q_X^{-1}(\partial S_2)$ as shown in Figure~\ref{fig:newD_Zboundarypattern} such that $\partial D_0^+ \cap \partial D_X^+$ is an interval identified with $\partial D_0^- \cap \partial D_X^-$ and $D_0^+ \cap \tilde{Z}$ and $D^-_0 \cap \tilde{Z}$ are both boundary tripods. Fix an orientation reversing homeomorphism of these boundary tripods that preserves the order of their boundary vertices. Using Alexander's trick over the three disks in the complement of these tripods we can extend the homeomorphisms $\partial D_0^+ \cap \partial D_X^+ \rightarrow \partial D_0^- \cap \partial D_X^-$ and $\tilde{Z}\cap D_0^+ \rightarrow \tilde{Z} \cap D_0^-$ to a homeomorphism $D_0^+ \rightarrow D_0^-$. Let $D_Z^{\pm} := D_0^{\pm}\cup D_X^{\pm}$ and combine the maps of $D_0^{\pm}$ and $D_X^{\pm}$ into an orientation reversing homeomorphism of $D_Z^{\pm}$. Let $S_2'$ be the solid torus obtained by a quotient of $B_2$ by identifying $D_Z^{+}$ and $D_Z^-$ in this way and let $q_Z: B_2 \rightarrow S_2'$ be the corresponding quotient map. Note that $q_Z(\tilde{Z})$ is a $\mathbb{Z}_3$-folded surface properly embedded in the solid torus $S_2'$ since the identification of $D_Z^+$ with $D_Z^-$ restricts to an orientation reversing map of $\mathbb{Z}_3$-folded $1$-manifolds $D_Z^+ \cap \tilde{Z}$ and $D_Z^- \cap \tilde{Z}$. Moreover $\partial(q_Z(\tilde{Z}))$ is a $(3(n+1),1)$-torus knot on the boundary torus $\partial S_2'$. As in the case of $Y$ above we let $S_1'$ be another oriented solid torus and glue $\partial S_1'$ and $\partial S_2'$ via an orientation reversing map such that the $\partial (q_Z(\tilde{Z}))$ is identified with a meridional curve on $\partial S_1'$. The resulting space is the lens space $L(3(n+1),1)$. Let $Z \subset L(3(n+1),1)$ be the $\mathbb{Z}_3$-folded surface such that $Z\cap S_2' = q_Z(\tilde{Z})$ and $Z\cap S_1'$ is a meridional disk properly embedded in $S_1'$ cobounding $\partial(q_Z(\tilde{Z}))$. 

    The same arguments regarding the relationship between $\chi(\tilde{X})$, $\chi(\Gamma_X)$, $\chi(X\cap S_2)$ and $\chi(X)$ with the corresponding quantities for $Z$ apply as previously in the case of $X'$ and $Y$ leading to $\chi(Z) = \chi(X)-1$. Moreover what we have just done is essentially the inverse of the construction of $Y$ from $X'$. By taking $Z\mapsto X'$, $X\mapsto Y$ and $n+1 \mapsto n$ we can use the same arguments above to show that $Z$ is non-separating in $L(3(n+1),1)$ since in particular $X'$ was non-separating if and only if $Y$ was non-separating. This implies Equation~\eqref{eq:secondineq}.
     
    \begin{equation}\label{eq:secondineq}
        \chi_3(L(3(n+1),1))\geq \chi(Z) = \chi(X)-1 = \chi_3(L(3n,1))-1
    \end{equation}

    Combining the inequalities in Equations~\eqref{eq:firstineq} and~\eqref{eq:secondineq} delivers the desired result that $\chi_3(L(3n,1)) = \chi_3(L(3(n-1),1))-1$.

\begin{figure}
    \centering
    \includegraphics[width=0.8\linewidth]{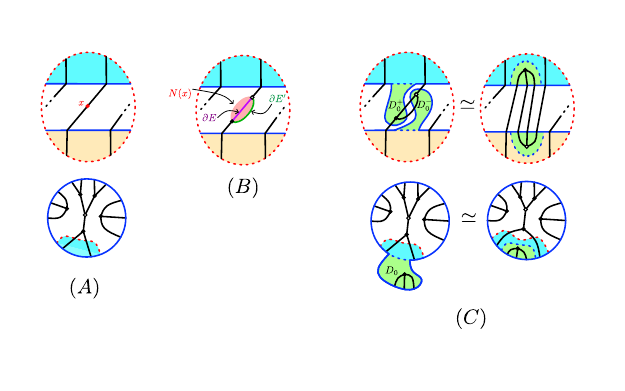}
    \caption{The two boundaries $\partial\tilde{X} \subset \partial \tilde{Z} \subset \partial B_2$ and the disks $D_Z^{\pm} =  D_{X}^{\pm} \cup D_0^{\pm}$.}
    \label{fig:newD_Zboundarypattern}
\end{figure}

\end{proof}

We expect that it should be possible to determine $\chi_3(L(3n,q))$ using similar techniques but we leave this to future work.

\section{A naïve complexity bound for $L(3n,1)$}\label{sec:naivecomplexity}

We now give Corollary~\ref{cor:naivecomplexitybound} which demonstrates the application of this theory of $\mathbb{Z}_p$-folded surfaces to the generalised triangulation complexity of $3$-manifolds. The application of the theory of canonical $\mathbb{Z}_3$-folded surfaces to study the complexity of lens spaces is joint work with Jonathan Spreer and Corollary~\ref{cor:naivecomplexitybound} should be considered as a preview of more sophisticated techniques from our forthcoming paper.

\begin{cor}[A lower bound on the complexity of $L(3n,1)$]\label{cor:naivecomplexitybound}
    It follows from Theorem~\ref{thm:l3n1norm} that $c(L(3n,1))\geq n$, where $c(L(3n,1))$ is the minimum number of tetrahedra required to form a generalised triangulation of $L(3n,1)$.
\end{cor}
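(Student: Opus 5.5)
We argue by comparing the Euler characteristic of a canonical $\mathbb{Z}_3$-folded surface with Theorem~\ref{thm:l3n1norm}. Let $\mathcal{T}$ be a generalised triangulation of $L(3n,1)$ with $t$ tetrahedra. We will first reduce to the case of a one-vertex triangulation. This is standard from~\cite{jaco20020efficienttriangulations3manifolds}: a minimal triangulation of an irreducible $3$-manifold other than $S^3$, $\mathbb{R}\mathrm{P}^3$ and $L(3,1)$ is $0$-efficient, and hence has a single vertex. Since $c(L(3,1))\geq 1$ is trivial, we may assume $n\geq 2$ and that $\mathcal{T}$ is a minimal one-vertex triangulation. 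Take $\varphi$ to be a generator of $H^1(L(3n,1);\mathbb{Z}_3)\cong \mathbb{Z}_3$, so that its dual $\varphi^*$ is non-zero. Corollary~\ref{cor:chi3bound} together with Theorem~\ref{thm:l3n1norm} then gives
\[\chi(S_\varphi(\mathcal{T}))\leq \chi_3(\varphi^*) = 2-n.\]
It therefore suffices to prove the lower bound $\chi(S_\varphi(\mathcal{T}))\geq 2-t$, since combining the two yields $t\geq n$.

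To prove this lower bound we will compute $\chi(S_\varphi)$ from the cell structure of $S_\varphi$ induced by $\mathcal{T}$. The contributions are as follows.
\begin{itemize}
\item Each edge $e$ contributes $|\tilde\varphi(e)|\leq 1$ vertices of $\Int(S_\varphi)$, because $p=3$ forces $\tilde\varphi(e)\in\{-1,0,1\}$.
\item Each triangle $t_i$ with $d\tilde\varphi(t_i)=\pm 3$ contributes one singular vertex. Its three edges are then all nonzero with the same sign.
\item In each tetrahedron we get either normal disks or a single normal $3$-fold. A normal $3$-fold consists of three disks glued along a singular edge, and it arises exactly when case~\ref{case:onesingularlocus} holds.
\end{itemize}
We will use the fact that $\mathcal{T}$ has a single vertex, so that $\chi(M)=0$ gives $\#\text{edges}=\#\text{tetrahedra}+1=t+1$. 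The aim is to write $\chi(S_\varphi)=V-E+F$ as a sum of local contributions. For each tetrahedron we will assign a weight equal to its face contribution, minus one half of each interior edge of $S_\varphi$ crossing one of its faces, plus the appropriate fractional shares of the vertices lying on its edges and faces.

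Before doing this bookkeeping we will use Lemma~\ref{lem:normalpfoldsarereduced}, or simply the hypothesis that $\mathcal{T}$ is minimal and hence $0$-efficient, to rule out normal quadrilaterals of the bad type. The hope is to show that each tetrahedron contributes at least $-1$ to $\chi(S_\varphi)$. A global correction of $+2$ should then come from the relation $\#\text{edges}=t+1$: edges with $\tilde\varphi(e)=0$ contribute no vertices, and the edges are shared among tetrahedra with degrees summing to $6t$. If this works it gives $\chi(S_\varphi)\geq 2-t$.

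The step I expect to be the main obstacle is this local-to-global Euler characteristic bound. Vertex contributions on edges are divided among tetrahedra according to edge degrees, which vary, so a naive per-tetrahedron bound may fail. If it does, I will instead bound the quantity directly. Let $a$ be the number of edges with $\tilde\varphi(e)\neq 0$, $s$ the number of singular triangles, and $f$ the number of normal pieces. In terms of these,
\[\chi(S_\varphi)=a+s-\#(\text{$1$-cells})+\#(\text{$2$-cells}),\]
and I will try to prove $\chi(S_\varphi)\geq 2-t$ using $a\leq t+1$ and explicit counts over the four tetrahedron boundary patterns of Figure~\ref{fig:p3boundarypatterns}. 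As a sanity check, the layered triangulation of $L(3n,1)$ should achieve equality, matching the remark after Conjecture~\ref{conj:maxcanonicalischi_p}.
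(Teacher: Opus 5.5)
Your reduction to the inequality $\chi(S_\varphi)\geq 2-t$ is sound, and the bookkeeping you are worried about is actually easy. Let $\mathfrak{o}$ be the number of edges with $\tilde\varphi(e)\neq 0$, and let $A,B,C,D$ be the numbers of tetrahedra meeting $S_\varphi$ in a quadrilateral, a triangle, nothing, or a normal $3$-fold. A direct cell count, using that each triangle is shared by two tetrahedron faces, gives exactly
\[\chi(S_\varphi)=\mathfrak{o}-A-\tfrac{B}{2}-D=\mathfrak{o}-t+\tfrac{B}{2}+C.\]
After substituting $\mathfrak{e}+\mathfrak{o}=t+1$, this is the paper's $1-\mathfrak{e}+\frac{B}{2}+C$. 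So your target is precisely $\mathfrak{o}+\frac{B}{2}+C\geq 2$, and this is where the proposal has a genuine gap.

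The per-tetrahedron contributions are indeed at least $-1$, but that only yields $\chi(S_\varphi)\geq\mathfrak{o}-t$. Combined with $\chi(S_\varphi)\leq 2-n$, this gives $t\geq n-2+\mathfrak{o}$, which is only $t\geq n-1$ when $\mathfrak{o}=1$. The missing ``$+2$'' cannot come from $\#\text{edges}=t+1$ or from $a\leq t+1$. That bound points the wrong way: $\chi(S_\varphi)$ increases with $a=\mathfrak{o}$, so you need a \emph{lower} bound on it. Nothing in a local count excludes a configuration with a single odd edge and only quadrilateral and $3$-fold tetrahedra, for which $\chi(S_\varphi)=1-t<2-t$.

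The missing idea is a global argument that $\mathfrak{o}\geq 2$, which the paper supplies as a second use of Theorem~\ref{thm:l3n1norm}.
\begin{enumerate}
\item Since $H_2(L(3n,1);\mathbb{Z})=0$, $S_\varphi$ cannot be a closed oriented surface, because its $\mathbb{Z}_3$ class would then be the reduction of an integral class, hence $0$. So some triangle has $d\tilde\varphi=\pm 3$.
\item If $\mathfrak{o}=1$, all three edges of that triangle are the same edge traversed in the same direction. Its image is therefore an embedded $\mathbb{Z}_3$-crosscap.
\item The crosscap has trivial integral $H_2$, so it is non-separating (Lemma~\ref{lem:injectivitynonsep}) and represents a non-zero class with $\chi=1$. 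This gives $\chi_3\geq 1>2-n$ for $n\geq 2$, a contradiction. Alternatively, its regular neighbourhood would be a punctured $L(3,1)$ inside the irreducible manifold $L(3n,1)$, which is impossible.
\end{enumerate}
With $\mathfrak{o}\geq 2$ your count closes immediately.

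Two smaller points. Your sanity check is false: if $\chi(S_\varphi)=2-t$ and $\chi(S_\varphi)=2-n$ both held, the layered triangulation would have exactly $n$ tetrahedra, yet $L(6,1)$ already needs $3$. The slack there sits in $\frac{B}{2}+C$, and the only term you control in general is $\mathfrak{o}$. Also, the appeal to $0$-efficiency to exclude ``bad'' quadrilaterals is unnecessary.
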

\begin{proof}
    First note that due to~\cite[Theorem~6.1]{jaco20020efficienttriangulations3manifolds} it follows that any minimal triangulation $\mathcal{T}$ of $L(3n,1)$ with $n>1$ has a single vertex and it is well-known that $c(L(3,1))=2$. Let $\varphi \in H^1(L(3n,1);\mathbb{Z}_3)$ be the unique non-trivial cohomology class (up to a sign inversion). Up to normal isotopy and symmetries the intersection of $S_{\varphi}$ with each tetrahedron has four distinct types. A type $A$ tetrahedron is intersected in a quadrilateral normal disk. A type $B$ tetrahedron is intersected in a triangle normal disk. A type $C$ tetrahedron does not intersect $S_{\varphi}$ and the intersection of a type $D$ tetrahedron with $S_{\varphi}$ is homeomorphic to $K_{3,1}\times[0,1]$. These tetrahedron types are pictured in Figure~\ref{fig:tetrahedrontypes3}. Let $A, B, C, D$ be the number of type $A,B,C$ and $D$ tetrahedra in $\mathcal{T}$ respectively. We now follow identical reasoning to that used in the $\mathbb{Z}_2$ case for canonical non-orientable surfaces in~\cite{Jaco_2009}. We can partition the edges of $\mathcal{T}$ into \textit{even edges} that are not intersected by $S_{\varphi}$ and \textit{odd edges} that intersect $S_{\varphi}$ in a single point. Let the number of even edges be $\mathfrak{e}$ and the number of odd edges be $\mathfrak{o}$. Since $L(3n,1)$ is a closed $3$-manifold it follows that $\chi(S_{\varphi}) = \chi(L(3n,1) \setminus S_{\varphi})$. By pieceing together retractions for each tetrahedron type one can show that $L(3n,1) \setminus S_{\varphi}$ deformation retracts onto the subcomplex of $\mathcal{T}$ spanned by the even edges. This subcomplex has a single vertex, $\mathfrak{e}$ edges, $\frac{B+4C}{2}$ triangles and $C$ tetrahedra. Hence we deduce that $\chi(S_{\varphi}) = 1 - \mathfrak{e} + \frac{B}{2} + C$. It follows from Corollary~\ref{cor:chi3bound} that $\chi(S_{\varphi})\leq \chi_3(\varphi^*)$. Theorem~\ref{thm:l3n1norm} implies that $\chi_3(\varphi^*) = 2-n$ and hence $\chi(S_{\varphi})\leq 2-n$. Combining these two relations gives $1 - \mathfrak{e} + \frac{B}{2} + C \leq 2-n$ and hence $\mathfrak{e} \geq n-1$. Since $\varphi$ is non-trivial it follows that $\mathfrak{o}\geq 1$. Moreover since the lens spaces $L(3n,1)$ for $n>1$ have trivial integral second homology it follows that $\Sing(S_{\varphi}) \neq \emptyset$. Consider a triangle in $\mathcal{T}$ that intersects the singular locus of $S_{\varphi}$. If $\mathfrak{o}=1$ then all three edges of this triangle are identified such that the image of this triangle in $L(3n,1)$ is a $\mathbb{Z}_3$-crosscap $X$ (see Figure~\ref{fig:pcrosscaps}). $\mathbb{Z}_3$-crosscaps are always non-separating since they have trivial integral second homology. The Euler characteristic of a $\mathbb{Z}_3$-crosscap is $1$ which would imply that $\chi_3([X])\geq 1$, but Theorem~\ref{thm:l3n1norm} implies that $\chi_3([X]) = 2-n$. Our assumption that $n>1$ implies that $\chi_3([X])<1$ so we have a contradiction and we conclude that there must be at least two distinct odd edges. Hence we deduce that $\mathfrak{o}\geq 2$ and so $\mathfrak{e}+\mathfrak{o} \geq n+1$. Since $L(3n,1)$ is closed $\chi(L(3n,1))= 0$ and Equation~\eqref{eq:countingcellscanonical3fold} follows.

    \begin{equation}\label{eq:countingcellscanonical3fold}
        1 - \mathfrak{e} - \mathfrak{o} + 2c(L(3n,1)) - c(L(3n,1)) = \chi(L(3n,1))=0
    \end{equation}

    The required result $c(L(3n,1)) = \mathfrak{e} + \mathfrak{o} -1 \geq n$ follows immediately.
\end{proof}

\begin{figure}
    \centering
    \includegraphics[width=0.8\linewidth]{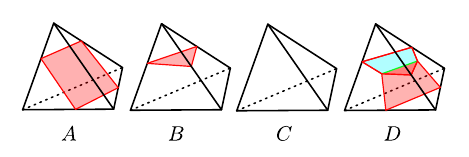}
    \caption{The four intersection types of the canonical $\mathbb{Z}_3$-folded surface with tetrahedra in a one-vertex triangulation (up to symmetries of a tetrahedron).}
    \label{fig:tetrahedrontypes3}
\end{figure}

More sophisticated arguments can be used to determine the exact complexity of the lens spaces $L(3n,1)$ using canonical $\mathbb{Z}_3$-folded surfaces. We will present this joint work in a forthcoming paper.

\bibliographystyle{alpha}
\bibliography{refs}

\end{document}